\newtheorem{theorem}{Theorem}
\newtheorem{corollary}{Corollary}
\newtheorem{problem}{Problem}
\newtheorem{recommendation}{Mesh Refinement Recommendation}
\newtheorem{algorithm}{Algorithm}
\newcommand{\NN}{\mathbb{N}}
\newcommand{\RR}{\mathbb{R}}
\newcommand{\rmd}{\,\mathrm{d}}
\newcommand{\rme}{\mathrm{e}}
\newcommand{\ce}{\coloneqq}
\newcommand{\ec}{\eqqcolon}
\newcommand{\pd}[2]{\frac{\partial #1}{\partial #2}}
\newcommand{\lap}{\Delta}
\newcommand{\rot}{\nabla\times}
\renewcommand{\div}{\nabla\cdot}
\newcommand{\grad}{\nabla}
\newcommand{\x}{\mathbf{x}}
\newcommand{\y}{\mathbf{y}}
\newcommand{\bnu}{\mathbf{n}}
\newcommand{\eps}{\varepsilon}
\newcommand{\bphi}{\boldsymbol{\phi}}
\newcommand{\E}{\mathbf{E}}
\renewcommand{\H}{\mathbf{H}}
\renewcommand{\P}{\mathbf{P}}
\newcommand{\G}{\mathbf{G}}
\renewcommand{\l}{ {\boldsymbol{\lambda} }}
\newcommand{\oeps}{{\bar{\eps}}}
\newcommand{\oE}{{\bar{\E}}}
\newcommand{\ol}{{\bar{\l}}}
\newcommand{\ih}{{\Pi_h}}
\newcommand{\rh}{{r_h}}
\newcommand{\maxwell}{\mathscr{D}}
\newcommand{\adjoint}{\mathscr{A}}
\renewcommand{\th}{\mathcal{T}_h}
\newcommand{\It}{\mathcal{I}_\tau}
\newcommand{\ot}{ {\Omega_T} }
\newcommand{\gt}{ {\Gamma_T} }
\newcommand{\diam}{\operatorname{diam}}
\newcommand{\vht}{V_h}
\newcommand{\uht}{U_h}
\newcommand{\abs}[1]{\left\lvert #1 \right\rvert}
\newcommand{\norm}[1]{\left\lVert{\textstyle #1 }\right\rVert}
\newcommand{\sca}[2]{\left\langle{\textstyle #1},\,{\textstyle #2}\right\rangle}
\newcommand{\jm}[2]{\left\{\textstyle{ #1 }\right\}_{\mathrm{#2}}}
\newcommand{\aj}[2]{\left[\textstyle{ #1 }\right]_{\mathrm{#2}}}
\begin{document}

\author{John Bondestam Malmberg 
\thanks{
Department of Mathematical Sciences, Chalmers University of Technology and Gothenburg University, SE-42196 Gothenburg, Sweden, e-mail: \texttt{john.bondestam.malmberg@chalmers.se}
}
\and
Larisa Beilina
\thanks{
Department of Mathematical Sciences, Chalmers University of Technology and Gothenburg University, SE-42196 Gothenburg, Sweden, e-mail: \texttt{larisa@chalmers.se}
}
}

\title{An Adaptive Finite Element Method in Quantitative Reconstruction of Small Inclusions from Limited Observations}

\maketitle

\begin{abstract}
We consider a coefficient inverse problem for the dielectric permittivity in Maxwell's equations, with data consisting of boundary measurements of one or two backscattered or transmitted waves. The problem is treated using a Lagrangian approach to the minimization of a Tikhonov functional, where an adaptive finite element method forms the basis of the computations. A new a posteriori error estimate for the coefficient is derived. The method is tested successfully in numerical experiments for the reconstruction of two, three, and four small inclusions with low contrast, as well as the reconstruction of a superposition of two Gaussian functions.
\end{abstract}

\section{Introduction} \label{intro}
In this note we study an adaptive finite element method for the reconstruction of a dielectric permittivity function $\eps=\eps(\x)$, $\x=(x_1,\,x_2,\,x_3)\in\Omega$, where $\Omega\subset\RR^3$ is a bounded domain with (piecewise) smooth boundary $\Gamma$. We consider data consisting of boundary measurements of a small number (one or two) backscattered or transmitted waves. This is a coefficient inverse problem (CIP) for Maxwell's equations, where the dielectric permittivity function $\eps$, acting as the coefficient in the equations, characterizes an inhomogeneous, isotropic, non-magnetic, non-conductive medium in $\Omega$. Although we are in this note concerned with the reconstruction of a real-valued coefficient $\eps$, our future applications are, eventually, in medical imaging, such as microwave imaging of breast cancer, and early diagnosis of stroke (see \cite{fhp06, mfrdp07, mpr95, gmkdp12} for details).

The method studied is based on a Lagrangian approach to the minimization of a Tikhonov functional, where the functions involved are approximated by piecewise polynomials in a finite element method. Such an approach to coefficient inverse problems of the type we consider has previously been studied extensively in the context of a two-stage procedure in \cite{b11, bkk10, btkm14, btkm14b}. The main application considered in those publications was detection of explosives. With such applications, the low amount of data one can expect -- usually only backscattered data for a single incident wave of one frequency -- makes the problem of reconstruction challenging. In the context of medical imaging, it is possible to gather more data, and the challenges are instead low contrast between healthy and non-healthy tissue (seldom larger than 2 in microwave imaging of malign tumors, see for example \cite{jzlj94}), and small size of inclusions.

In the theoretical part of this paper, we derive a new, direct a posteriori error estimate for the dielectric permittivity function to be reconstructed. Qualitative computable, or a posteriori, error estimates are an essential tool for finite element based adaptive algorithms in the optimization approach to our inverse problem. Previously, in \cite{b11}, such an estimate was given for our coefficient inverse problem. However, this estimate was an indirect one, estimating the size of an error in the computed Lagrangian, as apposed to a direct estimate of the error in the computed permittivity as presented here. A similar estimate, also in the computed Lagrangian, was shown for a modified Maxwell system in \cite{m14}.

We illustrate our theoretical results with several numerical examples. With the above-mentioned aspects of contrast and size of inclusions in mind, we present reconstructions of two, three, and four different small inclusions, respectively, of low contrast. We also evaluate how variations in the type of data collected affect the reconstruction. More precisely, we consider the effect of working with only backscattered data, with only transmitted data, with both backscattered and transmitted data, as well as backscattered data from two anti-parallel incident waves. In addition to these reconstructions of inclusions, we also present a reconstruction of a more complicated function, to wit, a superposition of two gaussians.

Here is an outline of the remaining part of this note: In the Section~\ref{dir_inv} we present the mathematical formulations of the direct and inverse problems and present the basic results prior to discretization of the problems. In Section~\ref{error_analysis} we state the finite element formulations, perform the error analysis, and summarize the results in terms of a mesh refinement strategy.  The adaptive algorithm for solving the inverse problem is described in Section~\ref{algorithm}, and numerical examples are given in Section~\ref{numex}. Section~\ref{conclusion} concludes the paper.

\section{The direct and inverse problems} \label{dir_inv}
Before proceeding with the mathematical statement of the problem, we introduce some notation. For the bounded polygonal domain $\Omega\subset \RR^3$ with boundary $\Gamma$, we write $\ot\ce \Omega\times(0,\,T)$ and $\gt\ce\Gamma\times(0,\,T)$, where $T>0$ is a (sufficiently large) fixed time. If $X\subset\RR^n$, $n\in\NN$, is a domain, we denote by $\sca{\cdot}{\cdot}_X$ and $\norm{\cdot}_X$ the $L_2$-inner product and norm, respectively, over the domain $X$.

Let 
\begin{equation*}
V^\eps_0\ce \{v\in C(\bar{\Omega}): \grad{v}\in[L_\infty(\Omega)]^3\}.
\end{equation*}
In the spirit of \cite{bkk10}, we define a finite dimensional subspace of piecewise linears 
\begin{equation*}
V^\eps\ce \{v\in V^\eps_0: v\rvert_K\in P^1(K)\,\forall K\in \mathcal{T}_\delta\},
\end{equation*}
where $\mathcal{T}_\delta= \{K\}$ is a very fine triangulation of $\bar\Omega$ and $P^1(K)$ denotes the set of polynomials of degree no greater than 1 over $K$. We equip this space with the $L_2$-norm over $\Omega$, and define the set of admissible dielectric permittivity functions
\begin{equation} \label{admissible}
U^\eps\ce\{v\in V^\eps: 1 \leq v(\x) \leq \eps_{\mathrm{max}}~\forall \x\in\Omega,~v\rvert_\Gamma\equiv1,~\grad v\rvert_\Gamma\equiv 0\}
\end{equation}
for some known but not necessarily small upper bound $\eps_{\mathrm{\max}}$. The space $V^\eps$ can be thought of as a very fine finite element space (see Section \ref{error_analysis}), hence it is of (large but) finite dimension, which justifies the use of the $L_2$-norm. Although finite dimensional, this space is considered too large to handle numerically, and our goal is to approximate $\eps\in V^\eps$ by an element of a smaller subspace.

The set $U^\eps$ is defined to describe a heterogeneous medium in $\Omega$, immersed in a constant background with permittivity 1 in $\RR^3\setminus\Omega$.

Under the assumption that $\eps\in U^\eps$ we consider Maxwell's equations for an isotropic, non-magnetic, non-conductive medium in $\Omega$:
\begin{align}
\pd{(\mu\H)}{t} + \rot\E &= 0 && \text{in } \ot, \label{mw1}\\
\pd{(\eps\E)}{t} - \rot\H &= 0 && \text{in } \ot, \label{mw2}\\
 \div(\mu\H) = \div(\eps\E) &= 0 && \text{in } \ot, \label{mw3}
\end{align}
where $\H=\H(\x,\,t)$ and $\E=\E(\x,\,t)$, $(\x,\,t)\in\ot$, denote the magnetic and electric fields, respectively, and $\mu>0$ is the constant magnetic permeability. By scaling, we may assume that $\mu=1$.

To obtain an equation involving only $\eps$ and $\E$, we combine the curl of \eqref{mw1} and derivative of \eqref{mw2} with respect to $t$ to obtain the second order equation
\begin{align*}
\eps\pd{^2\E}{t^2} + \rot(\rot\E) &= 0 && \text{in } \ot.
\end{align*}
To incorporate \eqref{mw3} we introduce a penalty term $-s\grad(\div(\eps\E))$, for a fixed gauge parameter $s\geq1$ (see details in \cite{m03, pl91}), thus, after completing with boundary and initial conditions, we obtain the system
\begin{equation} \label{maxwells}
\begin{aligned}
&\eps\pd{^2\E}{t^2}-\lap\E + \grad(\div\E) - s\grad(\div(\eps\E)) = 0 &&\text{in } \ot,\\
&\pd{\E}{\bnu} = \P &&\text{on } \gt, \\
&\E(\cdot,\,0) = \pd{\E}{t}(\cdot, \,0) = 0 &&\text{in } \Omega,
\end{aligned}
\end{equation}
where we have used the expansion $\rot(\rot\E)=-\lap\E+\grad(\div\E)$ to simplify further manipulations of the equation. We use the notation $\pd{}{\bnu}=\bnu\cdot\grad$, where $\bnu$ denotes the outward unit normal on $\Gamma$. $\P\in[L_2(\gt)]^3$ is given Neumann data (see Section~4 of \cite{btkm14} for details). For well-posedness of problems of this class, we refer to \cite{l85, e10}.

The mathematical statement of the coefficient inverse problem is:

\begin{problem} \label{cip}
Given time-resolved boundary observations $\G\in[L_2(\gt)]^3$ of the electric field, determine $\eps\in U^\eps$ such that $\E = \G$ on $\gt$.
\end{problem}

The observations $\G$ represents either experimental or (partially) simulated data. For details on how to obtain such data, see \cite{btkm14}.

Inverse problems, such as Problem~\ref{cip}, are typically ill-posed in nature. Thus we cannot expect to be able to find an exact unique solution for any given data $\G$ (which in practice \emph{will} contain noise). Instead we will follow the concept of regularization (see for instance \cite{ehn96, tgsy95}) and assume that the given data $\G$ is a perturbation of ideal data $\G^*$, for which there exists a unique solution $\eps^*$ to Problem~\ref{cip}. The goal is then to systematically compute an approximation of $\eps^*$, a so-called regularized solution which hereafter will be denoted simply by $\eps$, which is as close to $\eps^*$ as can be achieved given the level of noise $\norm{\G - \G^*}_\gt$. Such a regularized solution is obtained by minimizing a Tikhonov functional, to be defined below (see equation~\eqref{tikhonovfunctional}).

Uniqueness of the solution of coefficient inverse problems of this type is typically obtained via the method of Carleman estimates \cite{bk81}. Examples where this method is applied to inverse problems for Maxwell's equations can be found in, for example, \cite{k86}, \cite{bcs12} for simultaneous reconstruction of two coefficients, and \cite{li05, ly07} for bi-isotropic and anisotropic media.
However, this technique requires non-vanishing initial conditions for the underlying partial differential equation, which is not the case here. Thus, currently, uniqueness of the solution for the problem we study is not known. For the purpose of this work, we will assume that uniqueness holds. This assumption is justified by the numerical reconstruction results presented in the experimental works \cite{btkm14, btkm14b}.

We introduce the space 
\begin{equation*}
V^\mathrm{dir}\ce\{\mathbf{v}\in[H^1(\ot)]^3:\mathbf{v}(\cdot,\,0)=0\}
\end{equation*}
for solutions to the direct problem, and
\begin{equation*}
V^\mathrm{adj}\ce\{\mathbf{v}\in[H^1(\ot)]^3:\mathbf{v}(\cdot,\,T)=0\}
\end{equation*}
for adjoint solutions. Both spaces are equipped with the usual norm and inner product on $[H^1(\ot)]^3$.
Then, by multiplying the first equation in \eqref{maxwells} by a test function $\bphi\in V^\mathrm{adj}$ and integration over $\ot$, we obtain, after integration by parts,
\begin{equation} \label{weakmaxwell}
\begin{aligned}
0 &=
 -\sca{\eps\pd{\E}{t}}{\pd{\bphi}{t}}_\ot
 + \sca{\eps\pd{\E}{t}(\cdot,\,T)}{\bphi(\cdot,\,T)}_\Omega
 - \sca{\eps\pd{\E}{t}(\cdot,\,0)}{\bphi(\cdot,\,0)}_\Omega \\&\quad
 + \sca{\grad\E}{\grad\bphi}_\ot 
 - \sca{\pd{\E}{\bnu}}{\bphi}_\gt
 - \sca{\div\E}{\div\bphi}_\ot
 + \sca{\div\E}{\bnu\cdot\bphi}_\gt \\&\quad
 + s\sca{\div(\eps\E)}{\div\bphi}_\ot
 - s\sca{\div(\eps\E)}{\bnu\cdot\bphi}_\gt\\
&= 
 -\sca{\eps\pd{\E}{t}}{\pd{\bphi}{t}}_\ot
 + \sca{\grad\E}{\grad\bphi}_\ot 
 - \sca{\div\E}{\div\bphi}_\ot \\&\quad
 + s\sca{\div(\eps\E)}{\div\bphi}_\ot
 - \sca{\P}{\bphi}_\gt \\
& \ec \maxwell(\eps,\,\E,\,\bphi),
\end{aligned}
\end{equation}
where the second equality holds because $\bphi(\cdot,\,T)=0$, $\pd{\E}{t}(\cdot,\,0)=0$, $\pd{\E}{\bnu}=\P$ on $\gt$, and $\div\E=\div(\eps\E)=0$ on $\gt$. This leads to the following weak description of the electric field:
\begin{problem} \label{weakdirect}
Given $\eps\in U^\eps$, determine $\E\in V^\mathrm{dir}$ such that $\maxwell(\eps,\,\E,\,\bphi)=0$ for every $\bphi\in V^\mathrm{adj}$.
\end{problem}

Let $\E_\eps\in V^\mathrm{dir}$ denote the solution to Problem~\ref{weakdirect} for a given $\eps\in U^\eps$. We can then define the Tikhonov functional $F\colon U^\eps\to \RR_+$,
\begin{equation} \label{tikhonovfunctional}
F(\eps)=F(\eps,\,\E_\eps)\ce \frac{1}{2}\norm{(\E_\eps - \G)z_\delta}_\gt^2 + \frac{\alpha}{2}\norm{\eps - \eps_0}_\Omega^2,
\end{equation}

\begin{center}
  \begin{figure}
    \includegraphics[width=0.9\textwidth]{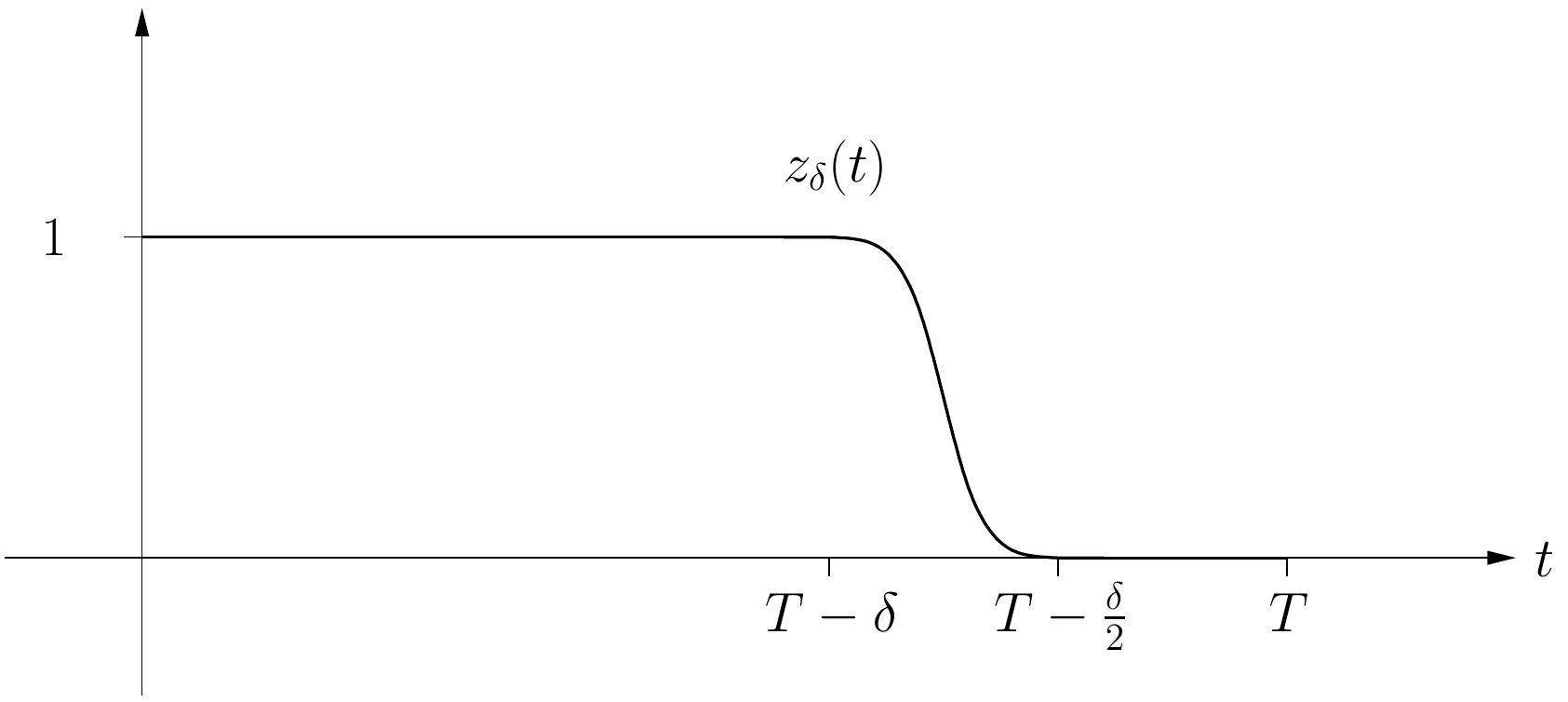}
    \caption{Schematic illustration of the cut-off function $z_\delta$ appearing in the Tikhonov functional \eqref{tikhonovfunctional}.}\label{zdelta}
  \end{figure}
\end{center}
where $\alpha>0$ is a regularization parameter and $z_\delta=z_\delta(t)\in C^\infty([0,\,T])$ is a cut-off function for the data, dropping from a constant level of 1 to a constant level of 0 within the small interval $(T-\delta,\,T-\delta/2)$, $0 < \delta \ll T$, as schematically shown in Figure \ref{zdelta}. The function $z_\delta$ is introduced to ensure data compatibility in the adjoint problem arising in the minimization of \eqref{tikhonovfunctional}.

How to choose the regularization parameter $\alpha$ with respect to the level of noise in the data is a widely studied topic. Several methods exist, examples are the (generalized) discrepancy principle \cite{tgsy95} and iterative methods \cite{bks11}. In future studies, we are planning to investigate iterative methods, similarly with \cite{h15}. However, for the results presented here, we regard $\alpha$ as a fixed parameter.

We assume that the initial approximation $\eps_0$ is sufficiently close to an ideal solution $\eps^*$, corresponding to noiseless data $\G^*$ in Problem~\ref{cip}.  If so, then by Theorem~3.1 of \cite{bkk10}, the Tikhonov functional $F$ is strongly convex in a neighborhood $\mathcal{N}\subset V^\eps$ of $\eps_0$. More precisely, we have the estimate
\begin{equation} \label{convexity}
\frac{\alpha}{2}\norm{\eps_1 - \eps_2}^2_\Omega \leq F'(\eps_1;\,\eps_1 - \eps_2) - F'(\eps_2;\,\eps_1 - \eps_2)
\end{equation}
for every $\eps_1$, $\eps_2\in\mathcal{N}\cap U^\eps$, where $F'(\eps;\,\oeps)$ denotes the Fr\'echet derivative of $F$ at $\eps$, acting on $\oeps$.

Throughout the remaining part of this text we will assume that the hypothesis of Theorem~3.1 of \cite{bkk10}, and hence strong convexity, holds. Then we may seek a minimizer $\eps\in U^\eps$ of $F$ by applying any gradient based method (such as steepest descent, quasi-Newton, or conjugate gradient), starting from $\eps_0$.

Such an approach requires that we compute the Fr\'echet derivative of $F$, which is complicated since it involves the implicit dependence of $\E_\eps$ upon $\eps$. To simplify the analysis, in the spirit of optimal control (see for example \cite{bkr00, kl10} for the general theory and some specific examples), we introduce the Lagrangian associated to the problem of minimizing $F(\eps,\,\E)$, $\eps\in U^\eps$, $\E\in V^\mathrm{dir}$, with $\maxwell(\eps,\,\E,\,\bphi)=0$ for all $\bphi\in V^\mathrm{adj}$ acting as a constraint. This Lagrangian is
\begin{equation*} \label{lagrangian}
L(u)\ce F(\eps,\,\E) + \maxwell(\eps,\,\E,\,\l),
\end{equation*}
where $u=(\eps,\,\E,\,\l)\in U \ce U^\eps\times V^\mathrm{dir}\times V^\mathrm{adj} \subset V\ce V^\eps\times V^\mathrm{dir}\times V^\mathrm{adj}$, $F(\eps,\,\E)$ was defined in \eqref{tikhonovfunctional}, and $\maxwell(\eps,\,\E,\,\l)$ was defined in \eqref{weakmaxwell}.

We can now minimize $F$ over $U^\eps$ by finding a stationary point of $L$ over $U$. With the strong convexity as above, this would imply that we solve
\begin{problem} \label{minlag}
Find $u\in U$ such that $L'(u;\,v)=0$ for every $v\in V$.
\end{problem}
Again we use the notation $L'(u;\,v)$ for the Fr\'echet derivative of $L$ at $u$, acting on $v$. It can be shown 
that
\begin{equation*}
L'(u;\,v)=\pd{L}{\eps}(u;\,\oeps) + \pd{L}{\E}(u;\,\oE) + \pd{L}{\l}(u;\,\ol),
\end{equation*}
where $u=(\eps,\,\E,\,\l)\in U$, $v=(\oeps,\,\oE,\,\ol)\in V$, and
\begin{equation}
\begin{aligned} \label{dl}
\pd{L}{\eps}(u;\,\oeps) &\ce \alpha\sca{\eps-\eps_0}{\oeps}_\Omega
 - \sca{\pd{\E}{t}\cdot\pd{\l}{t}}{\oeps}_\ot
 + s\sca{\div\l}{\div(\oeps\E)},\\
 \pd{L}{\E}(u;\,\oE) &\ce \sca{(\E-\G)z_\delta^2}{\oE}_\gt
 - \sca{\eps\pd{\l}{t}}{\pd{\oE}{t}}_\ot
 + \sca{\grad\l}{\grad\oE}_\ot\\ &\qquad
 - \sca{\div\l}{\div\oE}_\ot
 + s\sca{\div\l}{\div(\eps\oE)}  \ec \adjoint(\eps,\,\l,\,\oE),\\
\pd{L}{\l}(u;\,\ol) &= \maxwell(\eps,\,\E,\,\ol).
\end{aligned}
\end{equation}

In particular, we note that the solution $u=(\eps,\,\E,\,\l)$ to Problem~\ref{minlag} must satisfy $\maxwell(\eps,\,\E,\,\ol)=0$ for every $\ol\in V^\mathrm{adj}$ and $\adjoint(\eps,\,\l,\,\oE) = 0$ for every $\oE\in V^\mathrm{dir}$. The former means that $\E$ solves Problem~\ref{weakdirect} and the latter that $\l$ solves the following adjoint problem:
\begin{problem} \label{weakadjoint}
Given $\eps\in U^\eps$, determine $\l\in V^\mathrm{adj}$ such that $\adjoint(\eps,\,\l,\,\bphi)=0$ for every $\bphi\in V^\mathrm{dir}$.
\end{problem}

The functional $\adjoint$ in Problem~\ref{weakadjoint} was defined in \eqref{dl}. The problem can be seen as a weak analogue of the following system, adjoint to \eqref{maxwells}:
\begin{equation*}
\begin{aligned}
&\eps\pd{^2\l}{t^2} - \lap\l + \grad(\div\l) - s\eps\grad(\div\l) = 0 && \text{in }  \ot,\\
&\pd{\l}{\bnu}=-(\E - \G)z_\delta^2 && \text{on } \gt,\\
&\l(\cdot,\,T) = \pd{\l}{t}(\cdot,\,T) = 0 && \text{in } \Omega. 
\end{aligned}
\end{equation*}

These observations will be used in the error analysis to be described below. But first we shall make some remarks concerning the relation between the Fr\'echet derivative of Tikhonov functional and that of the Lagrangian.

Let $u_\eps = (\eps,\,\E_\eps,\,\l_\eps)$ be the element of $U$ obtained by taking $\E_\eps$ as the solution to Problem~\ref{weakdirect} and $\l_\eps$ as the solution to Problem~\ref{weakadjoint} for the given $\eps\in U^\eps$. Then, under assumption of sufficient stability of the weak solutions $\E_\eps$ and $\l_\eps$ with respect to $\eps$, the observation that
\begin{equation*}
F(\eps) = F(\eps,\,\E_\eps) = F(\eps,\,\E_\eps) + \maxwell(\eps,\,\E_\eps,\,\l_\eps) = L(u_\eps),
\end{equation*}
(as $\maxwell(\eps,\,\E_\eps,\,\l_\eps)=0$) leads to 
\begin{equation} \label{dj2dl}
F'(\eps;\,\cdot) = \pd{L}{\eps}(u_\eps;\,\cdot).
\end{equation}
Estimate \eqref{convexity} and identity \eqref{dj2dl} will play an important role in the error analysis for the Tikhonov functional and for the coefficient.

\section{Finite element formulations and error analysis} \label{error_analysis}
In this section we will give finite element formulations for discretizing Problems~\ref{weakdirect}, \ref{minlag} and \ref{weakadjoint}. After that we will turn to the error analysis. We begin by defining finite-dimensional analogues of the spaces $V^\eps$, $V^\mathrm{dir}$, $V^\mathrm{adj}$, and $V$, as well as subsets corresponding to $U^\eps$ and $U$.

Let $\th\ce\{K\}$ be a triangulation of $\Omega$ and let $\It$ be a uniform partition of $(0,\,T)$ into subintervals $(t_k,\,t_{k+1}]$, $t_k=k\tau$, $k=0,\,\ldots,\,N_\tau$, of length $\tau=T/N_\tau$. With $\th$ we associate a mesh-function $h=h(\x)$ such that 
\begin{equation} \label{meshfunction}
h(\x)=\diam(K)
\end{equation}
for $\x\in K\in \th$. On these meshes we define
\begin{equation*}
\begin{aligned}
V_h^\eps & \ce \{v\in V^\eps: v\rvert_K\in P^q(K)~\forall K\in \th\},\\
U_h^\eps & \ce V_h^\eps\cap U^\eps,\\
\vht^\mathrm{dir}& \ce \{v\in V^\mathrm{dir}:v\rvert_{K\times I}\in [P^1(K)]^3\times P^1(I)~\forall K\in\th~\forall I\in\It\},\\
\vht^\mathrm{adj}& \ce \{v\in V^\mathrm{adj}:v\rvert_{K\times I}\in [P^1(K)]^3\times P^1(I)~\forall K\in\th~\forall I\in\It\},\\
\vht &\ce V_h^\eps \times\vht^\mathrm{dir}\times\vht^\mathrm{adj},\\
\uht &\ce U_h^\eps \times\vht^\mathrm{dir}\times\vht^\mathrm{adj},
\end{aligned}
\end{equation*}
where $P^n(X)$ denotes the space of polynomials of degree at most $n\in\NN$ over $X$, and the degree $q$ used in the finite-dimensional analogue $V_h^\eps$ of $V^\eps$ is at least 1. Observe that the dependence on the step size $\tau$ in time is not explicitly included in the notation for the finite-dimensional spaces. This is justified by the fact that $\tau$ should be selected with regard to $h$ in accordance with the Courant-Friedrichs-Lewy condition.

Using these spaces we can state finite element versions of Problems~\ref{weakdirect} and \ref{weakadjoint} as Problem~\ref{femdirect} and Problem~\ref{femadjoint}, respectively, as follows:

\begin{problem}\label{femdirect}
Given $\eps\in U^\eps$, determine $\E_h\in \vht^\mathrm{dir}$ such that $\maxwell(\eps,\,\E_h,\,\bphi_h)=0$ for every $\bphi_h\in \vht^\mathrm{adj}$.
\end{problem}

\begin{problem}\label{femadjoint}
Given $\eps\in U^\eps$, determine $\l_h\in \vht^\mathrm{adj}$ such that $\adjoint(\eps,\,\l_h,\,\bphi_h)=0$ for every $\bphi\in \vht^\mathrm{dir}$.
\end{problem}

The finite-dimensional analogue for Problem~\ref{minlag} is:
\begin{problem} \label{femminlag}
Find $u_h=(\eps_h,\,\E_h,\,\l_h)\in \uht$ such that $L'(u_h,\,v)=0$ for every $v\in \vht$.
\end{problem}
The same remark that was made in conjunction with Problem~\ref{minlag} is also valid here: it holds that $\E_h$ solves Problem~\ref{femdirect} and $\l_h$ solves Problem~\ref{femadjoint} for $\eps=\eps_h$.

Having stated problems \ref{cip}--\ref{femminlag}, let us briefly summarize how these problems relate to each other, before turning to the error analysis.

Recall that our main goal is to find the dielectric permittivity which gave rise to the observed data $\G$, in other words to solve Problem~\ref{cip}. Due to ill-posedness, this goal is practically unattainable, and we focus instead on finding a regularized solution, which can be done by finding a stationary point to the Lagrangian, that is, by solving Problem~\ref{minlag}. This, in turn, relies upon solving the direct and adjoint problems for the electric field, problems \ref{weakdirect} and \ref{weakadjoint}, respectively. However, problems \ref{weakdirect}--\ref{weakadjoint} cannot, in general, be solved exactly, but approximately through their finite-dimensional analogues, problems \ref{femdirect}--\ref{femminlag}.

The purpose of the error estimation below is to quantify the discrepancy between the solutions to problems \ref{weakdirect}--\ref{weakadjoint}, and the solutions to problems \ref{femdirect}--\ref{femminlag}, in order to be able to adaptively refine the latter three problems so that the solution to Problem~\ref{femminlag}, the approximation of the regularized solution, fits the solution to Problem~\ref{minlag}, the true regularized solution, as closely as desired. We will now focus on this estimation.

We begin by introducing some additional notation. For $v=(\eps,\,\E,\,\l)\in V$ we denote (with some slight abuse of notation) its interpolant in $\vht$ by 
\begin{align*}
\ih v= (\ih\eps,\,\ih\E,\,\ih\l),
\end{align*}
and the interpolation error by 
\begin{align*}
\rh v = v-\ih v = (\rh\eps,\, \rh\E,\,\rh\l).
\end{align*}
 We will also need to consider jumps of discontinuous functions over $\th$ and $\It$. Let $K_1$, $K_2\in\th$ share a common face $f$. For $\x\in f$ we define
\begin{equation} \label{spatial_jump}
\jm{v}{s}(\x)\ce\lim_{\y\to\x,\,\mathbf{y}\in K_1}v(\y) + \lim_{\y\to\x,\,\mathbf{y}\in K_2}v(\y),
\end{equation}
so that in particular if $v=w\bnu$, where $w$ is piecewise constant on $\th$ and $\bnu$ is the outward unit normal, then $\jm{v}{s}=\jm{w\bnu}{s}=(w\bnu)\rvert_{K_1} + (w\bnu)\rvert_{K_2}$ is the normal jump across $f$. We extend $\jm{\cdot}{s}$ to every face in $\th$ by defining $\jm{v}{s}(\x)=0$ for $\x\in K\cap\Gamma$, $K\in \th$. The corresponding maximal jump is defined by
\begin{equation} \label{abs_spatial_jump}
\aj{v}{s}(\x) \ce \max_{\y\in\partial K}\abs{\jm{v}{s}(\y)}, \quad \x\in K\in \th,
\end{equation}
where $\partial K$ denotes the boundary of $K$.

For jumps in time, we define
\begin{equation} \label{time_jump}
\jm{v}{t}(t_k)\ce\begin{cases} {\displaystyle \lim_{s\to0+}}\big(v(t_k+s)-v(t_k-s)\big), & k=1,\,\ldots,\,N_\tau-1,\\
0 & k=0,\,N_\tau,\end{cases}
\end{equation}
and
\begin{equation} \label{abs_time_jump}
\aj{v}{t}(t)\ce 
\max\{\abs{\jm{v}{t}(t_k)},\,\abs{\jm{v}{t}(t_{k+1})}\}
\quad t\in(t_k,\,t_{k+1}).
\end{equation}


In the theorems and proofs to be presented, we will frequently use the symbols $\approx$ and $\lesssim$ to denote approximate equality and inequality, respectively, where higher order terms (with respect to mesh-size or errors) are neglected. We let $C$ denote variuos constants of moderate size which are independent of mesh-sizes and the the unknown functions.

We now proceed with an error estimate for the coefficient. An error estimate for the Tikhonov functional will follow as a corollary.

\begin{theorem} (A posteriori error estimate for the coefficient.) \label{ape_norm}
Suppose that the initial approximation $\eps_0$ and the regularization parameter $\alpha$ are such that the strong convexity estimate \eqref{convexity} holds. Let $u=(\eps,\,\E,\,\l)\in U$ be the solution to Problem~\ref{minlag}, and let $u_h=(\eps_h,\,\E_h,\,\l_h)\in \uht$ be the solution to Problem~\ref{femminlag}, computed on meshes $\th$ and $\It$. Then there exists a constant $C$, which does not depend on $u$, $u_h$, $h$, or $\tau$, such that 
\begin{equation} \label{the_estimate}
\norm{\eps - \eps_h}_\Omega \lesssim \frac{2C}{\alpha}( \eta + \norm{R_\eps}_\Omega),
\end{equation}
where $\eta =  \eta(u_h)$ is defined by
\begin{equation} \label{eta}
\begin{aligned}
\eta &\ce
\sca{ \frac{1}{\tau} \abs{\aj{\pd{\l_h}{t}}{t}} + s\abs{\div\l_h} }{h\abs{\aj{\pd{\E_h}{\bnu}}{s}} + \tau\abs{\aj{\pd{\E_h}{t}}{t}}  }_\ot \\ &\qquad
 + \sca{\frac{1}{\tau}\abs{\aj{\pd{\E_h}{t}}{t}}}{h\abs{\aj{\pd{\l_h}{\bnu}}{s}} + \tau\abs{\aj{\pd{\l_h}{t}}{t}} }_\ot\\&\qquad
 + s\sca{\abs{\div\l_h}}{\abs{\aj{\pd{\E_h}{\bnu}}{s}} + \tau\abs{\aj{\pd{\div\E_h}{t}}{t}} }_\ot\\&\qquad
 + s\sca{\abs{\div\E_h} + \abs{\E_h}}{ \abs{\aj{\pd{\l_h}{\bnu}}{s}} + \tau\abs{\aj{\pd{\div\l_h}{t}}{t}} }_\ot,
\end{aligned}
\end{equation}
and
\begin{equation} \label{residual}
R_\eps \ce \alpha(\eps_h - \eps_0)
 - \int_0^T\pd{\E_h}{t}\cdot\pd{\l_h}{t}\rmd t
 + \frac{s}{2h}\int_0^T\aj{(\div\l_h)(\bnu\cdot\E_h)}{s}\rmd t.
\end{equation}
\end{theorem}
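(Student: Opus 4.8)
The plan is to combine the strong convexity estimate \eqref{convexity} with the identity \eqref{dj2dl} relating $F'$ to $\pd{L}{\eps}$, and then to split the resulting quantity into a directly computable residual (producing $R_\eps$) and a part that measures the finite element error in the state and adjoint fields (producing $\eta$). First I would apply \eqref{convexity} with $\eps_1=\eps$ and $\eps_2=\eps_h$. Since $u=(\eps,\,\E,\,\l)$ solves Problem~\ref{minlag}, we have $\pd{L}{\eps}(u;\,\cdot)=0$; moreover $\E$ and $\l$ solve Problems~\ref{weakdirect} and \ref{weakadjoint} for $\eps$, so $u=u_\eps$ and \eqref{dj2dl} gives $F'(\eps;\,\cdot)=\pd{L}{\eps}(u;\,\cdot)=0$. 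Hence
\begin{equation*}
\frac{\alpha}{2}\norm{\eps-\eps_h}_\Omega^2 \leq F'(\eps;\,\eps-\eps_h)-F'(\eps_h;\,\eps-\eps_h) = -F'(\eps_h;\,\eps-\eps_h).
\end{equation*}
Applying \eqref{dj2dl} once more, now at $\eps_h$, yields $F'(\eps_h;\,\cdot)=\pd{L}{\eps}(u_{\eps_h};\,\cdot)$, where $u_{\eps_h}=(\eps_h,\,\E_{\eps_h},\,\l_{\eps_h})$ carries the \emph{exact} solutions of Problems~\ref{weakdirect} and \ref{weakadjoint} for the coefficient $\eps_h$, as opposed to the finite element solutions $\E_h,\,\l_h$ appearing in $u_h$.

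Next I would insert $u_h$ by writing
\begin{equation*}
-F'(\eps_h;\,\eps-\eps_h) = -\pd{L}{\eps}(u_h;\,\eps-\eps_h) - \Big(\pd{L}{\eps}(u_{\eps_h};\,\eps-\eps_h)-\pd{L}{\eps}(u_h;\,\eps-\eps_h)\Big).
\end{equation*}
For the first term I would use the explicit formula for $\pd{L}{\eps}$ in \eqref{dl} with the admissible direction $\eps-\eps_h$, which vanishes on $\Gamma$. The $\alpha$-part and the time-integral part contribute directly, while in $s\sca{\div\l_h}{\div((\eps-\eps_h)\E_h)}$ I would integrate by parts over each $K\in\th$, use that $\grad(\div\l_h)\rvert_K\equiv0$ for the piecewise linear $\l_h$, drop the $\Gamma$-contribution since $(\eps-\eps_h)\rvert_\Gamma\equiv0$, and replace the face integrals by element integrals via the standard $\int_{\partial K}\approx \int_K/h_K$ rule. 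This gives $\pd{L}{\eps}(u_h;\,\eps-\eps_h)\approx\sca{R_\eps}{\eps-\eps_h}_\Omega$ with $R_\eps$ exactly as in \eqref{residual}, so that $\abs{\pd{L}{\eps}(u_h;\,\eps-\eps_h)}\leq\norm{R_\eps}_\Omega\norm{\eps-\eps_h}_\Omega$ by Cauchy--Schwarz.

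The heart of the argument, and the main obstacle, is the second term, which quantifies the effect of approximating $\E_{\eps_h},\,\l_{\eps_h}$ by $\E_h,\,\l_h$. Here the $\alpha$-contributions cancel, and I would linearize the two terms of $\pd{L}{\eps}$ that are bilinear in the field variables about $u_h$: setting $e_\E=\E_{\eps_h}-\E_h$ and $e_\l=\l_{\eps_h}-\l_h$ and discarding the products $e_\E e_\l$ as higher order, one is left with inner products over $\ot$ in which $e_\E,\,e_\l$ and their space/time derivatives appear linearly against the computed quantities $\l_h,\,\E_h,\,\div\l_h,\,\div\E_h$ and against $\eps-\eps_h$. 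I would then integrate by parts in $t$ and over each $K\in\th$ to lift every derivative off $e_\E,\,e_\l$, using the piecewise linearity of $\E_h,\,\l_h$ so that only the jumps $\jm{\cdot}{t}$ and $\jm{\cdot}{s}$ survive, and apply the a~posteriori/interpolation bounds $\abs{e_\E}\lesssim h\abs{\aj{\pd{\E_h}{\bnu}}{s}}+\tau\abs{\aj{\pd{\E_h}{t}}{t}}$, the analogue for $e_\l$, and the corresponding estimates for $\div e_\E,\,\div e_\l$, which produce the divergence indicators $\aj{\pd{\div\E_h}{t}}{t}$ and $\aj{\pd{\div\l_h}{t}}{t}$. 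Collecting the products reproduces precisely the four groups of terms in $\eta$ of \eqref{eta}.

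Finally I would factor out the coefficient error. Since $\eps-\eps_h\in V^\eps$, a \emph{finite-dimensional} space, all norms on it are equivalent, so $\abs{\eps-\eps_h}$ is bounded pointwise by $C\norm{\eps-\eps_h}_\Omega$; pulling this factor out of the $\ot$-integrals bounds the second term by $C\eta\,\norm{\eps-\eps_h}_\Omega$. Combining the two estimates gives $\tfrac{\alpha}{2}\norm{\eps-\eps_h}_\Omega^2\lesssim C(\norm{R_\eps}_\Omega+\eta)\norm{\eps-\eps_h}_\Omega$, and dividing by $\norm{\eps-\eps_h}_\Omega$ yields \eqref{the_estimate}. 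The delicate points to watch are the bookkeeping in the integration by parts that generates the divergence-type indicators, and the verification that the neglected quadratic field-error terms together with the interpolation and quadrature remainders are genuinely of higher order with respect to $h$ and $\tau$.
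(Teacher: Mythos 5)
Your overall route coincides with the paper's proof: strong convexity \eqref{convexity} together with $F'(\eps;\,\cdot)=0$ and $F'(\eps_h;\,\cdot)=\pd{L}{\eps}(\tilde u;\,\cdot)$ at the exact fields $\tilde\E,\,\tilde\l$ for the coefficient $\eps_h$; the same splitting into a computable part (your first term, the paper's $\Theta_2$, which you treat exactly as the paper does -- element-wise integration by parts, $\grad(\div\l_h)\rvert_K\equiv0$, vanishing boundary term since $(\eps-\eps_h)\rvert_\Gamma\equiv0$, face-to-element quadrature, Cauchy--Schwarz -- yielding $\norm{R_\eps}_\Omega$) and a field-approximation part (the paper's $\Theta_1$); the same linearization discarding quadratic error products; and the same norm-equivalence argument on the finite-dimensional $V^\eps$ to extract $\norm{\eps-\eps_h}_\Omega$ and produce the factor multiplying $\eta$.

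However, there is one genuine gap in your treatment of the second term: you apply the pointwise bounds $\abs{e_\E}\lesssim h\abs{\aj{\pd{\E_h}{\bnu}}{s}}+\tau\abs{\aj{\pd{\E_h}{t}}{t}}$ (and the analogues for $e_\l$, $\div e_\E$, $\div e_\l$) directly to the \emph{full} discretization errors $e_\E=\tilde\E-\E_h$, $e_\l=\tilde\l-\l_h$. Those are interpolation estimates: they are valid for the interpolation residuals, via $\abs{\rh\tilde\E}\leq C\bigl(h^2\abs{D^2\tilde\E}+\tau^2\abs{\pd{^2\tilde\E}{t^2}}\bigr)$ and the heuristic replacement of second derivatives by jump quotients, and no pointwise a posteriori estimate of this form for the full finite element error of the time-dependent Maxwell system is standard or established anywhere in the paper. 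The missing idea is Galerkin orthogonality: split $\tilde\E-\E_h=\rh\tilde\E+(\ih\tilde\E-\E_h)$ and observe that $\pd{L}{\E}(u_h;\,\ih\tilde\E-\E_h)=0$, because $\ih\tilde\E-\E_h\in\vht^\mathrm{dir}$ and $\l_h$ solves the discrete adjoint Problem~\ref{femadjoint} with coefficient $\eps_h$; symmetrically, $\pd{L}{\l}(u_h;\,\ih\tilde\l-\l_h)=0$ since $\E_h$ solves Problem~\ref{femdirect}. This reduces the second term to $D_1\rvert_{\eps-\eps_h}\bigl(\pd{L}{\E}(u_h;\,\rh\tilde\E)+\pd{L}{\l}(u_h;\,\rh\tilde\l)\bigr)$, and only then do your integration by parts in time and over elements, followed by the interpolation estimates, legitimately generate the four groups of terms in $\eta$ of \eqref{eta}. (A minor caveat, present in the paper as well: the orthogonality is invoked inside an expression already differentiated with respect to $\eps$, which is itself an approximate step.) With that reduction inserted, the rest of your bookkeeping -- cancellation of the $\alpha$-contributions in the difference term, time jumps from the piecewise linearity of $\E_h$ and $\l_h$, and the final division by $\norm{\eps-\eps_h}_\Omega$ -- matches the paper and closes the proof.
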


\begin{proof}
Using strong convexity \eqref{convexity}, we obtain
\begin{equation*}
\norm{\eps - \eps_h}_\Omega^2\leq  \frac{2}{\alpha}\left(F'(\eps;\,\eps - \eps_h) - F'(\eps_h;\,\eps-\eps_h)\right).
\end{equation*}
Since $\eps$ minimizes $F(\eps)$ we have $F'(\eps;\eps-\eps_h)=0$ and thus
\begin{equation} \label{pfthm2_00}
\norm{\eps-\eps_h}_\Omega^2\leq \frac{2}{\alpha}\abs{F'(\eps_h;\,\eps - \eps_h)} = \frac{2}{\alpha}\abs{\pd{L}{\eps}(\tilde u;\,\eps - \eps_h)}, 
\end{equation}
where we have denoted by $\tilde\E$ and $\tilde\l$ the solutions to Problem~\ref{weakdirect} and Problem~\ref{weakadjoint}, respectively, with permittivity $\eps_h$, and set $\tilde{u} =(\eps_h,\,\tilde\E,\,\tilde\l)\in U$. The last equality follows from \eqref{dj2dl}. We remark that \eqref{pfthm2_00} implies $\norm{\eps - \eps_h}_\Omega \leq \frac{2}{\alpha}\norm{F'(\eps_h)}$, which would be an effective estimate. Unfortunately, we cannot compute $F'(\eps_h)$ exactly, since it depends on the exact solutions $\tilde\E$ and $\tilde\l$ as indicated in the text, hence further estimation is needed.

We expand
\begin{equation} \label{pfthm2_0}
\begin{aligned}
\abs{\pd{L}{\eps}(\tilde u;\,\eps - \eps_h)} &= \abs{\pd{L}{\eps}(\tilde u;\,\eps - \eps_h) -\pd{L}{\eps}(u_h;\,\eps - \eps_h) + \pd{L}{\eps}(u_h;\,\eps - \eps_h)}\\
&\leq \abs{\pd{L}{\eps}(\tilde u;\,\eps - \eps_h) -\pd{L}{\eps}(u_h;\,\eps - \eps_h)} + \abs{\pd{L}{\eps}(u_h;\,\eps - \eps_h)}\\
&\ec \abs{\Theta_1} + \abs{\Theta_2},
\end{aligned}
\end{equation}
and estimate the two terms $\abs{\Theta_1}$ and $\abs{\Theta_2}$ separately.

For $\Theta_1$ we assume that the second partial derivatives of $L$ exist, and use the linearization
\begin{align*}
\Theta_1& = \pd{L}{\eps}(\tilde u;\,\eps - \eps_h) - \pd{L}{\eps}(u_h;\,\eps - \eps_h)\\ & =
 \pd{^2L}{\eps^2}(u_h;\,\eps_h - \eps_h;\,\eps -\eps_h) + o(\norm{\eps_h - \eps_h}_\Omega) \\ &\quad +
\pd{^2L}{\E\partial\eps}(u_h;\,\tilde\E - \E_h;\eps - \eps_h) + o(\lVert\tilde\E - \E_h\rVert_{H^1(\ot)}) \\ &\quad
 + \pd{^2L}{\l\partial\eps}(u_h;\,\tilde\l - \l_h;\eps - \eps_h) + o(\lVert\tilde\l - \l_h\rVert_{H^1(\ot)}),
\end{align*}
where $\pd{^2L}{\E\partial\eps}$ and $\pd{^2L}{\l\partial\eps}$ denote mixed second partial Fr\'echet derivatives of $L$. The first two terms vanish, since the first components of $\tilde u$ and $u_h$ are both $\eps_h$, and again the remainder terms are neglected as they are of higher order with respect to the error. Thus, after exchanging the order of differentiation, we are left with
\begin{equation}
\begin{aligned}\label{pfthm2_1}
\Theta_1 &\approx \pd{^2L}{\E\partial\eps}(u_h;\,\tilde\E - \E_h;\eps - \eps_h) + \pd{^2L}{\l\partial\eps}(u_h;\,\tilde\l - \l_h;\eps - \eps_h) \\ & =
 D_1\rvert_{\eps - \eps_h}\left(\pd{L}{\E}(u_h;\,\tilde\E - \E_h) + \pd{L}{\l}(u_h;\,\tilde\l - \l_h)\right),
\end{aligned}
\end{equation}
where $D_1\rvert_{\eps - \eps_h}$ denotes differentiation with respect to the first component in $u_h$ and action on $\eps - \eps_h$.

We split $\tilde\E - \E_h=(\tilde\E - \ih\tilde\E) + (\ih\tilde\E - \E_h) = \rh\tilde\E + (\ih\tilde\E - \E_h)$ and use the fact that $\l_h$ solves Problem~\ref{weakadjoint} with coefficient $\eps_h$, so that $\pd{L}{\E}(u_h;\,\ih\tilde\E-\E_h) = 0$ as $\ih\tilde\E-\E_h\in V_h^\mathrm{dir}$. This gives
\begin{align} \label{pfthm2_2}
\pd{L}{\E}(u_h;\,\tilde\E - \E_h) = \pd{L}{\E}(u_h;\,\rh\tilde\E) + \pd{L}{\E}(u_h;\,\ih\tilde\E - \E_h) = \pd{L}{\E}(u_h;\,\rh\tilde\E).
\end{align}
Similarly, we have
\begin{align} \label{pfthm2_3}
\pd{L}{\l}(u_h;\,\tilde\l - \l_h) = \pd{L}{\l}(u_h;\,\rh\tilde\l) + \pd{L}{\l}(u_h;\,\ih\tilde\l - \l_h) = \pd{L}{\l}(u_h;\,\rh\tilde\l).
\end{align}
as $\E_h$ solves Problem~\ref{weakdirect} with coefficient $\eps_h$.

Combining \eqref{pfthm2_1}, \eqref{pfthm2_2}, and \eqref{pfthm2_3}, and recalling \eqref{dl} gives
\begin{align*}
\Theta_1 &\approx D_1\rvert_{\eps - \eps_h}\left(\pd{L}{\E}(u_h;\,\rh\tilde\E) + \pd{L}{\l}(u_h;\,\rh\tilde\l)\right)\\
 &= -\sca{(\eps - \eps_h)\pd{\rh\tilde\E}{t}}{\pd{\l_h}{t}}_\ot
 + s\sca{\div((\eps - \eps_h)\rh\tilde\E)}{\div\l_h}_\ot \\
 & \quad-\sca{(\eps - \eps_h)\pd{\E_h}{t}}{\pd{\rh\tilde\l}{t}}_\ot
 + s\sca{\div((\eps - \eps_h)\E_h}{\div\rh\tilde\l}_\ot.
\end{align*}

We now aim to lift time derivatives from the interpolation residuals $\rh\tilde\E$ and $\rh\tilde\l$ by splitting the integral over $[0,\,T]$ to the sum of integrals over the subintervals in $\mathcal{I}_\tau$, and integrating by parts on each subinterval. Thus we obtain
\begin{align*}
\sca{(\eps - \eps_h)\pd{\rh\tilde\E}{t}}{\pd{\l_h}{t}}_\ot
 &= \sum_{k=1}^{N_\tau} \int_{t_{k-1}}^{t_k}\sca{(\eps - \eps_h)\pd{\rh\tilde\E}{t}}{\pd{\l_h}{t}}_\Omega\rmd t\\
 &= \sum_{k=1}^{N_\tau} \left(- \int_{t_{k-1}}^{t_k}\sca{(\eps - \eps_h)\rh\tilde\E}{\pd{^2\l_h}{t^2}}_\Omega\rmd t\right.\\
 &\qquad +\left. \left.\sca{(\eps - \eps_h)\rh\tilde\E}{\pd{\l_h}{t}}_\Omega\right\rvert_{t = t_k}
  - \left.\sca{(\eps - \eps_h)\rh\tilde\E}{\pd{\l_h}{t}}_\Omega\right\rvert_{t = t_{k-1}}\right).
\end{align*}
We note that $\pd{^2\l_h}{t^2}\equiv0$ on each subinterval, since $\l_h$ is piecewise linear, and identify the jumps form \eqref{time_jump}, obtaining
\begin{align*}
\sca{(\eps - \eps_h)\pd{\rh\tilde\E}{t}}{\pd{\l_h}{t}}_\ot
 &= \sum_{k=1}^{N_\tau}\left.\sca{(\eps - \eps_h)\rh\tilde\E}{\jm{\pd{\l_h}{t}}{t}}_\Omega\right\rvert_{t=t_k}.
\end{align*} 

Next, we use the approximation
\begin{align*}
f(t_k) \approx \frac{1}{\tau}\int_{t_{k-1}}^{t_k}f(t)\rmd t
\end{align*}
to obtain terms defined on the whole time-interval. That is
\begin{align*}
\sca{(\eps - \eps_h)\pd{\rh\tilde\E}{t}}{\pd{\l_h}{t}}_\ot
 &\approx \sum_{k=1}^{N_\tau}\frac{1}{\tau}\int_{t_{k-1}}^{t_k}\sca{(\eps - \eps_h)\rh\tilde\E}{\aj{\pd{\l_h}{t}}{t}}_\Omega\rmd t\\
 & = \sca{(\eps - \eps_h)\rh\tilde\E}{\frac{1}{\tau}\aj{\pd{\l_h}{t}}{t}}_\ot.
\end{align*} 

Similarly, we have
\begin{align*}
\sca{(\eps - \eps_h)\pd{\E_h}{t}}{\pd{\rh\tilde\l}{t}}_\ot
& = \sca{(\eps - \eps_h)\frac{1}{\tau}\aj{\pd{\E_h}{t}}{t}}{\rh\tilde\l}_\ot.
\end{align*} 

Thus
\begin{equation}\label{pfthm2_3.5}
\begin{aligned}
  \abs{\Theta_1} &\lesssim \sca{\abs{\eps - \eps_h}\frac{1}{\tau}\abs{\aj{\pd{\l_h}{t}}{t}}}{ \abs{\rh\tilde\E}}_\ot
 + s\sca{\abs{\div((\eps - \eps_h)\rh\tilde\E)}}{\abs{\div\l_h}}_\ot\\
 & \quad+\sca{\abs{\eps - \eps_h}\frac{1}{\tau}\abs{\aj{\pd{\E_h}{t}}{t}}}{\abs{\rh\tilde\l}}_\ot
 + s\sca{\abs{\div((\eps - \eps_h)\E_h)}}{\abs{\div\rh\tilde\l}}_\ot \\
&\leq \norm{\eps - \eps_h}_{L_\infty(\Omega)}\left(
   \sca{\frac{1}{\tau}\abs{\aj{\pd{\l_h}{t}}{t}}}{ \abs{\rh\tilde\E}}_\ot
 + \sca{\frac{1}{\tau}\abs{\aj{\pd{\E_h}{t}}{t}}}{\abs{\rh\tilde\l}}_\ot
\right) \\&\quad
+ s\norm{\eps - \eps_h}_{L_\infty(\Omega)}\left(
   \sca{\abs{\div\rh\tilde\E}}{\abs{\div\l_h}}_\ot
 + \sca{\abs{\div\E_h}}{\abs{\div\rh\tilde\l}}_\ot
\right) \\&\quad
+ s\norm{\grad(\eps - \eps_h)}_{L_\infty(\Omega)}\left(
   \sca{\abs{\rh\tilde\E}}{\abs{\div\l_h}}_\ot
 + \sca{\abs{\E_h}}{\abs{\div\rh\tilde\l}}_\ot
\right).
\end{aligned}
\end{equation}

Note that, by the equivalence of norms on $V^\eps$, we have
\begin{align*}
\norm{\eps-\eps_h}_{L_\infty(\Omega)} + \norm{\grad(\eps-\eps_h)}_{L_\infty(\Omega)} &\leq C\norm{\eps-\eps_h}_\Omega.
\end{align*}
Finally, we use standard interpolation estimates (see for instance \cite{js95}) for $\rh\tilde\E$
\begin{align*}
\abs{\rh\tilde\E} &\leq C\left(h^2\abs{D^2\tilde\E} + \tau^2\abs{\pd{^2\E}{t^2}}\right)
 \approx C\left(h^2\abs{h^{-1}\aj{\pd{\E_h}{\bnu}}{s}} + \tau^2\abs{\tau^{-1}\aj{\pd{\E_h}{t}}{t}} \right) \\
& \lesssim C\left(h\abs{\aj{\pd{\E_h}{\bnu}}{s}} + \tau\abs{\aj{\pd{\E_h}{t}}{t}} \right),
\end{align*}
and as well as for $\rh\tilde\l$, $\div\rh\tilde\E$, and $\div\rh\tilde\l$
\begin{align*}
\abs{\rh\tilde\l} &\lesssim C\left(h\abs{ \aj{ \pd{\l_h}{\bnu} }{ s } } + \tau\abs{\aj{\pd{\l_h}{t}}{t}}\right),\\
\abs{\div\rh\tilde\E} &\lesssim C\left(\abs{ \aj{ \pd{\E_h}{\bnu} }{ s } } + \tau\abs{\aj{\pd{\div\E_h}{t}}{t}}\right),\\
\abs{\div\rh\tilde\l} &\lesssim C\left(\abs{ \aj{ \pd{\l_h}{\bnu} }{ s } } + \tau\abs{\aj{\pd{\div\l_h}{t}}{t}}\right),
\end{align*}
where the jumps $\aj{\cdot}{s}$ and $\aj{\cdot}{t}$ were defined in equations \eqref{spatial_jump}, \eqref{abs_spatial_jump}, \eqref{time_jump}, and \eqref{abs_time_jump}. Applying these estimates in \eqref{pfthm2_3.5}, we get
\begin{equation}\label{pfthm2_4}
\begin{aligned}
 \Theta_ 1 &\lesssim C\eta\norm{\eps - \eps_h}_\Omega,
\end{aligned}
\end{equation}
with $\eta$ as in \eqref{eta}.

Turning to $\Theta_2$ of \eqref{pfthm2_0}, we recall from \eqref{dl} that
\begin{equation} \label{pfthm2_4.5}
\begin{aligned}
\Theta_2 &= \pd{L}{\eps}(u_h;\,\eps - \eps_h) \\
 & =
 \alpha\sca{\eps_h - \eps_0}{\eps - \eps_h}_\Omega
 - \sca{(\eps - \eps_h)\pd{\E_h}{t}}{\pd{\l_h}{t}}_\ot
 + s\sca{\div((\eps - \eps_h)\E_h)}{\div\l_h}_\ot.
\end{aligned}
\end{equation}
Starting with the last term in the above expression, we split the integral over $\Omega$ into the sum of integrals over $K\in \th$, and integrate by parts to get
\begin{align*}
\sca{\div((\eps - \eps_h)\E_h)}{\div\l_h}_\ot &= \sum_{K\in\th}\sca{\div((\eps - \eps_h)\E_h)}{\div\l_h}_{K_T}\\ &=
\sum_{K\in\th}\left(
 - \sca{(\eps - \eps_h)\E_h}{\grad(\div\l_h)}_{K_T}\right.\\&\qquad\qquad\left.
 + \sca{(\eps - \eps_h)\bnu\cdot\E_h}{\div \l_h}_{\partial K_T}
\right),
\end{align*}
where $K_T\ce K \times (0,\,T)$, and $\partial K_T \ce (\partial K) \times (0,\,T)$. we observe that $\div(\grad\l_h)\equiv0$ on each $K\in \th$ for the piecewise linear $\l_h$, and identify the jumps from \eqref{spatial_jump}, obtaining
\begin{align*}
\sca{\div((\eps - \eps_h)\E_h)}{\div\l_h}_\ot &= 
 \sum_{K\in\th} \sca{(\eps - \eps_h)\bnu\cdot\E_h}{\div \l_h}_{\partial K_T} \\
& =
 \frac{1}{2}\sum_{K\in\th}\sca{\jm{(\div\l_h)(\bnu\cdot\E_h)}{s}}{\eps - \eps_h}_{\partial K_T},
\end{align*}
where the factor $\frac{1}{2}$ appears since every non-zero jump is encountered exactly twice in the sum over $K\in\th$.

Seeking to obtain expressions defined over the whole of $\Omega$, as opposed to once defined only on boundaries  of elements $K\in\th$, we use the following approximation, similar to the one used above for the jumps in time:
\begin{align*}
\int_{\partial K}f\rmd S \approx \int_K\frac{f}{h_K}\rmd\x.
\end{align*} 
This yields
\begin{align*}
\sca{\div((\eps - \eps_h)\E_h)}{\div\l_h}_\ot &\approx
\frac{1}{2}\sum_{K\in\th}\sca{\frac{1}{h_K}\aj{(\div\l_h)(\bnu\cdot\E_h)}{s}}{\eps - \eps_h} \\&=
\sca{\frac{1}{2h}\aj{(\div\l_h)(\bnu\cdot\E_h)}{s}}{\eps - \eps_h}.
\end{align*}
With the above estimate in \eqref{pfthm2_4.5}, we can now conclude that
\begin{equation} \label{pfthm2_5}
\begin{aligned}
\abs{\Theta_2} &\lesssim \abs{\sca{\alpha(\eps_h - \eps_0) - \int_0^T\pd{\E_h}{t}\cdot\pd{\l_h}{t}\rmd t +  \frac{s}{2h}\int_0^T\aj{(\div\l_h)(\bnu\cdot\E_h)}{s}\rmd t}{\eps - \eps_h}}\\
&\leq \norm{R_\eps}_\Omega\norm{\eps - \eps_h}_\Omega,
\end{aligned}
\end{equation}
with $R_\eps$ as defined \eqref{residual}.

Combining estimates \eqref{pfthm2_4} and \eqref{pfthm2_5} with \eqref{pfthm2_00} and \eqref{pfthm2_0}, we conclude that
\begin{equation*}
\norm{\eps - \eps_h}_\Omega^2 \lesssim \frac{2C}{\alpha}\left(\eta\norm{\eps - \eps_h}_\Omega + \norm{R_\eps}_\Omega\norm{\eps - \eps_h}_\Omega\right),
\end{equation*}
and the result \eqref{the_estimate} follows.
\end{proof}

We see that if the numerical errors for solving the direct and adjoint problems are relatively small, that is, when $\tilde u\approx u_h$ with relatively high accuracy, then $\norm{R_\eps}_\Omega\approx\norm{F'(\eps_h)}$ dominates the error estimate.

\begin{corollary}(A posteriori error estimate for the Tikhonov functional.) \label{ape_tikh}
Under the hypothesis of Theorem~\ref{ape_norm}, we have
\begin{equation*}
\abs{F(\eps) - F(\eps_h)} \lesssim \frac{4C^2}{\alpha^2}\left(\eta^2 + \norm{R_\eps}_\Omega^2\right),
\end{equation*}
with $\eta$ as defined in \eqref{eta}, and $R_\eps$ as in \eqref{residual}.
\end{corollary}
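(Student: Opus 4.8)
The plan is to reduce the statement to the coefficient estimate \eqref{the_estimate} already proved in Theorem~\ref{ape_norm}, by controlling the functional difference $\abs{F(\eps) - F(\eps_h)}$ quadratically in $\norm{\eps - \eps_h}_\Omega$. First I would exploit, exactly as in the proof of Theorem~\ref{ape_norm}, that $\eps$ minimizes $F$ over $U^\eps$, so its Fr\'echet derivative vanishes: $F'(\eps;\,\oeps)=0$ for every admissible direction $\oeps$, and in particular $F'(\eps;\,\eps_h - \eps)=0$. A Taylor expansion of $F$ about $\eps$ in the direction $\eps_h - \eps$ then gives
\begin{equation*}
F(\eps_h) - F(\eps) = F'(\eps;\,\eps_h - \eps) + \tfrac{1}{2}F''(\eps;\,\eps_h - \eps,\,\eps_h - \eps) + o(\norm{\eps_h - \eps}_\Omega^2),
\end{equation*}
in which the first term drops out and, consistent with the convention of neglecting higher-order contributions, the remainder is discarded. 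This isolates the curvature term $\tfrac{1}{2}F''(\eps;\,\eps_h - \eps,\,\eps_h - \eps)$ as the leading part of the functional error.

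Next I would bound this curvature term. Whereas the strong convexity \eqref{convexity} supplies a \emph{lower} bound on $F''$, here I need an \emph{upper} bound; under the standing assumption that $F$ is twice Fr\'echet differentiable with bounded second derivative on the neighbourhood $\mathcal{N}$ (equivalently, that $F'$ is Lipschitz there, a consequence of the stability of $\E_\eps$ and $\l_\eps$ with respect to $\eps$ already invoked in connection with \eqref{dj2dl}), the bilinear form $F''(\eps;\,\cdot,\,\cdot)$ is bounded. Hence $\abs{F''(\eps;\,\eps_h - \eps,\,\eps_h - \eps)} \leq C\norm{\eps - \eps_h}_\Omega^2$, and therefore $\abs{F(\eps) - F(\eps_h)} \lesssim C\norm{\eps - \eps_h}_\Omega^2$.

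Finally I would substitute the coefficient estimate \eqref{the_estimate}, square it, and use $(a+b)^2 \leq 2(a^2+b^2)$ to split the cross term, absorbing the harmless numerical factors into the generic constant $C$:
\begin{equation*}
\abs{F(\eps) - F(\eps_h)} \lesssim C\norm{\eps - \eps_h}_\Omega^2 \lesssim C\,\frac{4C^2}{\alpha^2}\left(\eta + \norm{R_\eps}_\Omega\right)^2 \lesssim \frac{4C^2}{\alpha^2}\left(\eta^2 + \norm{R_\eps}_\Omega^2\right),
\end{equation*}
which is the claimed bound, with $\eta$ and $R_\eps$ as in \eqref{eta} and \eqref{residual}.

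The main obstacle is precisely the upper bound on $F''$: the hypotheses of Theorem~\ref{ape_norm} guarantee only strong convexity, that is, a one-sided curvature bound, so one must additionally rely on the smoothness and boundedness of the second Fr\'echet derivative, itself resting on the assumed stability of the direct and adjoint solutions, in order to pass from the coefficient error to the functional error. The bookkeeping of constants (the leading curvature constant, the factor $\tfrac{1}{2}$, and the factor $2$ from the cross term) is, as elsewhere in the paper, absorbed into $C$ in the $\lesssim$ sense, so that the displayed coefficient $\tfrac{4C^2}{\alpha^2}$ is to be read with $C$ a generic constant of moderate size independent of $h$ and $\tau$.
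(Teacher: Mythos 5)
Your proof is correct in substance, but it takes a genuinely different route from the paper's, and the difference is worth spelling out. The paper expands $F$ only to \emph{first} order, and at $\eps_h$ rather than at $\eps$: it writes $F(\eps)-F(\eps_h)=F'(\eps_h;\,\eps-\eps_h)+o(\norm{\eps-\eps_h}_\Omega)$, identifies $F'(\eps_h;\,\eps-\eps_h)=\pd{L}{\eps}(\tilde u;\,\eps-\eps_h)$ via \eqref{dj2dl}, and then recycles the decomposition $\Theta_1+\Theta_2$ and its bounds from inside the proof of Theorem~\ref{ape_norm} to get $\abs{F(\eps)-F(\eps_h)}\lesssim \frac{2C}{\alpha}\left(\eta+\norm{R_\eps}_\Omega\right)\norm{\eps-\eps_h}_\Omega$, after which a single application of \eqref{the_estimate} produces the square. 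In other words, the linear term is not killed by stationarity but bounded by the already-established residual estimates, so no second derivative of $F$ is ever needed. You instead expand to second order at the minimizer $\eps$, drop the linear term by the same interior stationarity $F'(\eps;\,\cdot)=0$ that the paper invokes, and control the curvature term by an \emph{upper} bound on $F''$. As you correctly flag, that upper bound is not among the stated hypotheses: \eqref{convexity} is one-sided, so your argument needs the extra assumption that $F'$ is Lipschitz (equivalently, $F''$ bounded) on $\mathcal{N}$ --- plausible under the stability assumptions the paper informally invokes for \eqref{dj2dl}, but strictly stronger than ``under the hypothesis of Theorem~\ref{ape_norm}.'' What your route buys is cleaner logic: you use Theorem~\ref{ape_norm} as a black box, through its conclusion \eqref{the_estimate} alone, whereas the paper's proof must re-enter the theorem's proof to reuse the estimate of $\pd{L}{\eps}(\tilde u;\,\eps-\eps_h)$. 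What the paper's route buys is one fewer hypothesis and explicit $\alpha$-dependence, since in your version the curvature constant silently joins the generic $C$ in front of $\frac{4C^2}{\alpha^2}$. The final algebra --- squaring, splitting the cross term via $(a+b)^2\leq 2(a^2+b^2)$, and absorbing factors into $C$ --- is the same in both.
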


\begin{proof}
Using the definition of the Fr\'echet derivative and \eqref{dj2dl} we get
\begin{align*}
F(\eps) - F(\eps_h) &= F'(\eps_h;\eps - \eps_h) + o(\norm{\eps - \eps_h}_\Omega)\\
                   &= \pd{L}{\eps}(\tilde u;\,\eps - \eps_h) + o(\norm{\eps - \eps_h}_\Omega).
\end{align*}
Neglecting the remainder term as it is of higher order with respect to the error, and estimating $\pd{L}{\eps}(\tilde u;\,\eps - \eps_h) = \Theta_1 + \Theta_2$ as in the proof of Theorem~\ref{ape_norm}, we obtain
\begin{align*}
\abs{F(\eps) - F(\eps_h)} \lesssim \frac{2C}{\alpha}(\eta + \norm{R_\eps}_\Omega)\norm{\eps - \eps_h}_\Omega.
\end{align*}
Applying Theorem~\ref{ape_norm} to estimate $\norm{\eps - \eps_h}_\Omega$, we arrive at
\begin{align*}
\abs{F(\eps) - F(\eps_h)} \lesssim \frac{4C^2}{\alpha^2}(\eta + \norm{R_\eps}_\Omega)^2\leq C\left(\eta^2 + \norm{R_\eps}_\Omega^2\right).
\end{align*}
\end{proof}

We will conclude this section by describing how these theorems translates into concrete recommendations for refining the computational mesh in the adaptive algorithm, outlined in the Section~\ref{algorithm}. 

\subsection{Mesh refinement recommendations} \label{sec:ref}
From Theorem~\ref{ape_norm} and Corollary~\ref{ape_tikh}, it is clear that the error in the reconstruction can be estimated in terms of two quantities, $\eta$, and $R_\eps$. The former essentially represents how well the direct and adjoint problems, problems~\ref{weakdirect}, and \ref{weakadjoint}, are approximated by their finite element counterparts, problems~\ref{femdirect}, and \ref{femadjoint}, respectively. The latter, $R_\eps$, describes the error incurred by the approximation of the coefficient itself.

In our computational experience from \cite{btkm14, btkm14b}, on given meshes $\th$ and $\mathcal{I}_\tau$, the solutions to direct and adjoint problems are in generally approximated better than the coefficient itself. As remarked above, this implies that $\eta \ll \norm{R_\eps}_\Omega$, which tells us that the error is the greatest in regions where $\abs{R_\eps}\approx\abs{F'(\eps_h)}$ is close to its maximum value. Thus we propose the following mesh refinement recommendation:

\begin{recommendation} \label{mrr1}
Using Theorem~\ref{ape_norm} we conclude that we should refine the mesh in neighborhoods of those points in $\Omega$ where the function $\abs{R_\eps}$ attains its maximal values. More precisely, let $\beta \in (0,\,1)$ be a tolerance number which should be chosen in computational experiments. Then, refine the mesh $\th$ in such subdomains of $\Omega$ where
\begin{equation*}
 \abs{R_\eps} \geq \beta \max_{\Omega}\abs{R_\eps}.
\end{equation*}
\end{recommendation}

Since a relatively large value of the reconstructed coefficient $\eps_h$ indicates a region where the permittivity is different from the background value of $1$, we can also propose the following heuristic:

\begin{recommendation} \label{mrr2}
We should refine the mesh in neighborhoods of those points in $\Omega$ where the function $\abs{\eps_h}$ attains its maximal values. More precisely, we refine the mesh in such subdomains of $\Omega$ where
\begin{equation*}
\abs{\eps_h} \geq \widetilde{\beta} \max_\Omega \abs{\eps_h}, 
\end{equation*}
where $ \widetilde{\beta} \in (0,1)$ is a number which should be chosen computationally and $h$ is the mesh function \eqref{meshfunction} of the finite element mesh $\th$.
\end{recommendation}

\section{Adaptive algorithms for the inverse problem} \label{algorithm}
In this section we will present different algorithms which can be used for solution of the inverse problem we consider: usual conjugate gradient algorithm and two different adaptive finite element algorithms. Conjugate gradient algorithm is applied on every finite element mesh $\th$ which we use in computations. We note that in our adaptive algorithms the time mesh $\mathcal{I}_\tau$ is refined globally accordingly to the Courant-Friedrichs-Lewy condition of \cite{cfl67}. 

Taking into account remark of Section \ref{sec:ref} we denote by 
\begin{equation} \label{dertikhonov2}
\begin{aligned}
R_\eps^n(\x)\ce \alpha(\eps_h - \eps_0)
 - \int_0^T\pd{\E_h^n}{t}(\x,\,t)\cdot\pd{\l_h^n}{t}(\x,t)\rmd t
 + s\int_0^T\div\E_h^n(\x,\,t)\div\l_h^n(\x,\,t)\rmd t
\end{aligned}
\end{equation}
where functions $\l_h^n$, and  $\E_h^n$ are finite element solutions of direct and adjoint problems computed with $\eps \ce\eps_h^n$, respectively, and $n$ is the number of iteration in the conjugate gradient algorithm.

\subsection{Conjugate Gradient Algorithm}\label{sec:cg}
Here we outline the conjgate gradient algorithm, which will be used in the two adaptive algorithms presented below.

\begin{algorithm} (Conjugate Gradient Algorithm)
\begin{itemize}
\item[Step 0.]  Discretize the computational space-time domain $\Omega \times [0,T]$ using partitions $\th$ and $\mathcal{I}_\tau$, respectively, see Section \ref{error_analysis}.  Start with the initial approximations $\eps_h^0 = \eps_0$ and compute the sequence of $\eps_h^n$, $n = 1,\,2,\,\ldots$, as:
\item[Step 1.] Compute solutions $\E_h^n$ and $\l_h^n$ of problems~\ref{femdirect} and \ref{femadjoint}, respectively, using the coefficient $\eps_h^n$.

\item[Step 2.]  Update the coefficient on $\th$ and $\mathcal{I}_\tau$ via the conjugate gradient method
\begin{equation*}
\begin{aligned}
\eps_h^{n+1}(\x) &=  \eps_h^n(\x) + \gamma_{\eps}^n d_{\eps}^n(\x),
\end{aligned}
\end{equation*}
where
\begin{equation*}
\begin{aligned}
 d_{\eps}^n(\x)&=  -R_\eps^n(\x) + \beta_\eps^n  d_\eps^{n-1}(\x),
\end{aligned}
\end{equation*}
with
\begin{equation*}
\begin{aligned}
\beta_\eps^n &= \frac{ \norm{R_\eps^n}_\Omega^2 }{ \norm{R_\eps^{n-1}}_\Omega^2 }, 
\end{aligned}
\end{equation*}
$d_\eps^0(\x)= -R_\eps^0(\x)$, and $\gamma_\eps^n$ are step-sizes in the gradient update which can be computed as in \cite{p84}
\begin{equation}
\gamma_\eps^n = -\frac{ \sca{R_\eps^n}{d_\eps^n}_\Omega }{\alpha \norm{d_\eps^n}_\Omega^2}.
\end{equation}

\item[Step 3.] Stop computing updates $\eps_h^n$ at the iteration $M \ce n$ and obtain the function $\eps_h \ce \eps_h^M$ if either $\norm{R_\eps^n}_\Omega\leq \theta$ -- where $\theta$ is the tolerance in $n$ updates of the gradient method -- or norms $\norm{\eps_h^n}_\Omega$ are stabilized. Otherwise update $n$ to $n+1$ and return to Step 1.
\end{itemize}
\end{algorithm}

\subsection{Adaptive algorithms} \label{sec:adaptalg}
In this section we present two different adaptive algorithms for the solution of our coefficient inverse problem (more precisely, Problem~\ref{femminlag}, approximating the solution to Problem~\ref{minlag}), where  in the first adaptive algorithm we apply Mesh Refinement Recommendation~\ref{mrr1} of Section~\ref{sec:ref}, while in the second adaptive algorithm we use Mesh Refinement Recommendation~\ref{mrr2} of Section~\ref{sec:ref}.

We define the minimizer of the Tikhonov functional \eqref{tikhonovfunctional}  and its approximated finite element solution on $k$ times adaptively refined mesh $\mathcal{T}_{h_k}$ by $\eps$ and $\eps_{h_k}$, correspondingly. The latter is obtained at the final step of the conjugate gradient iteration of Section~\ref{sec:cg} on the mesh $\mathcal{T}_{h_k}$.

\begin{algorithm} \label{adalg1}
(The First Adaptive Algorithm)

\begin{itemize}
\item[Step 0.] Choose an initial space-time mesh $\mathcal{T}_{h_0} \times \mathcal{I}_{\tau_0}$ in $\Omega \times [0,\,T]$. Compute $\eps_{h_k}$, $k > 0$, via following steps:

\item[Step 1.] Obtain numerical solution $\eps_{h_k}$ on $\mathcal{T}_{h_k}$ using the  Conjugate Gradient Method of  Section~\ref{sec:cg}.

\item[Step 2.] In accordance with the first mesh refinement recommendation, refine such elements in the mesh $\mathcal{T}_{h_k}$ where the expression 
\begin{equation} \label{alg2_2}
\abs{R_\eps^{M_k}} \geq \beta_k \max_\Omega\abs{R_\eps^{M_k}},
\end{equation}
is satisfied. Here, the tolerance numbers $ \widetilde{\beta}_k  \in \left(0,\,1\right) $ are chosen by the user.

\item[Step 3.] Define a new refined mesh as $\mathcal{T}_{h_{k+1}}$ and construct a new time partition $\mathcal{I}_{\tau_{k+1}}$ such that the Courant-Friedrichs-Lewy condition of \cite{cfl67} is satisfied. Interpolate $\eps_{h_k}$ on the new mesh $\mathcal{T}_{h_{k+1}}$ and perform Steps 1--3 on the space-time mesh $\mathcal{T}_{h_{k+1}} \times \mathcal{I}_{\tau_{k+1}}$. 

 \item[Step 4.] Stop mesh refinements when either $\norm{\eps_{h_k} - \eps_{h_{k-1}}}_\Omega < \theta_1$, or for some $n$, $\norm{R_\eps^n}_\Omega < \theta_2$, where $\theta_i$, $i=1,\,2$ are tolerances chosen by the user, and $R_\eps^n$ is the gradient on the $n$:th iteration of the conjugate gradient method on the new mesh. We then set the final number of refinements $k_\mathrm{rec} \ce k$, and the reconstructed coefficient $\eps_\mathrm{rec}\ce\eps_{h_k}$.
\end{itemize}

\end{algorithm}

\begin{algorithm} \label{adalg2}
(The Second Adaptive Algorithm)

This algorithm follows the same procedure as the First Adaptive Algorithm, except that the refinement criterion \eqref{alg2_2} is replaced, in accordance with the second mesh refinement recommendation, by
\begin{equation}\label{62}
\abs{\eps_{h_k}} \geq \widetilde{\beta}_k \max_\Omega \abs{\eps_{h_k}}
\end{equation}
for some tolerance numbers $\beta_k \in (0,\,1)$ , possibly different from $\widetilde{\beta}_k$, chosen by the user. Here, $R_\eps^{M_k}$ is the gradient on the last iteration of the conjugate gradient method on the $k$ times adaptively refined mesh.

\end{algorithm}

%
%
%

Before continuiong with the details of our numerical examples, some remarks are in order:

\begin{itemize}
\item Firstly, we comment on how to choose the tolerance numbers $\beta_k$, and $\widetilde{\beta_k }$ in \eqref{alg2_2}, \eqref{62}. Their values depend on the concrete values of $ \max_\Omega\abs{R_\eps^{M_k}}$ and $\max_\Omega \abs{h_k\eps_{h_k}}$, correspondingly.  If we will take values of $\beta_k$, and $\widetilde{\beta_k }$ which are very close to $1$ then we will refine the mesh in very narrow region of the domain $\Omega$, and if we will choose $\beta_k$, and $\widetilde{\beta_k} \approx 0$ then almost all elements in the finite element mesh  will be refined, and thus, we will get global and not local mesh refinement.  Our numerical tests of Section~\ref{numex} show that the choice of $\beta_k$, and $\widetilde{\beta}_k = 0.7$ is almost optimal one since with these values of the parameters $\beta_k$, and $\widetilde{\beta_k }$ the finite element mesh $\mathcal{T}_{h_k}$ is refined exactly at the places where we have computed non-trivial parts of the functions $\eps_{h_k}$.

\item Secondly, to compute norms $\norm{\eps_{h_k} - \eps_{h_{k-1}}}_\Omega$ in Step~3 of the adaptive algorithms, the solution $\eps_{h_{k-1}}$ is interpolated from the mesh $\mathcal{T}_{h_{k-1}}$ to the mesh $\mathcal{T}_{h_k}$.
\end{itemize}

\section{Numerical examples} \label{numex}

\begin{figure}[tbp]
\begin{center}
\begin{tabular}{cc}
{\includegraphics[width = 0.3\textwidth, clip = true, trim = 3.0cm 3.0cm 3.0cm 3.0cm,, angle = -90]{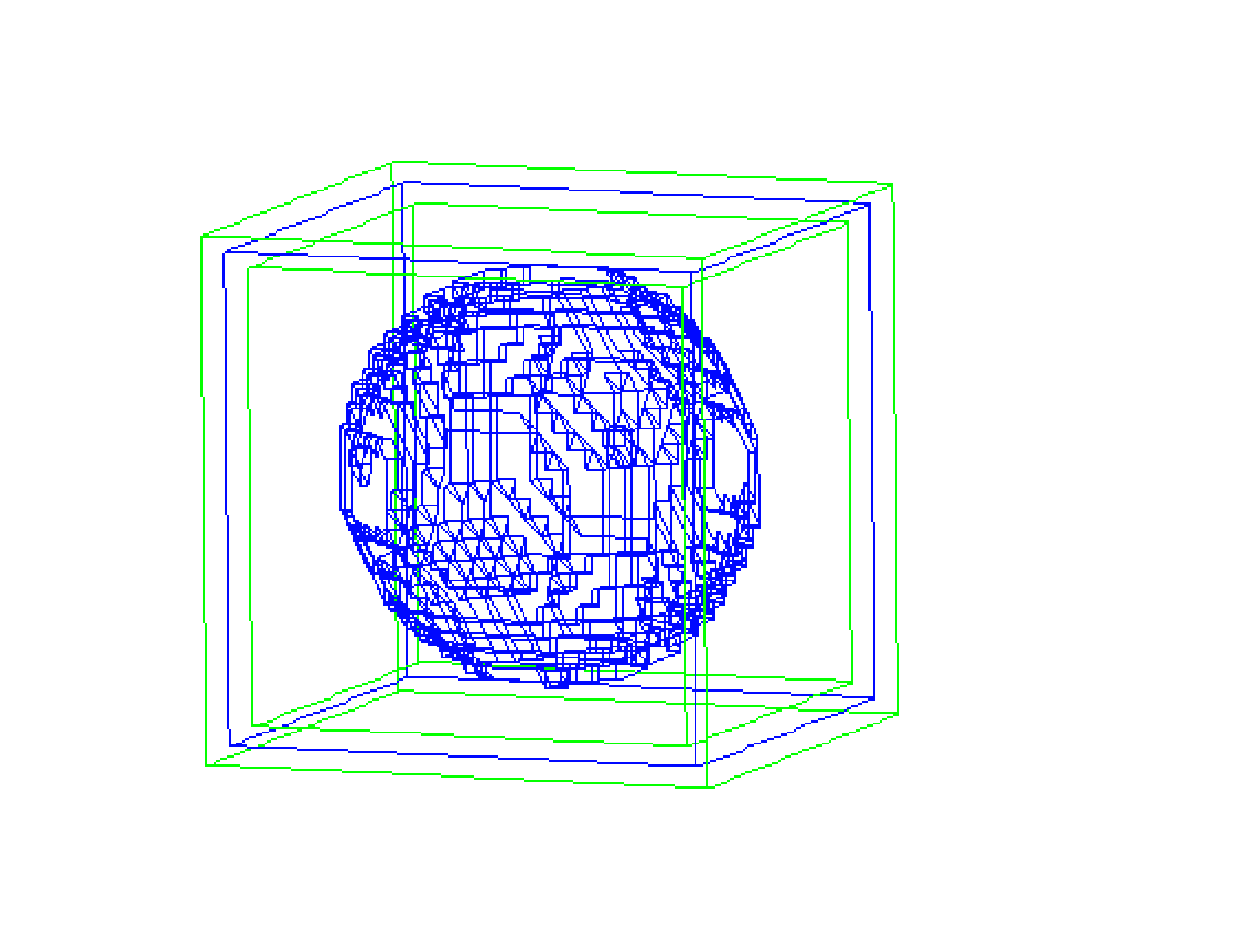}} &
{\includegraphics[width = 0.3\textwidth, clip = true, trim = 3.0cm 3.0cm 3.0cm 3.0cm,, angle = -90]{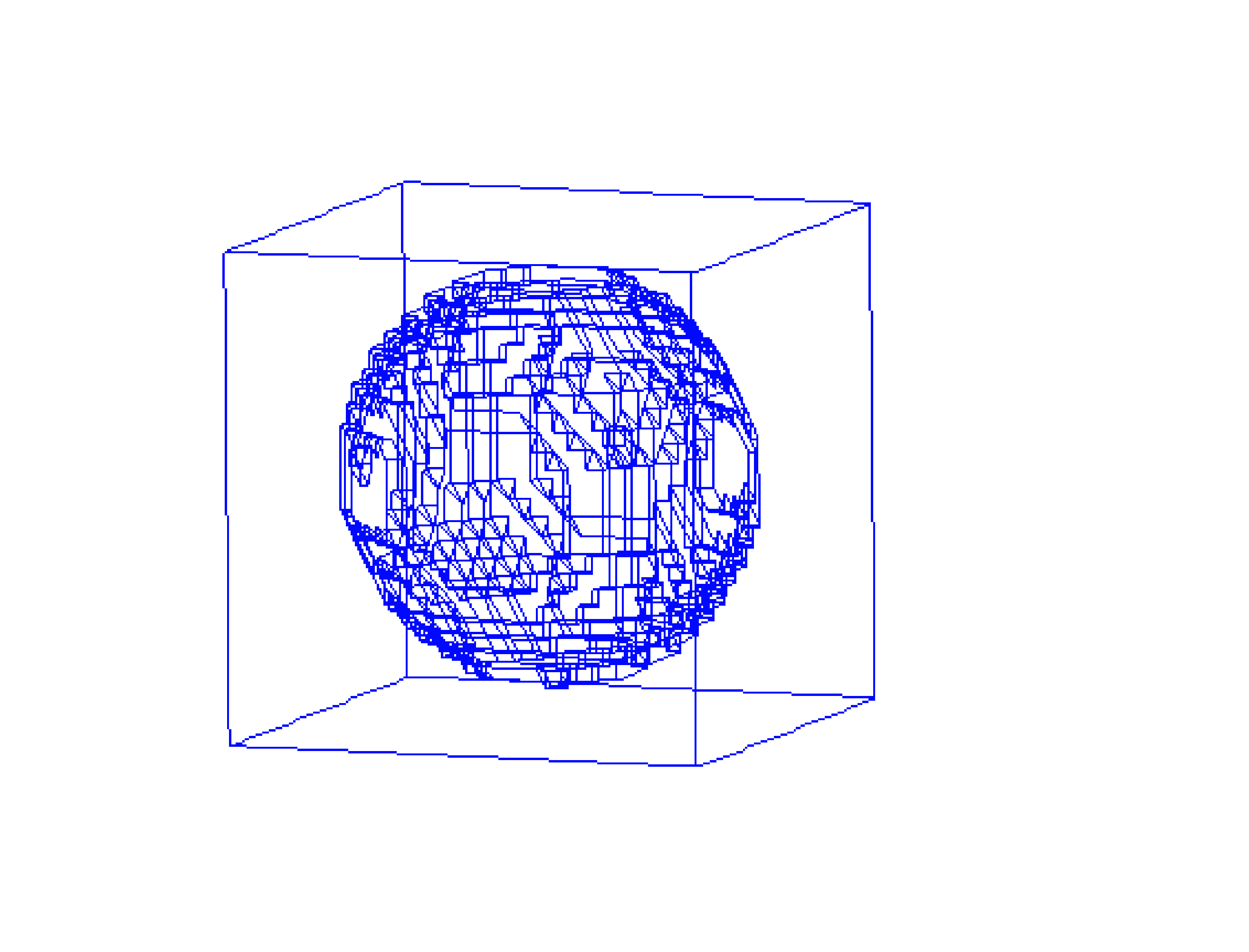}} \\
a) Test1: $\widetilde{\Omega} = \Omega_\mathrm{FEM} \cup \Omega_\mathrm{FDM}$ &  b) Test 1: $\Omega_\mathrm{FEM}$ \\
\end{tabular}
\end{center}
\caption{\protect\small\emph{Domain decomposition in numerical tests of Section~\ref{numex}. a) The  decomposed domain  $\widetilde{\Omega} = \Omega_\mathrm{FEM} \cup \Omega_\mathrm{FDM}$. b) The finite element domain $\Omega_\mathrm{FEM}$.}}
\label{fig:fig1}
\end{figure}

In this section we present numerical studies of the solution our inverse problem using the adaptive algorithms of Section~\ref{sec:adaptalg}. The algorithms are efficiently implemented in the software package WavES (\cite{waves}), using the domain decomposition technique of \cite{b13}. 

To do that we enlarge the domain $\Omega$ to a domain $\widetilde{\Omega} \supset \Omega$, and decompose $\widetilde{\Omega}$ into two subregions $\Omega_\mathrm{FEM}$ and $\Omega_\mathrm{FDM}$ such that $\Omega_\mathrm{FEM} = \Omega$, and $\Omega_\mathrm{FEM} \cap \Omega_\mathrm{FDM} = \emptyset$.  In $\Omega_\mathrm{FEM}$ we will use the finite element method (FEM) and, in $\Omega_\mathrm{FDM}$, the finite difference method (FDM). The boundary $\partial \widetilde{\Omega}$ of the domain $\widetilde{\Omega}$ is such that $\partial \widetilde{\Omega} = \partial_{1} \widetilde{\Omega} \cup \partial _{2} \widetilde{\Omega} \cup \partial_{3}\widetilde{\Omega}$ where $\partial _{1} \widetilde{\Omega}$ and $\partial _{2} \widetilde{\Omega}$ are, respectively, front and back sides of $\widetilde{\Omega}$, and $\partial _{3} \widetilde{\Omega}$ is the union of left, right, top and bottom sides of this domain. We will collect time-dependent observations over $S_T \ce \partial_1 \widetilde{\Omega} \times (0,\,T)$ at the backscattering side $\partial_1 \widetilde{\Omega}$ of $\widetilde{\Omega}$.  We also define $S_{1,\,1} \ce \partial_1 \widetilde{\Omega} \times (0,\,t_1]$, $S_{1,\,2}\ce \partial_1 \widetilde{\Omega} \times (t_1,\,T)$, $S_2 \ce \partial_2 \widetilde{\Omega} \times(0,\, T)$ and $S_3 \ce \partial_3 \widetilde{\Omega} \times (0,\, T)$.

As in \cite{btkm14, btkm14b} we initialize  only one component $E_2$ of the electrical field $\E=(E_1,\,E_2,\,E_3)$  on $S_T$ as a plane wave $f(t)$ such that 
 \begin{equation}\label{f}
 f(t) =\begin{cases}
 \sin \left(\omega t \right)  &\text{if } 0 < t < 2\pi/\omega, \\ 
 0 &\text{if } t > 2\pi /\omega .
 \end{cases} 
 \end{equation}

We assume that the function $\eps \equiv 1$ inside $\Omega_\mathrm{FDM}$. The numerical tests of our previous studies \cite{bh15} show that the best reconstruction results are obtained for $\omega = 40$ in \eqref{f}. Thus, we perform our tests with $\omega = 40$ in \eqref{f} in all our tests.

In our computations, we consider computational domains $\widetilde{\Omega} \ce (-0.8,\,0.8)^3$, and $\Omega_\mathrm{FEM} \ce (-0.7,\,0.7)^3$, where the length scales are in decimeters. We choose the mesh size $h_0 = 0.05$ in the overlapping layers between $\Omega_\mathrm{FEM}$ and $\Omega_\mathrm{FDM}$ as well as in the coarse mesh. We note that we have generated our transmitted data using a four times locally refined initial mesh $\Omega_\mathrm{FEM}$, and in a such way we avoid variational crimes. To generate transmitted data we solve the model problem in time $T = [0,\,3.0]$ with the time step $\tau = 0.006$ which satisfies to the Courant-Friedrichs-Lewy condition \cite{cfl67}. We then pollute our data additive noise of levels $\sigma=3\%$, and $10\%$, respectively.

Similarly with \cite{bh15, b13, b11, btkm14, btkm14b} in all our computations we choose constant regularization parameter $\alpha = 0.01$ because it gives smallest relative error in the reconstruction of the function $\eps$. This parameter was chosen via trial and error because of our computational experience: such choice for the regularization parameter gave the smallest relative error $e_{\eps} = \norm{\eps - \eps_{h_k}}_{\Omega}/\norm{\eps_{h_k}}_{\Omega}$. An iteratively regularized adaptive finite element method when both functions $\eps$ and $\mu$ are reconstructed, has recently been presented in \cite{h15}. Here, iterative regularization is performed via the algorithms of \cite{bks11}. We also refer to \cite{ehn96, tgsy95} for different techniques for the choice of regularization parameters.

We perform four different tests:
\begin{enumerate}

\item[Test 1:] The goal of this numerical test is to reconstruct a smooth function $\eps$ only inside $\Omega_\mathrm{FEM}$. We define this function as
\begin{equation}\label{2gaussians}
\begin{aligned}
\eps(\x)  &\ce1.0 + 1.0\rme^{-\abs{\x - \x_1}^2/0.2} + 1.0\rme^{-\abs{\x - \x_2}^2/0.2}, && \x\in\Omega_\mathrm{FEM},\\
\x_1 &\ce (0.3,\, 0.0,\, 0.0) \in\Omega_\mathrm{FEM},\\
\x_2 &\ce (-0.4,\, 0.2,\, 0.0) \in\Omega_\mathrm{FEM}.
\end{aligned}
\end{equation}

\item[Test 2:] In this test we  reconstruct three small inclusions of diameter $d=2$~mm with the centers of the inclusions at $(-0.3,\,0.0,\,-0.25), (0.3,\,0.2,\,-0.25)$ and $(0.3,\,-0.2,\,-0.25)$, respectively.

\item[Test 3:] In this test we reconstruct four small inclusions of diameter $d=2$~mm with the centers of the inclusions at $(-0.3,\, 0.0,\, 0.25), (0.0,\, 0.2,\, 0.25)$, $(0.0,\, -0.2,\, -0.25)$, and $(0.3,\, -0.2,\, -0.25)$, respectively.

\item[Test 4:]  The inclusions of this test are the same as in Test~3, but here the data consists measurements of two backscattered wave fields: one backscattered field initiated at the front boundary $\partial_1 \widetilde{\Omega} $, and another one at the back boundary $\partial_2 \widetilde{\Omega}$.
\end{enumerate}

We start to run the adaptive algorithm with a homogeneous initial approximation $\eps_0 \equiv 1.0$ in $\Omega_\mathrm{FEM}$. In our recent work \cite{bh15}, it was shown that such choice of the initial approximation gives a good reconstruction. Such homogeneous initial approximations were also used in \cite{elhk09}. See also \cite{b13, b11, btkm14, btkm14b} for a similar choice of initial approximations. We also assume the upper bound $\eps_\mathrm{max} = 5$ in \eqref{admissible}. This is a reasonable value, given that our target applications are in medical imaging, and typically involves low contrasts.

To get final images of our reconstructed function $\eps_{h_k}$ we use a post-processing procedure which has been described before in \cite{bh15, b13, b11, btkm14}.

\begin{table}[tbp] 
{\footnotesize Table 1. \emph{Results obtained on the coarse mesh. We present reconstructions of the maximal contrast $\tilde{\eps} = \max_{\Omega_\mathrm{FEM}} \eps_{h_0}^{M_0}$ together with computational errors in percents. Here, $M_0$ is the final number of iteration in the conjugate gradient method on the coarse mesh.}}  \par
\vspace{2mm}
\centerline{
\begin{tabular}{|c|c|}
\hline
$\sigma=3\%$ &  $\sigma = 10\%$ \\
\begin{tabular}{l|l|l|l} \hline
 & $\tilde{\eps}$ &  error, \% & $M_0$  \\ \hline
Test 1 & 1.93 & 3.5 & 2   \\
Test 2 & 2.94  & 47   &  2   \\
Test 3 &  1.77  & 11.5    &  2   \\
Test 4 & 1.9 & 5 &  2  \\
\end{tabular}
 & 
\begin{tabular}{l|l|l|l} \hline
&  $\tilde{\eps}$ &    error, \% & $M_0$  \\ \hline
Test 1 & 1.94 & 3  & 2  \\
Test 2 & 2.81  & 40.5 &  2  \\
Test 3 & 2.04 & 2 &  2  \\
Test 4 & 2.03 & 1.5 &  2  \\
\end{tabular} 
\\
\hline
\end{tabular}}
\end{table}

\begin{table}[tbp] 
{\footnotesize Table 2. \emph{Results obtained on $k_\mathrm{rec}$ times adaptively refined mesh. We present reconstructions of the maximal contrast $\tilde{\eps} = \max_{\Omega_\mathrm{FEM}} \eps_\mathrm{rec}$ together with computational errors in percents.}}  \par
\vspace{2mm}
\centerline{
\begin{tabular}{|c|c|}
\hline
$\sigma=3\%$ &  $\sigma = 10\%$ 
\\
\hline
\begin{tabular}{l|l|l|l|l} \hline
Case & $\tilde{\eps}$ &  error, \% & $M_{k_\mathrm{rec}}$  & $k_\mathrm{rec}$ \\ \hline
Test 1 & 2.04 & 2 & 5 & 4 \\
Test 2 & 1.99  & 0.5  & 1 & 5  \\
Test 3 & 1.55   & 22.5   & 1 & 2   \\
Test 4 & 1.9   &   5 & 1 & 1  \\
\end{tabular}
 & 
\begin{tabular}{l|l|l|l|l} \hline
Case & $\tilde{\eps}$ &    error, \% & $M_{k_\mathrm{rec}}$   & $k_\mathrm{rec}$ \\ \hline
Test 1 & 1.97 & 1.5  & 1 & 5   \\
Test 2 &  1.92 & 4 &  1 & 5 \\
Test 3 & 1.88 & 6 &  1 & 2 \\
Test 4 & 2.15 & 7.5 &  1 & 1 \\
\end{tabular} 
\\
\hline
\end{tabular}}
\end{table}

\begin{figure}
\begin{center}
\begin{tabular}{ccc}
{\includegraphics[scale=0.22, trim = 2.0cm 6.0cm 2.0cm 6.0cm, clip=true,]{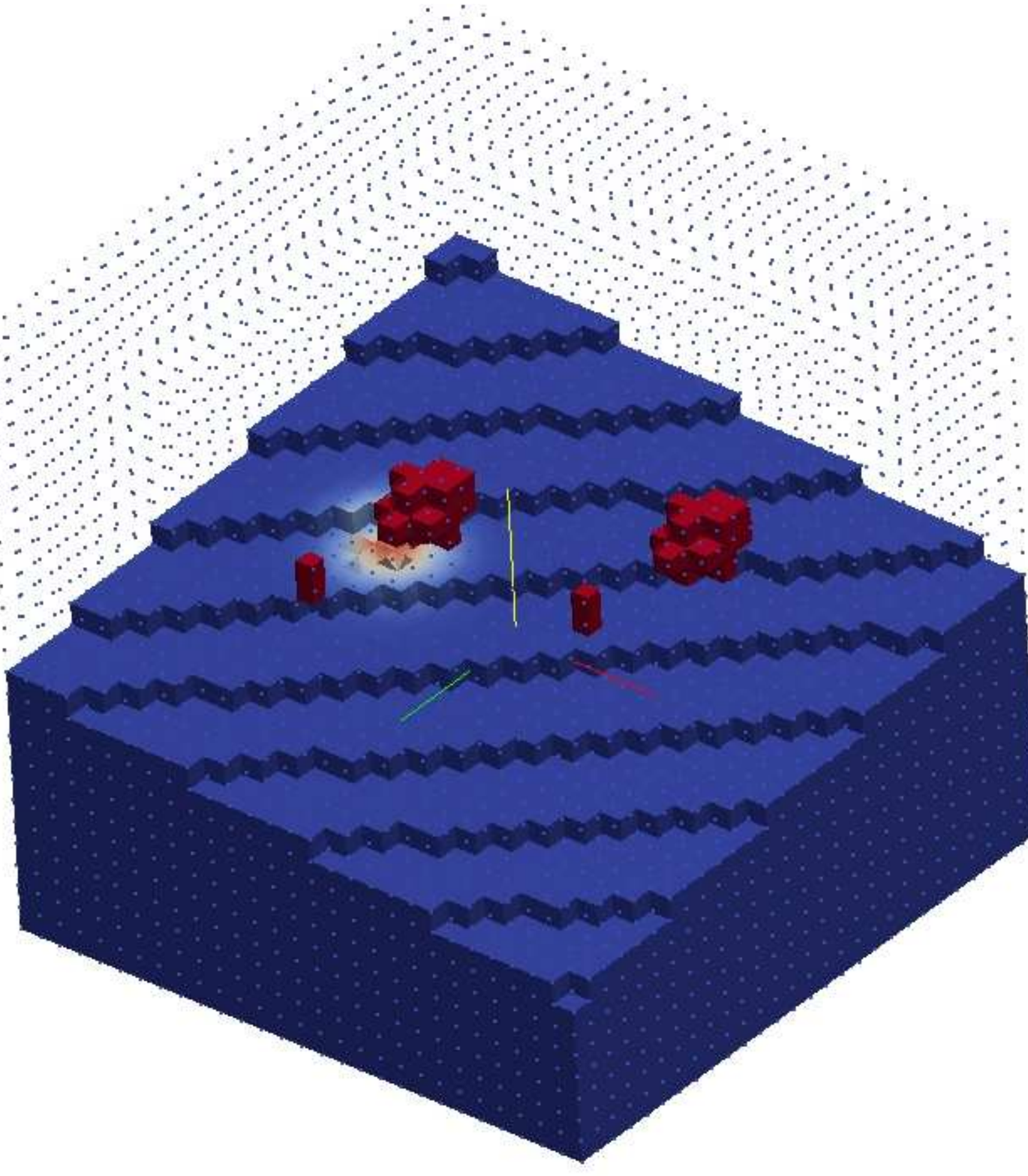}} &
{\includegraphics[scale=0.22, trim = 2.0cm 6.0cm 2.0cm 6.0cm, clip=true,]{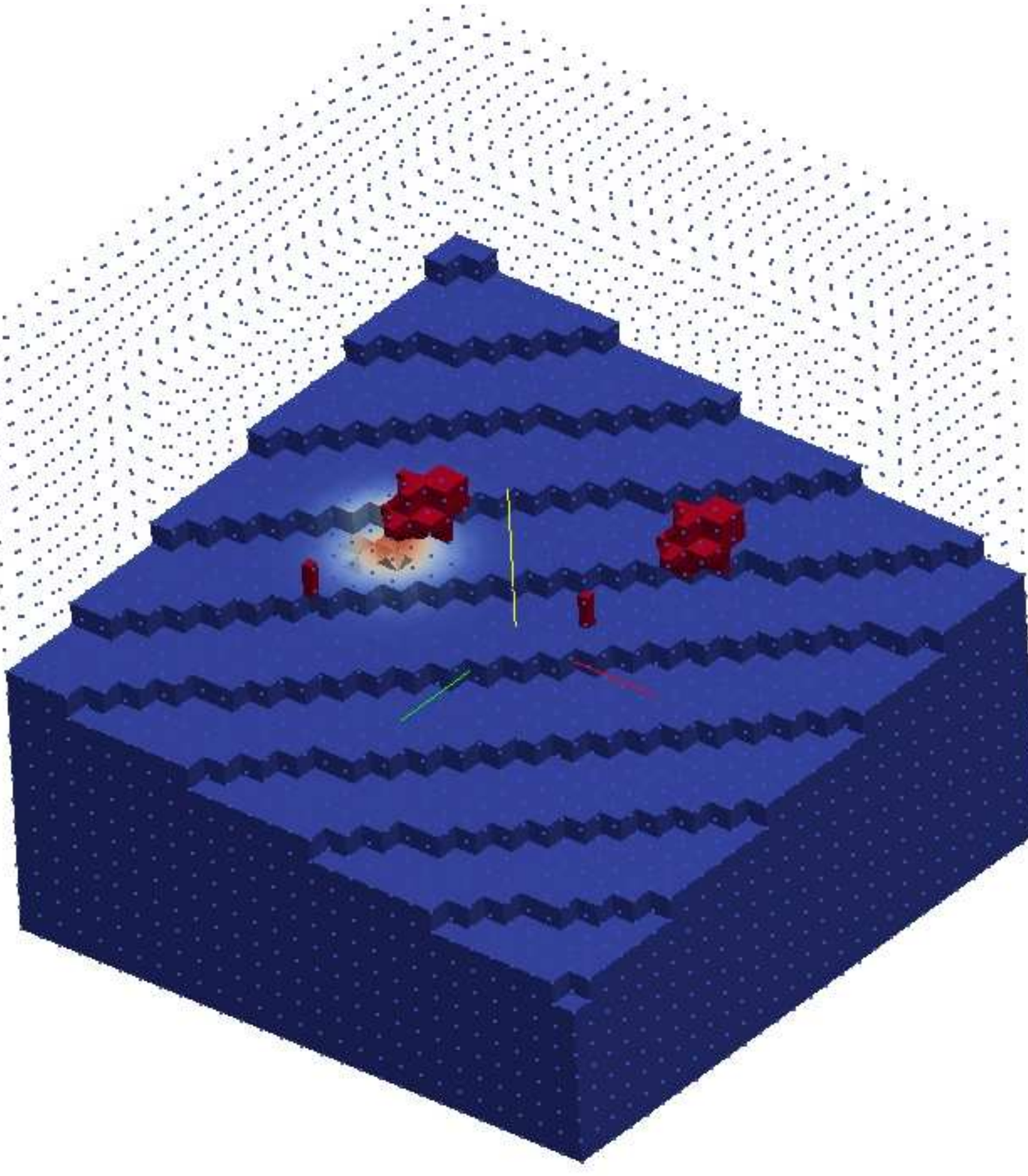}} &
{\includegraphics[scale=0.22, trim = 2.0cm 6.0cm 2.0cm 6.0cm, clip=true,]{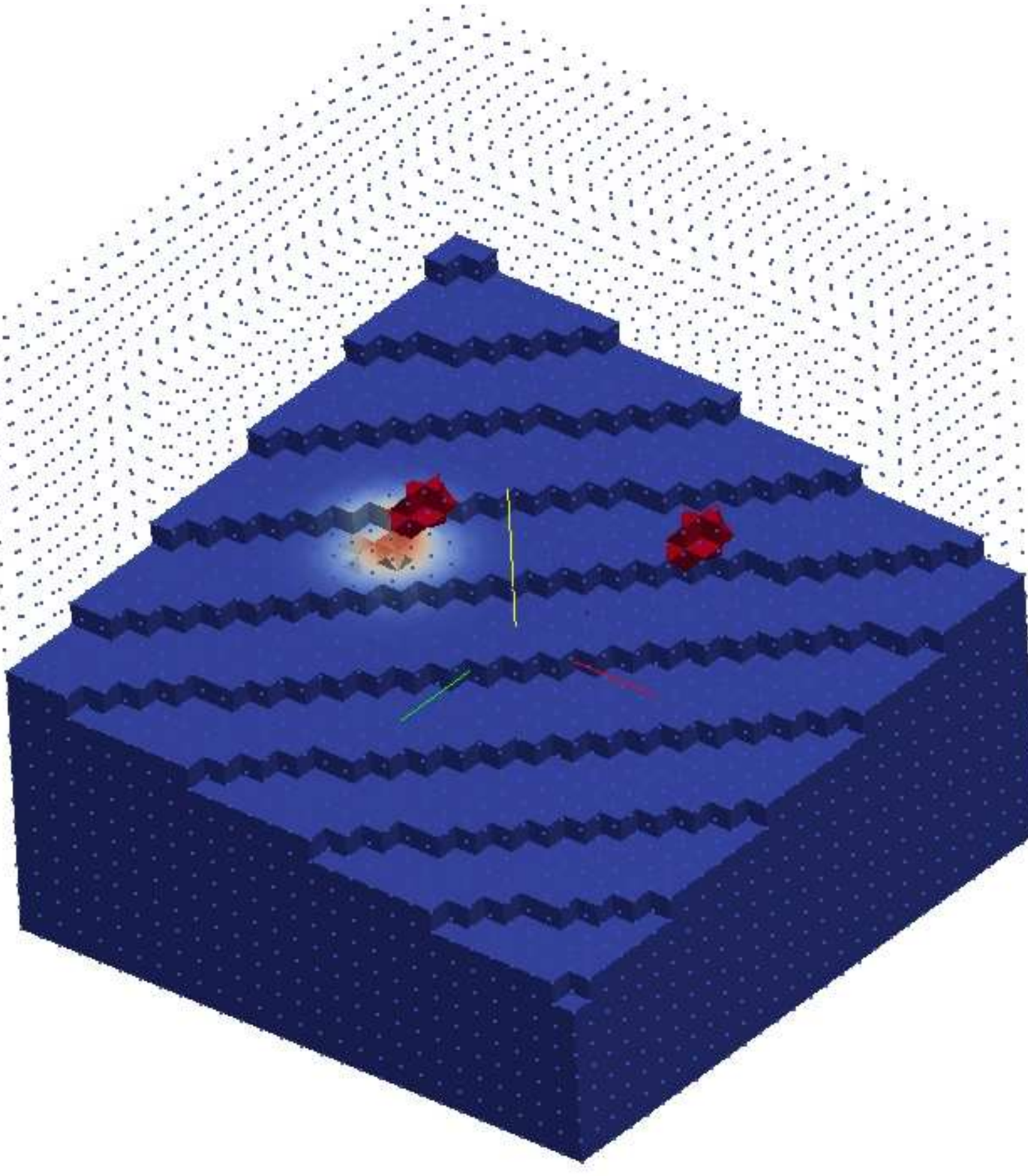}} \\
$\x\in\Omega_\mathrm{FEM}:\eps_{h_0}(\x) = 1.2$ & $\x\in\Omega_\mathrm{FEM}:\eps_{h_0}(\x) = 1.5 $ & $\x\in\Omega_\mathrm{FEM}:\eps_{h_0}(\x) = 1.8 $ 
\end{tabular}
\end{center}
\caption{\small\emph{Test 1. Prospect views of reconstructed isosurfaces of $\eps_{h_0}$ obtained on a coarse mesh. In this test we have reconstructed  $\max_{\Omega_\mathrm{FEM}} \eps_{h_0} = 1.94$, see Table 1. Noise level in data is $\sigma=10\%$.}}
 \label{fig:test1cellscoarse}
 \end{figure}

 \begin{figure}
 \begin{center}
 \begin{tabular}{ccc}
 {\includegraphics[scale=0.19, trim = 1.0cm 4.0cm 1.0cm 4.0cm, clip=true,]{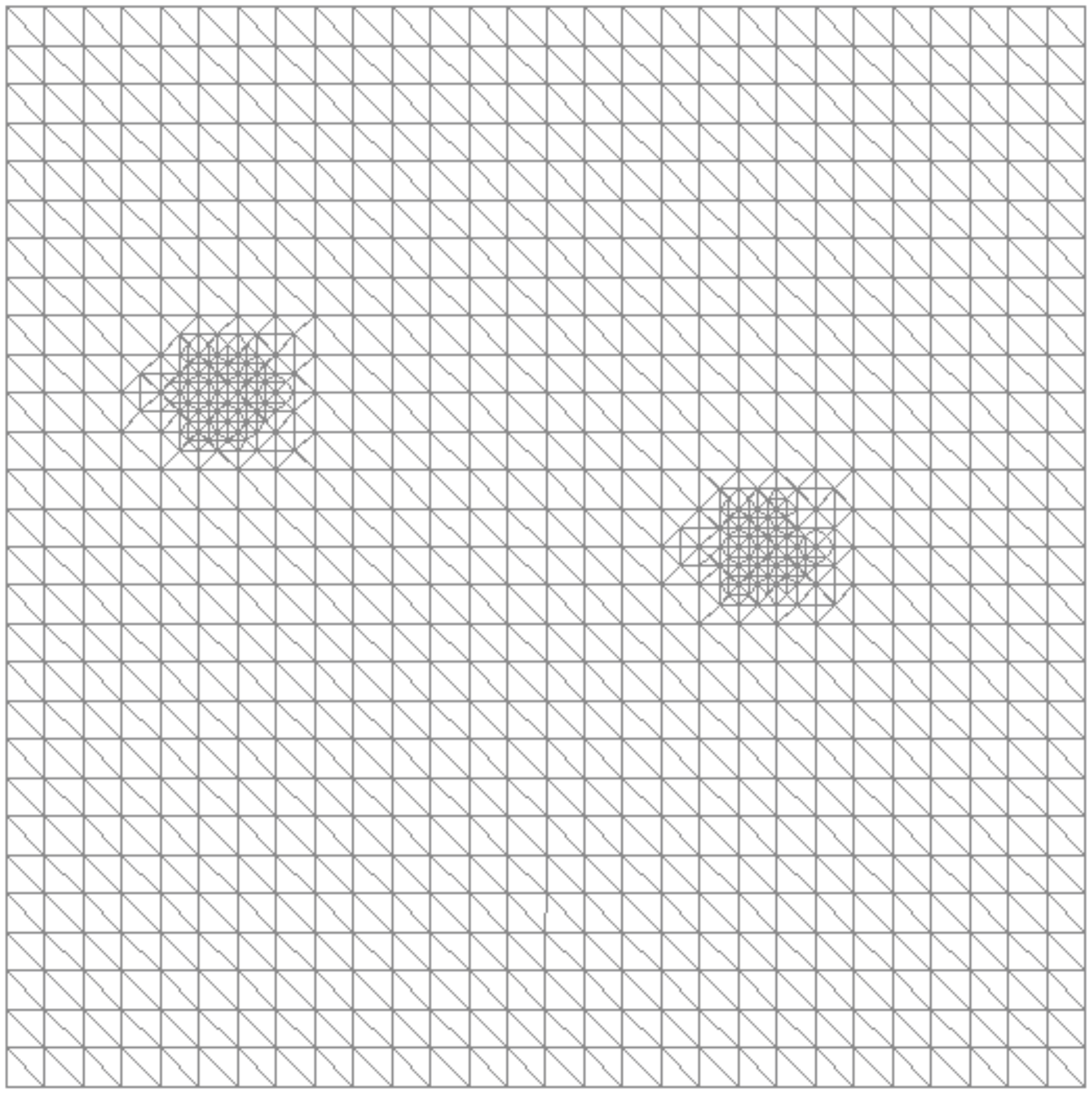}} &
 {\includegraphics[scale=0.19, trim = 1.0cm 4.0cm 1.0cm 4.0cm, clip=true,]{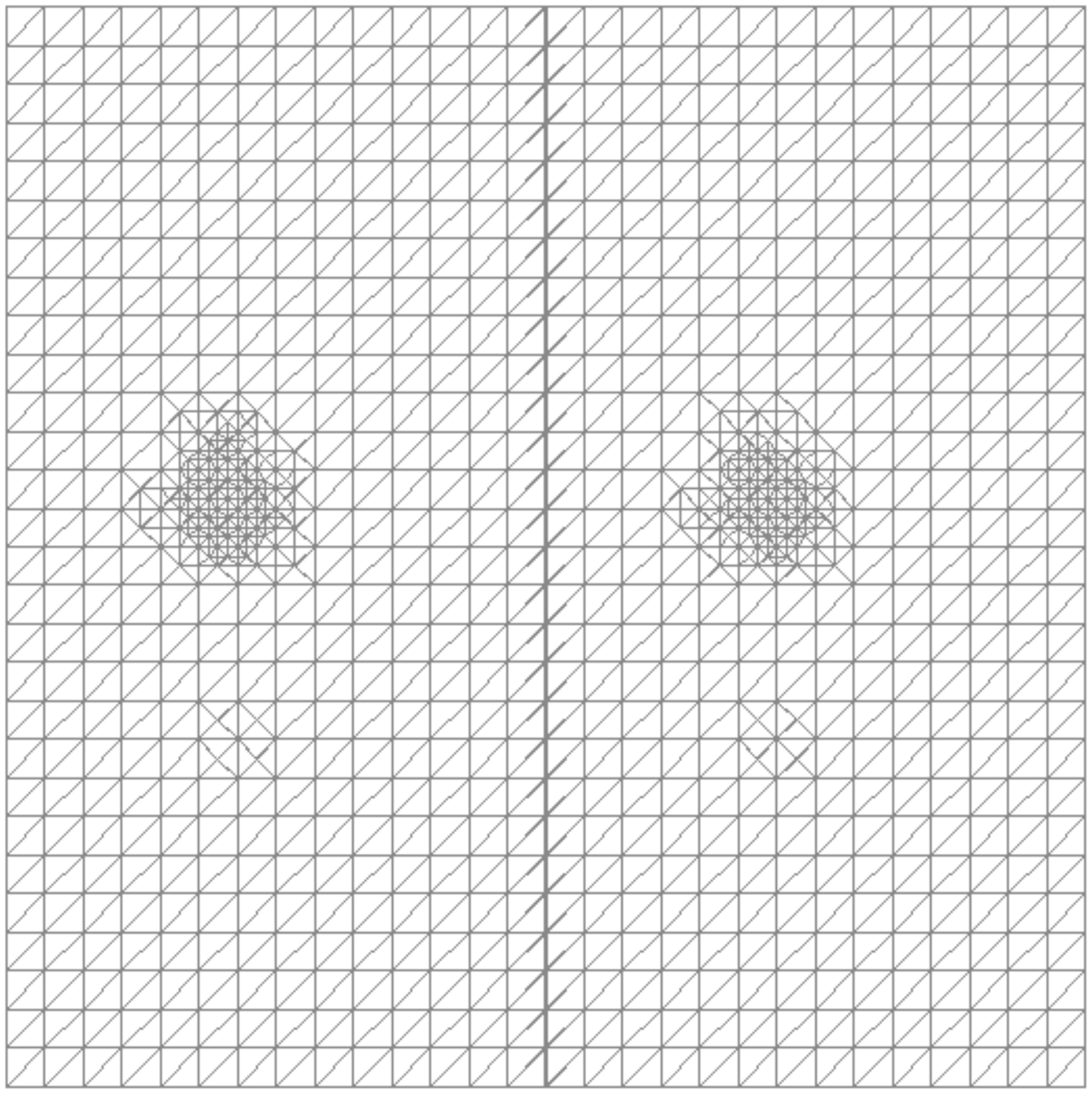}} &
 {\includegraphics[scale=0.19, trim = 1.0cm 4.0cm 1.0cm 4.0cm, clip=true,]{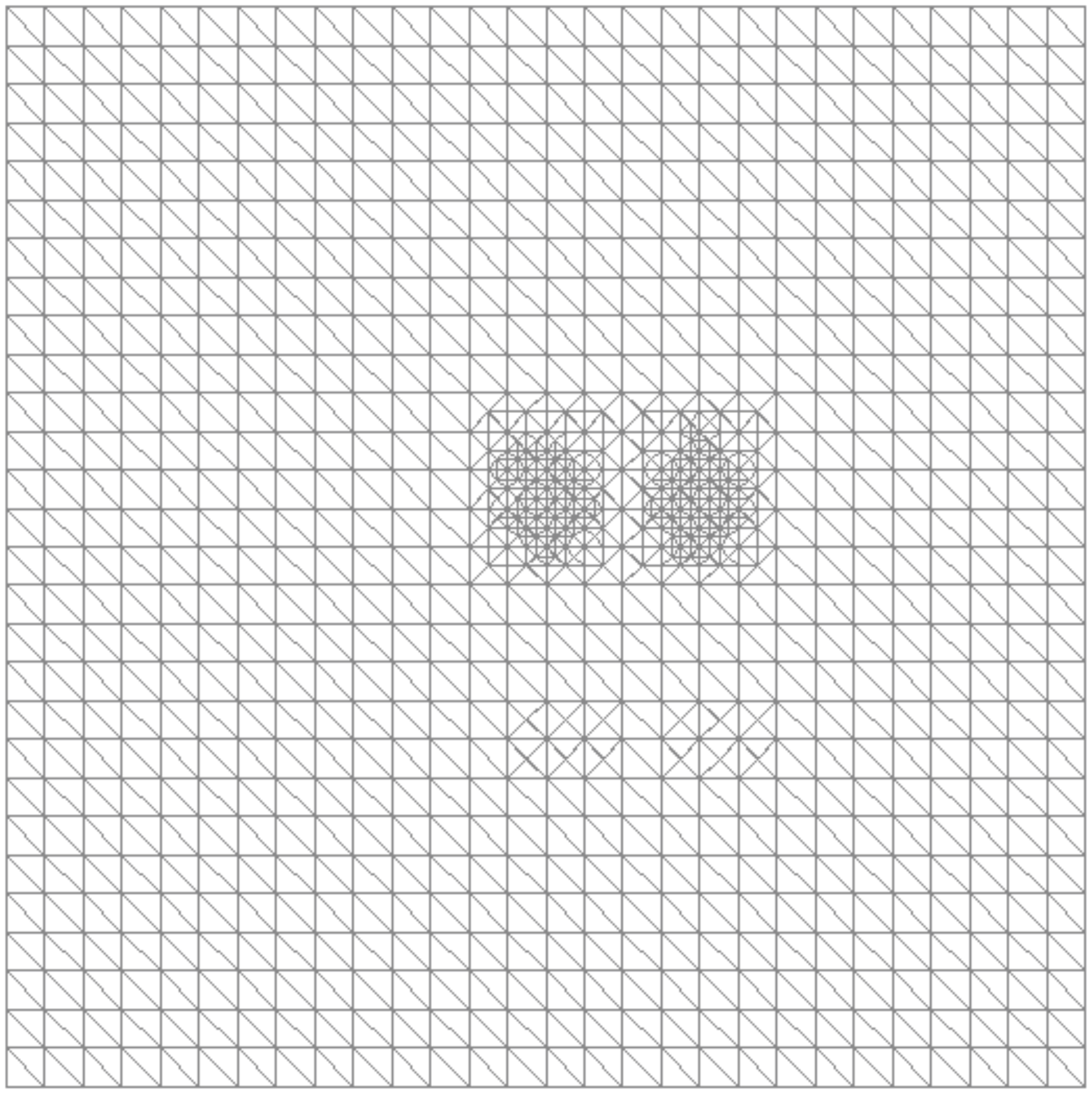}} \\
 $x_1 x_2$-view &  $x_1 x_3$-view & $x_2 x_3$-view
 \end{tabular}
 \end{center}
 \caption{\small\emph{Test 1. Five times adaptively refined mesh when the level of the noise in the data was $\sigma=10\%$.}}
 \label{fig:test1meshes}
 \end{figure}

\begin{figure}
\begin{center}
\begin{tabular}{ccc}
{\includegraphics[scale=0.22, trim = 2.0cm 6.0cm 2.0cm 6.0cm, clip=true,]{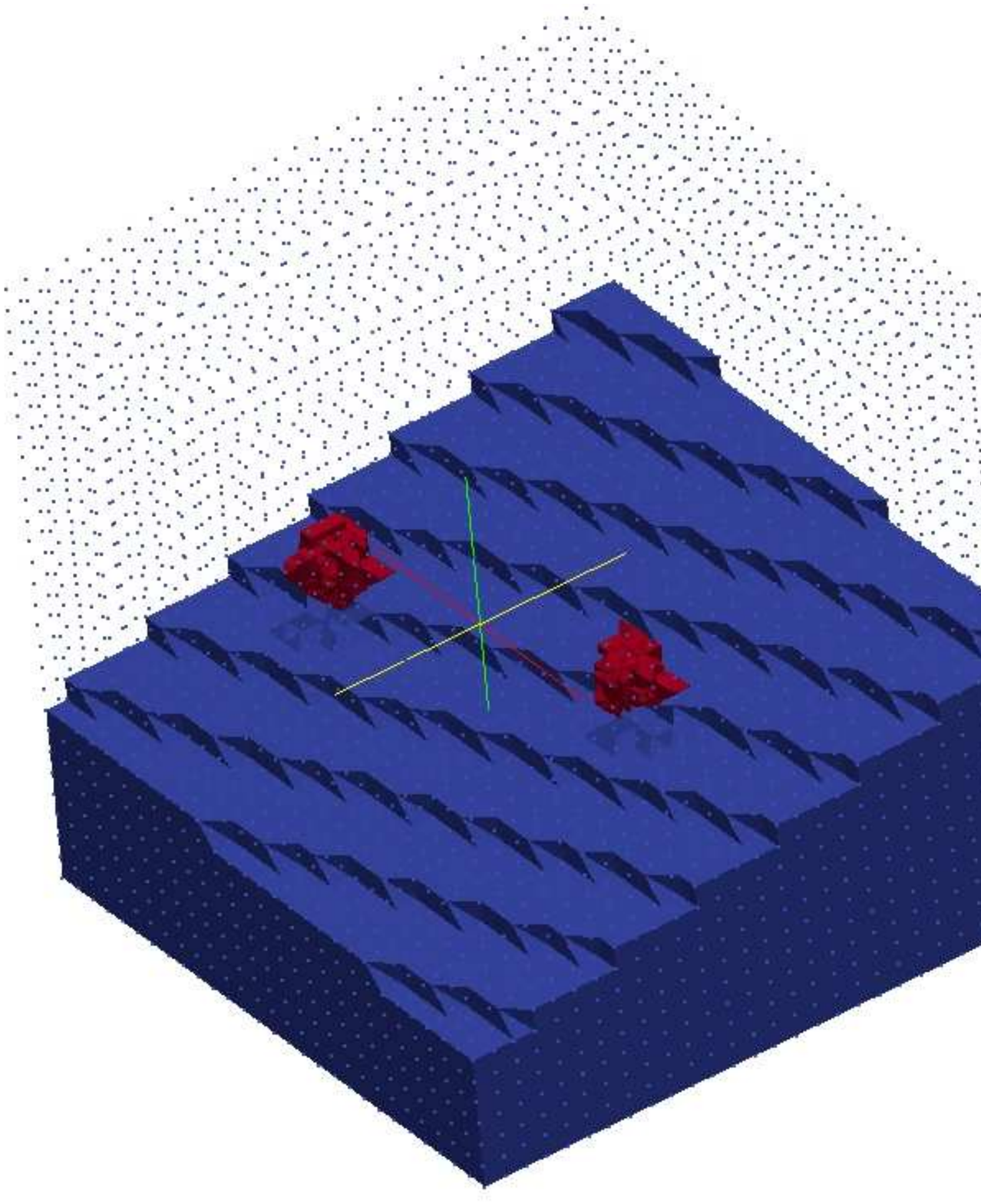}} &
{\includegraphics[scale=0.22, trim = 2.0cm 6.0cm 2.0cm 6.0cm, clip=true,]{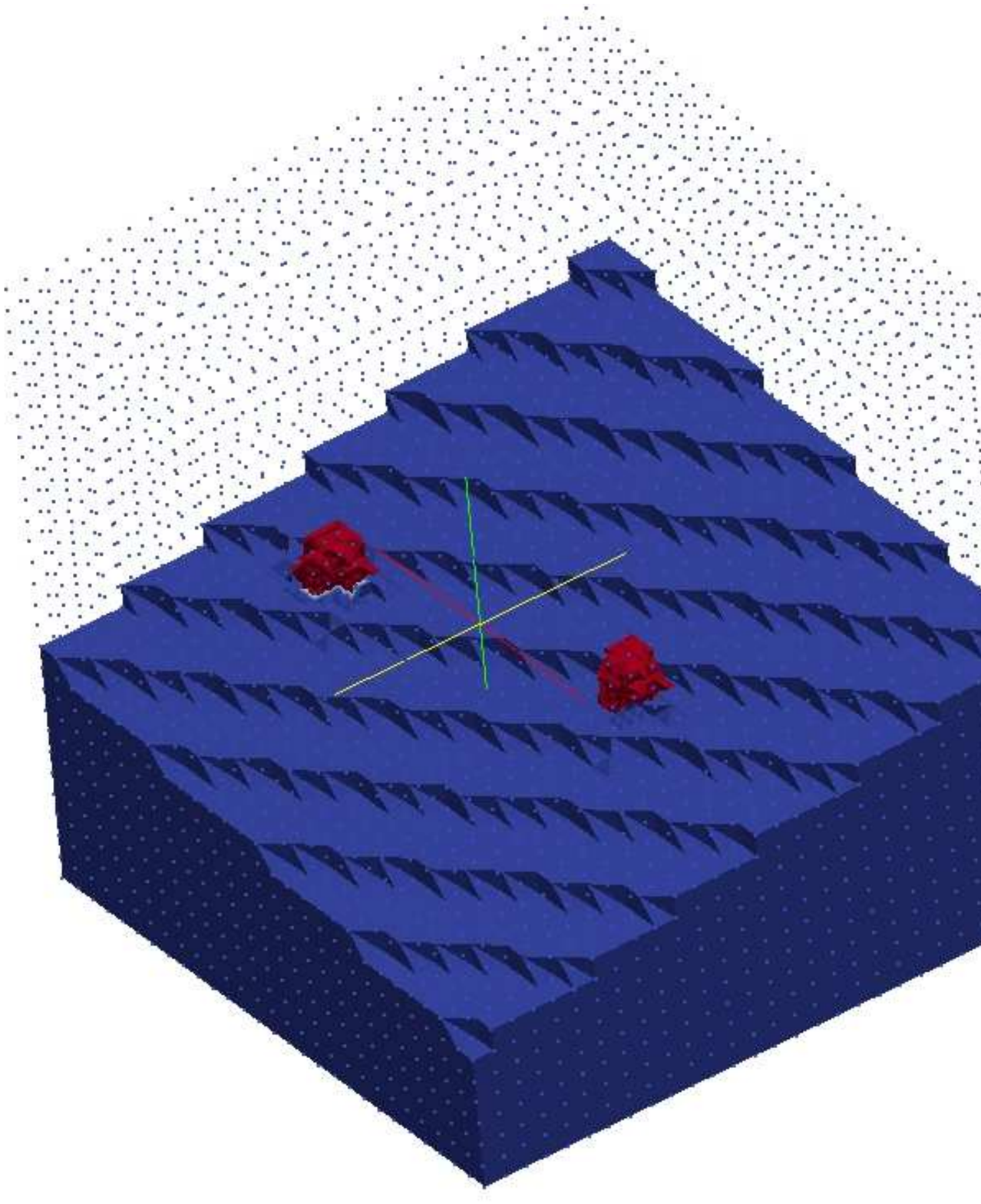}} &
{\includegraphics[scale=0.22, trim = 2.0cm 6.0cm 2.0cm 6.0cm, clip=true,]{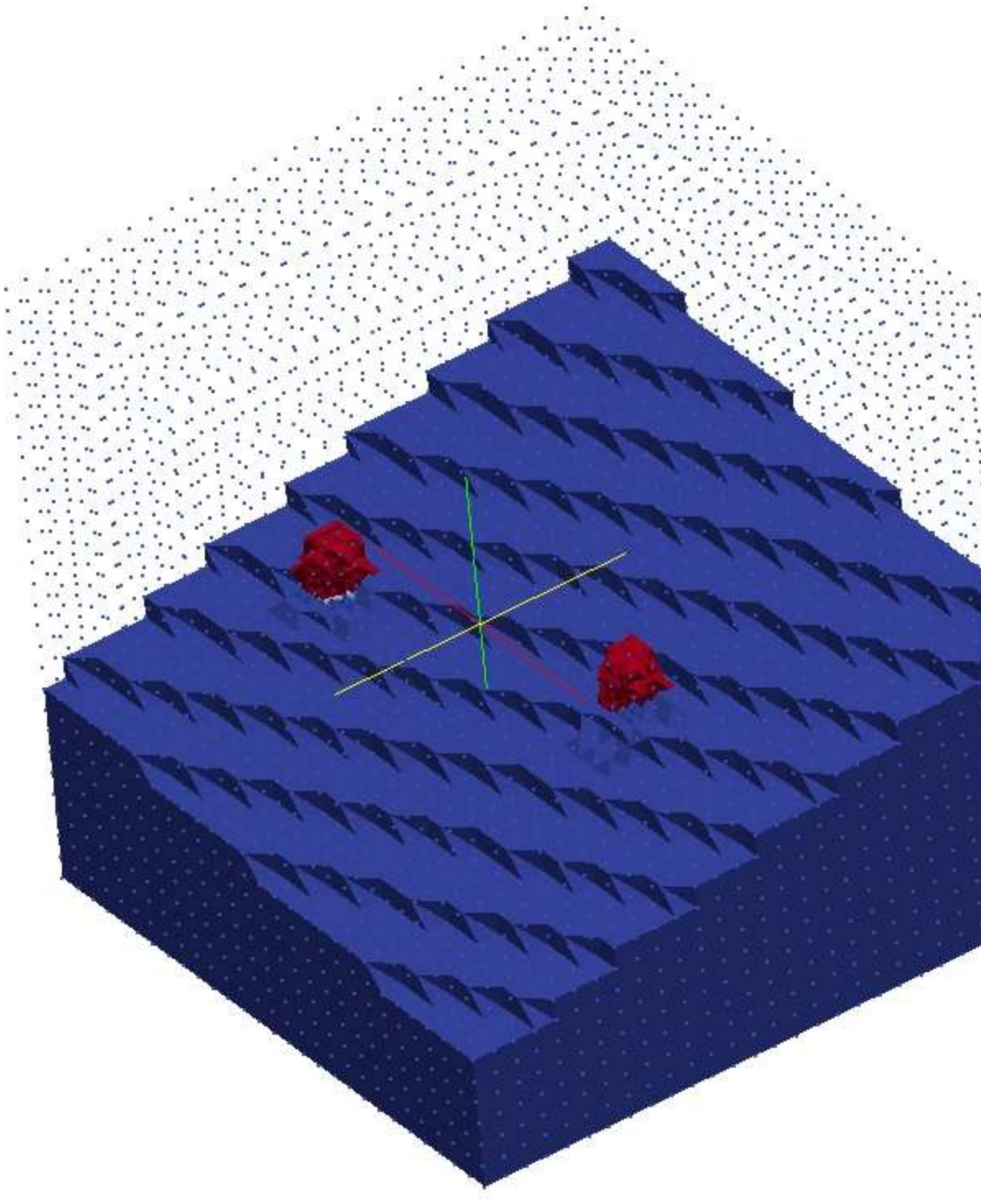}} \\
$\x\in\Omega_\mathrm{FEM}:\eps_\mathrm{rec}(\x) = 1.2$ & $\x\in\Omega_\mathrm{FEM}:\eps_\mathrm{rec}(\x) = 1.5 $ & $\x\in\Omega_\mathrm{FEM}:\eps_\mathrm{rec}(\x) = 1.8 $ 
\end{tabular}
\end{center}
\caption{\small\emph{Test 1. Prospect views of reconstructed isosurfaces of $\eps_\mathrm{rec}$ on a 5 times adaptively refined mesh. In this test we have obtained  $\max_{\Omega_\mathrm{FEM}} \eps_\mathrm{rec} = 1.97$, see Table 2. The noise level in the data is $\sigma=10\%$.}}
\label{fig:test1cellsref5}
\end{figure}

\begin{figure}
\begin{center}
\begin{tabular}{cc}
{\includegraphics[scale=0.4, clip=true,]{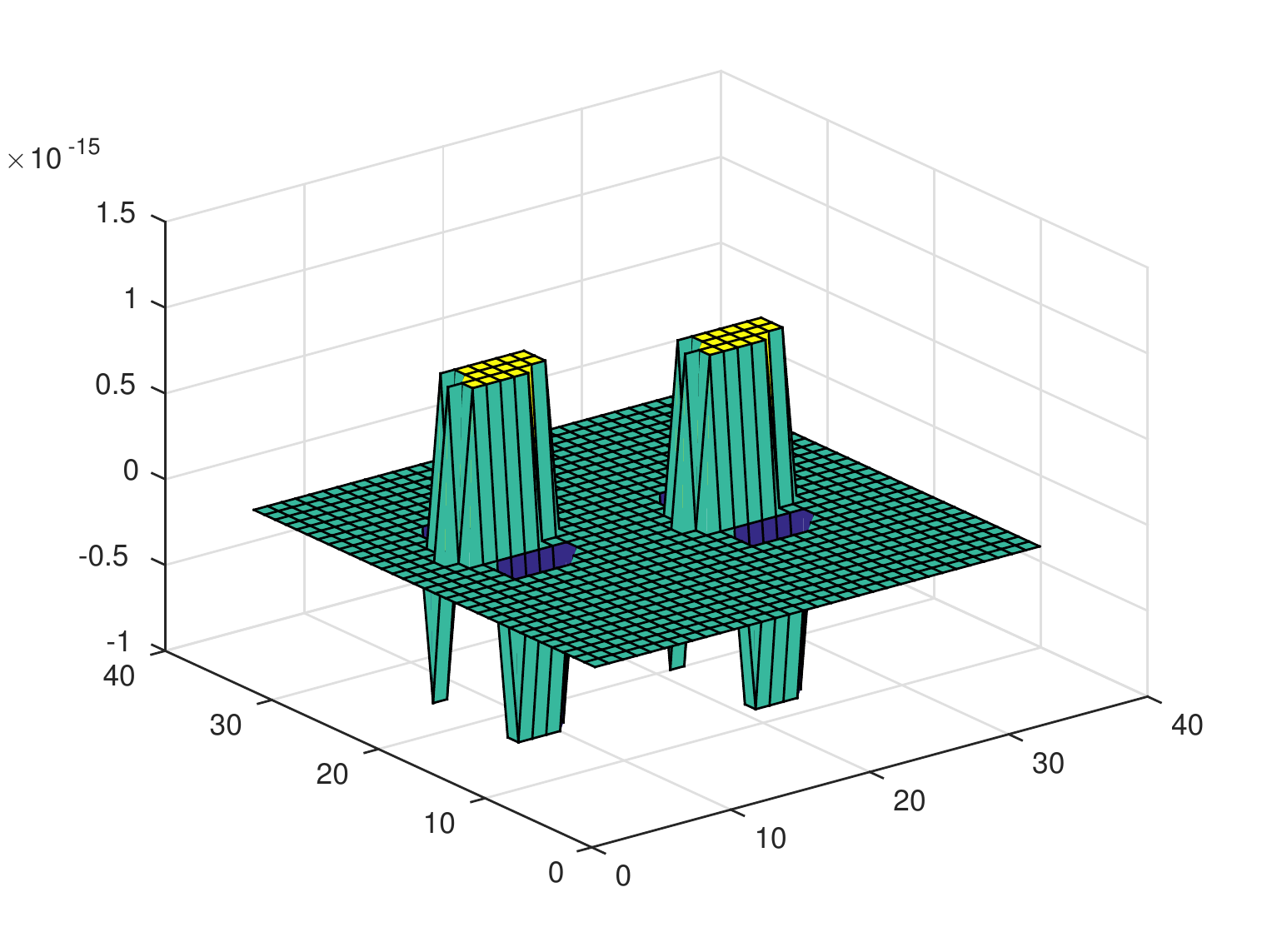}} &
{\includegraphics[scale=0.4, clip=true,]{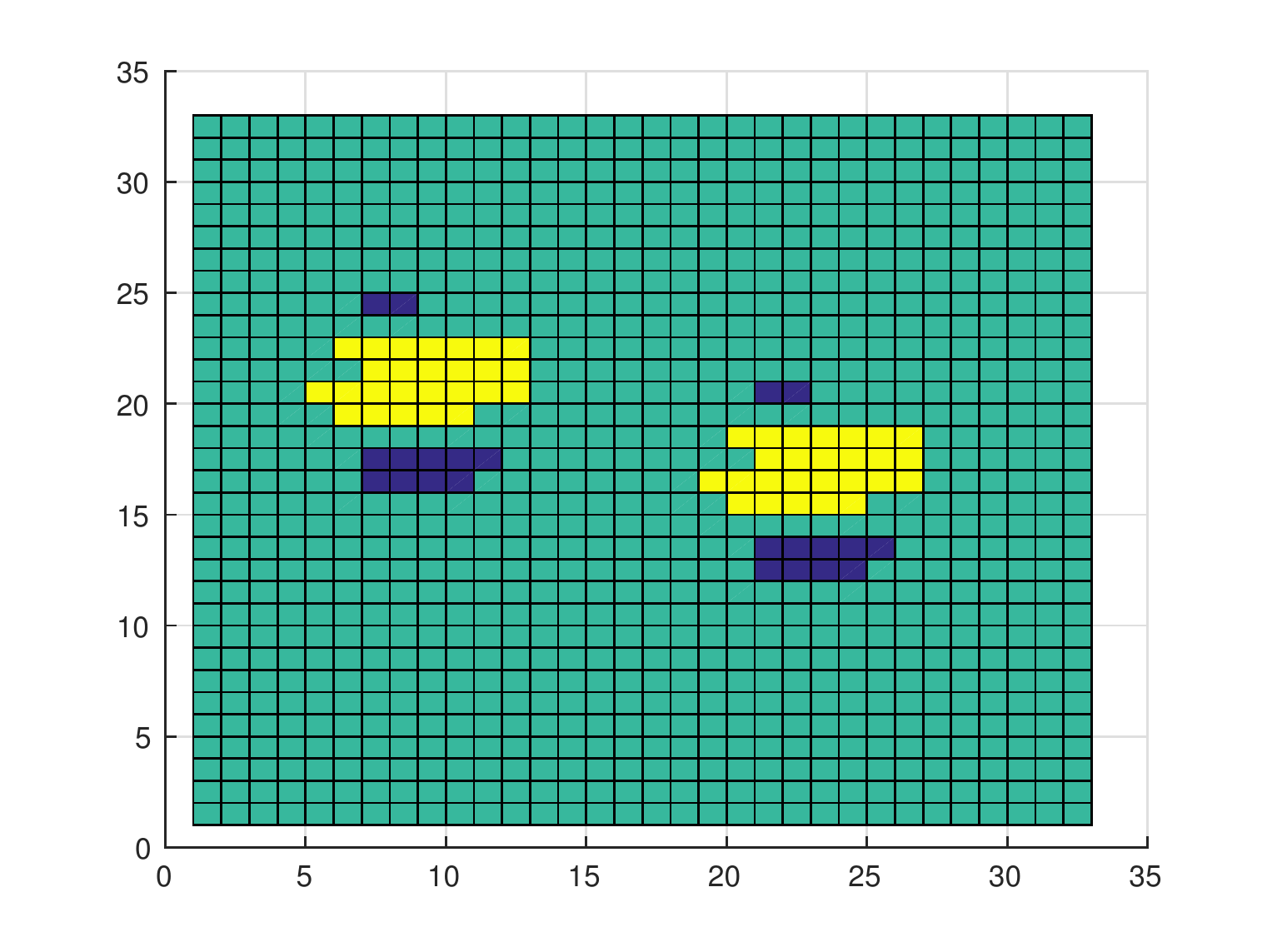}} \\
$t=0.6$  & $t=0.6, x_1 x_2 $ view \\
{\includegraphics[scale=0.4, clip=true,]{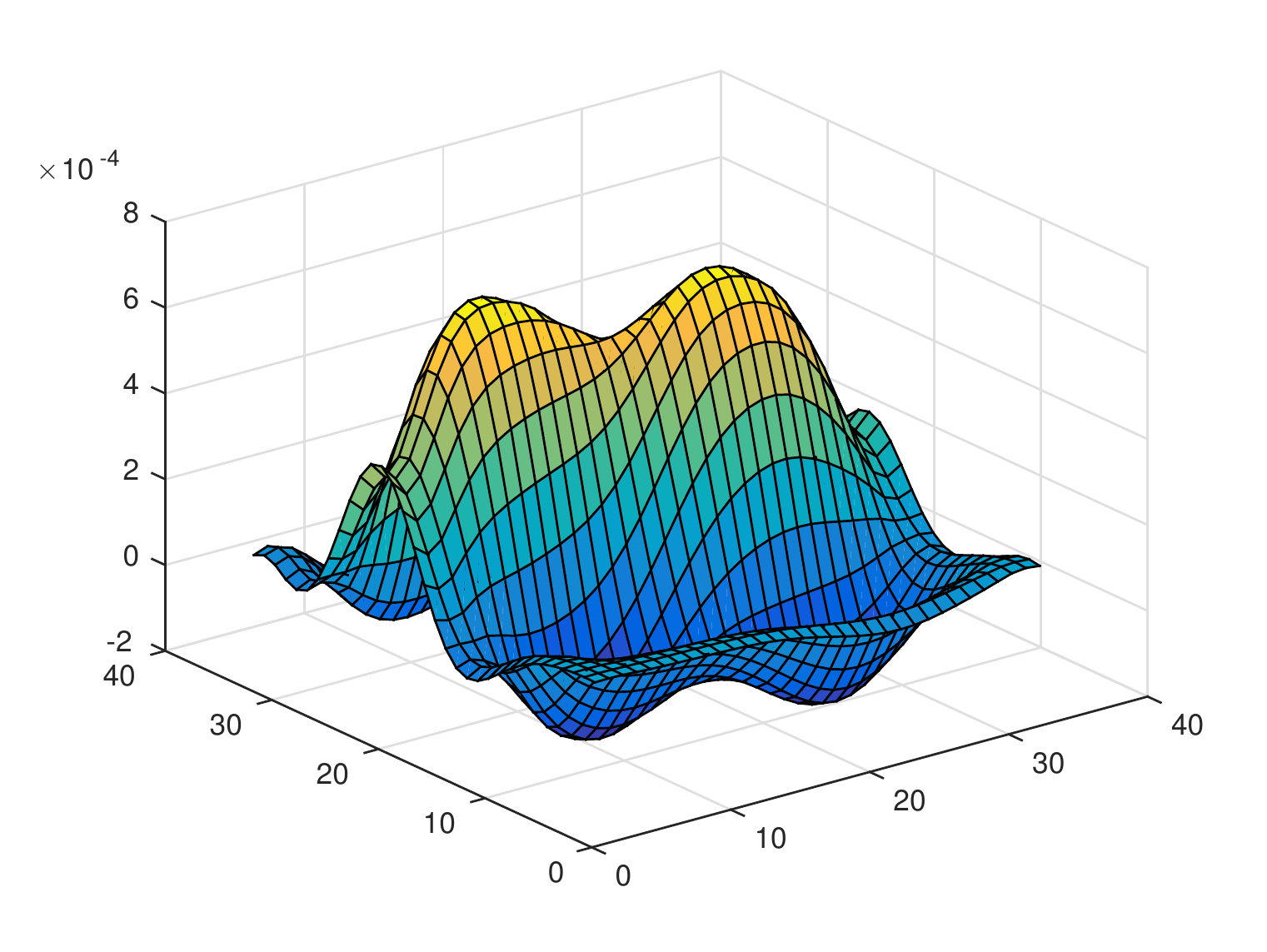}} &
{\includegraphics[scale=0.4, clip=true,]{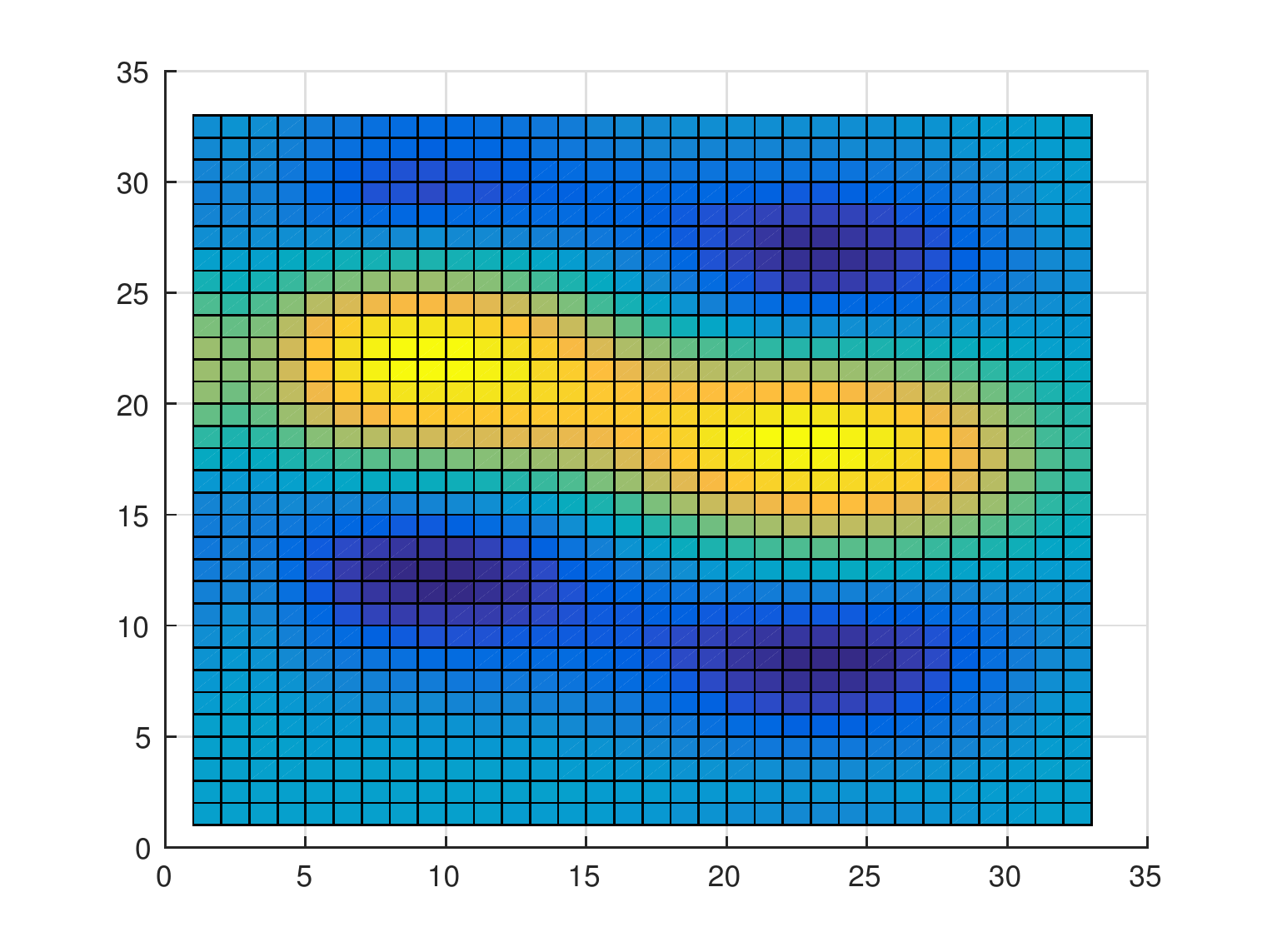}} \\
$t=1.2$  & $t=1.2, x_1 x_2$ view \\
{\includegraphics[scale=0.4, clip=true,]{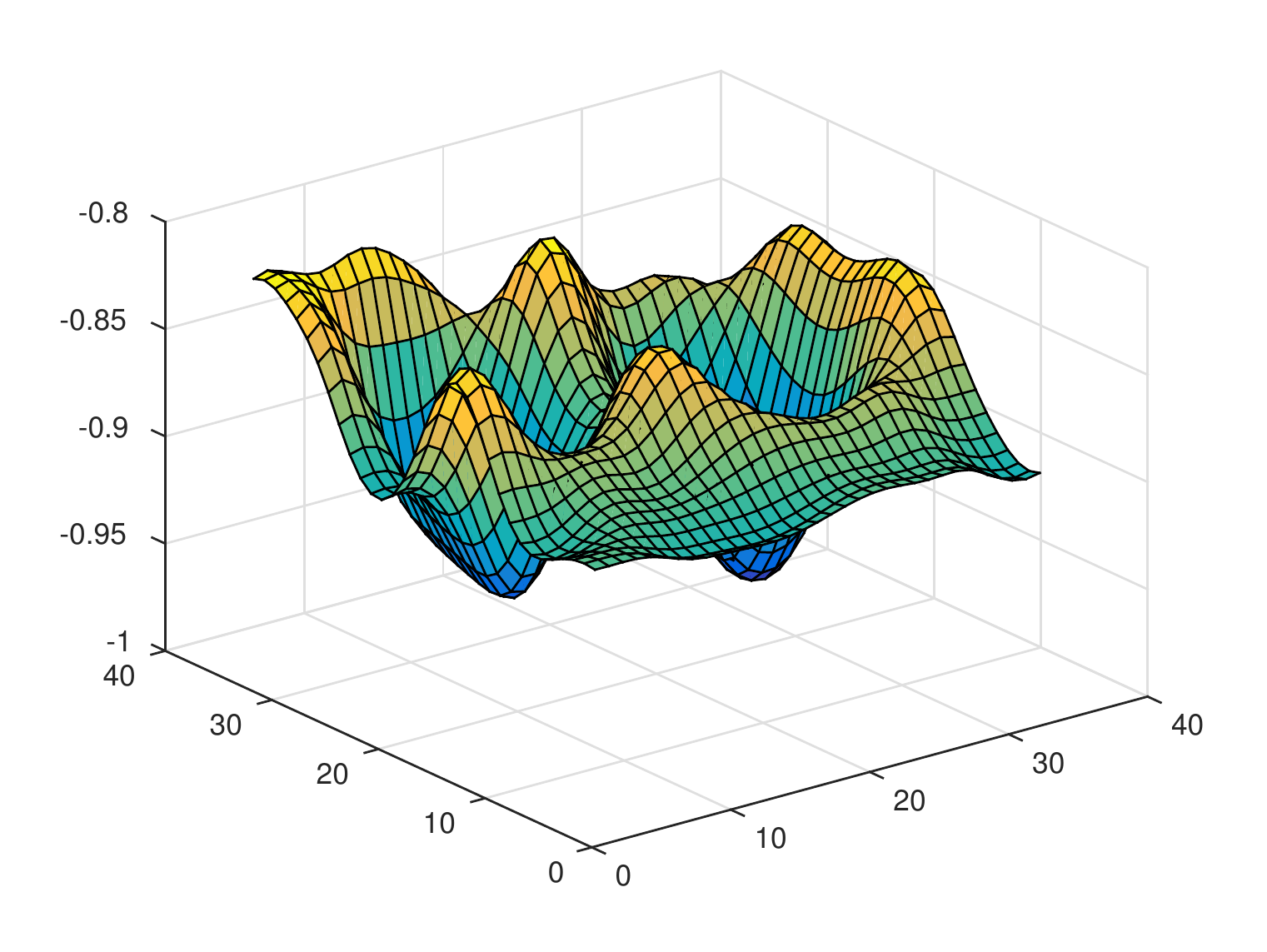}} &
{\includegraphics[scale=0.4, clip=true,]{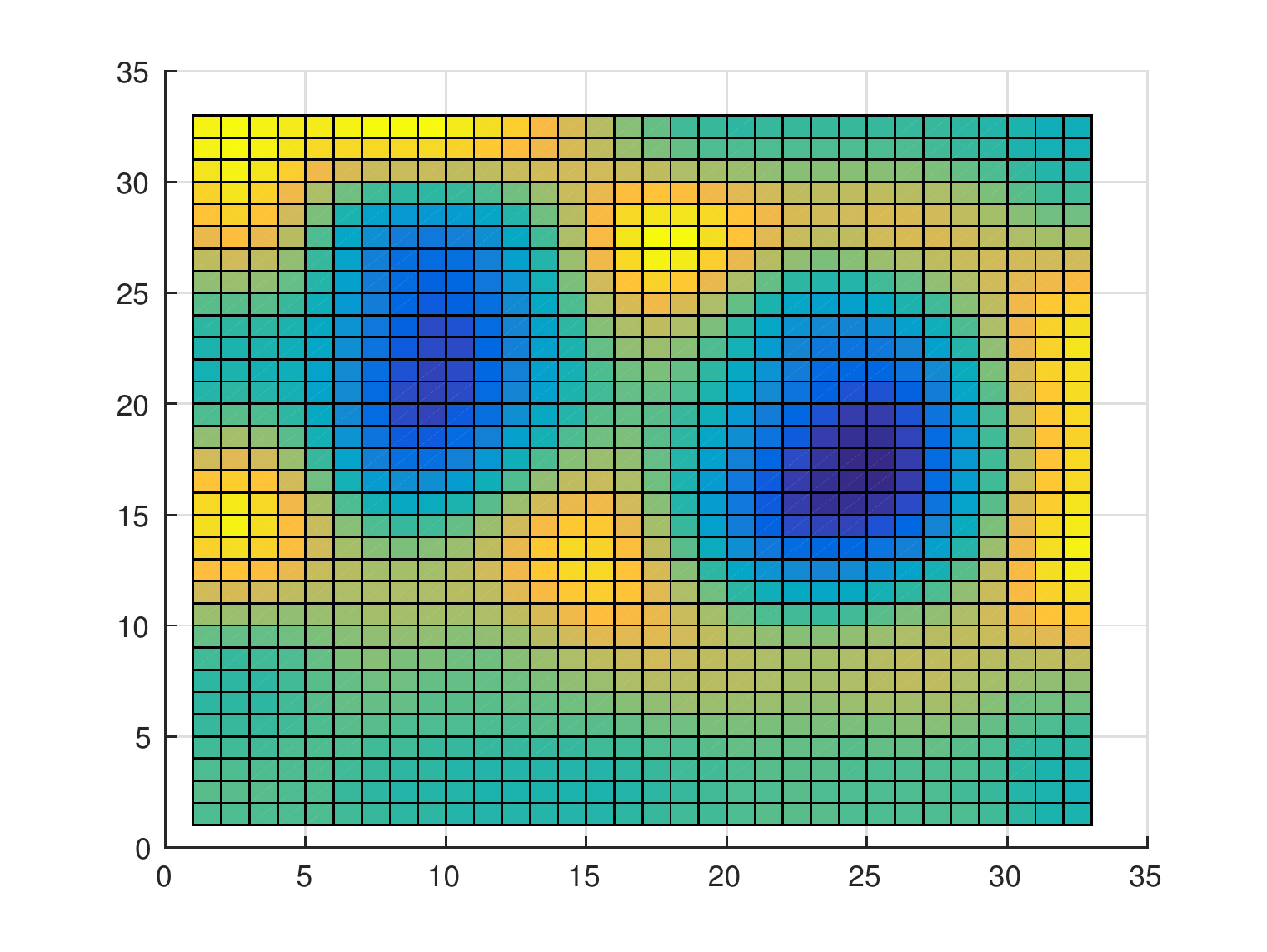}} \\
$t=1.8$  & $t=1.8, x_1 x_2$ view \\
\end{tabular}
\end{center}
\caption{\small\emph{Test 1. Transmitted data of component $E_2$ at different times. The noise in the data is $\sigma=10\%$.}}
\label{fig:test1data}
\end{figure}

 \begin{figure}
 \begin{center}
 \begin{tabular}{ccc}
 {\includegraphics[scale=0.18, trim = 2.0cm 6.0cm 2.0cm 6.0cm, clip=true,]{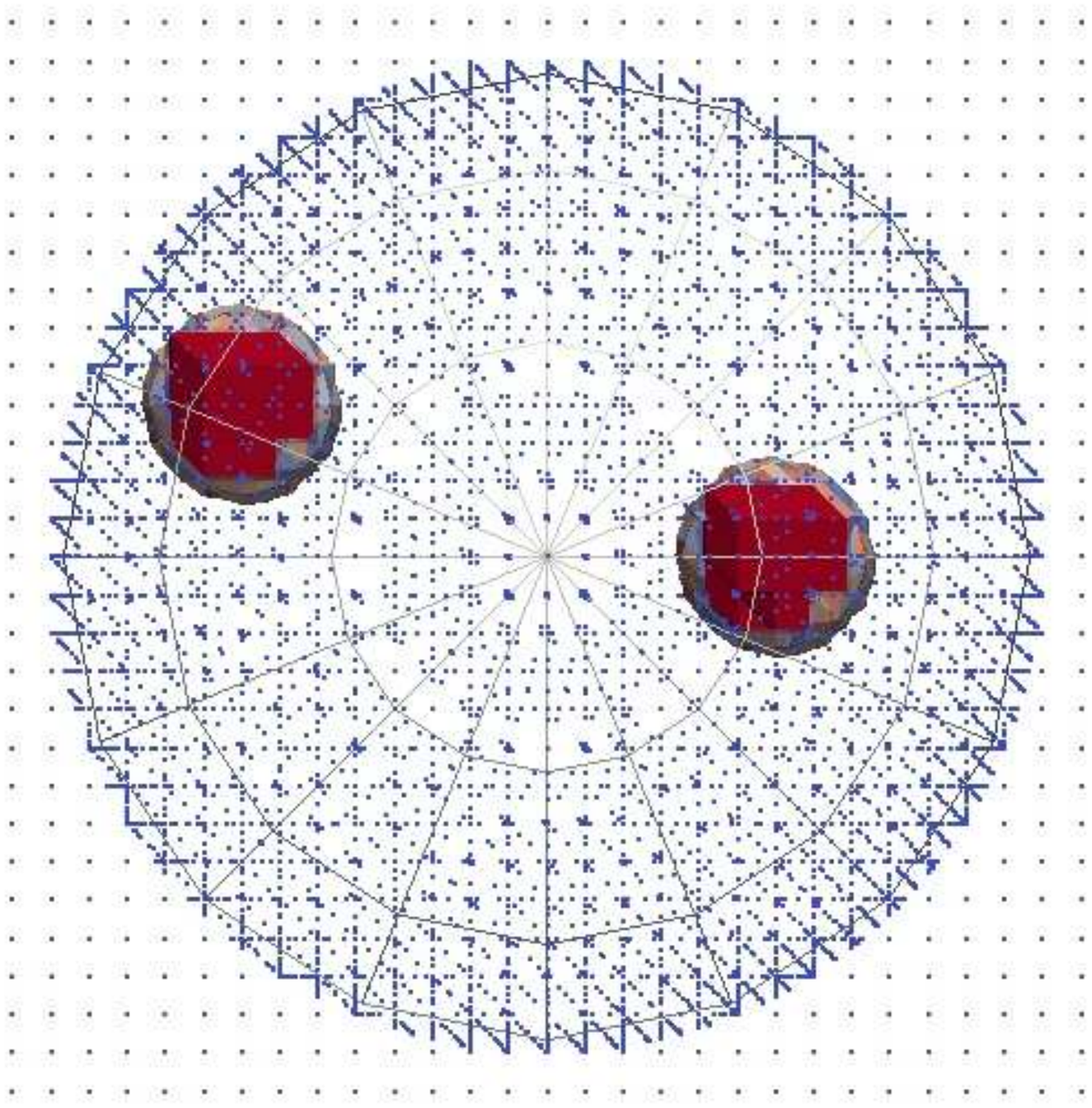}} &
 {\includegraphics[scale=0.18, trim = 2.0cm 6.0cm 2.0cm 6.0cm, clip=true,]{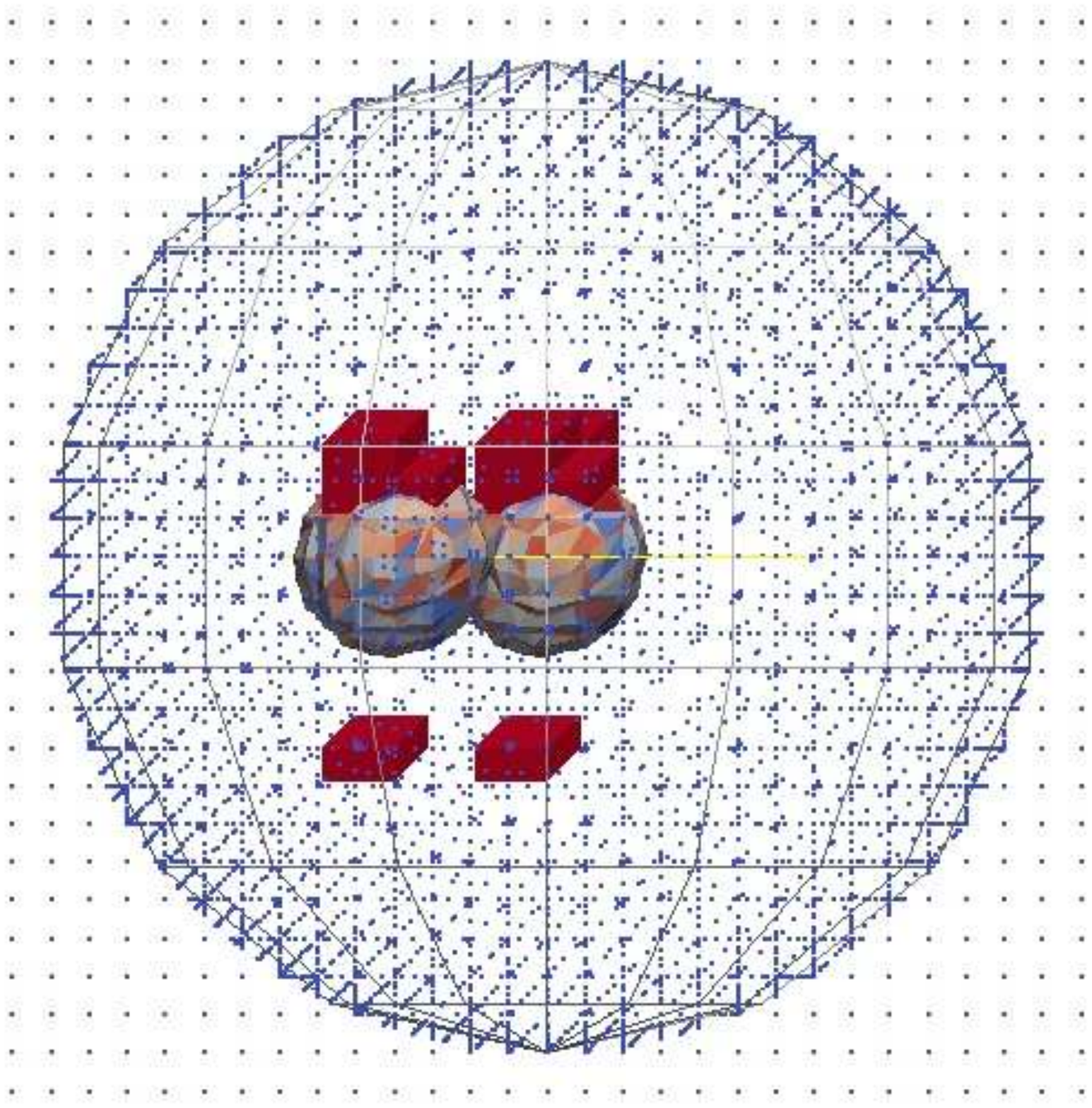}} &
 {\includegraphics[scale=0.18, trim = 2.0cm 6.0cm 2.0cm 6.0cm, clip=true,]{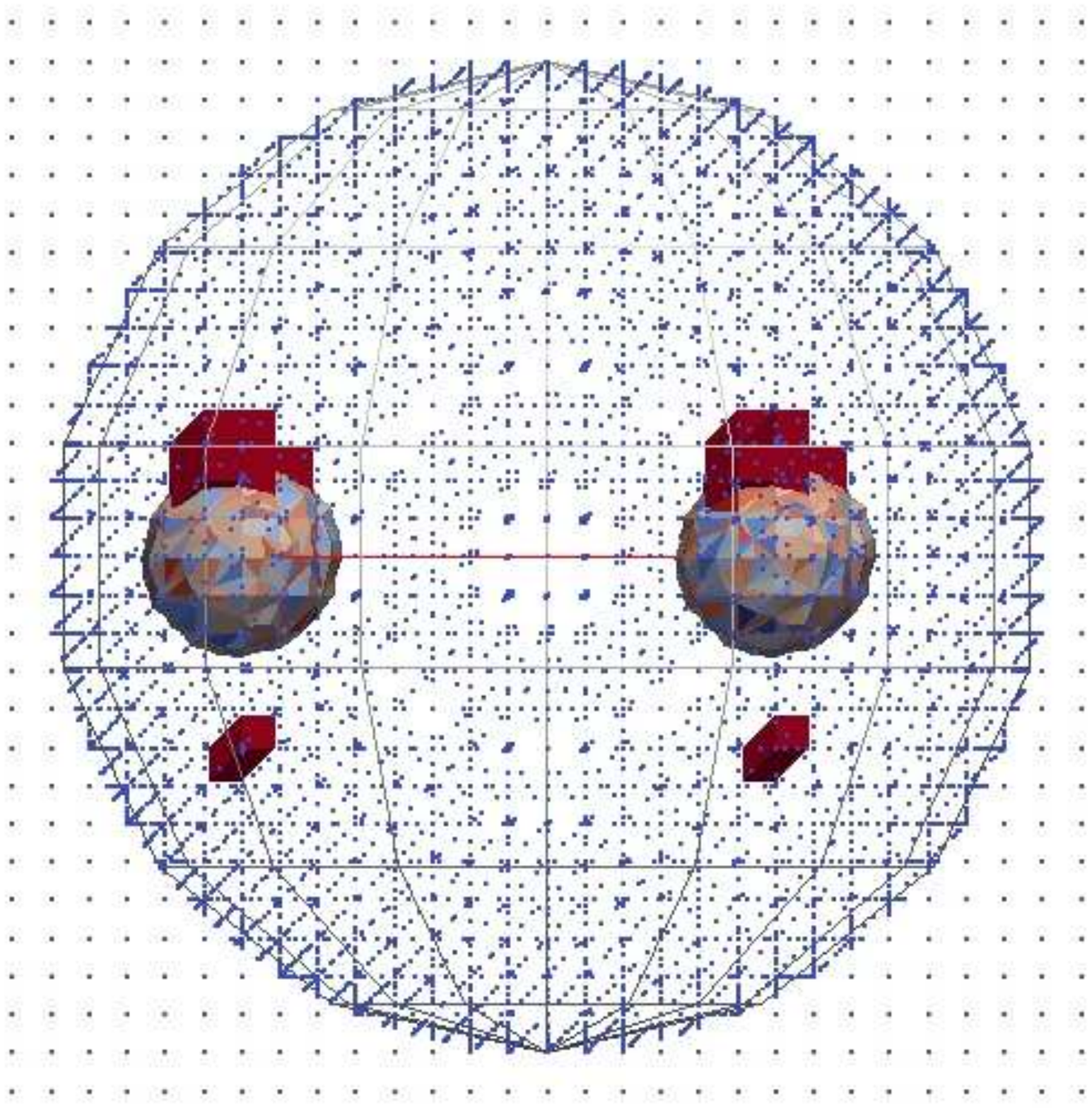}} \\
 $x_1 x_2$ view & $x_2 x_3$ view &   $x_1 x_3$ view \\
 {\includegraphics[scale=0.18, trim = 2.0cm 6.0cm 2.0cm 6.0cm, clip=true,]{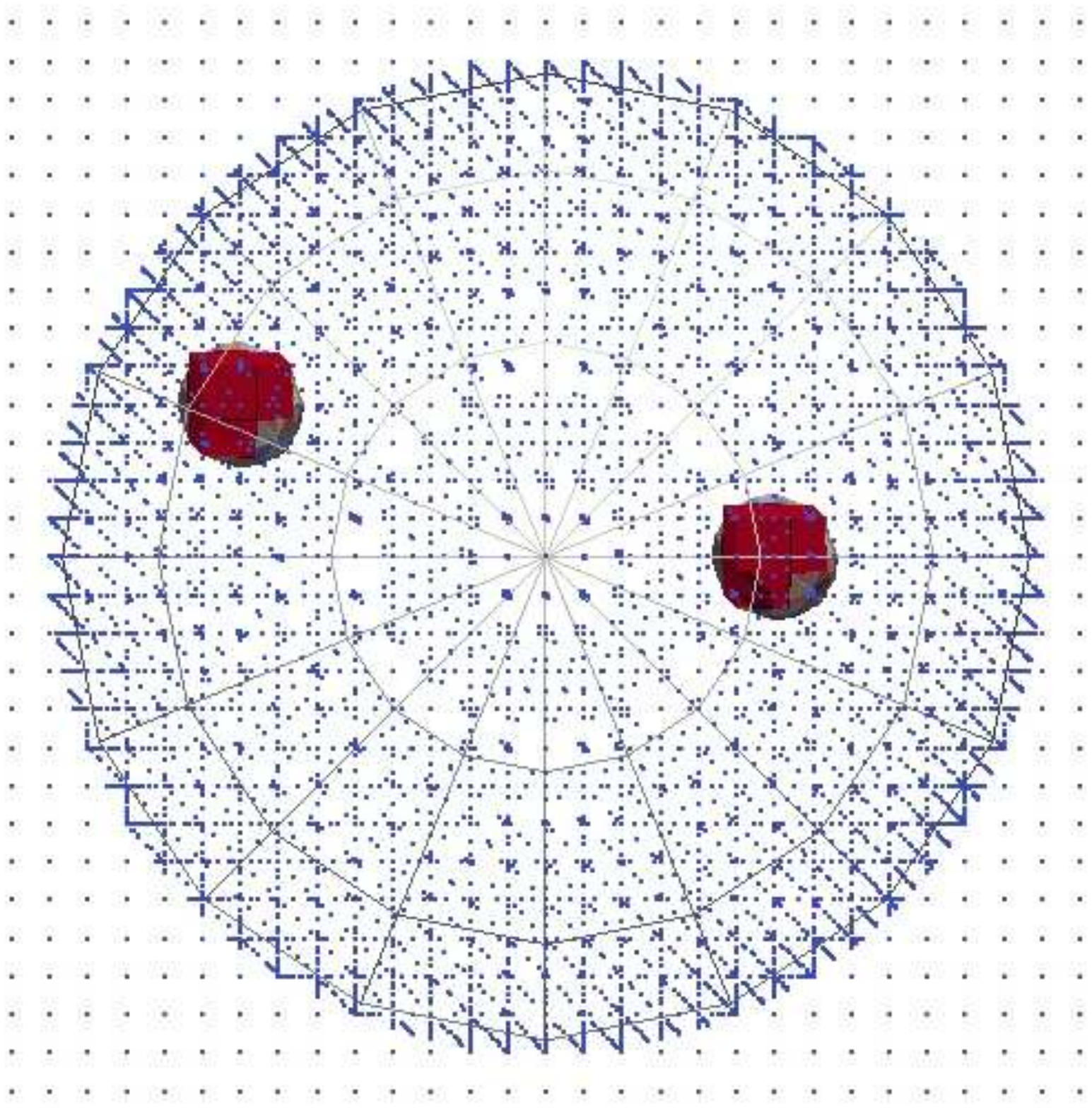}} &
 {\includegraphics[scale=0.18, trim = 2.0cm 6.0cm 2.0cm 6.0cm, clip=true,]{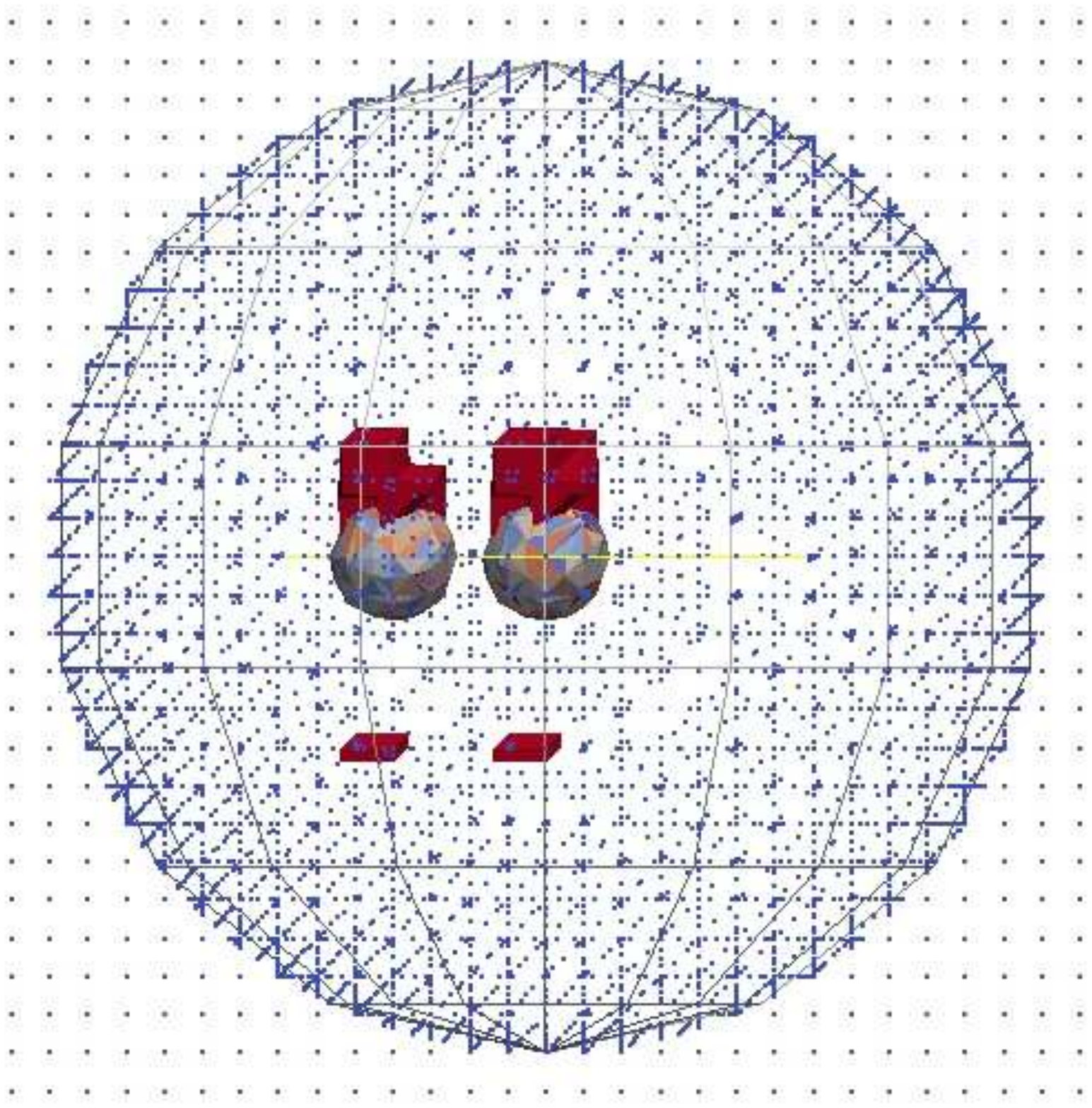}} &
 {\includegraphics[scale=0.18, trim = 2.0cm 6.0cm 2.0cm 6.0cm, clip=true,]{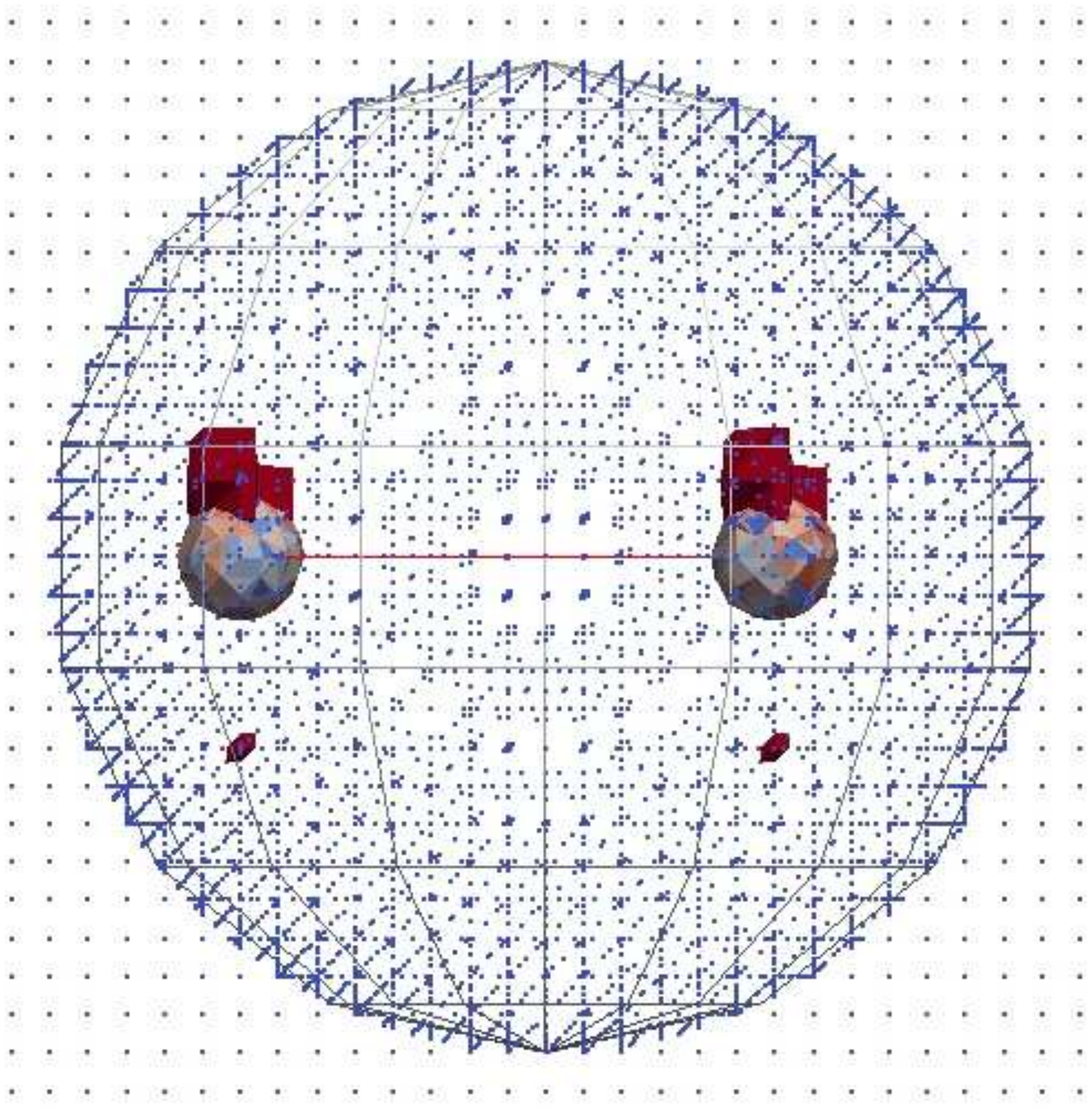}} \\
 $x_1 x_2$ view & $x_2 x_3$ view &   $x_1 x_3$ view \\
 {\includegraphics[scale=0.18, trim = 2.0cm 6.0cm 2.0cm 6.0cm, clip=true,]{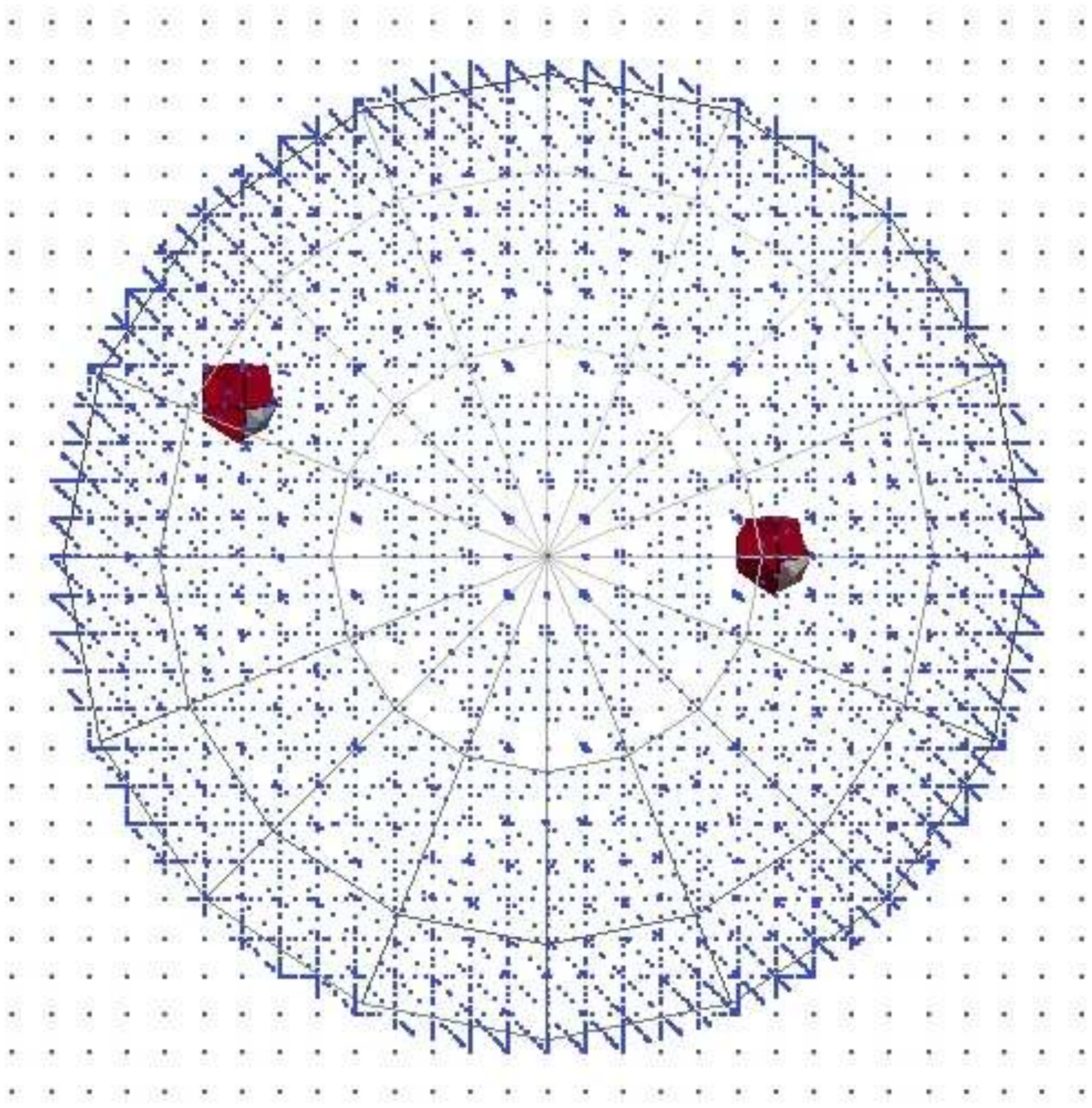}} &
 {\includegraphics[scale=0.18, trim = 2.0cm 6.0cm 2.0cm 6.0cm, clip=true,]{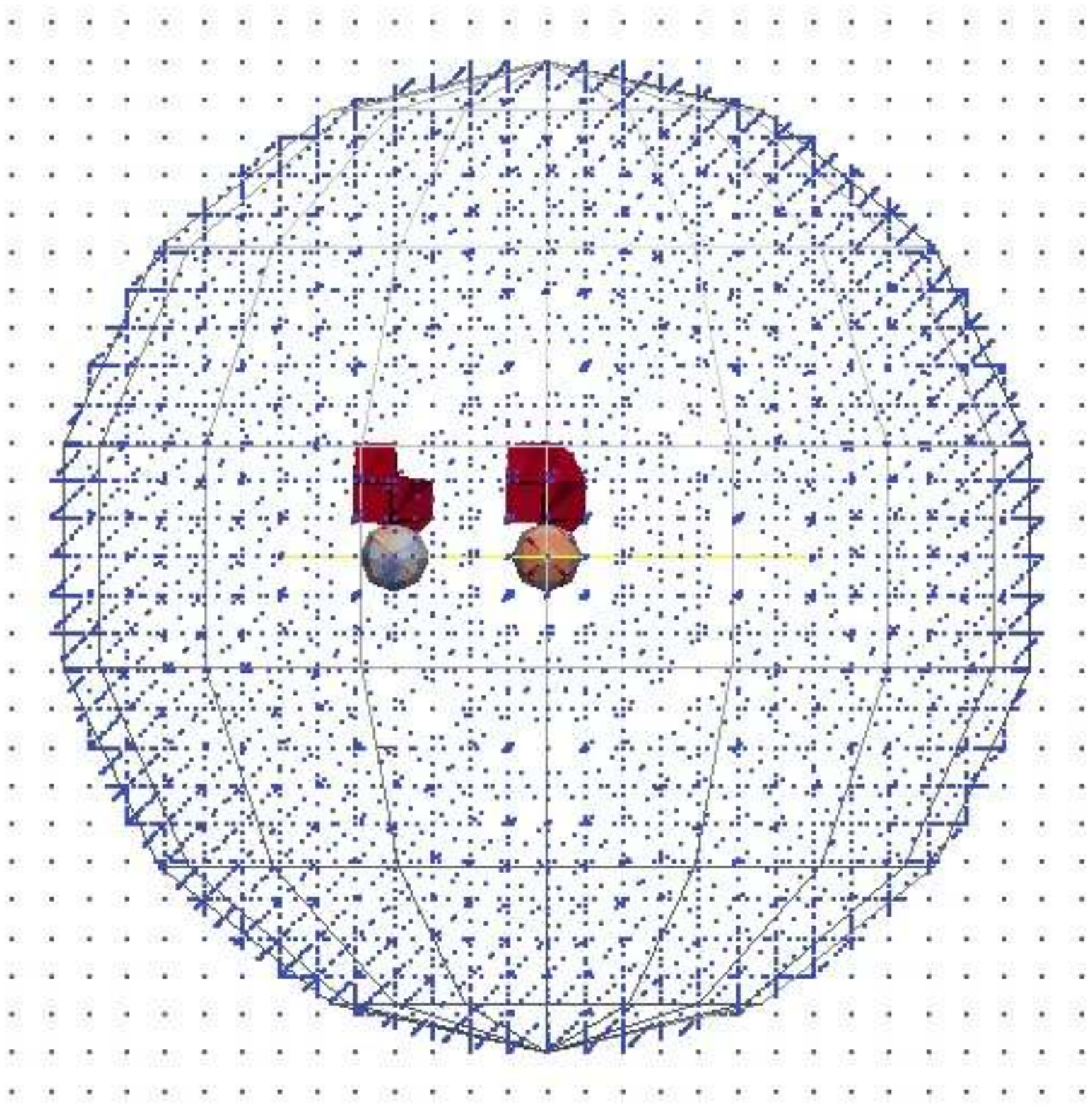}} &
 {\includegraphics[scale=0.18, trim = 2.0cm 6.0cm 2.0cm 6.0cm, clip=true,]{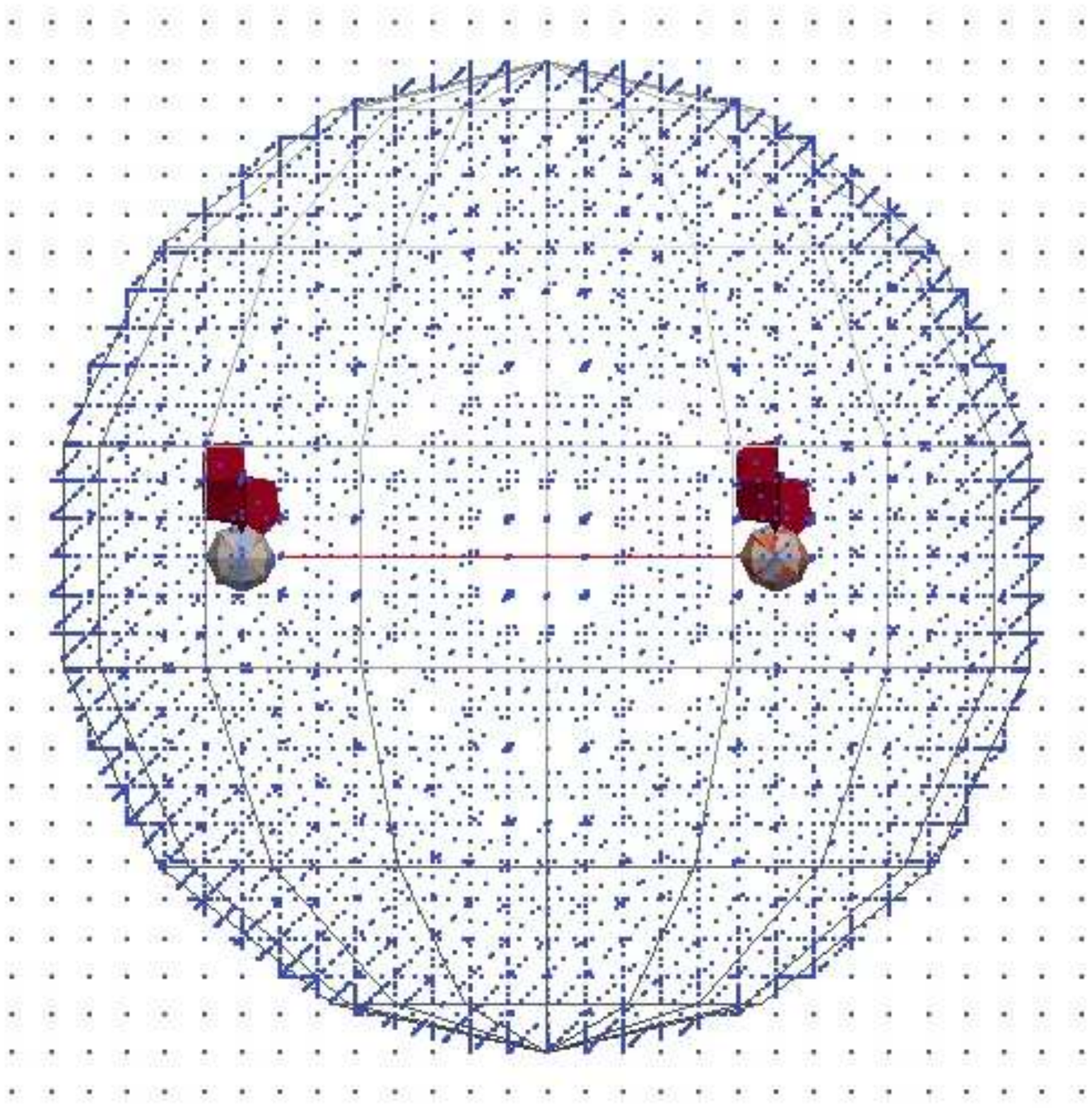}} \\
 $x_1 x_2$ view & $x_2 x_3$ view &   $x_1 x_3$ view  \\
\end{tabular}
\end{center}
\caption{\small\emph{Test 1. In red: isosurfaces $\{\x\in\Omega_\mathrm{FEM}:\eps_{h_0}(\x) = 0.8 \max_{\Omega_\mathrm{FEM}} \eps_{h_0}\}$ on a coarse mesh. Here, $\max_{\Omega_\mathrm{FEM}} \eps_\mathrm{h_0} = 1.94 $, and the noise level in the data is $\sigma=10\%$. For comparison we also present as wireframes the corresponding isosurface of the function \eqref{2gaussians} in every figure.}}
\label{fig:test1noise10coarse}
\end{figure}

\begin{figure}
\begin{center}
\begin{tabular}{ccc}
$\x\in \Omega_\mathrm{FEM}:\eps_\mathrm{rec}(\x) = 1.2$ & $\x\in \Omega_\mathrm{FEM}:\eps_\mathrm{rec}(\x) = 1.5 $ & $\x\in \Omega_\mathrm{FEM}:\eps_\mathrm{rec}(\x) = 1.8 $ \\
{\includegraphics[scale=0.18, trim = 2.0cm 6.0cm 2.0cm 6.0cm, clip=true,]{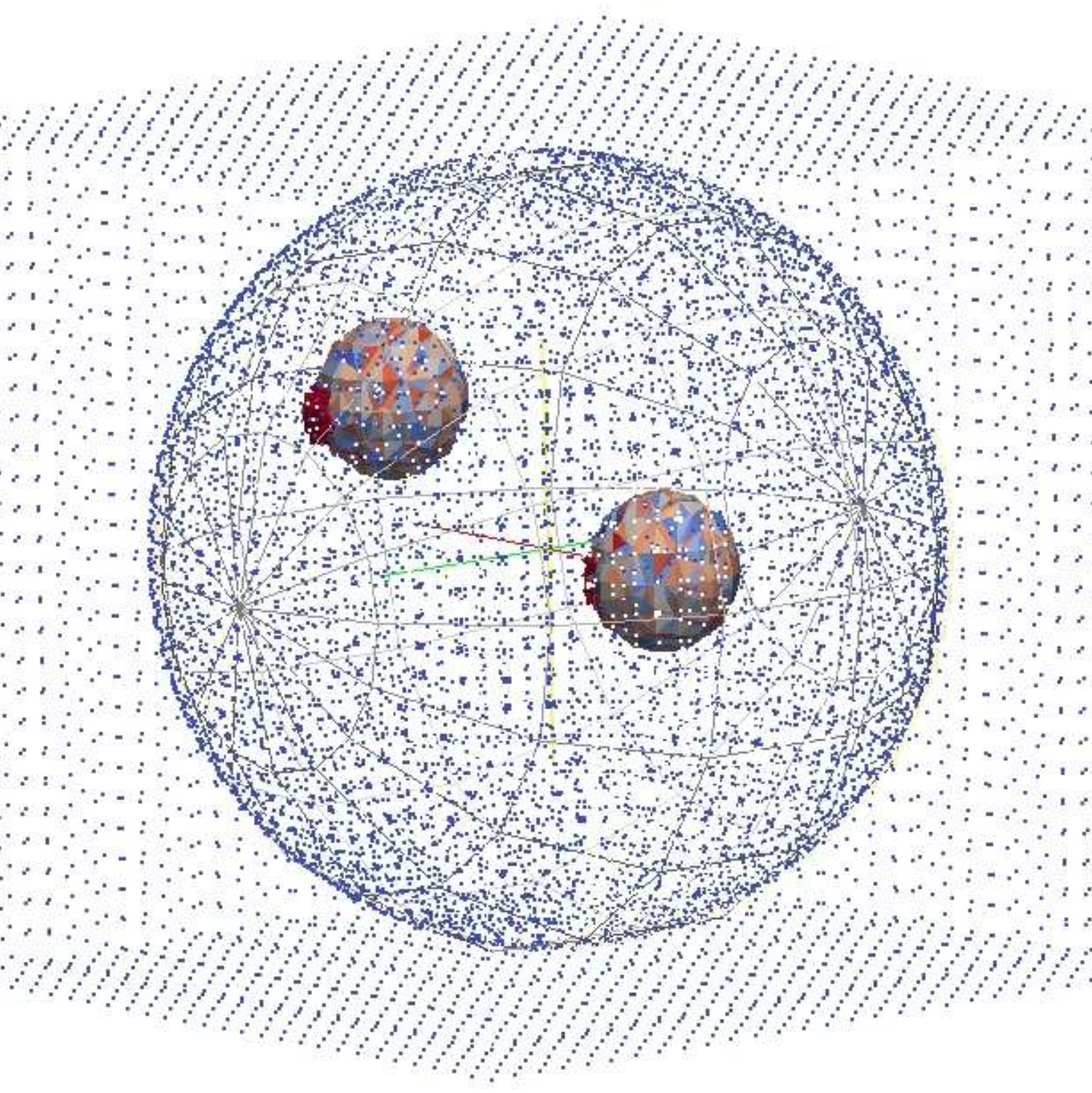}} &
{\includegraphics[scale=0.18, trim = 2.0cm 6.0cm 2.0cm 6.0cm, clip=true,]{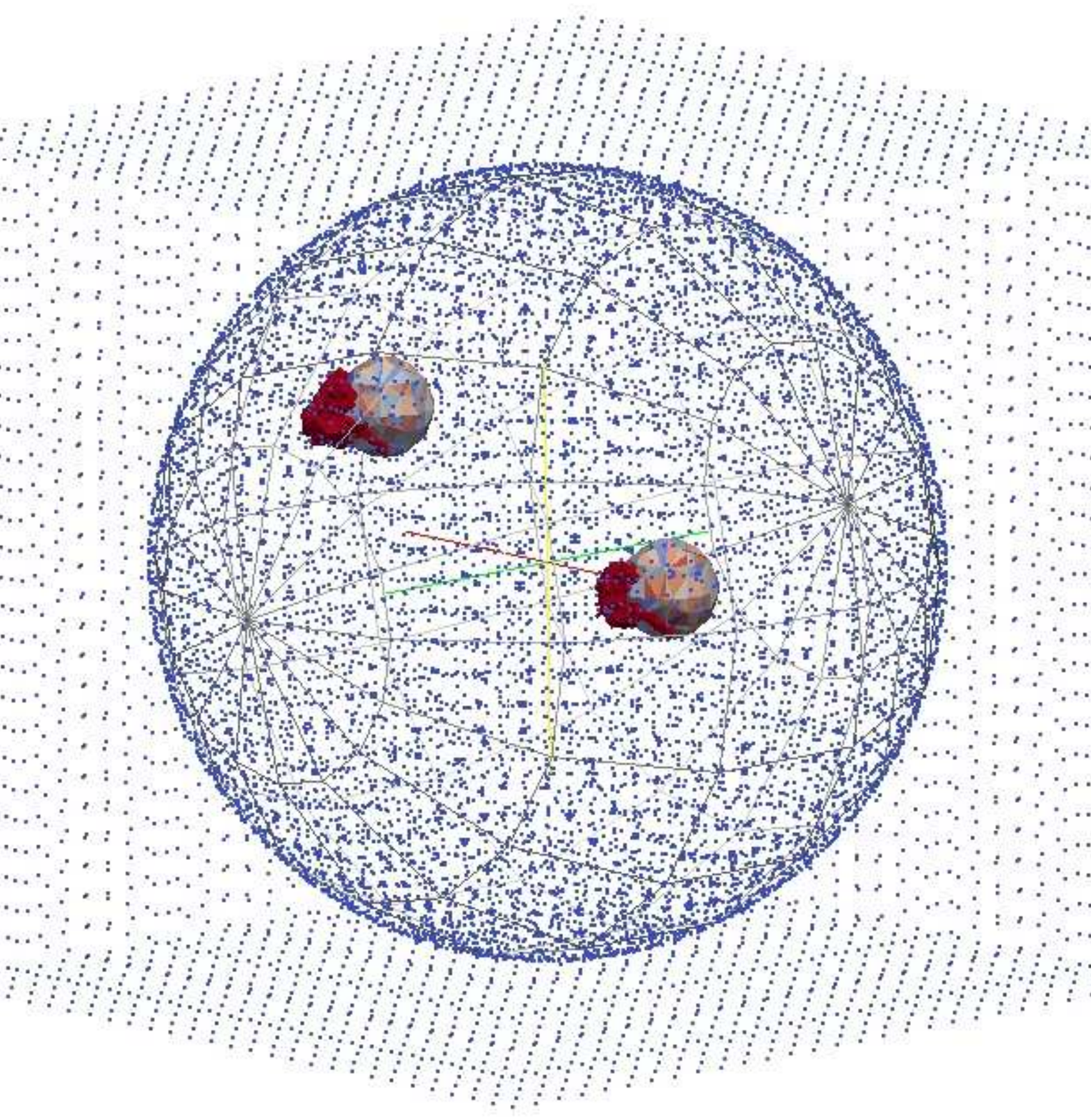}} &
{\includegraphics[scale=0.18, trim = 2.0cm 6.0cm 2.0cm 6.0cm, clip=true,]{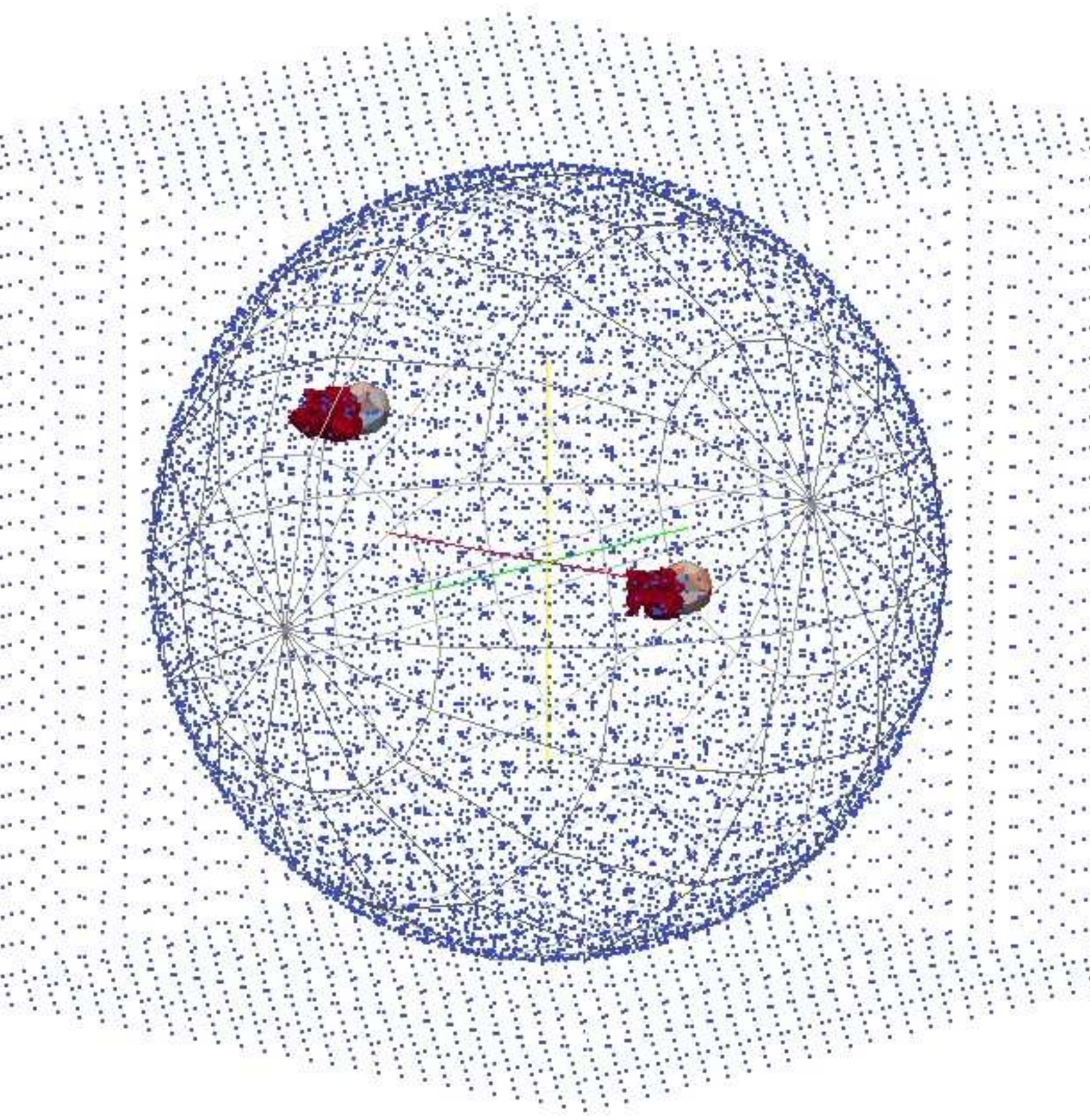}} \\
& prospect view & \\
{\includegraphics[scale=0.18, trim = 2.0cm 6.0cm 2.0cm 6.0cm, clip=true,]{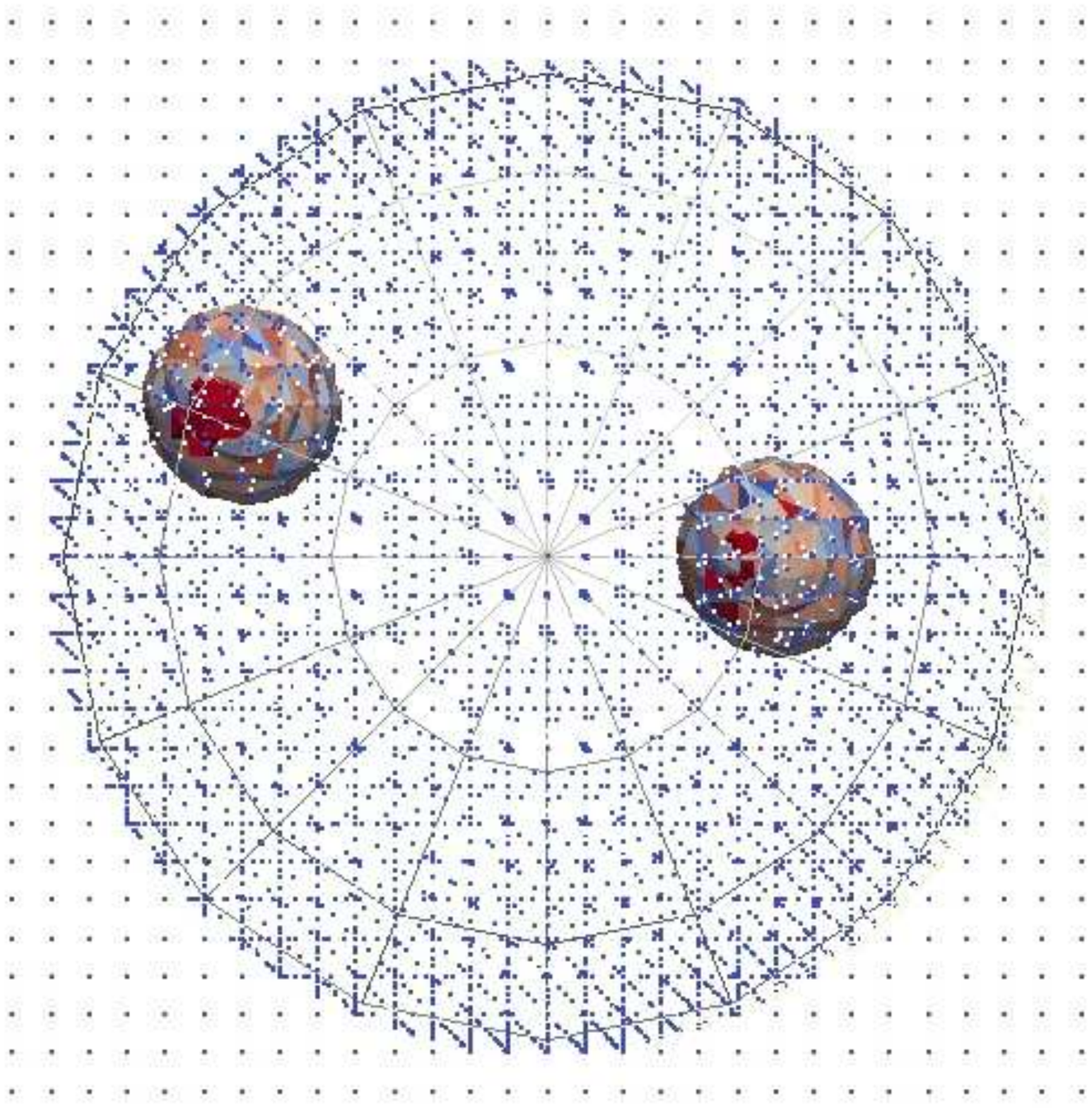}} &
{\includegraphics[scale=0.18, trim = 2.0cm 6.0cm 2.0cm 6.0cm, clip=true,]{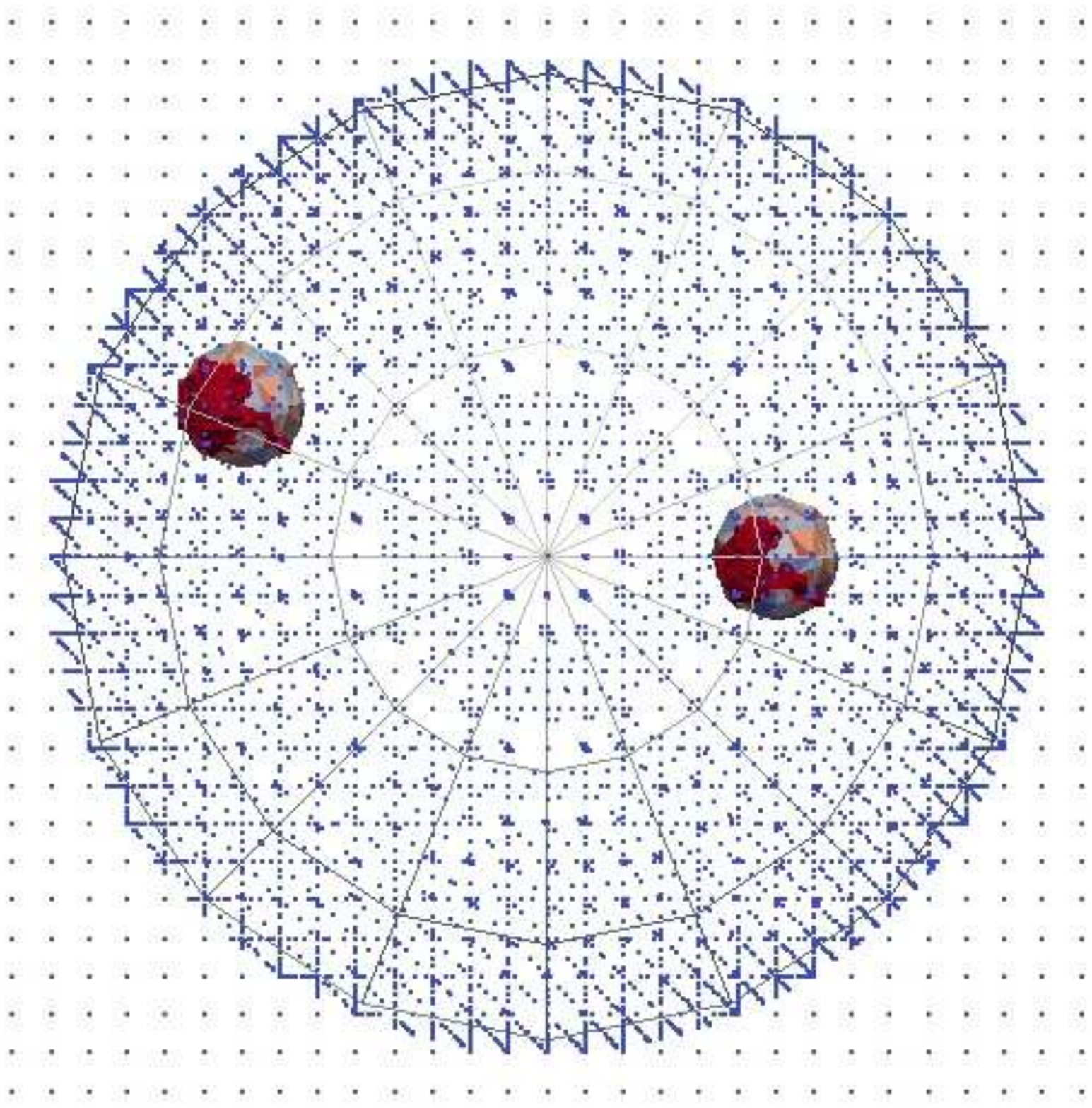}} &
{\includegraphics[scale=0.18, trim = 2.0cm 6.0cm 2.0cm 6.0cm, clip=true,]{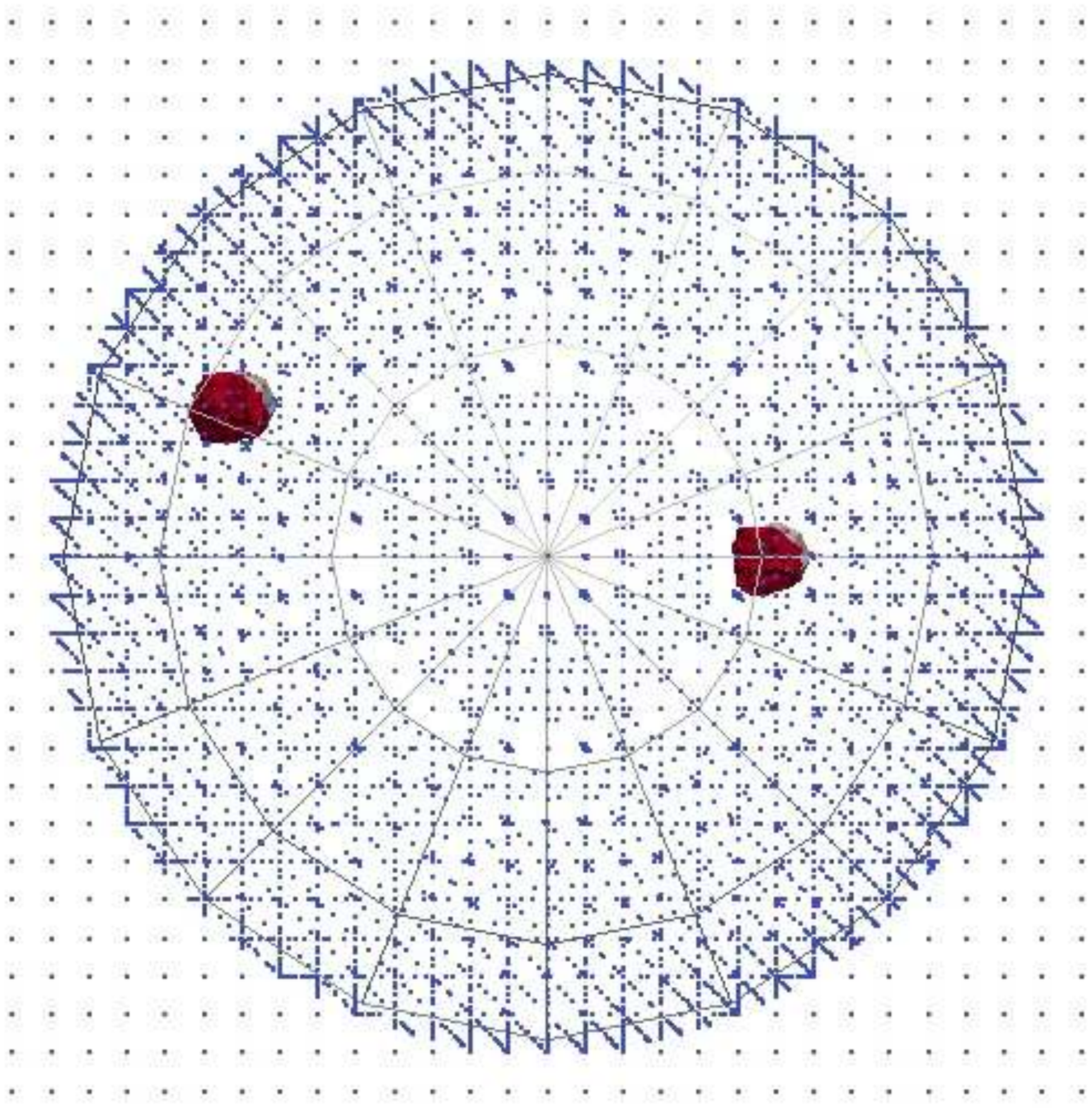}} \\
&  $x_1 x_2$ view & \\
{\includegraphics[scale=0.18, trim = 2.0cm 6.0cm 2.0cm 6.0cm, clip=true,]{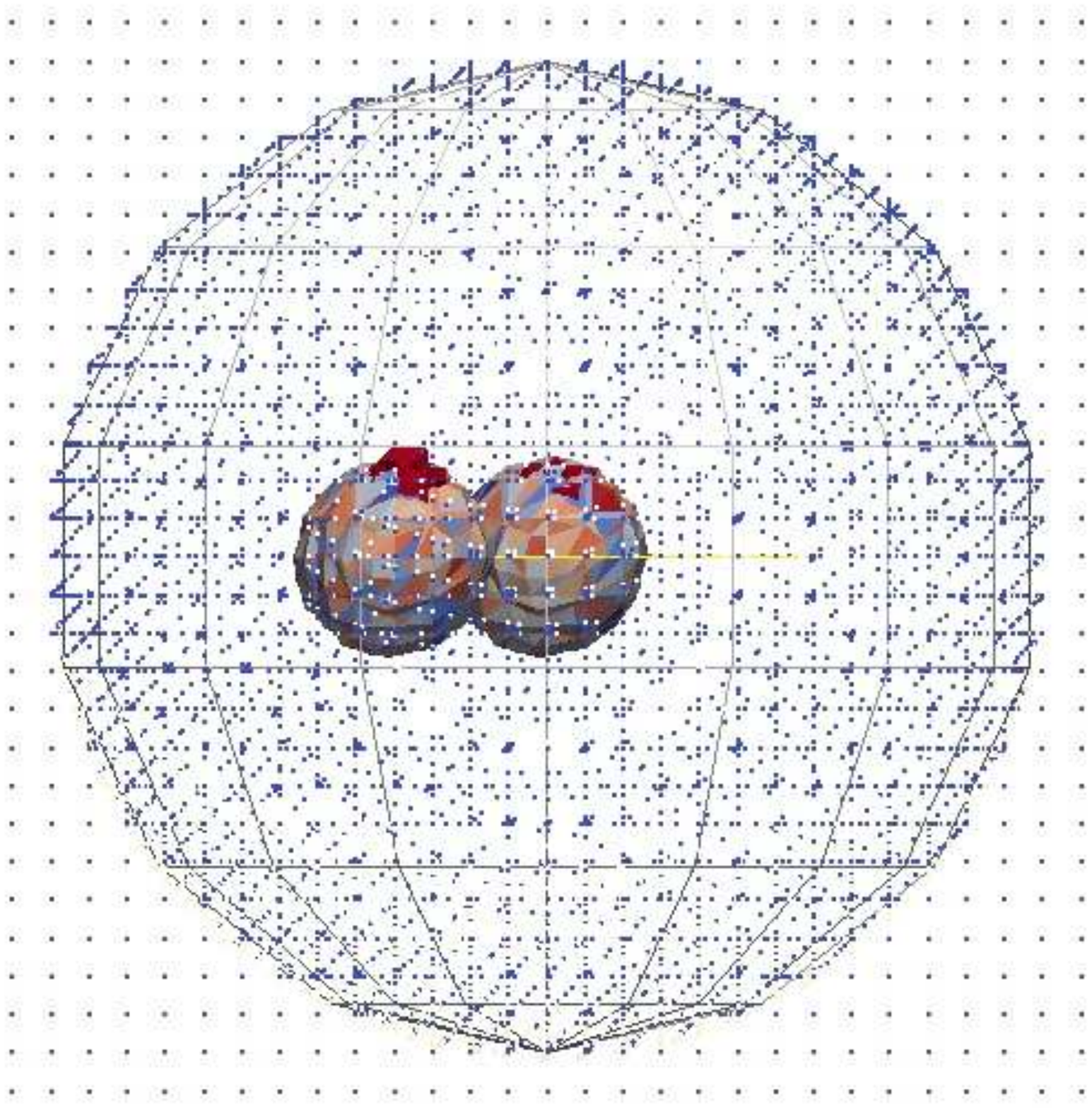}} &
{\includegraphics[scale=0.18, trim = 2.0cm 6.0cm 2.0cm 6.0cm, clip=true,]{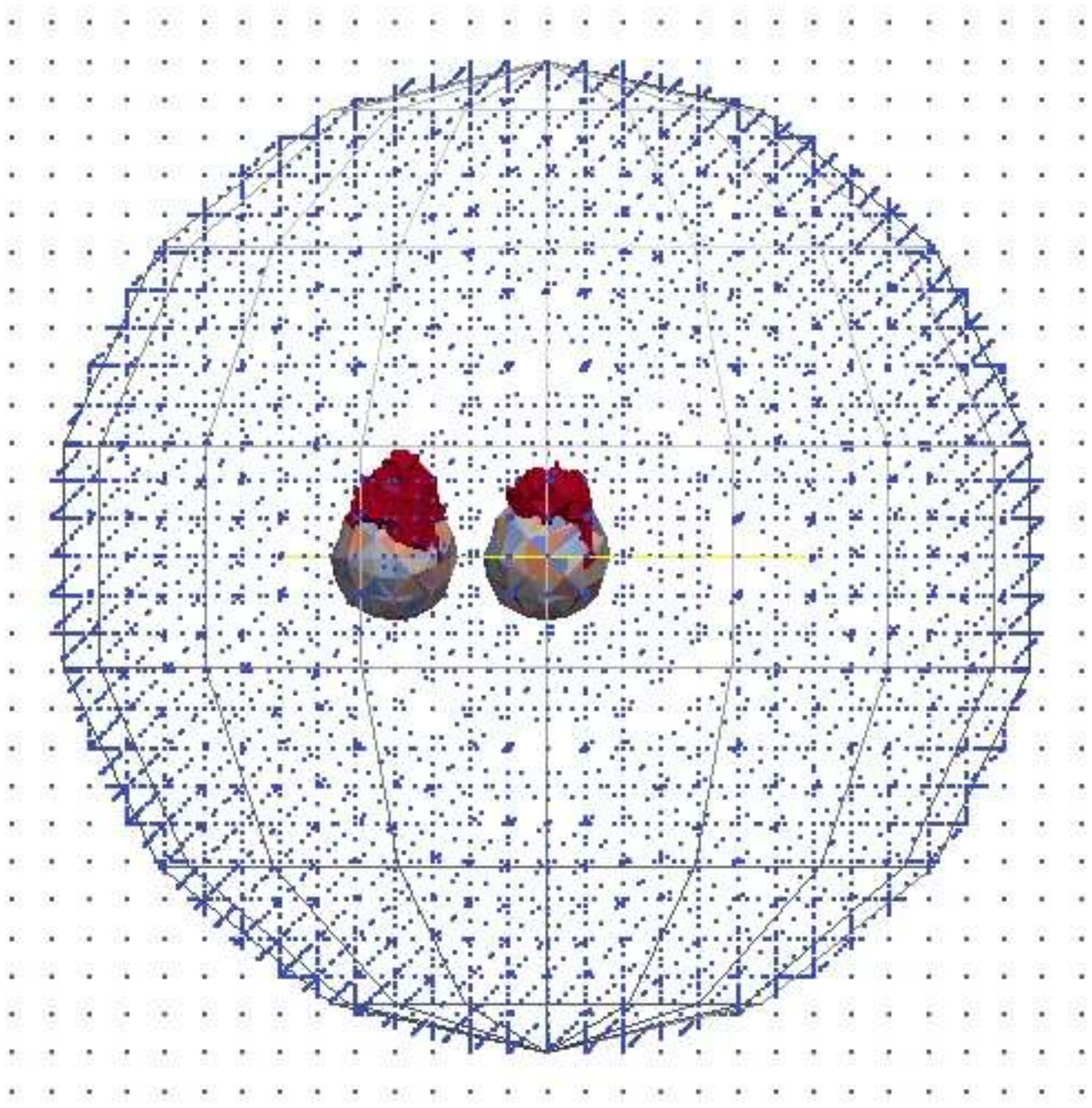}} &
{\includegraphics[scale=0.18, trim = 2.0cm 6.0cm 2.0cm 6.0cm, clip=true,]{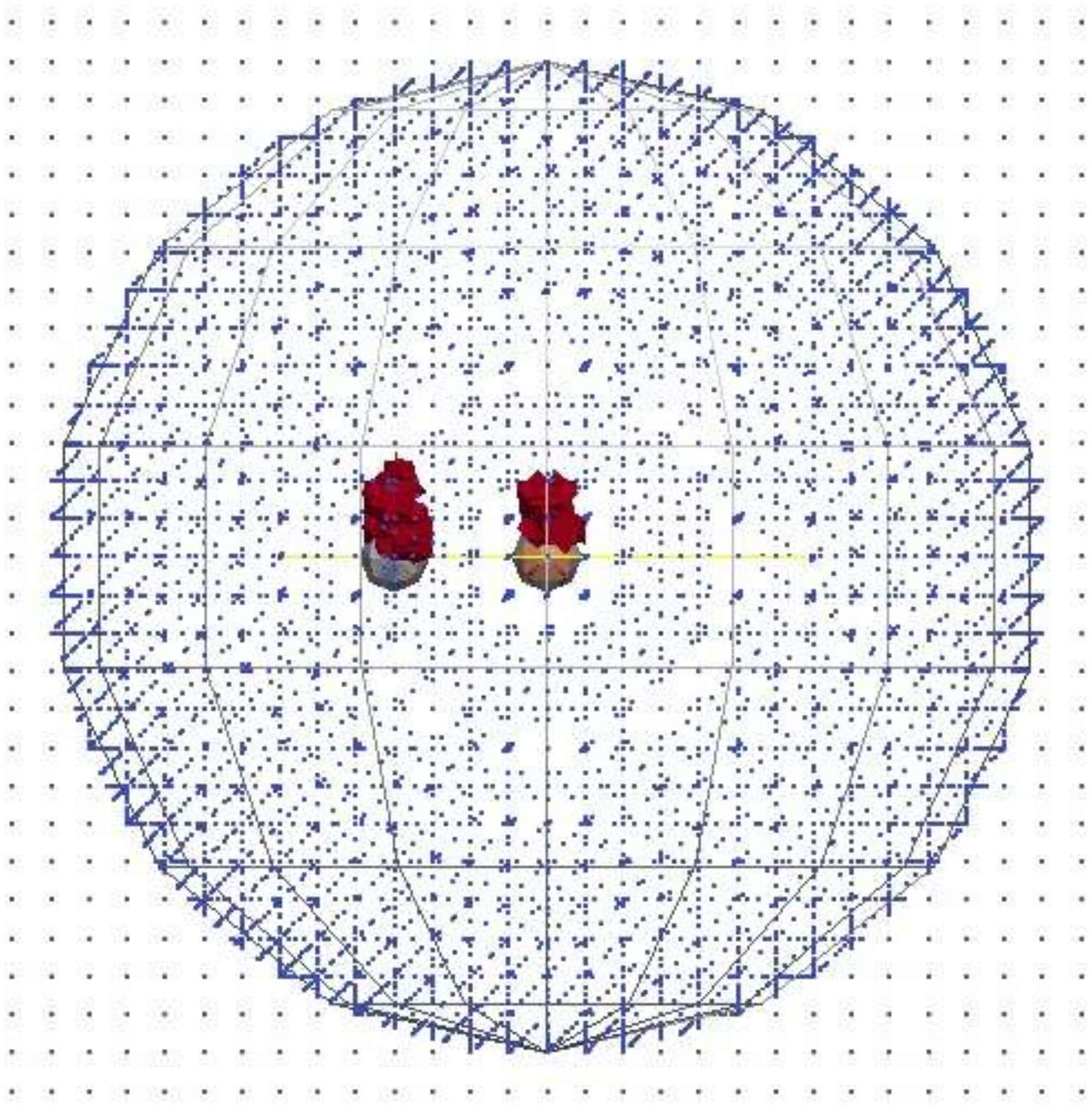}} \\
&  $x_2 x_3$ view & \\
{\includegraphics[scale=0.18, trim = 2.0cm 6.0cm 2.0cm 6.0cm, clip=true,]{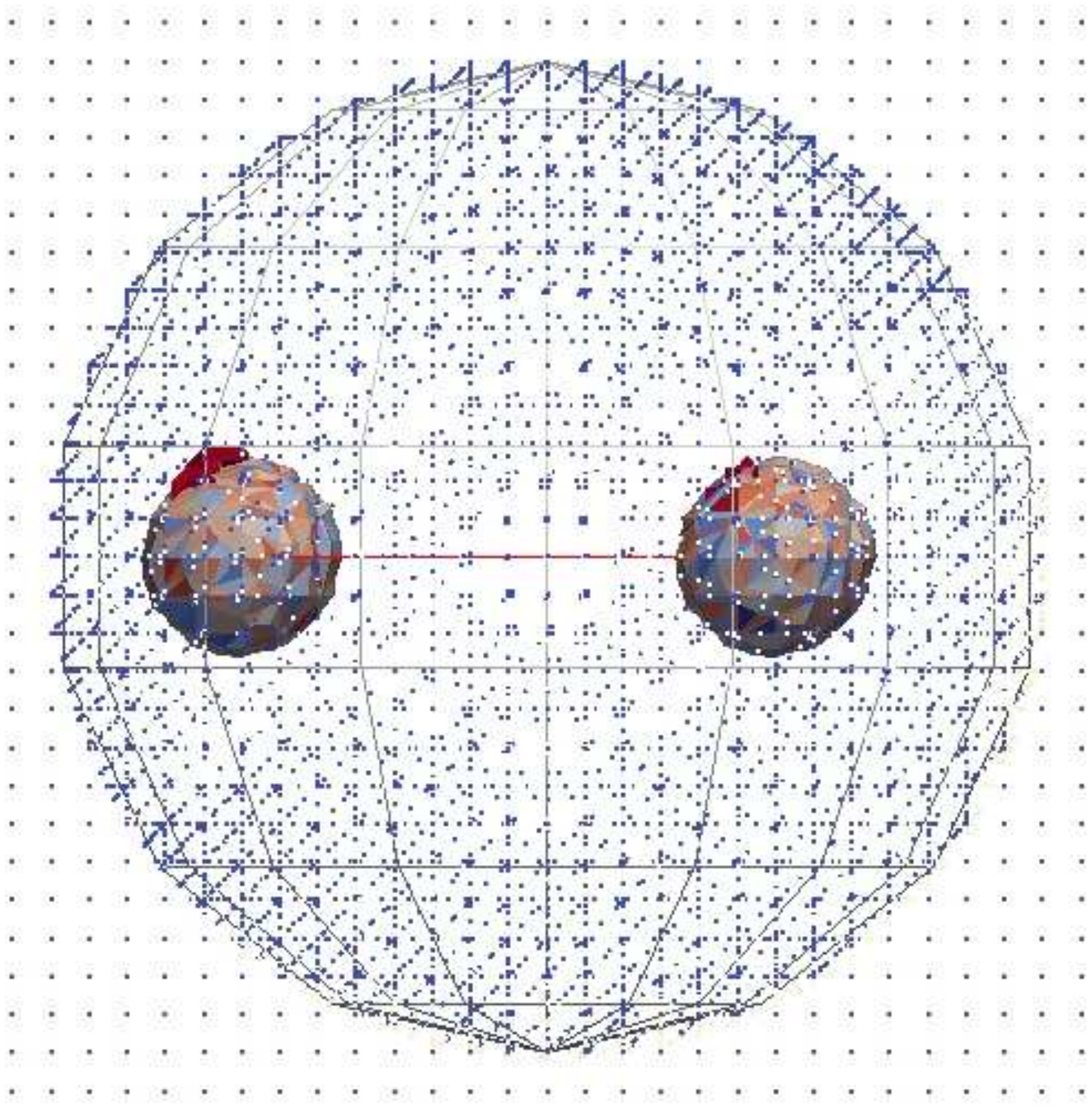}} &
{\includegraphics[scale=0.18, trim = 2.0cm 6.0cm 2.0cm 6.0cm, clip=true,]{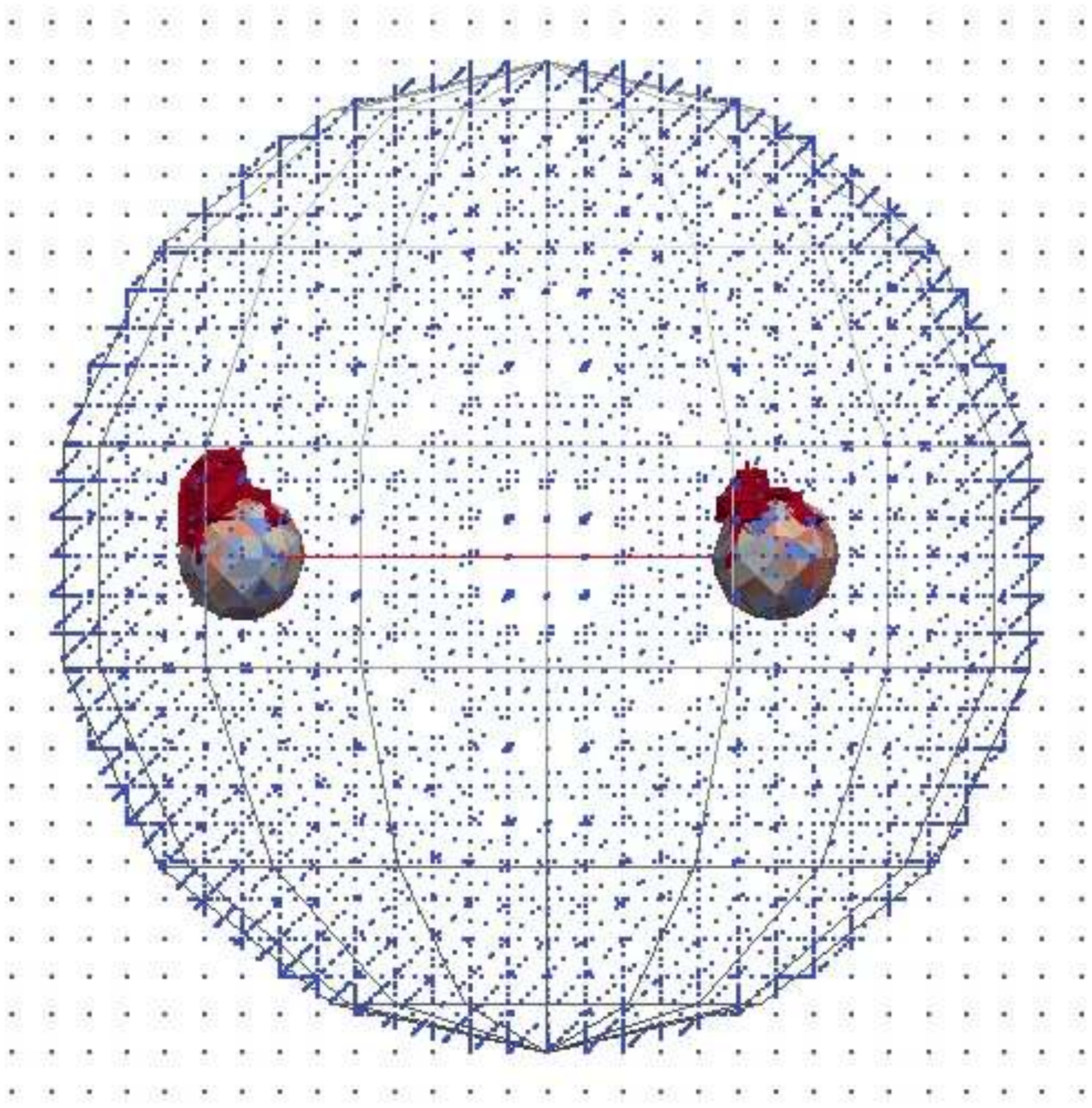}} &
{\includegraphics[scale=0.18, trim = 2.0cm 6.0cm 2.0cm 6.0cm, clip=true,]{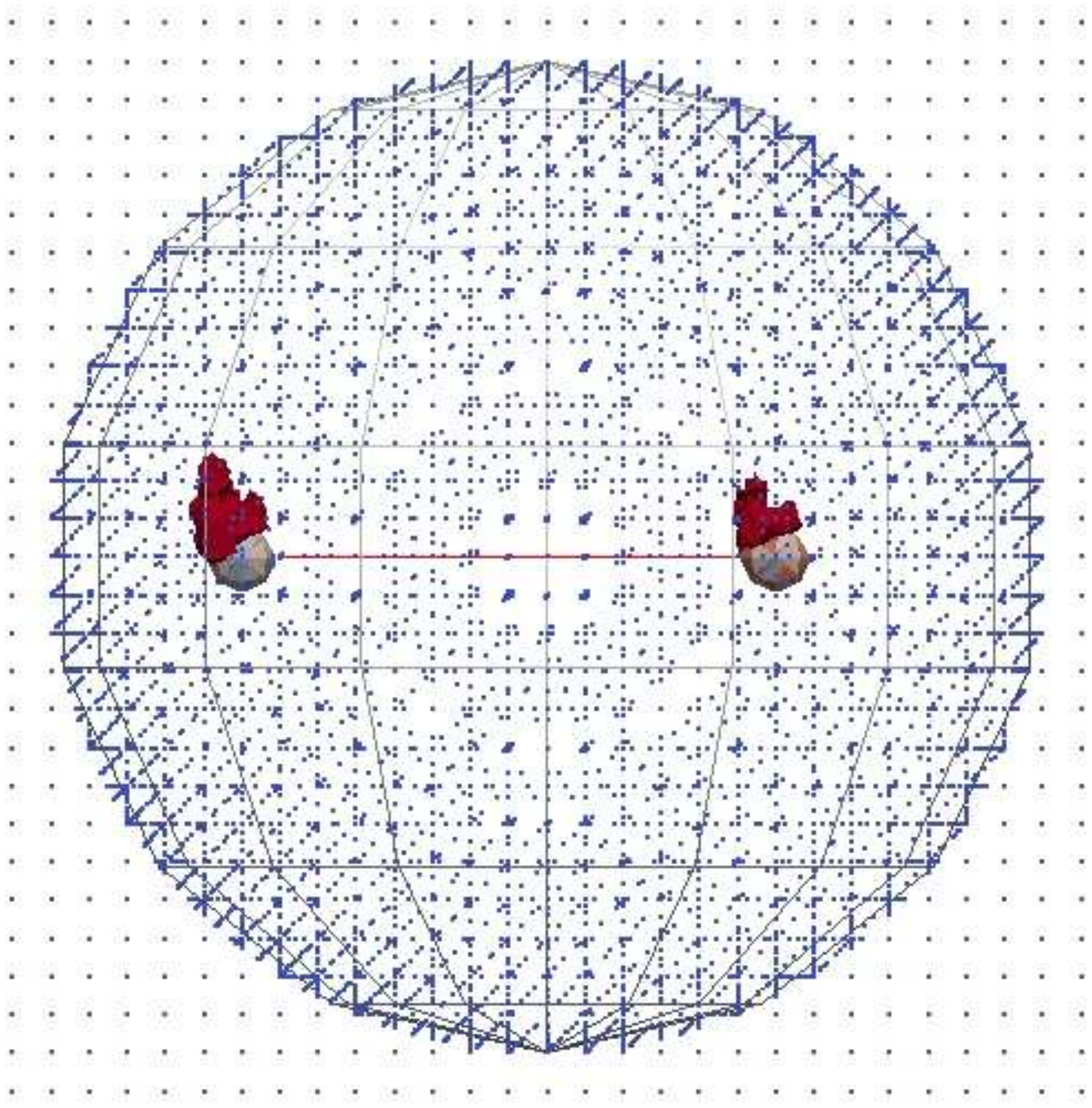}} \\
&  $x_1 x_3$ view & 
\end{tabular}
\end{center}
\caption{\small\emph{Test 1. In red:  isosurfaces $\{\x\in\Omega_\mathrm{FEM}:\eps_\mathrm{rec} = 0.8\max_{\Omega_\mathrm{FEM}}\eps_\mathrm{rec}\}$. Here,  $\eps_\mathrm{rec}$ was obtained on a five times refined mesh, $\max_{\Omega_\mathrm{FEM}}\eps_\mathrm{rec} = 1.97$, and the noise level in the data is $\sigma=10\%$. For comparison we also present as wireframes the corresponding isosurface of the function \eqref{2gaussians} in every figure.}}
\label{fig:test1noise10ref5}
\end{figure}

\subsection{Reconstructions} \label{sec:test1}

\subsubsection{Test 1}
In this section we present numerical results of the reconstruction of the function $\eps$ given by \eqref{2gaussians}. Tables 1--2 present computed results of the reconstructions on adaptively refined meshes after applying the First Adaptive Algorithm. Figures~\ref{fig:test1cellscoarse}--\ref{fig:test1noise10ref5} display results of the reconstruction of the function given by \eqref{2gaussians} with additive noise of the level $\sigma=10\%$. Quite similar results are obtained for $\sigma=3\%$, see Tables 1, and 2, and thus they are not presented here. In Figures~\ref{fig:test1cellscoarse}--\ref{fig:test1noise10ref5} we observe that the location of the maximal value of the function \eqref{2gaussians} is imaged correctly. It follows from Figure~\ref{fig:test1noise10coarse} and Table~1 that on the coarse mesh we obtain good contrast, with $\max_{\Omega_\mathrm{FEM}} \eps_{h_0} = 1.94 $. However, Figure~\ref{fig:test1noise10coarse} reveals that it is desirable to improve the location of the maxima of the reconstructed function in $x_3$ direction as well as remove some artifacts which appeared in the reconstruction on the coarse mesh.

The reconstruction $\eps_\mathrm{rec}$ of $\eps$ on a final, five times adaptively refined mesh are presented in Figure~\ref{fig:test1noise10ref5}. We observe significant improvement of the reconstruction of the function $\eps$ obtained on the final adaptively refined mesh: the artifacts are removed and the reconstructed function is moved more closer to the exact function in $x_3$ direction, compared with results of Figure~\ref{fig:test1noise10coarse}.

\begin{figure}
\begin{center}
\begin{tabular}{cc}
{\includegraphics[scale=0.4, clip=true,]{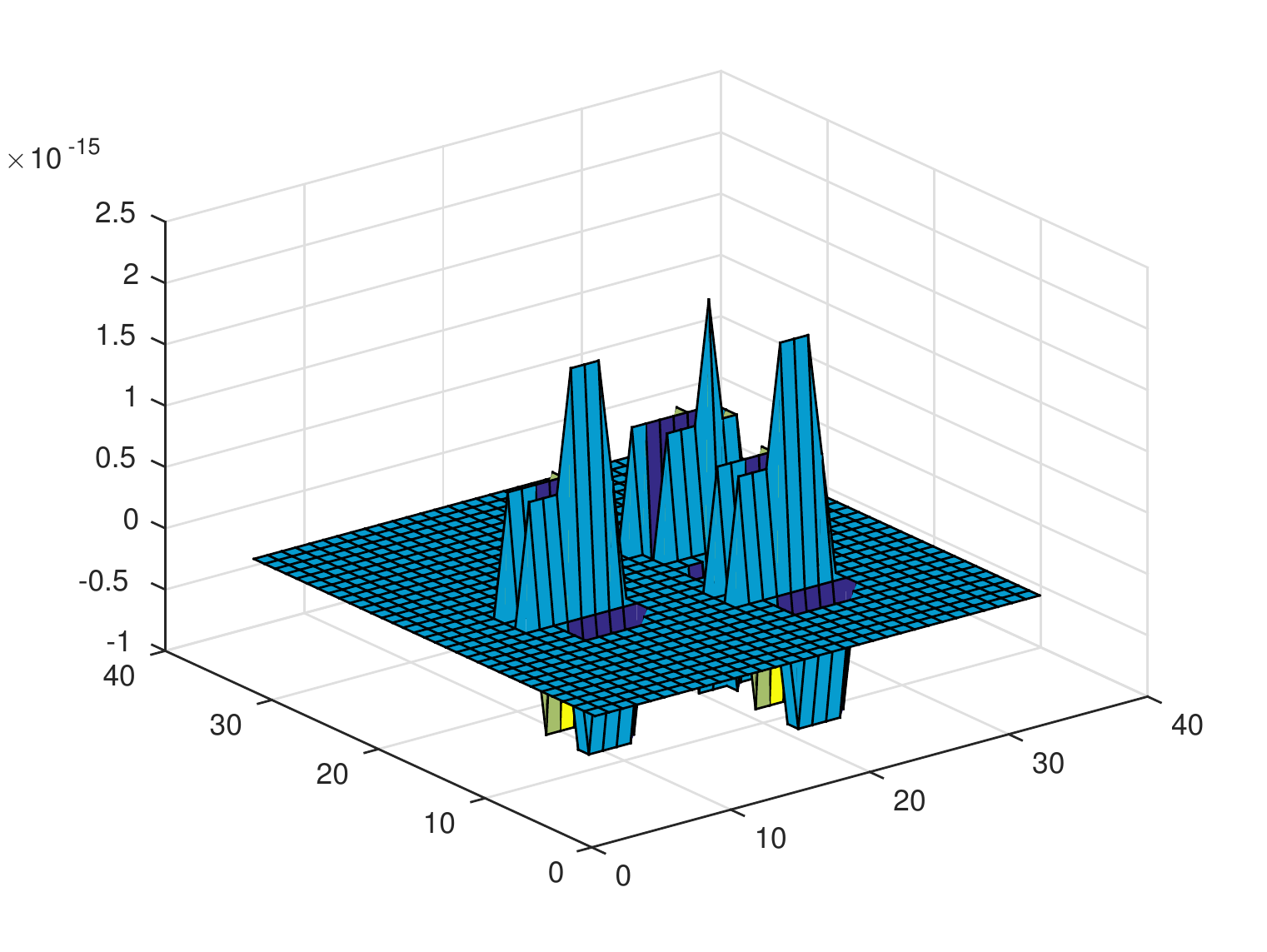}} &
{\includegraphics[scale=0.4, clip=true,]{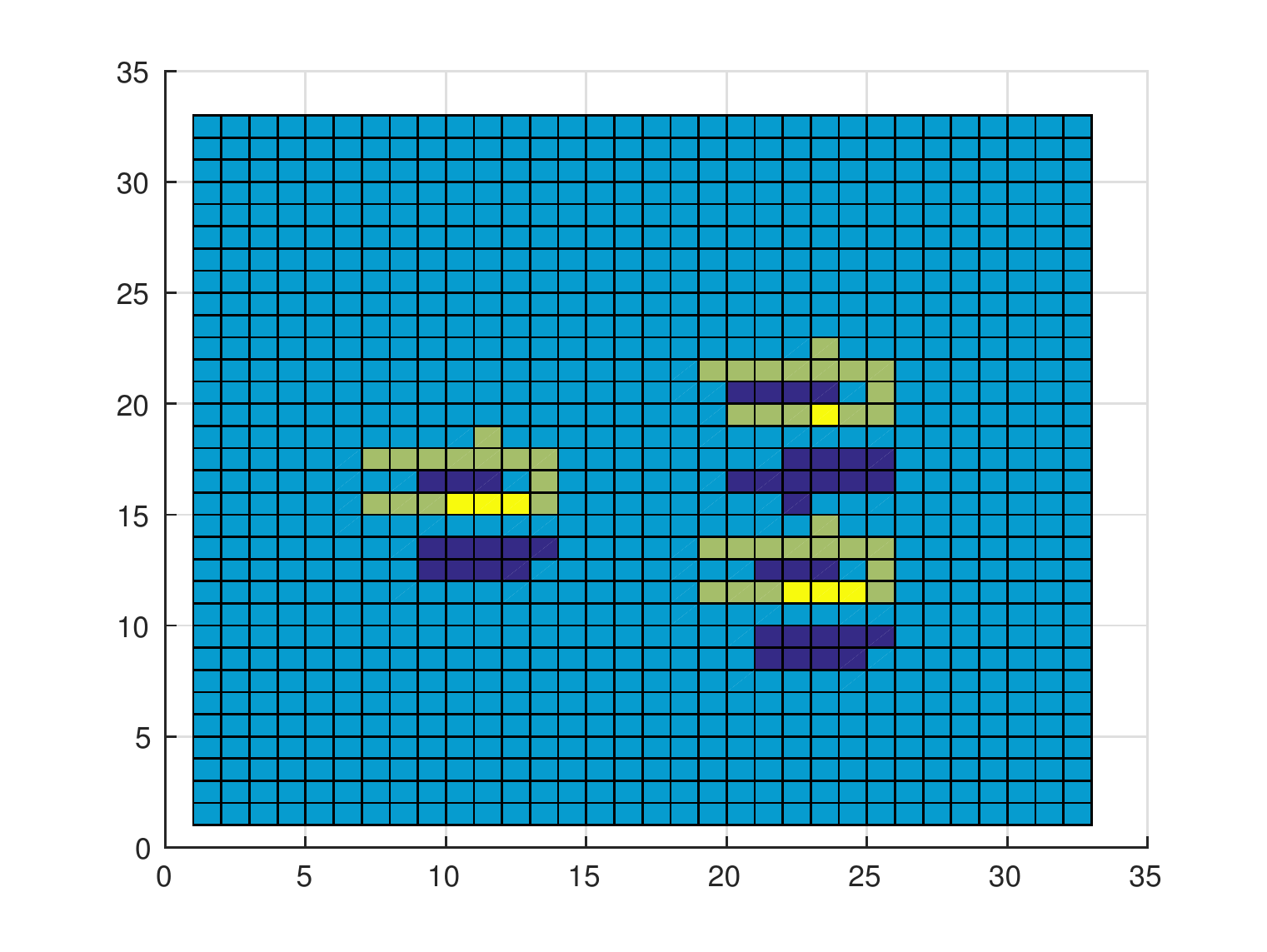}} \\
$t=0.6$  & $t=0.6, x_1 x_2 $ view \\
{\includegraphics[scale=0.4, clip=true,]{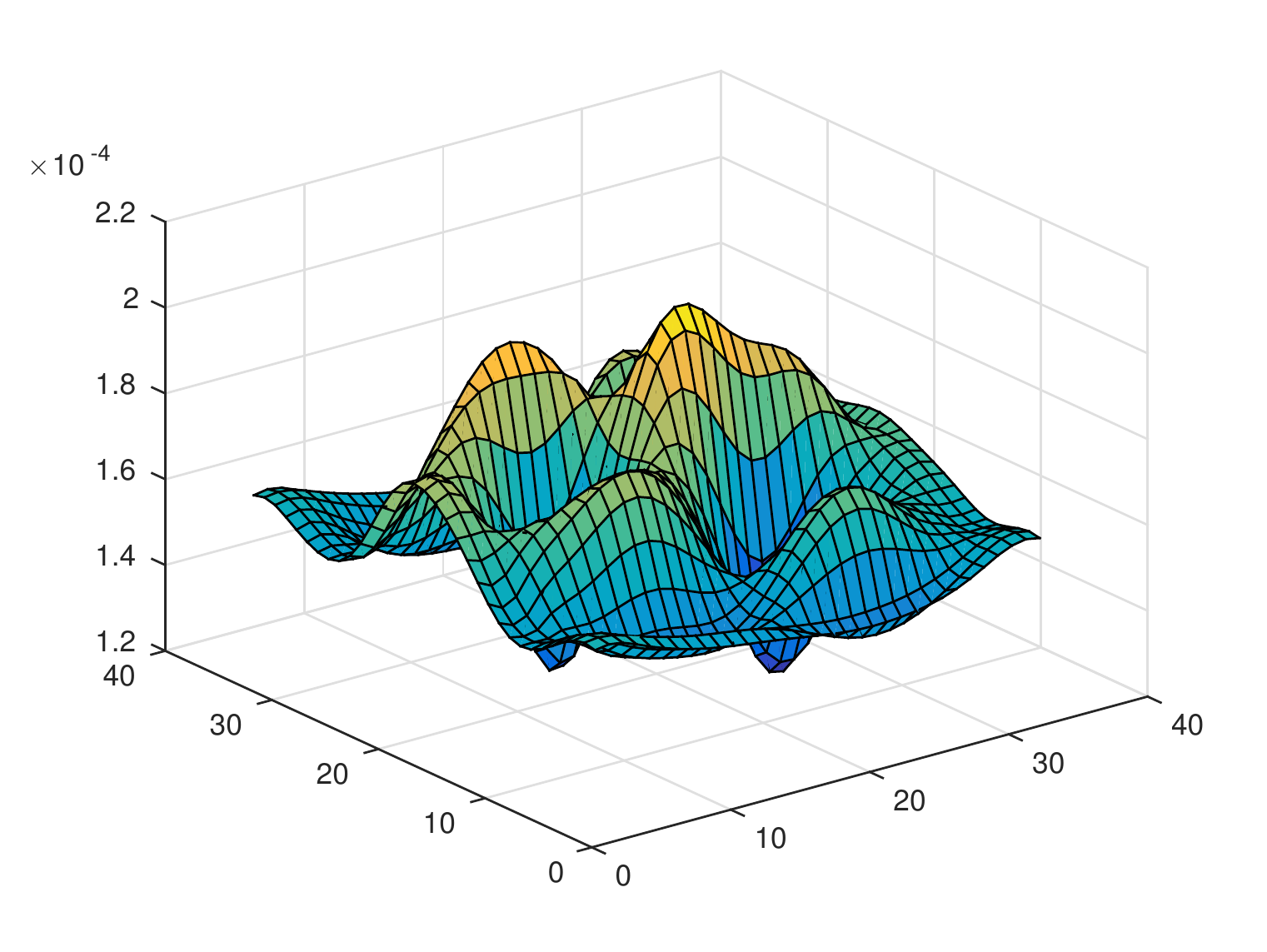}} &
{\includegraphics[scale=0.4, clip=true,]{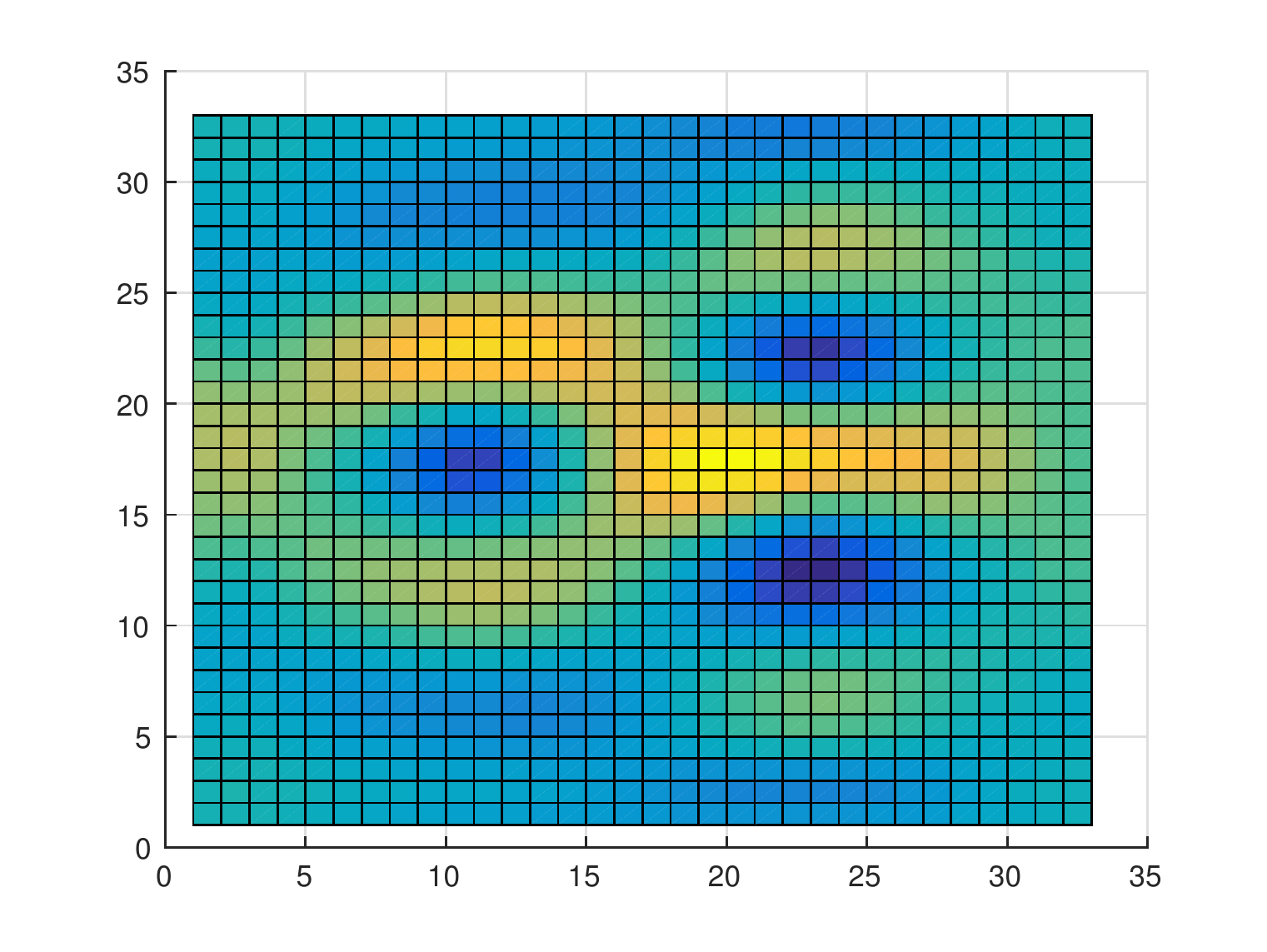}} \\
$t=1.2$  & $t=1.2, x_1 x_2$ view \\
{\includegraphics[scale=0.4, clip=true,]{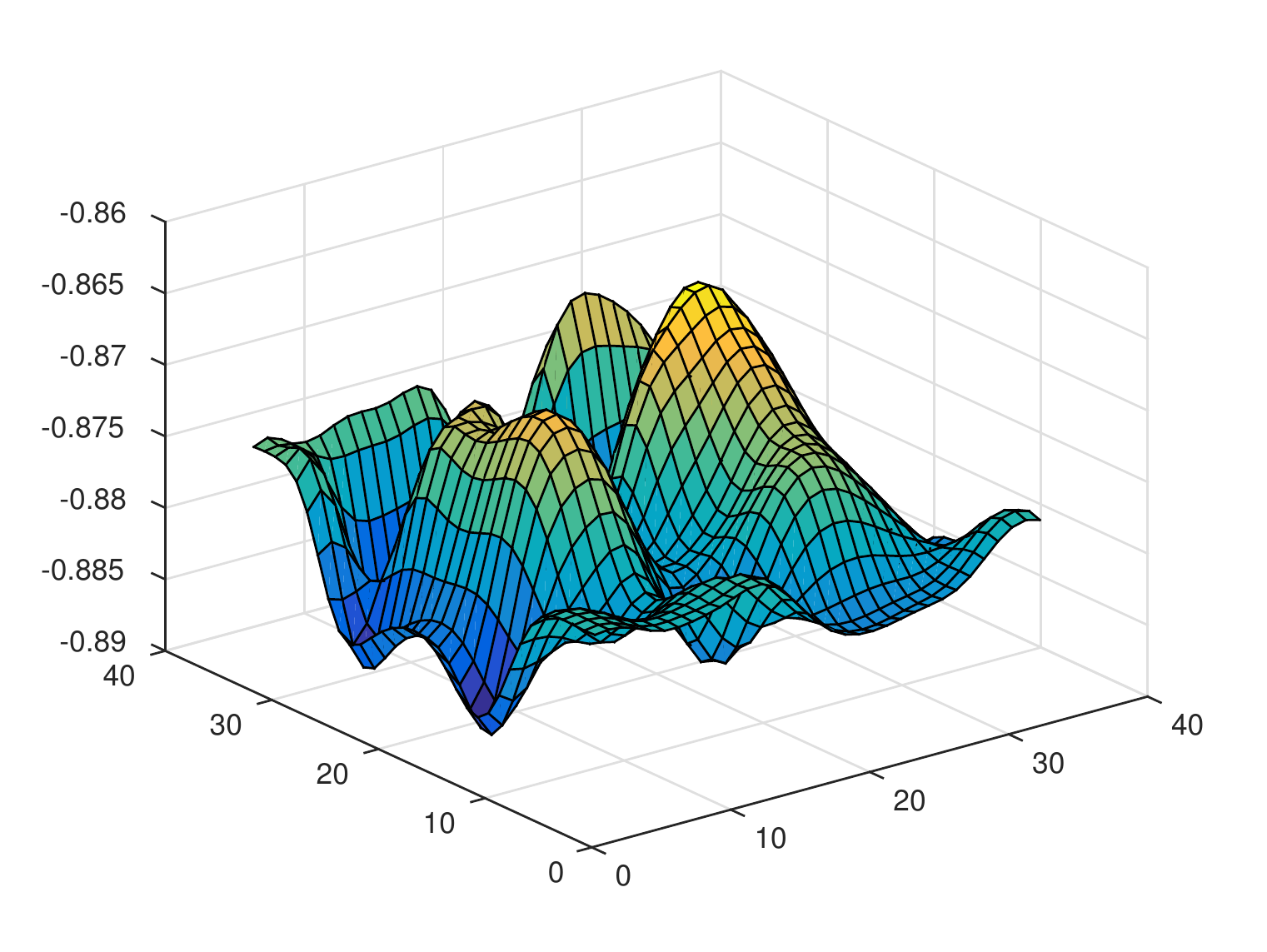}} &
{\includegraphics[scale=0.4, clip=true,]{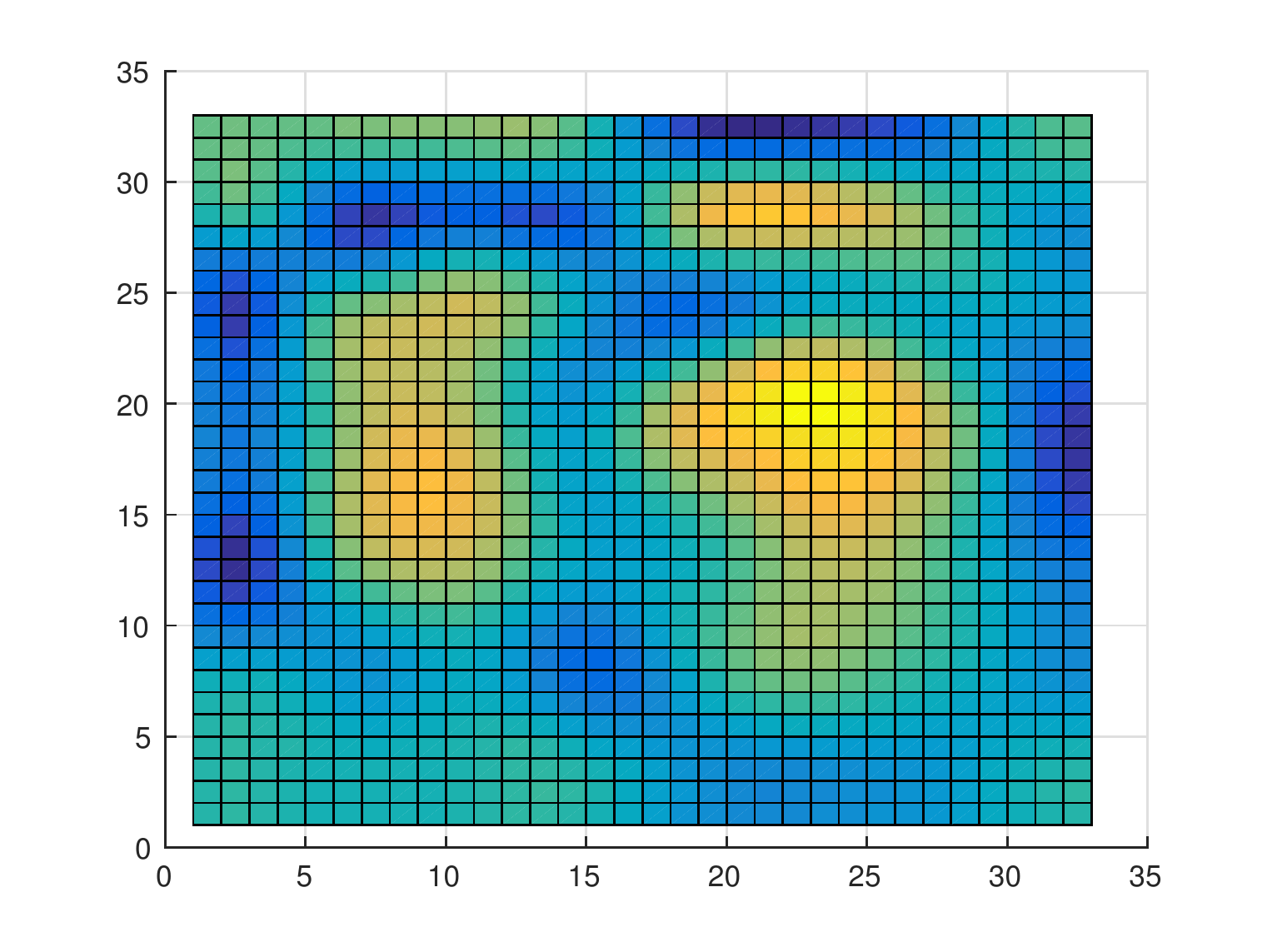}} \\
$t=1.8$  & $t=1.8, x_1 x_2$ view \\
\end{tabular}
\end{center}
\caption{\small\emph{Test 2. Transmitted data of the component $E_2$ at different times.  The noise level in the data is $\sigma=10\%$.}}
\label{fig:test2data}
\end{figure}

\begin{figure}
\begin{center}
\begin{tabular}{cc}
{\includegraphics[scale=0.3, trim = 2.0cm 6.0cm 2.0cm 6.0cm, clip=true,]{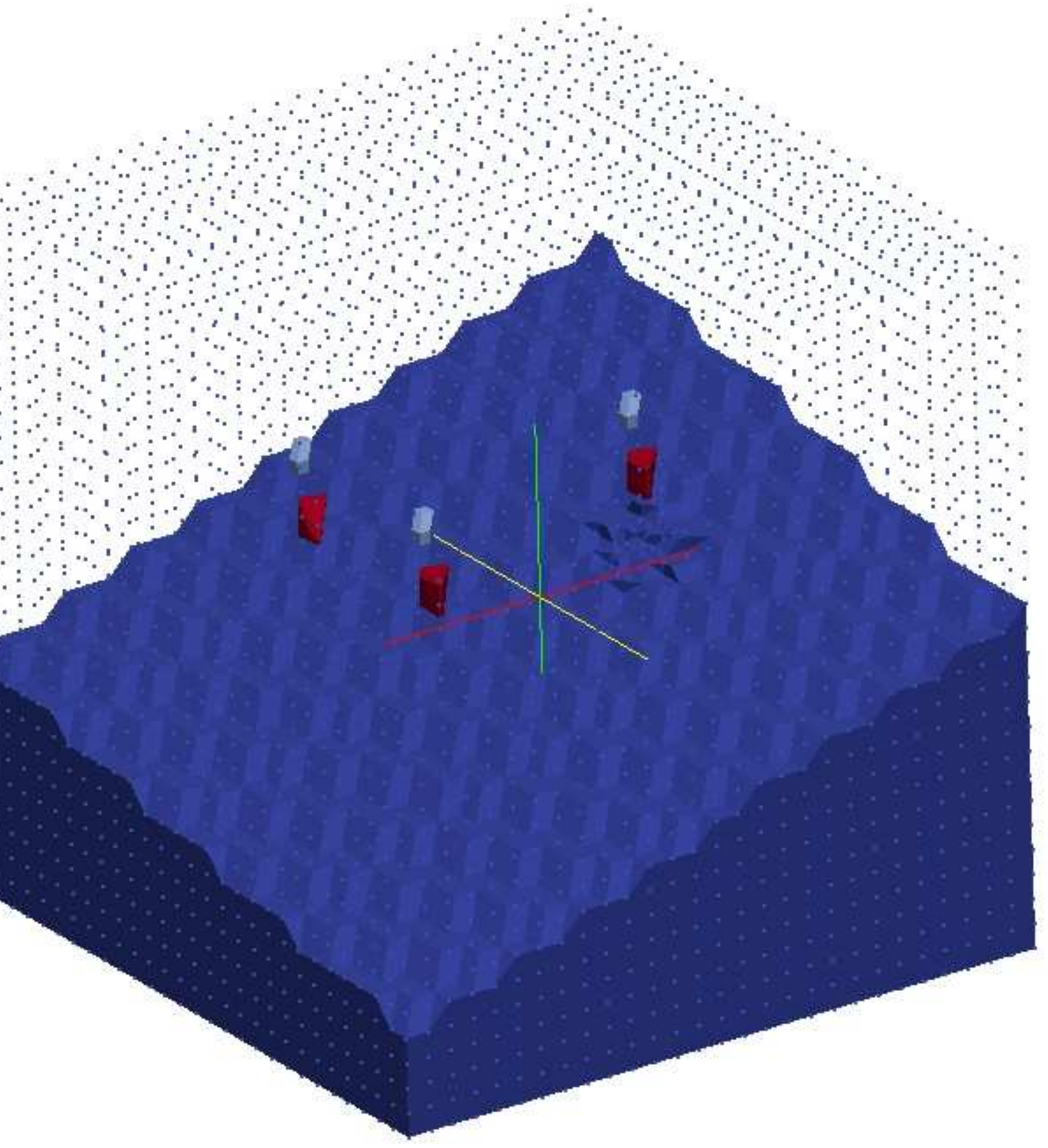}} &
{\includegraphics[scale=0.3, trim = 2.0cm 6.0cm 2.0cm 6.0cm, clip=true,]{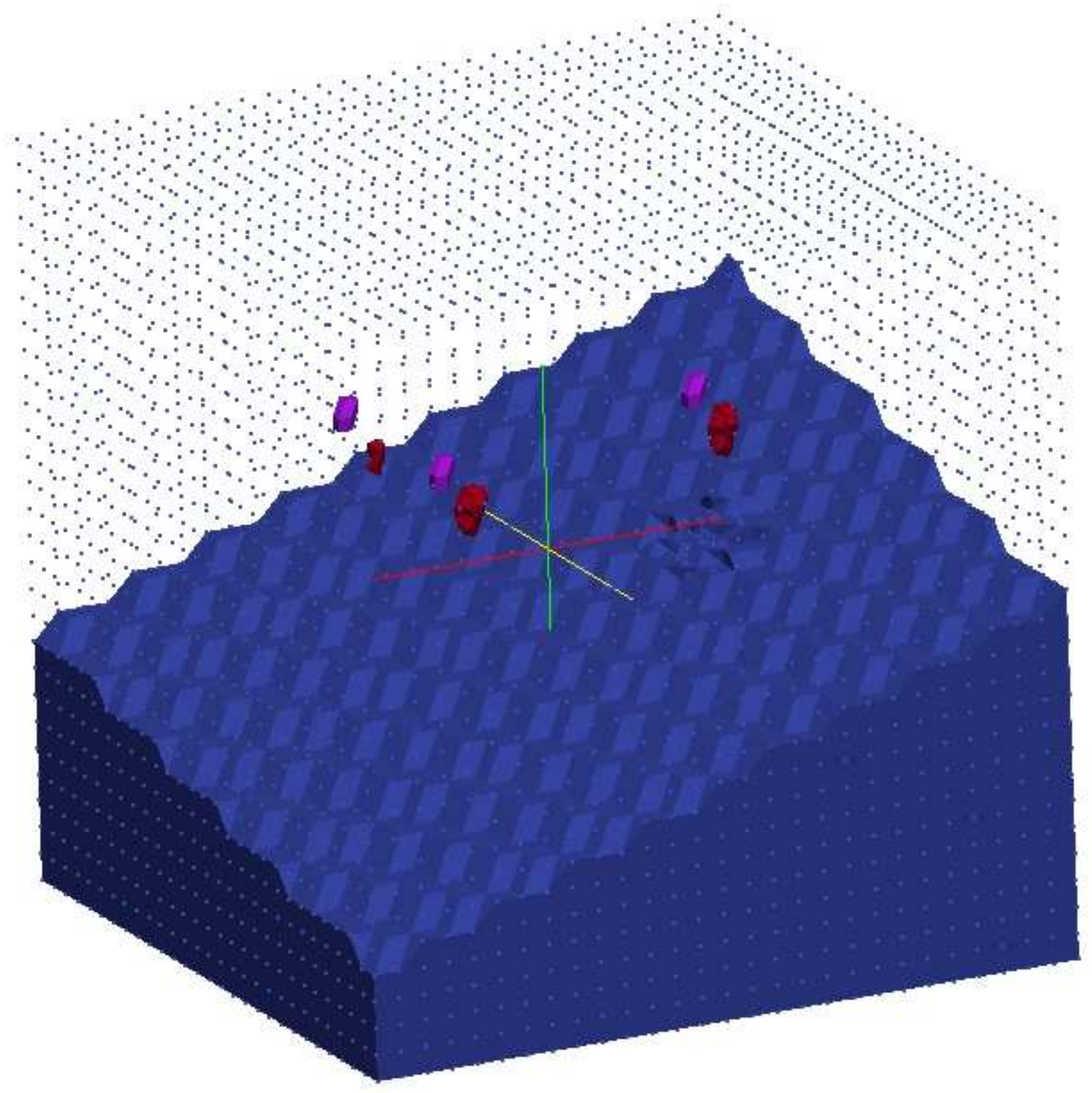}} \\
{\includegraphics[scale=0.3, trim = 2.0cm 6.0cm 2.0cm 6.0cm, clip=true,]{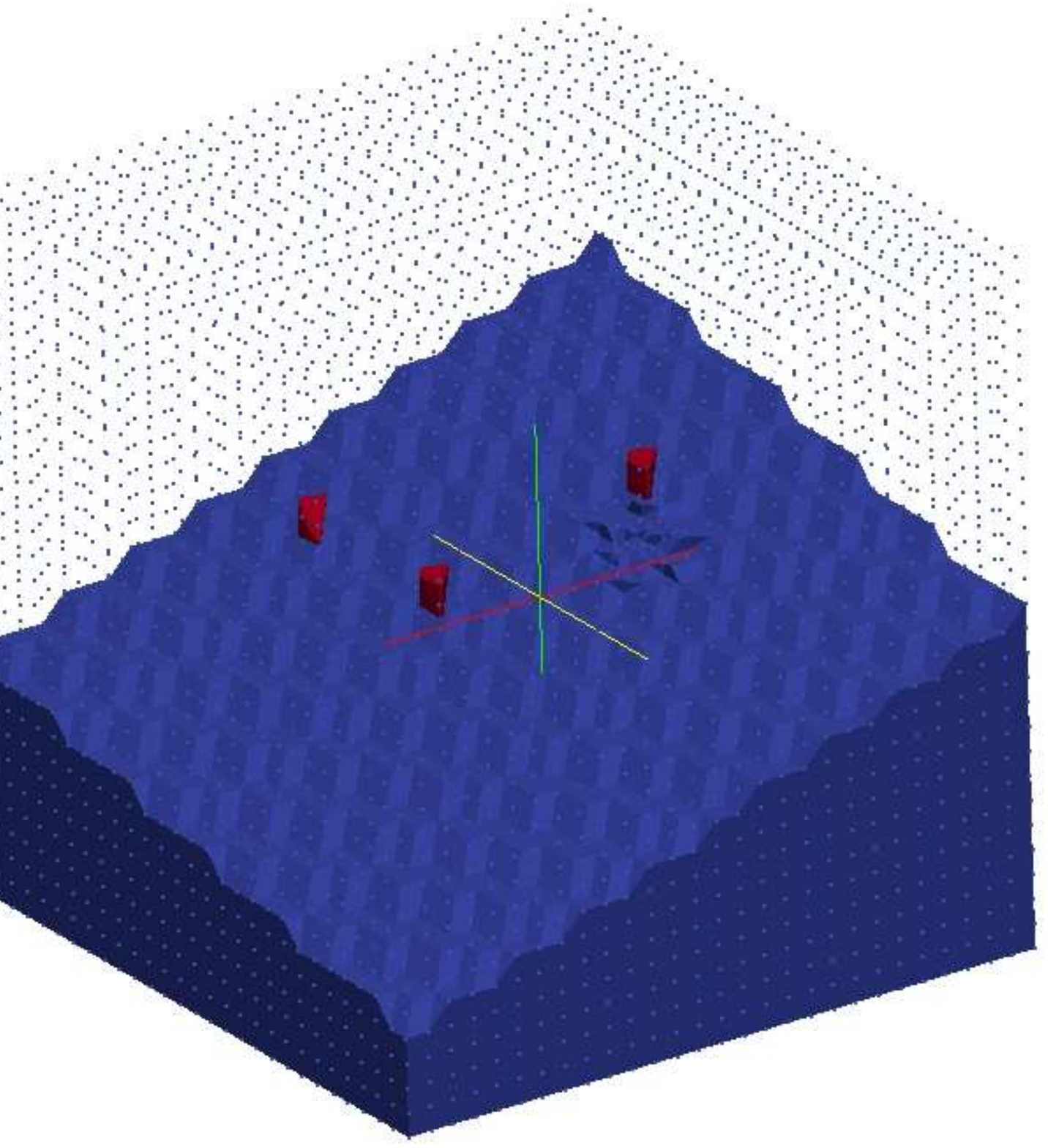}} &
{\includegraphics[scale=0.3, trim = 2.0cm 6.0cm 2.0cm 6.0cm, clip=true,]{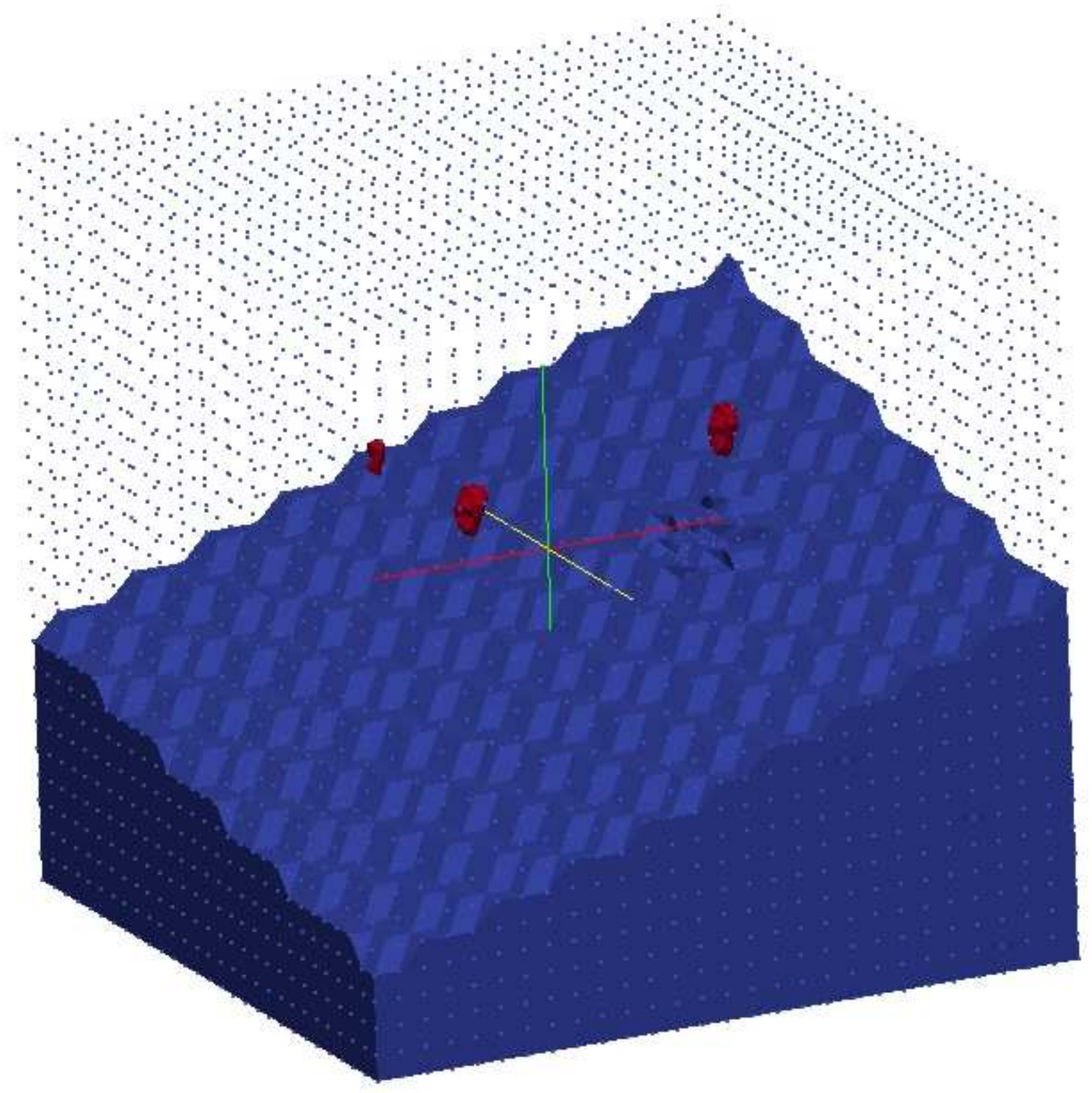}}\\
$\x\in\Omega_\mathrm{FEM}:\eps_{h_0}(\x) = 2.8 $ & $\x\in\Omega_\mathrm{FEM}:\eps_\mathrm{rec}(\x) = 1.9 $ 
\end{tabular}
\end{center}
\caption{\small\emph{Test 2. Prospect views of reconstructions of three inclusions on a coarse mesh (on the left) and on the five times adaptively refined mesh (on the right). The noise level in the data is $\sigma=10\%$. The top figures also present exact inclusions (in light blue and pink colors).}}
\label{fig:test2cells}
\end{figure}

\begin{figure}
\begin{center}
\begin{tabular}{ccc}
{\includegraphics[scale=0.19, trim = 1.0cm 4.0cm 1.0cm 4.0cm, clip=true,]{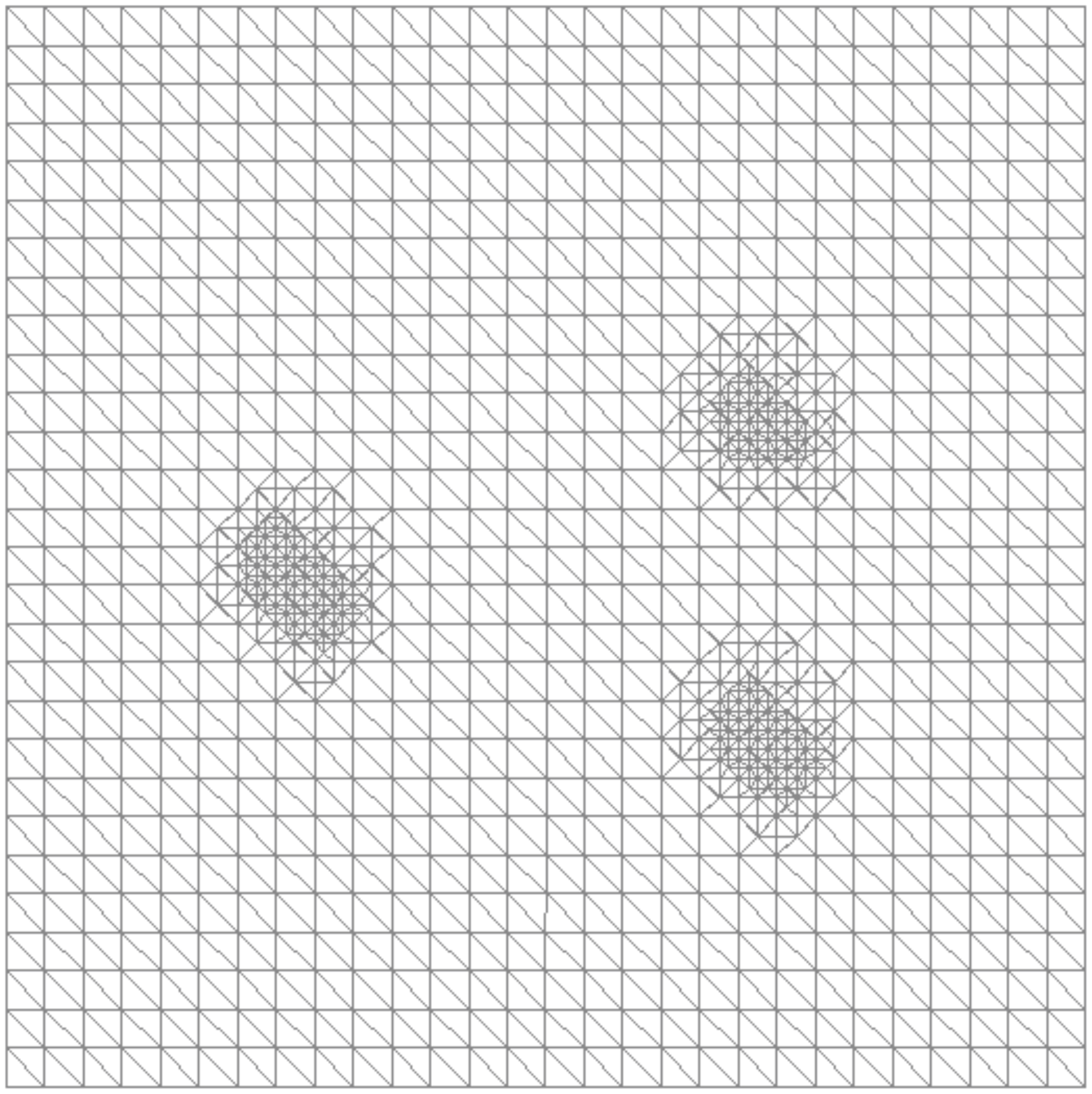}} &
{\includegraphics[scale=0.19, trim = 1.0cm 4.0cm 1.0cm 4.0cm, clip=true,]{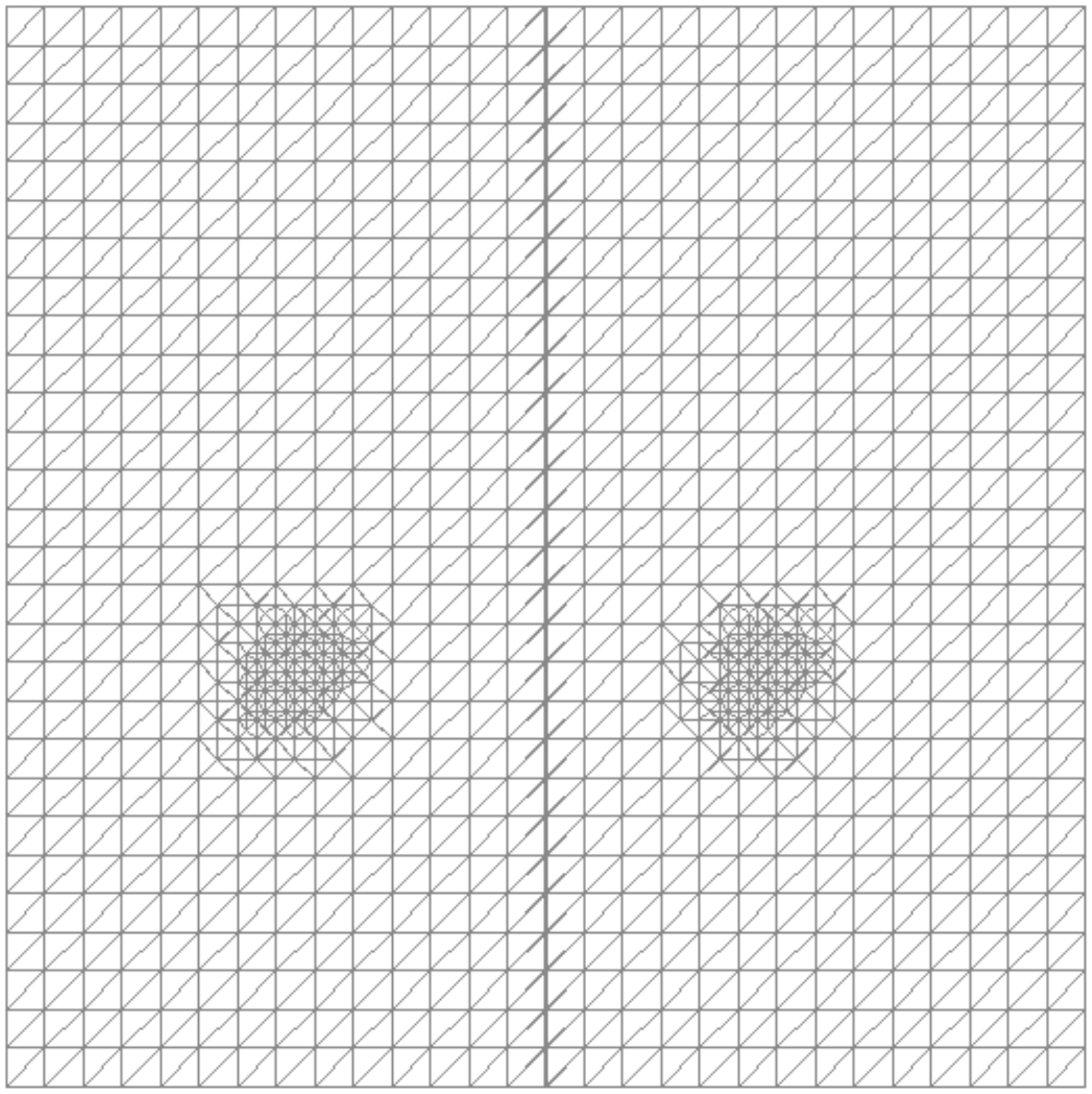}} &
{\includegraphics[scale=0.19, trim = 1.0cm 4.0cm 1.0cm 4.0cm, clip=true,]{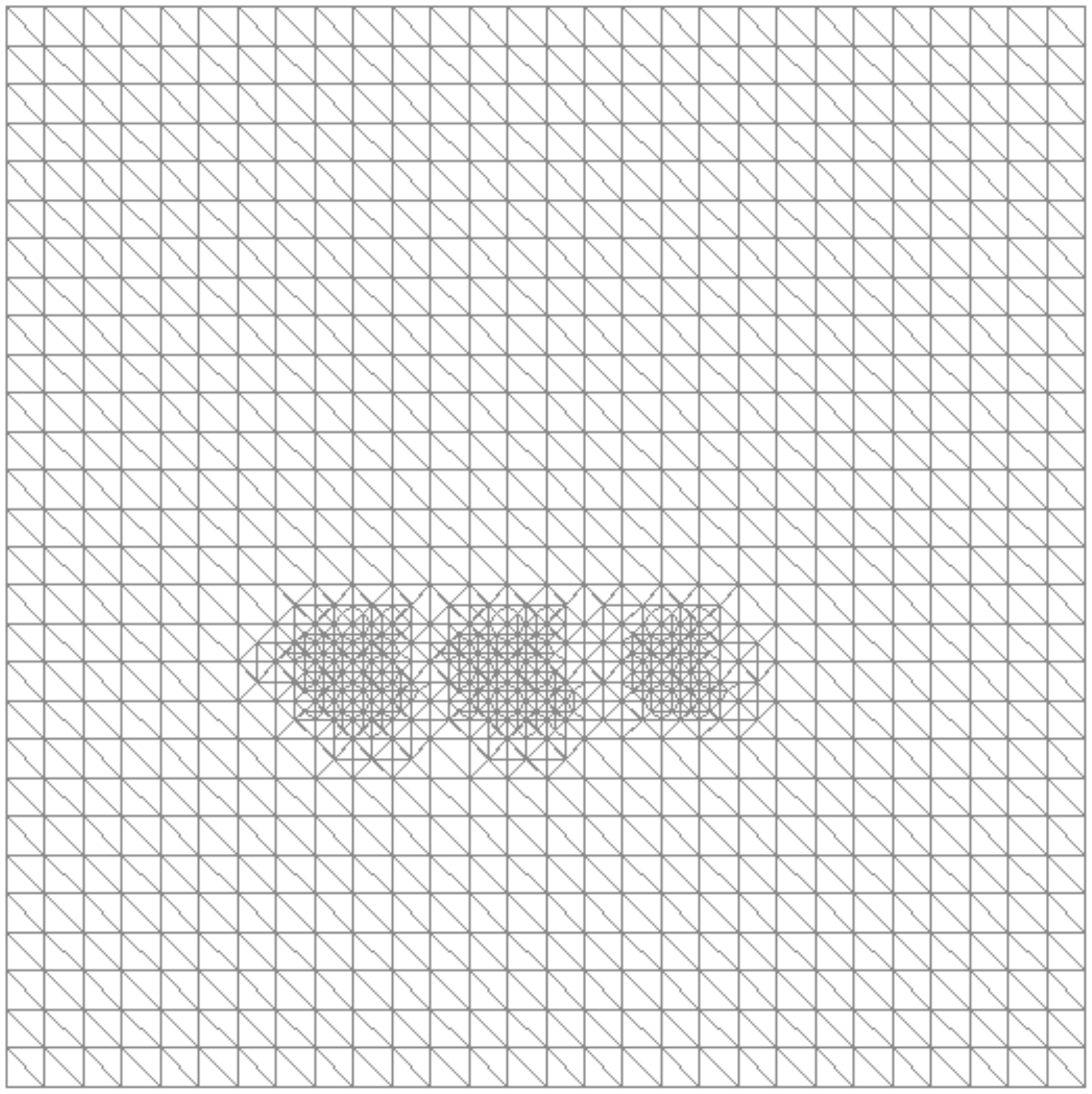}} \\
$x_1 x_2$ view &  $x_1 x_3$ view & $x_2 x_3$ view
\end{tabular}
\end{center}
\caption{\small\emph{Test 2. Five times adaptively refined mesh when the noise level in the data was $\sigma=10\%$.}}
\label{fig:test2meshes}
\end{figure}

\begin{figure}
\begin{center}
\begin{tabular}{cc}
 $\x\in\Omega_\mathrm{FEM}: \eps_{h_0}(\x) = 2.8 $ & $\x\in\Omega_\mathrm{FEM}: \eps_\mathrm{rec}(\x) = 1.9 $ \\
{\includegraphics[scale=0.22, trim = 2.0cm 6.0cm 2.0cm 6.0cm, clip=true,]{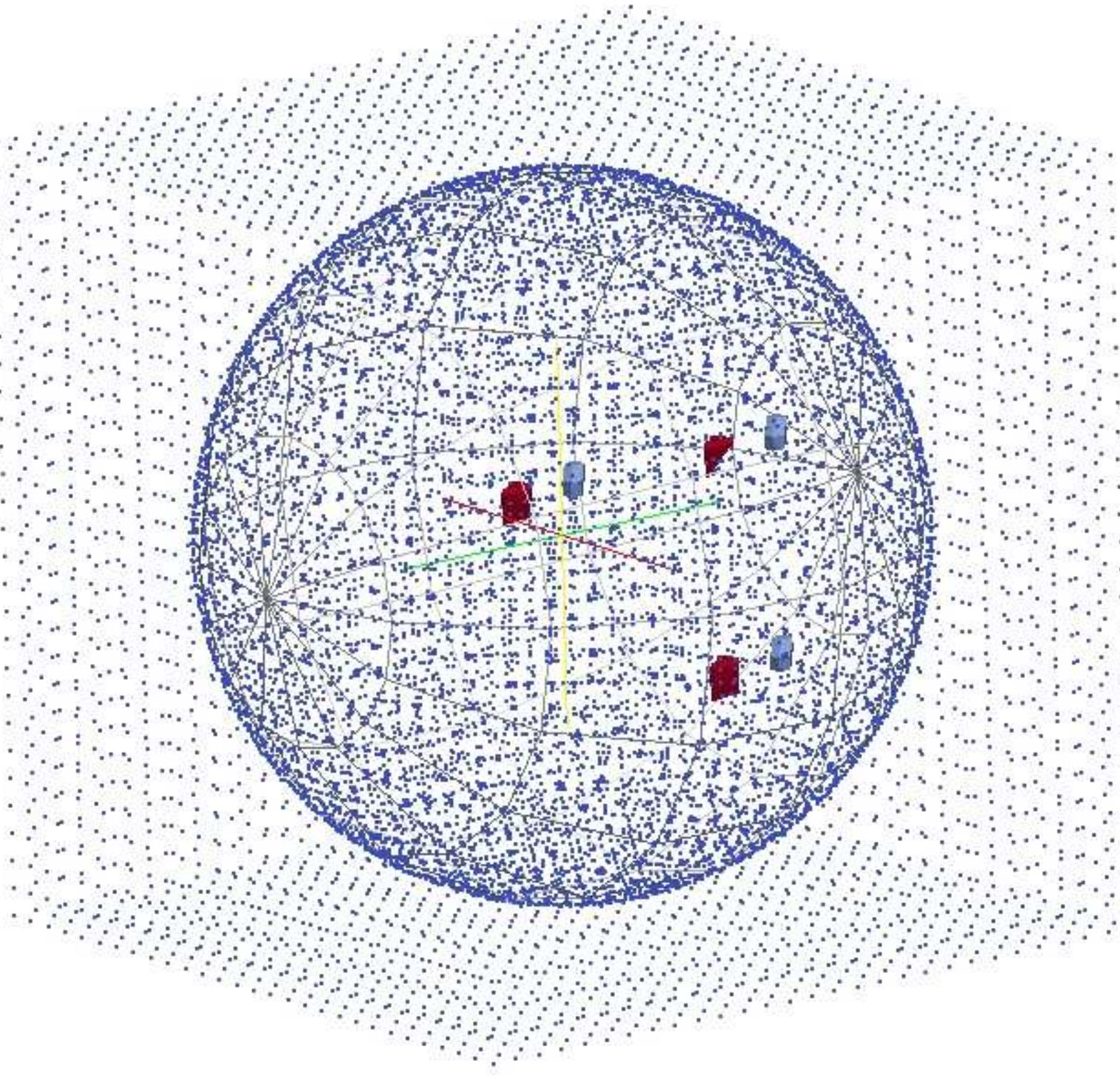}} &
{\includegraphics[scale=0.22, trim = 2.0cm 6.0cm 2.0cm 6.0cm, clip=true,]{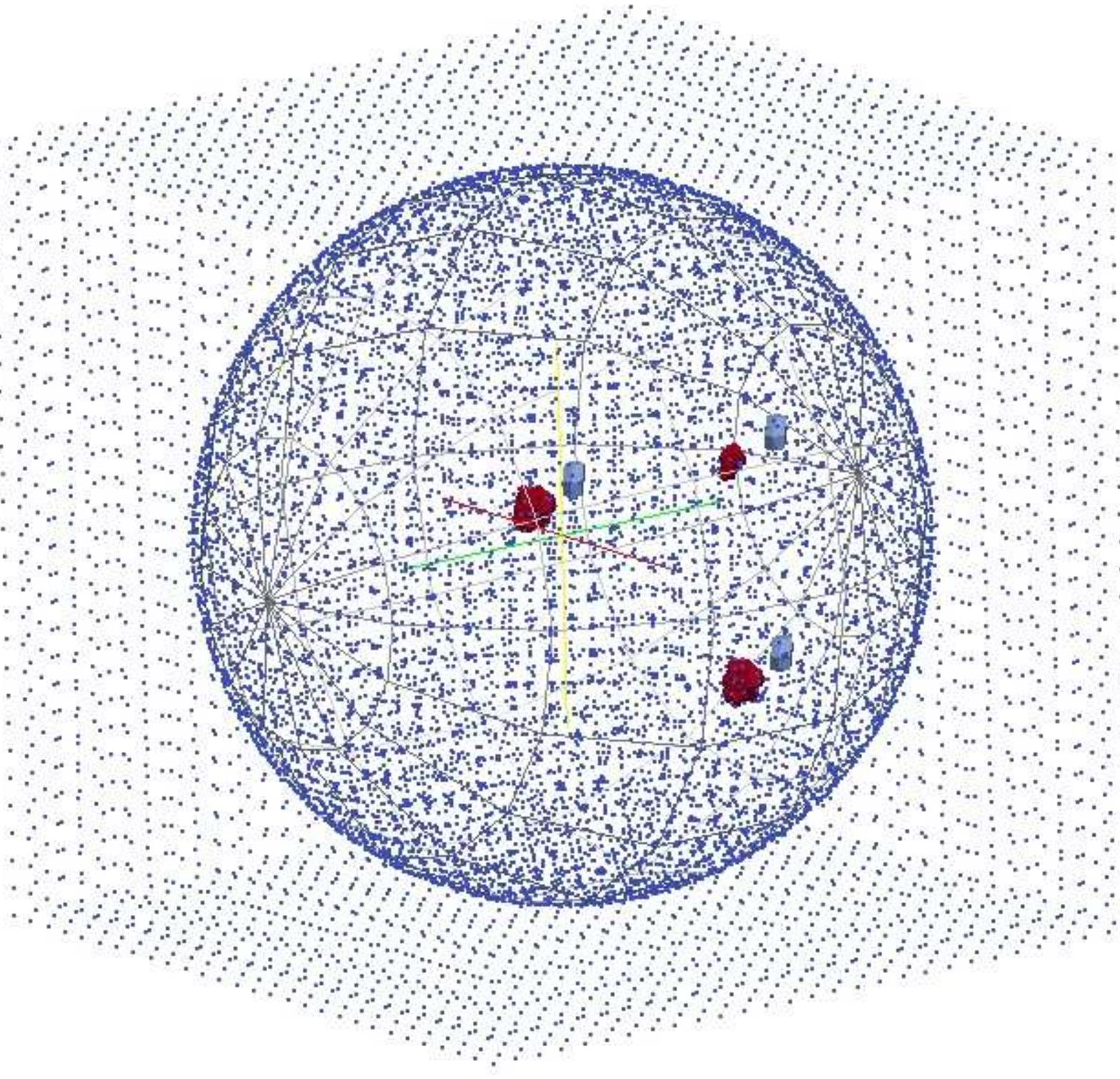}}  \\
a) prospect view &  b) prospect view \\
{\includegraphics[scale=0.22, trim = 2.0cm 6.0cm 2.0cm 6.0cm, clip=true,]{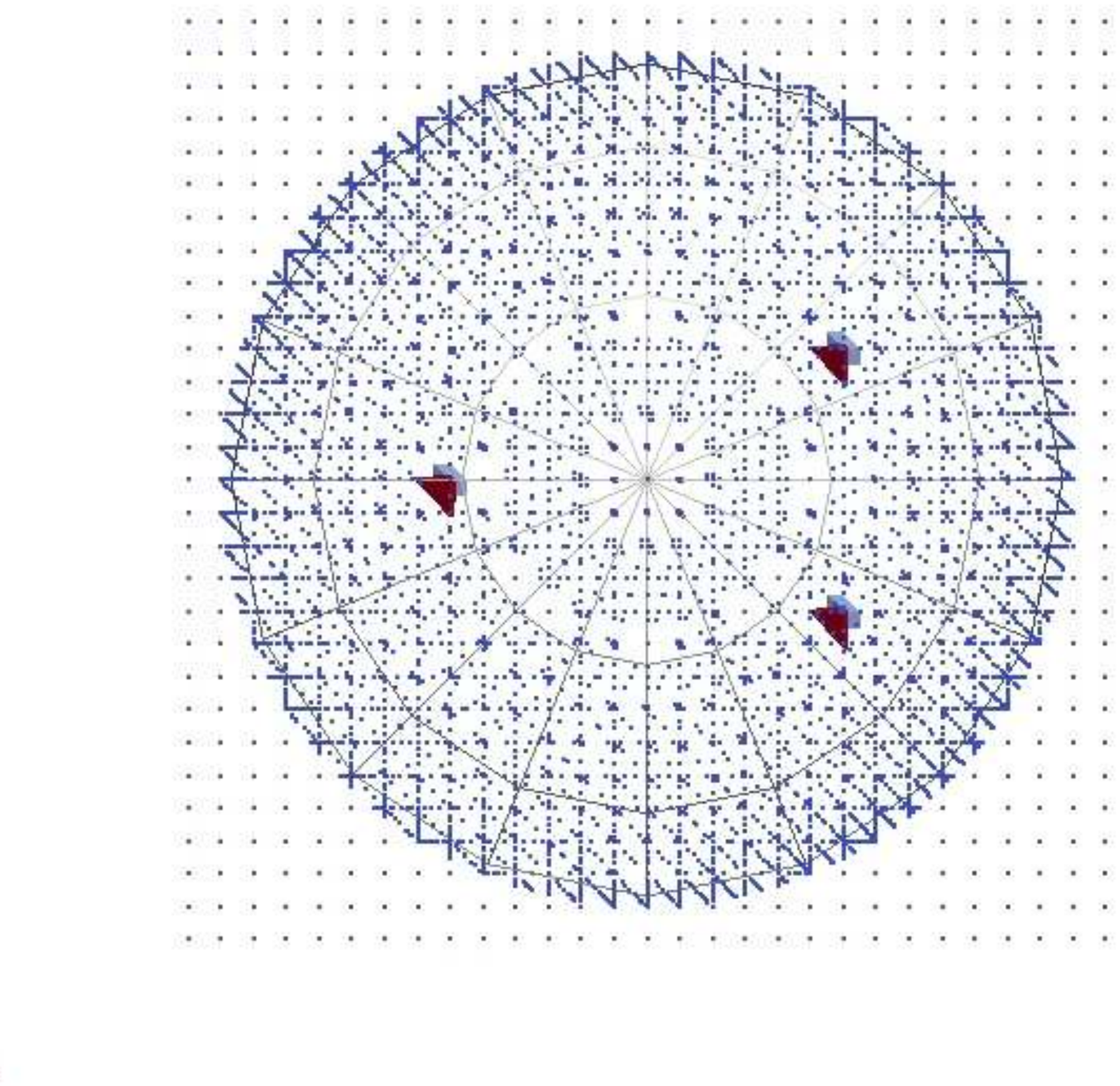}} &
{\includegraphics[scale=0.22, trim = 2.0cm 6.0cm 2.0cm 6.0cm, clip=true,]{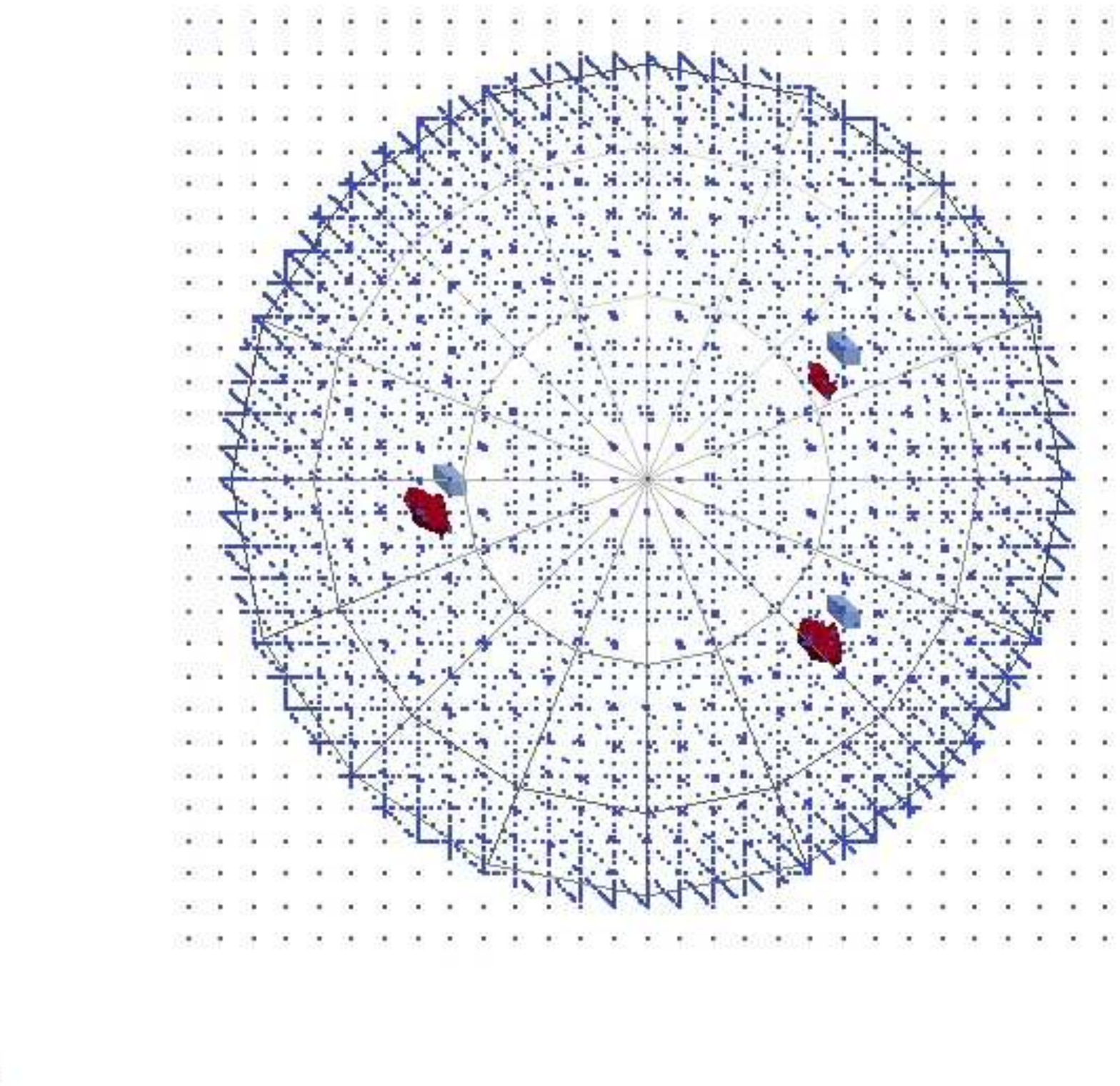}} \\
c)  $x_1 x_2$ view & d)  $x_1 x_2$ view \\ 
{\includegraphics[scale=0.22, trim = 2.0cm 6.0cm 2.0cm 6.0cm, clip=true,]{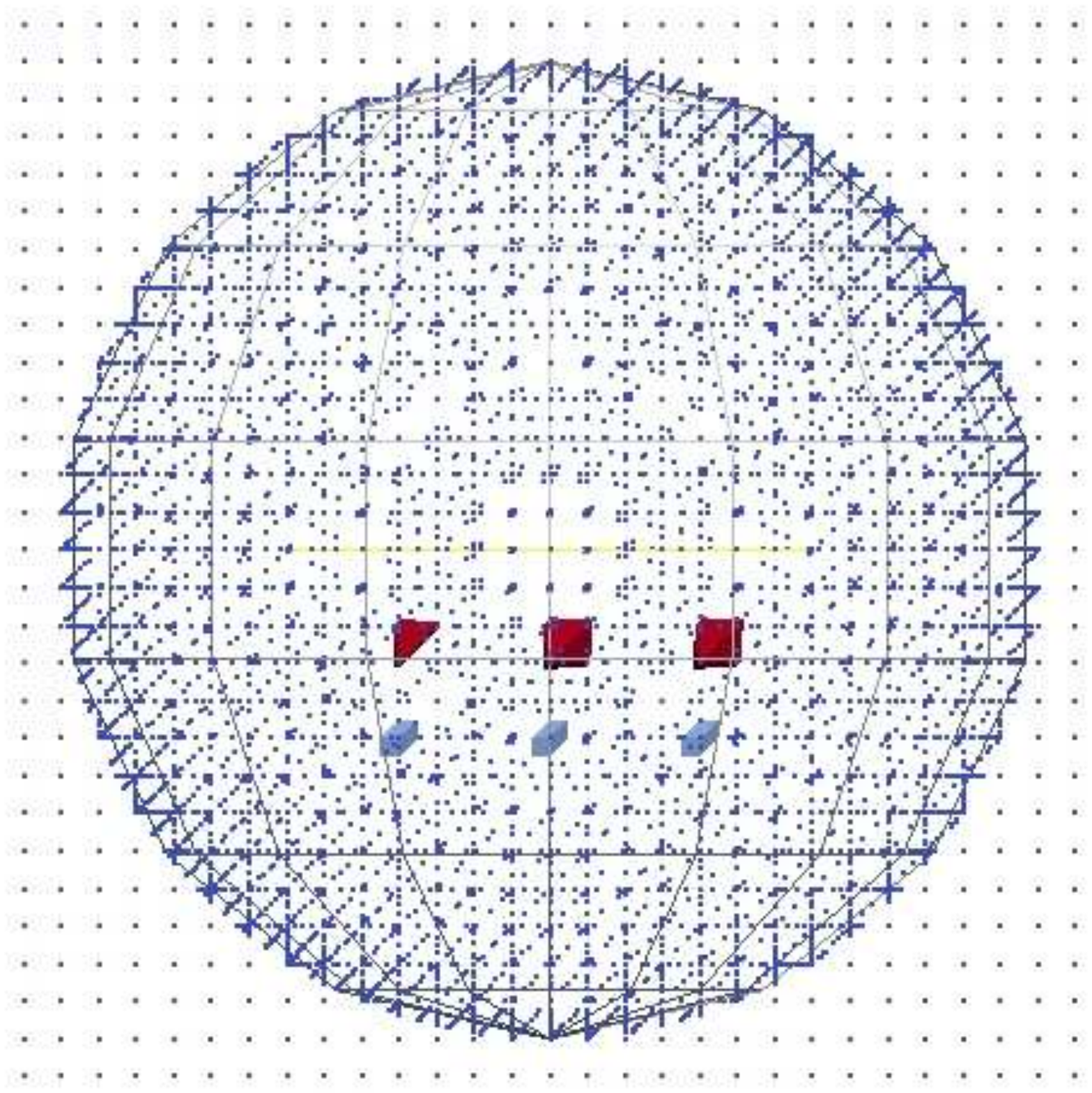}} &
{\includegraphics[scale=0.22, trim = 2.0cm 6.0cm 2.0cm 6.0cm, clip=true,]{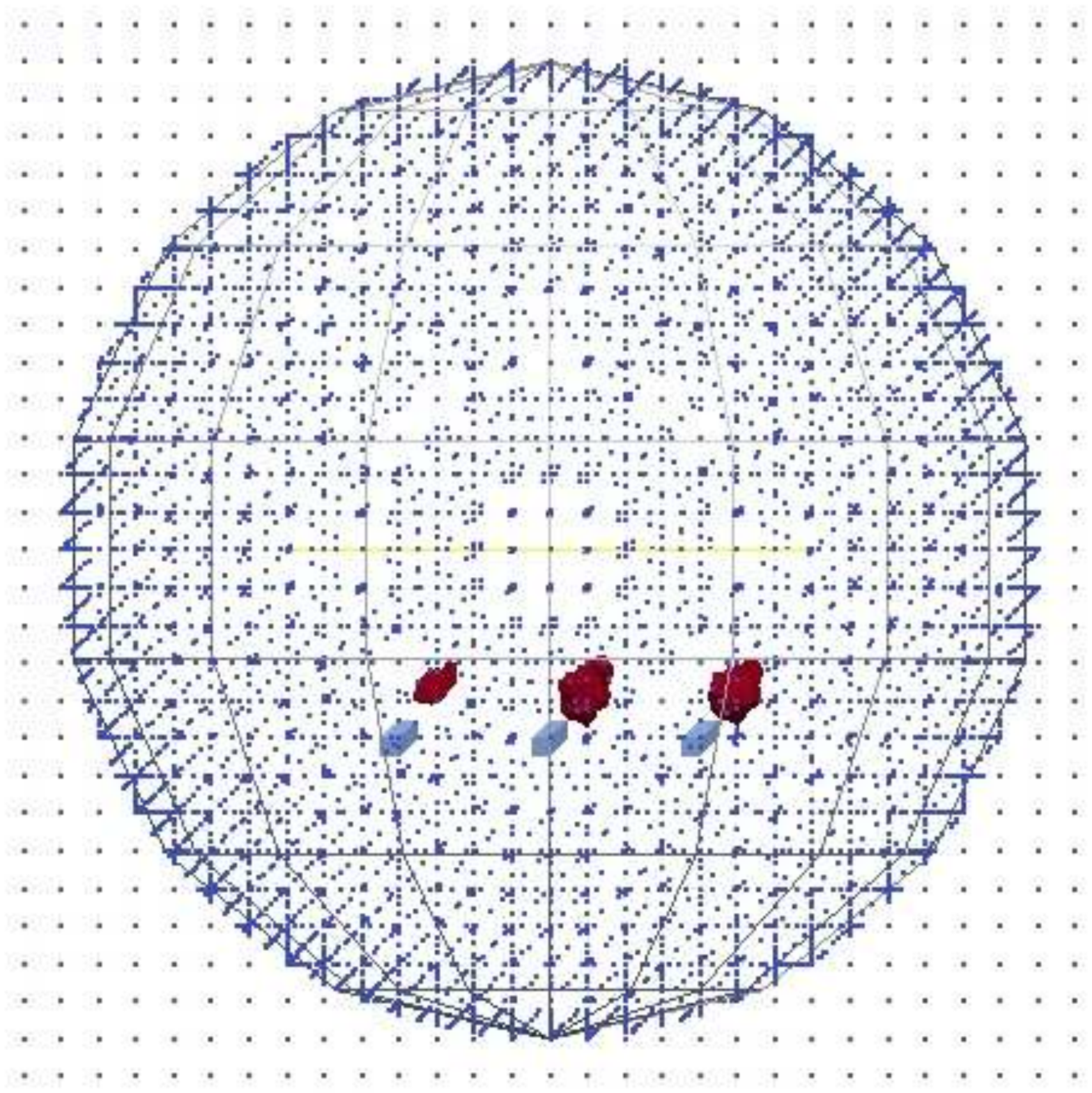}} \\
e) $x_2 x_3$ view & f)   $x_2 x_3$ view \\
{\includegraphics[scale=0.22, trim = 2.0cm 6.0cm 2.0cm 6.0cm, clip=true,]{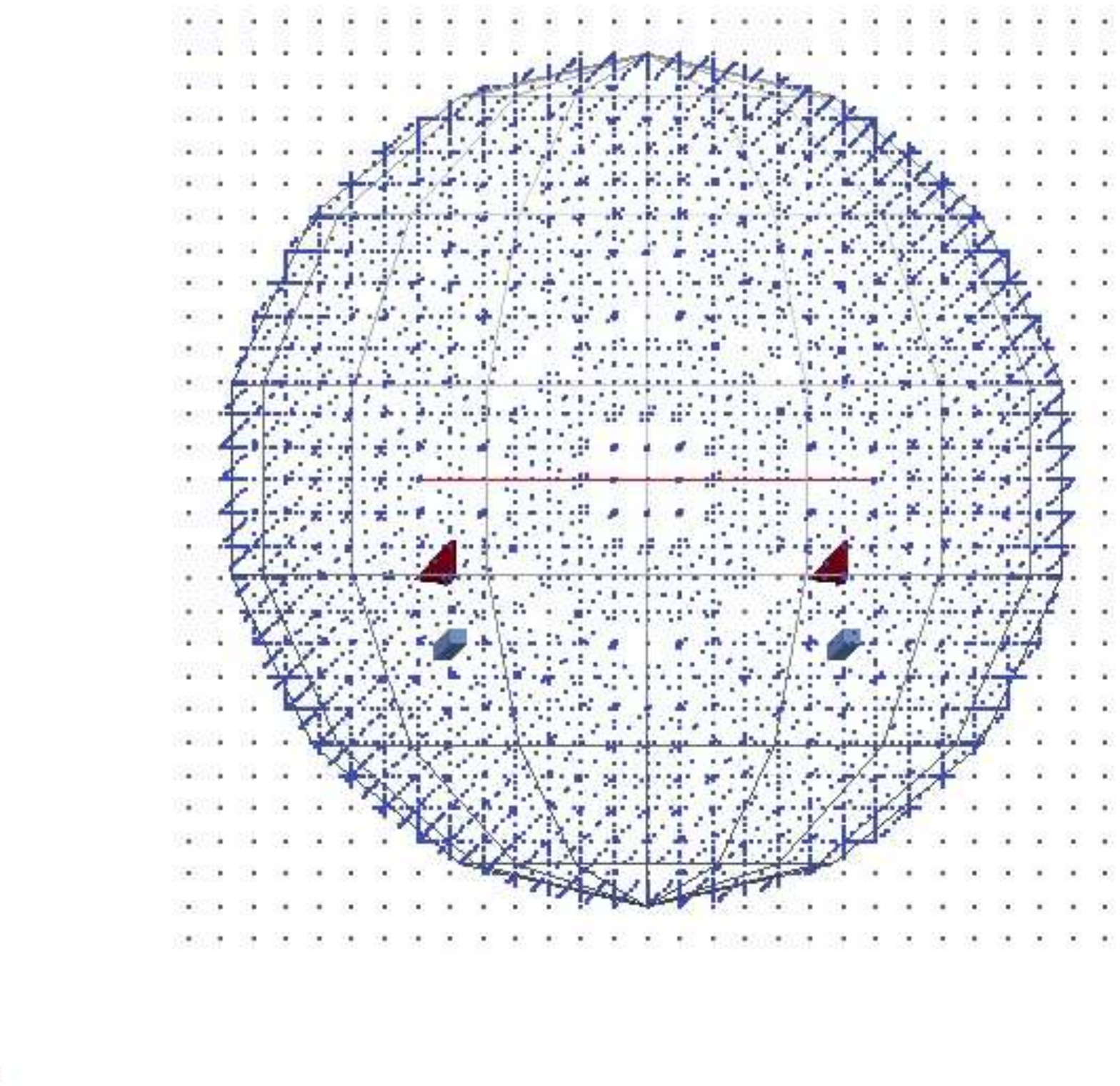}} &
{\includegraphics[scale=0.22, trim = 2.0cm 6.0cm 2.0cm 6.0cm, clip=true,]{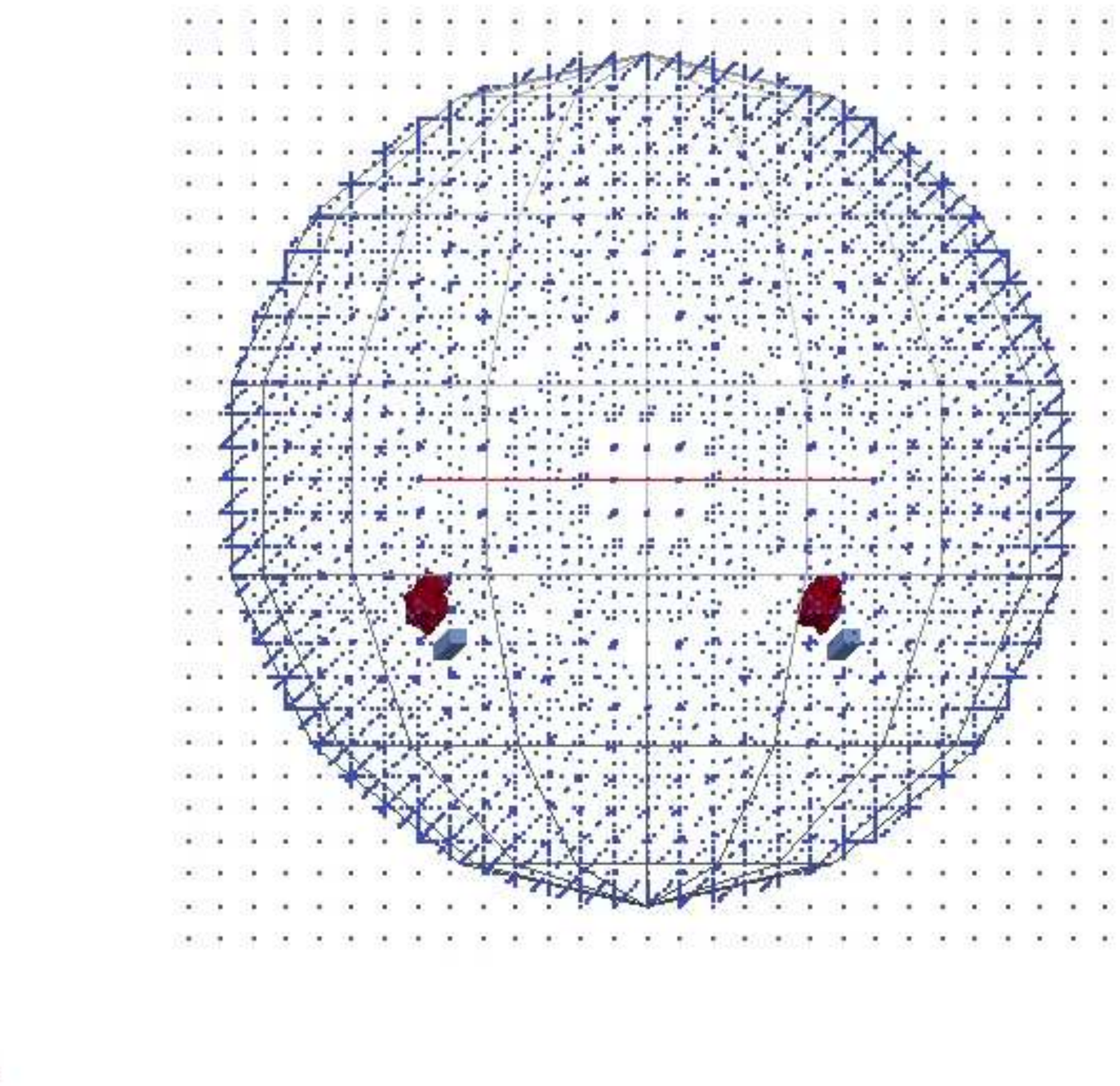}} \\
 g) $x_1 x_3$ view & h) $x_1 x_3$ view 
\end{tabular}
\end{center}
\caption{\small\emph{Test 2. Reconstructions (in red color) $\eps_{h_0}$ of $\eps$ obtained on the coarse mesh (left figures), and $\eps_\mathrm{rec}$ on the five times adaptively refined mesh (right figures).  The noise level in the data is $\sigma=10\%$. For comparison we also present exact isosurfaces of the three small inclusions to be reconstructed (in light blue color).}}
\label{fig:test2}
\end{figure}

\subsubsection{Test 2}
In the test of this section we consider the problem arisen in microwave imaging: reconstruction of high contrast in malign tumors and detection of small tumor sizes (less than 1 cm) in breast cancer screening. Most of existing numerical methods for solution of these problems uses minimization of conventional least-squares functionals and Gauss-Newton methods. See, for example, \cite{bssps04, jzlj94, cw90, jph91, fhp06}, \cite{egl15, egl15b}. We propose to use an adaptive finite element method which will allow achievement of high contrast in the malign tumor and efficiently detect very small sizes of inclusions during adaptive mesh refinement.  Although only reconstruction of a real dielectric permittivity function is considered in our test, the obtained results can be extended for the reconstruction of the complex permittivity function which is one of the goals of our current research.

We tested the Second Adaptive Algorithm on the reconstruction of three small inclusions with centers at $(-0.3,\,0.0,\,-0.25), (0.3,\,0.2,\,-0.25)$, and $(0.3,\,-0.2,\,-0.25)$ located inside spherical geometry of Figure~\ref{fig:fig1}, see also Figure \ref{fig:test2cells} where they are visualized. These inclusions model three small malign tumors of a size 2 mm.  We performed simulations with two additive noise levels in the data: $\sigma= 3 \%$ and $\sigma= 10\%$, see Tables~1--2 for the results.

The reconstruction of the three inclusions on the initial coarse mesh with $\sigma= 10\%$ is presented on the left figures of Figure~\ref{fig:test2}. In this figure and Table~1 we observe that, on the coarse mesh, we obtain quite correct locations of all inclusions and achieve maximal contrast of $\max_{\Omega_\mathrm{FEM}} \eps_{h_0}= 2.8$ in the inclusions. However, Figures~\ref{fig:test2}-e), and g) show us that the locations of all inclusions in $x_3$-direction can still be improved. The figures on the right of Figure~\ref{fig:test2} present the reconstruction $\eps_\mathrm{rec}$ of $\eps$ on the five times locally adaptively refined mesh.  In Figure~\ref{fig:test2}-f), and h) we observe an improvement of the reconstructions of the three inclusions in the $x_3$-direction on the final adaptively refined mesh, compared with reconstructions obtained on the coarse mesh. Comparing Tables~1 and 2 we also can conclude that adaptive mesh refinement allowed us to obtain more correct contrast for all three inclusions.

\begin{figure}
\begin{center}
\begin{tabular}{cc}
{\includegraphics[scale=0.4, clip=true,]{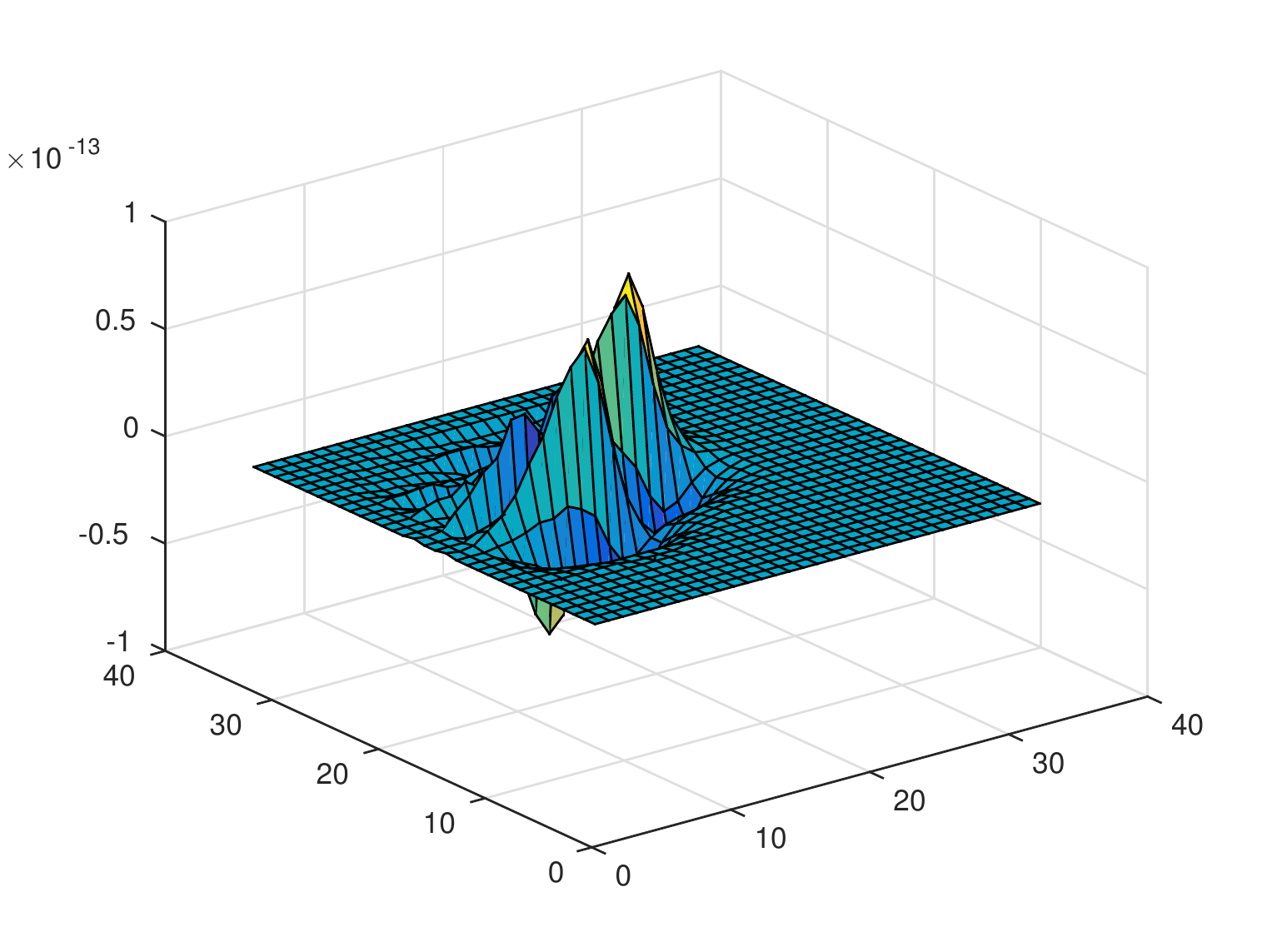}} &
{\includegraphics[scale=0.4, clip=true,]{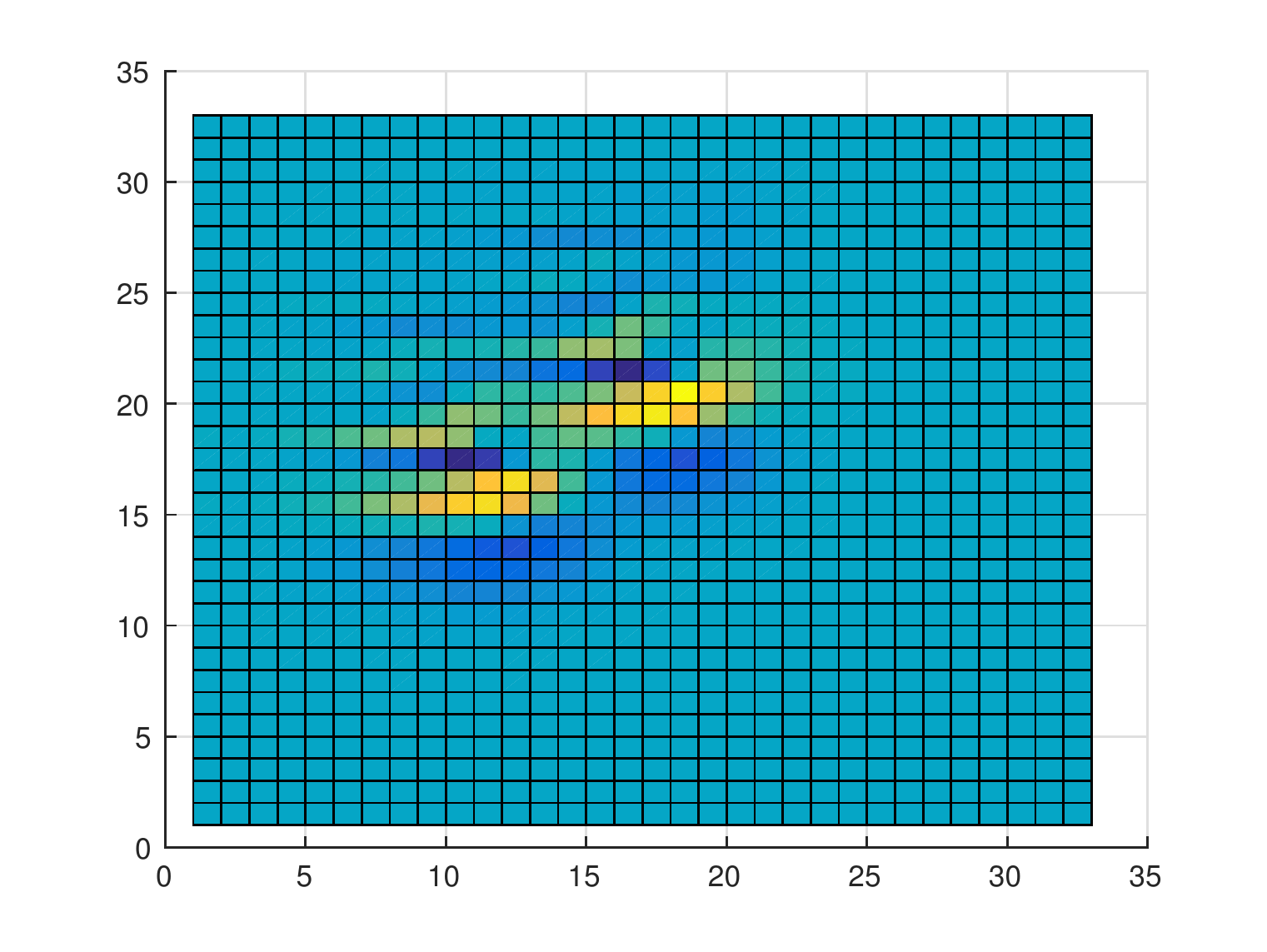}} \\
$t=0.6$  & $t=0.6$, $x_1 x_2$ view \\
{\includegraphics[scale=0.4, clip=true,]{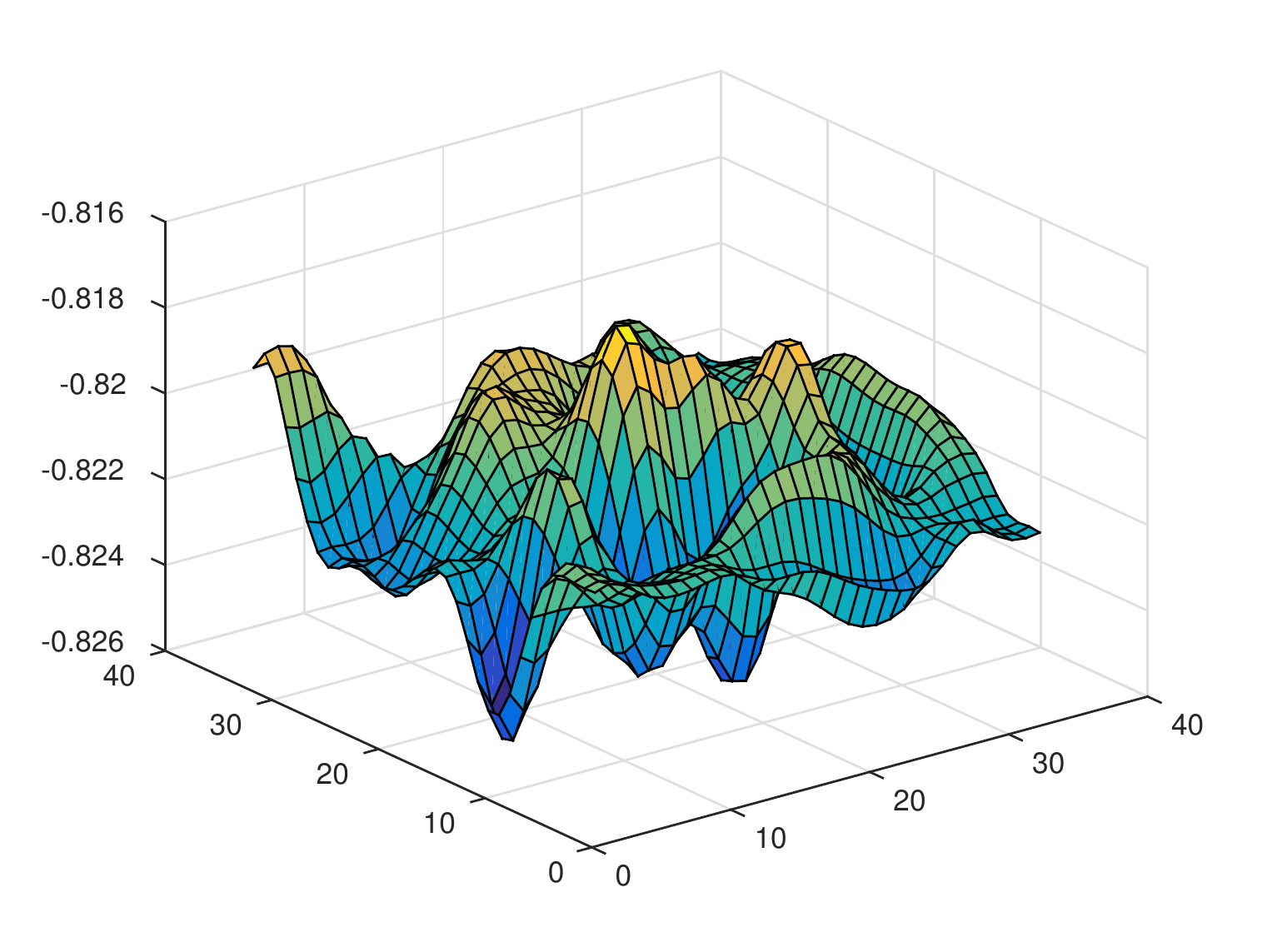}} &
{\includegraphics[scale=0.4, clip=true,]{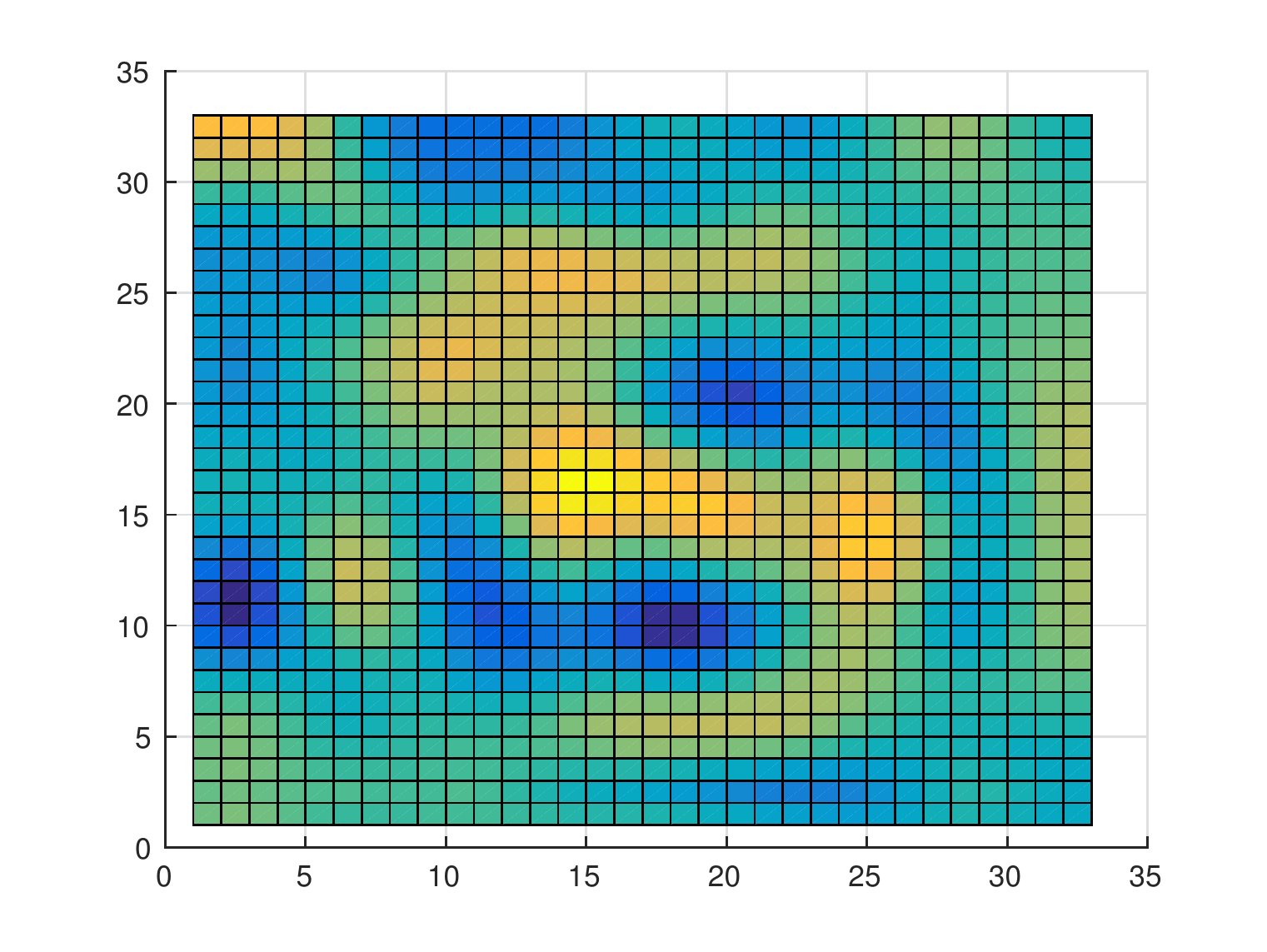}} \\
$t=1.8$  & $t=1.8$, $x_1 x_2$ view \\
{\includegraphics[scale=0.4, clip=true,]{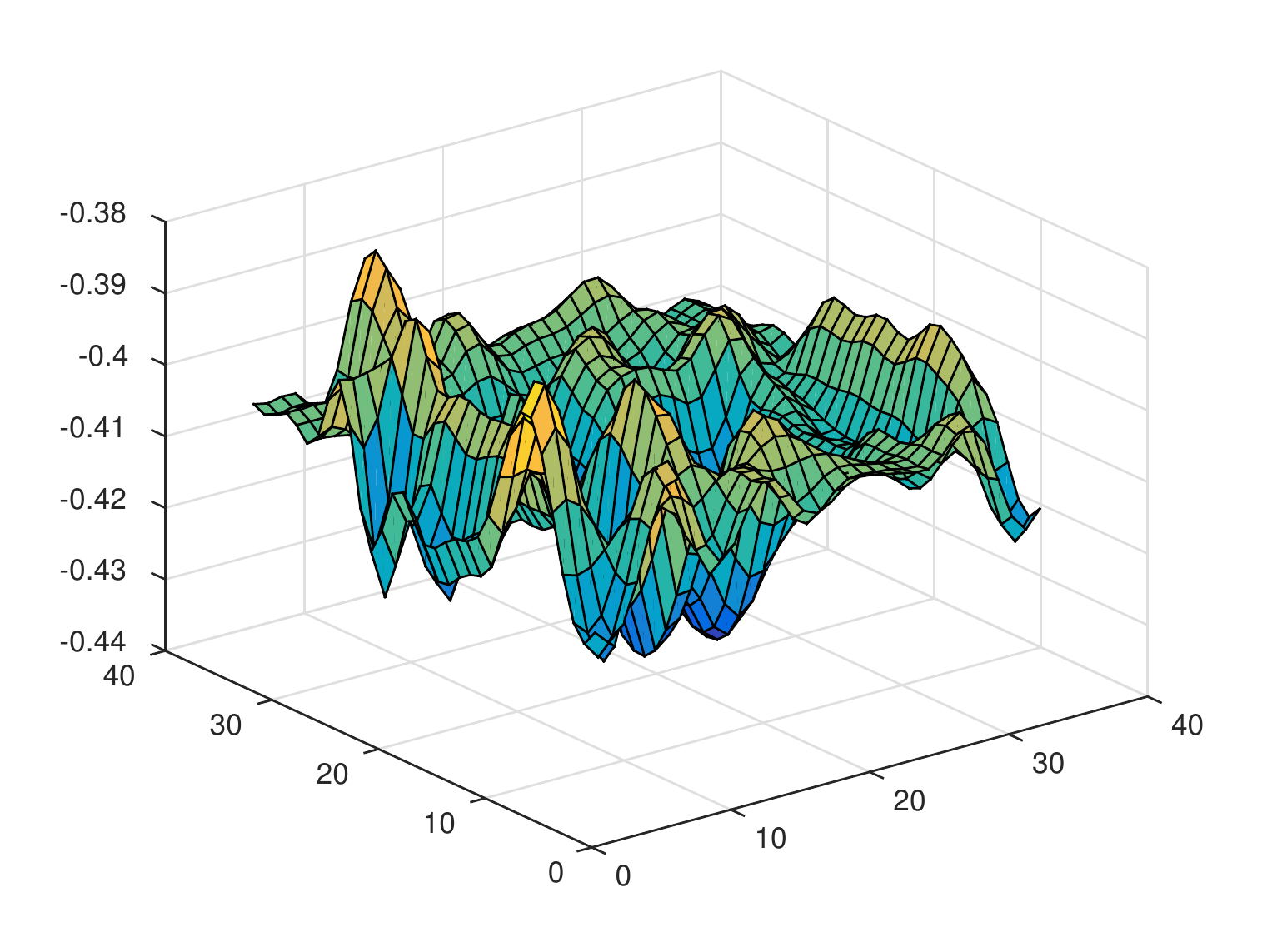}} &
{\includegraphics[scale=0.4, clip=true,]{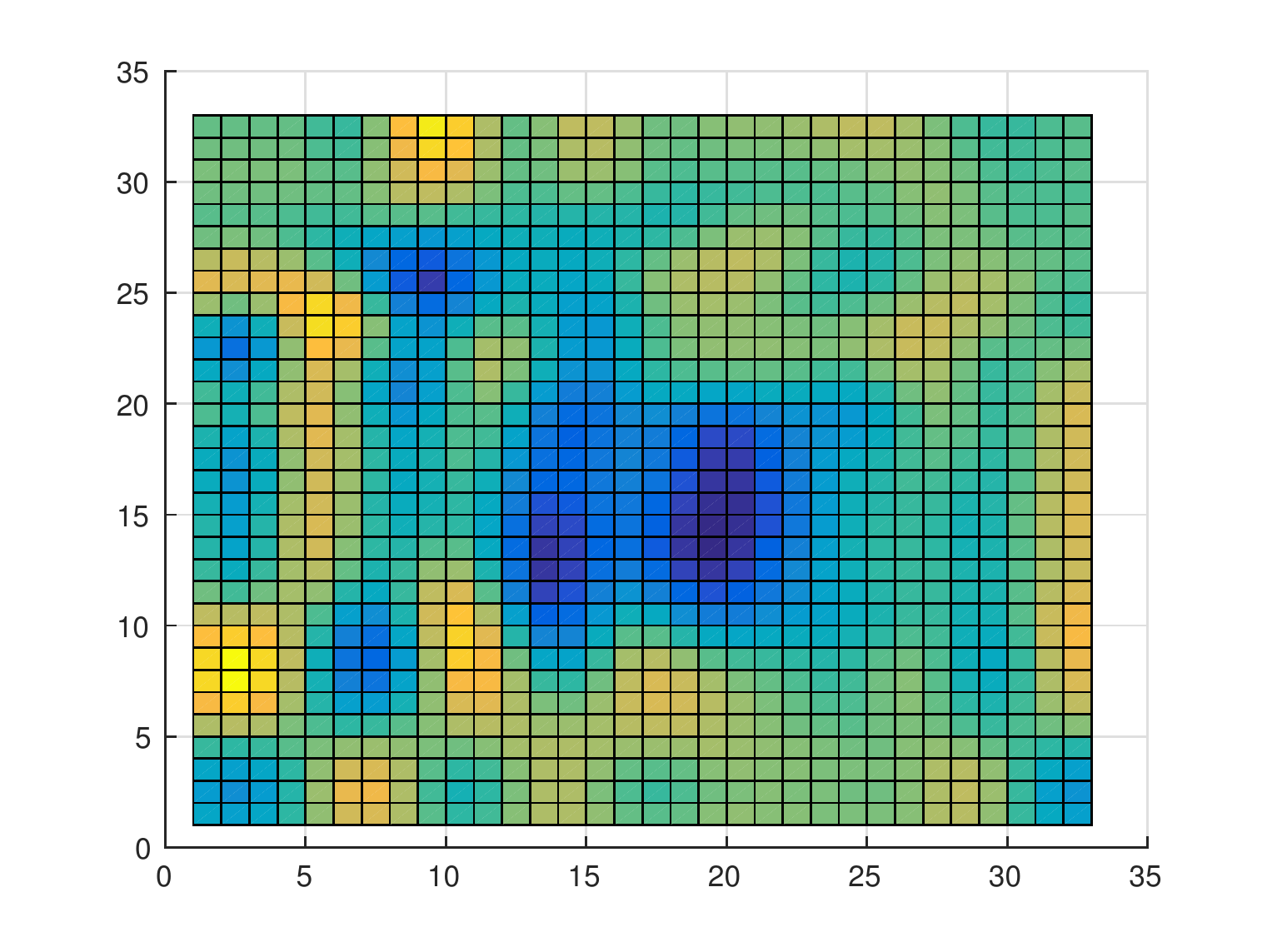}} \\
$t=2.4$  & $t=2.4$, $x_1 x_2$ view \\
\end{tabular}
\end{center}
\caption{\small\emph{Test 3. Transmitted data of the component $E_2$ at different times.  The noise level in the data is $\sigma=3\%$.}}
\label{fig:test3data}
\end{figure}

\begin{figure}
\begin{center}
\begin{tabular}{cc}
$\x\in\Omega_\mathrm{FEM}:\eps_{h_0}(\x) = 2.04 $ & $\x\in\Omega_\mathrm{FEM}:\eps_\mathrm{rec}(\x) = 1.88 $ \\
{\includegraphics[scale=0.22, trim = 2.0cm 6.0cm 2.0cm 6.0cm, clip=true,]{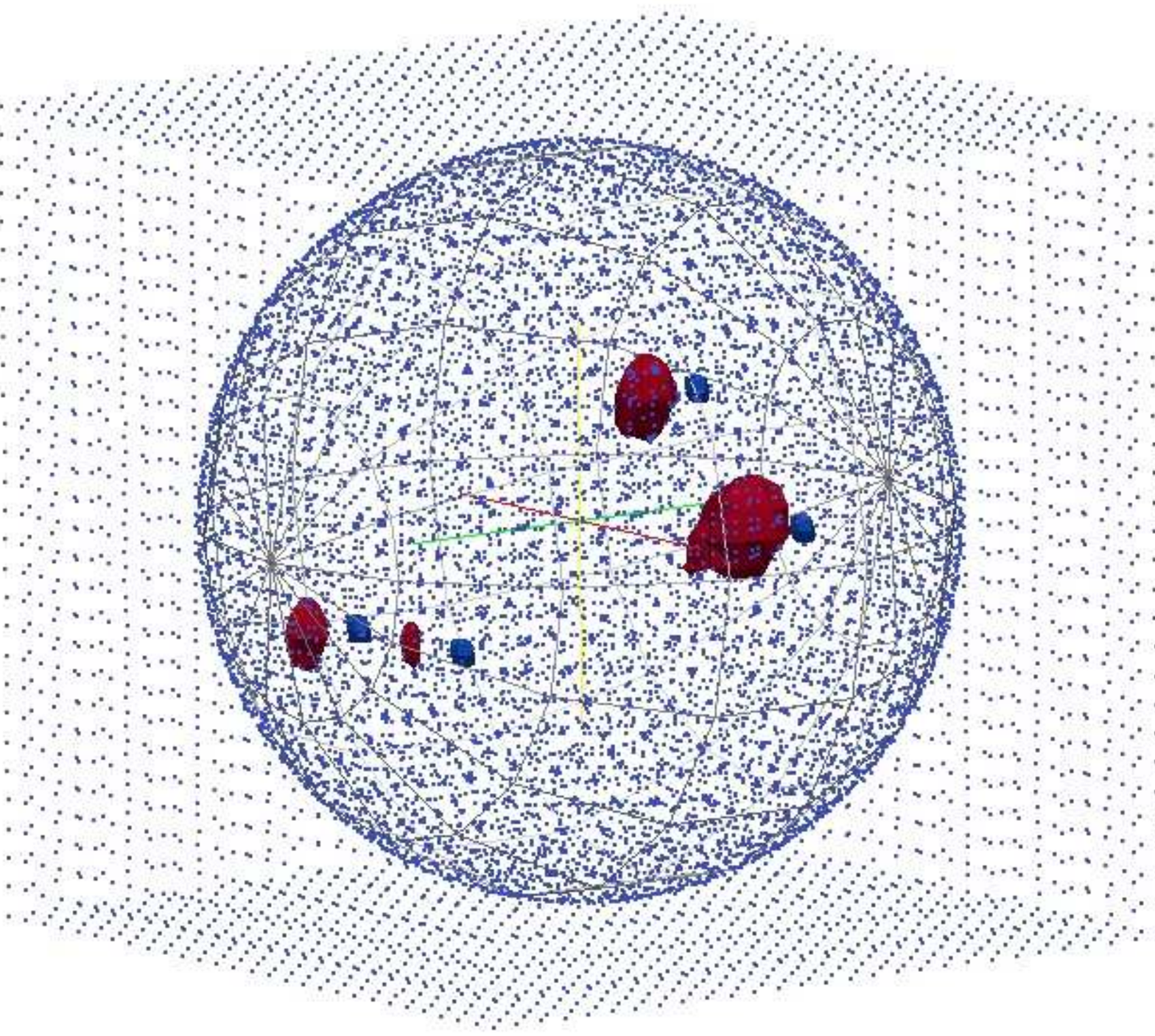}} &
{\includegraphics[scale=0.22, trim = 2.0cm 6.0cm 2.0cm 6.0cm, clip=true,]{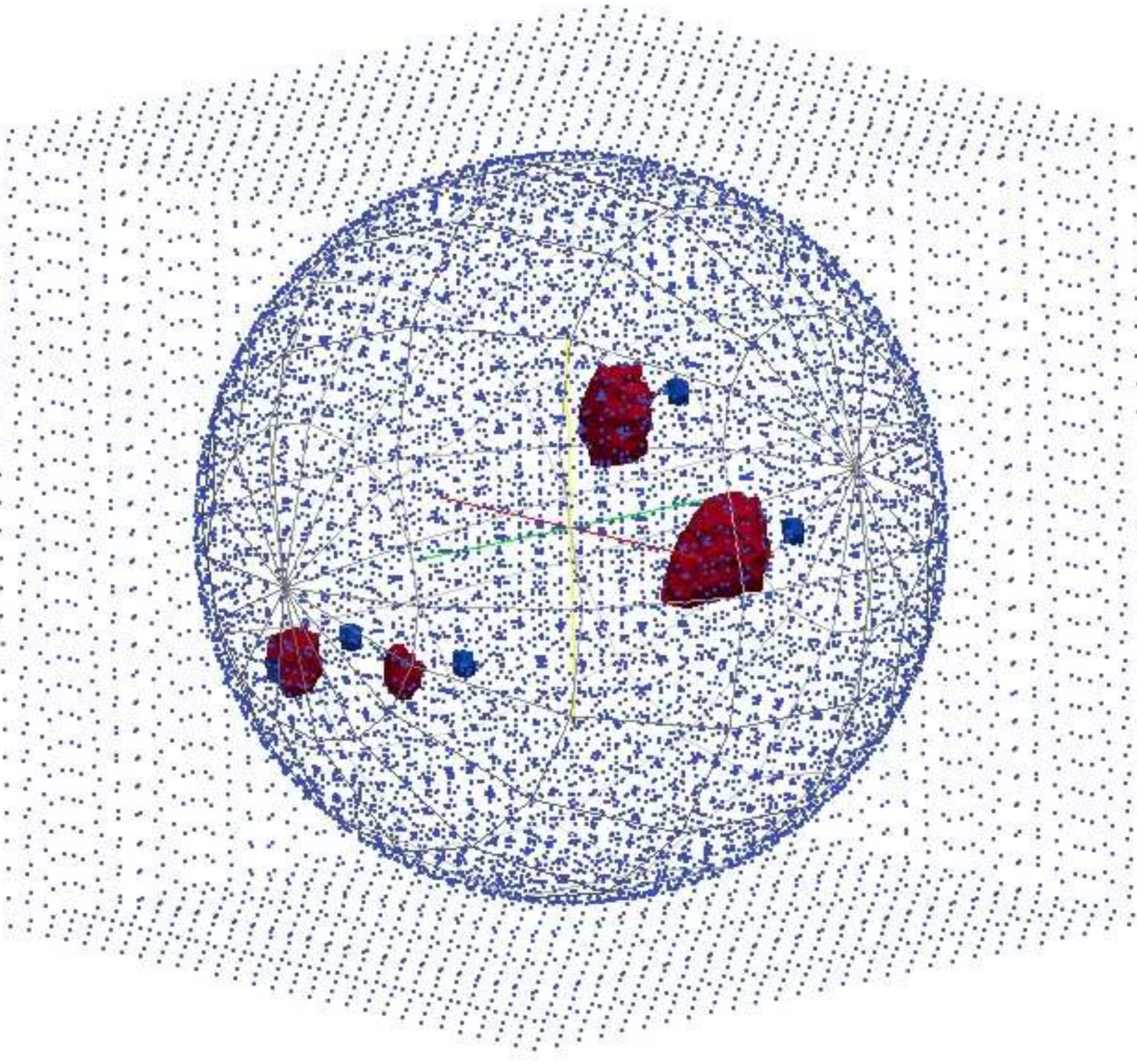}}  \\
a) prospect view &  b) prospect view \\
{\includegraphics[scale=0.22, trim = 2.0cm 6.0cm 2.0cm 6.0cm, clip=true,]{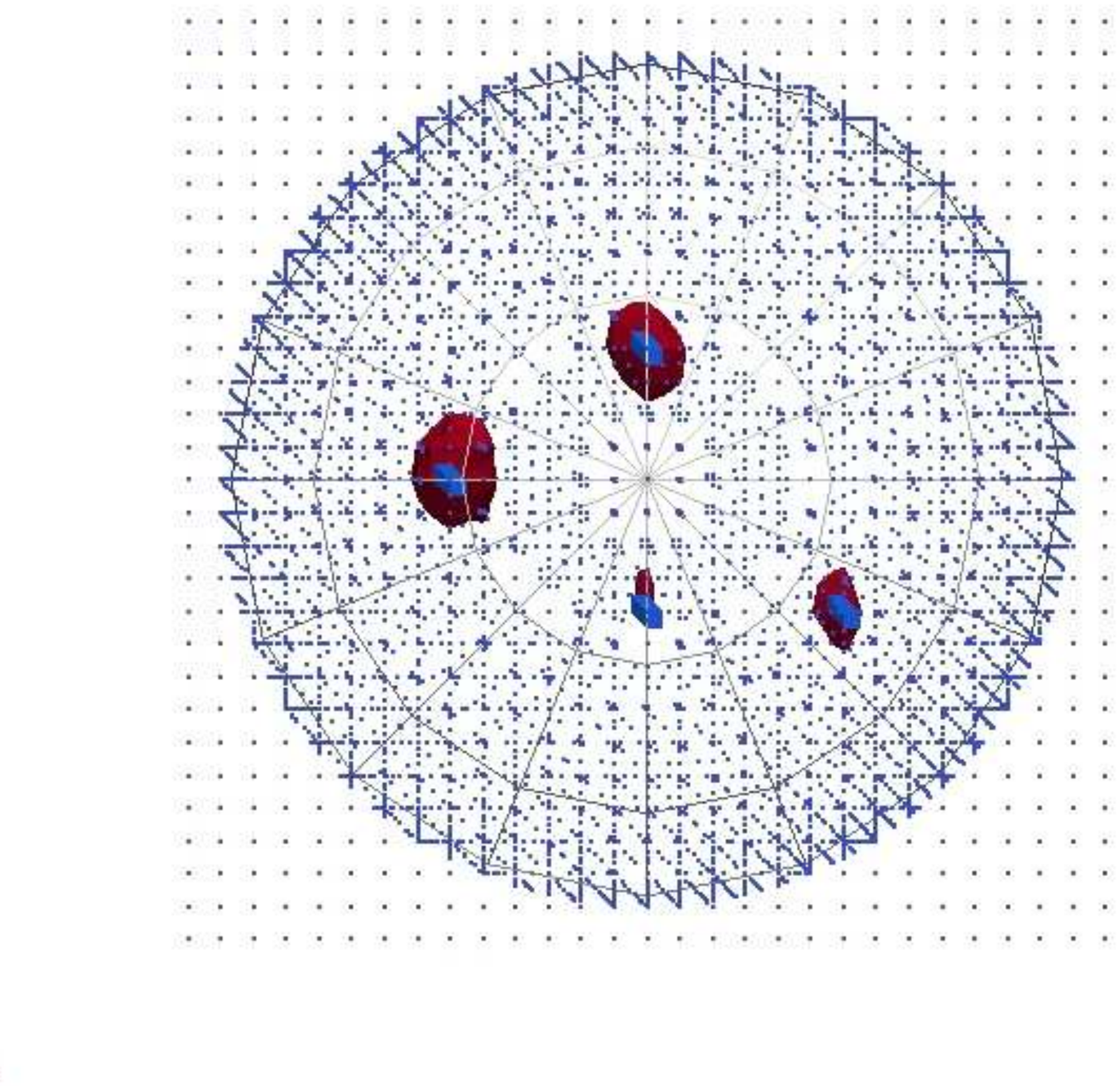}} &
{\includegraphics[scale=0.22, trim = 2.0cm 6.0cm 2.0cm 6.0cm, clip=true,]{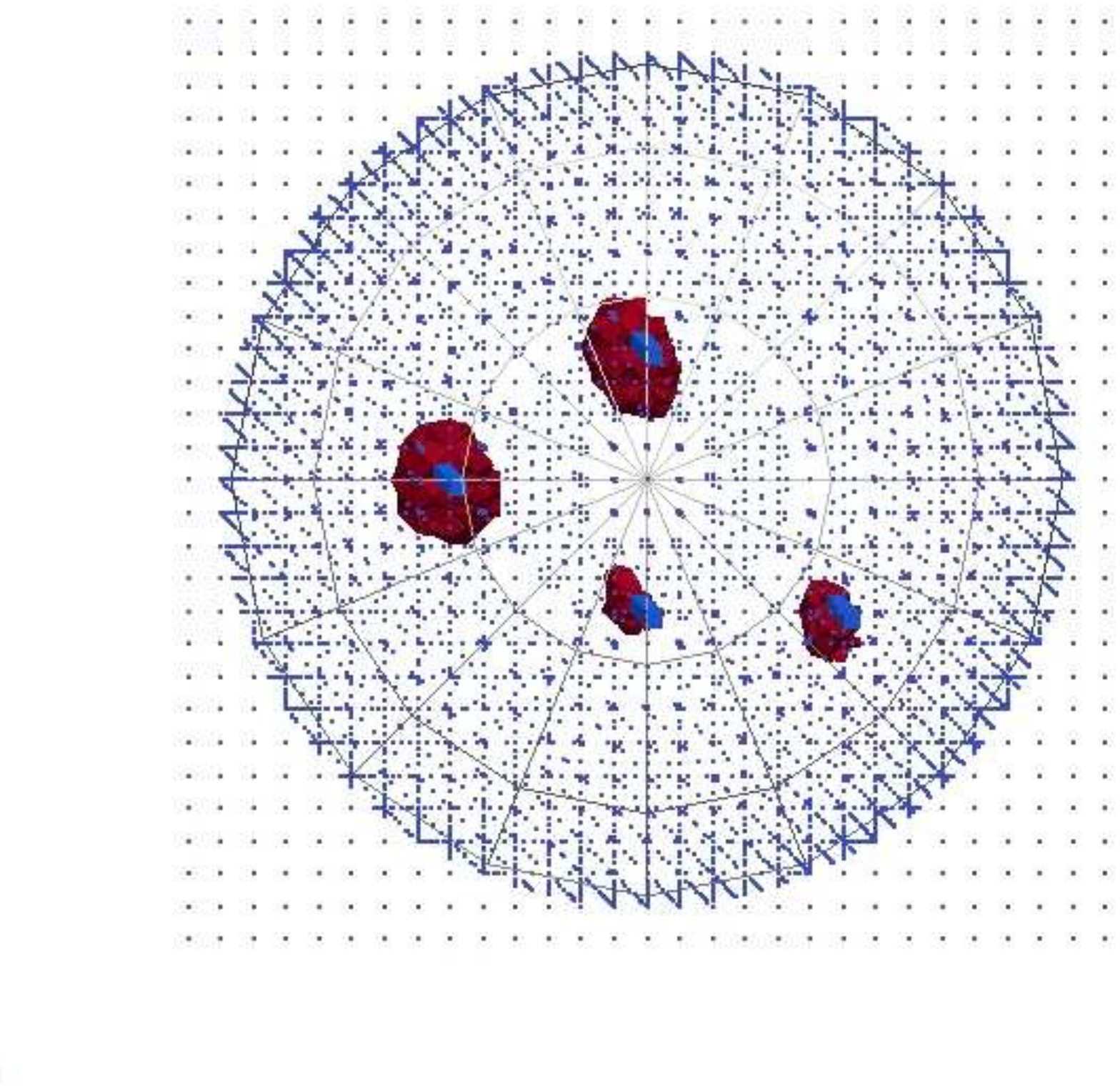}} \\
c) $x_1 x_2$ view & d) $x_1 x_2$ view \\ 
{\includegraphics[scale=0.22, trim = 2.0cm 6.0cm 2.0cm 6.0cm, clip=true,]{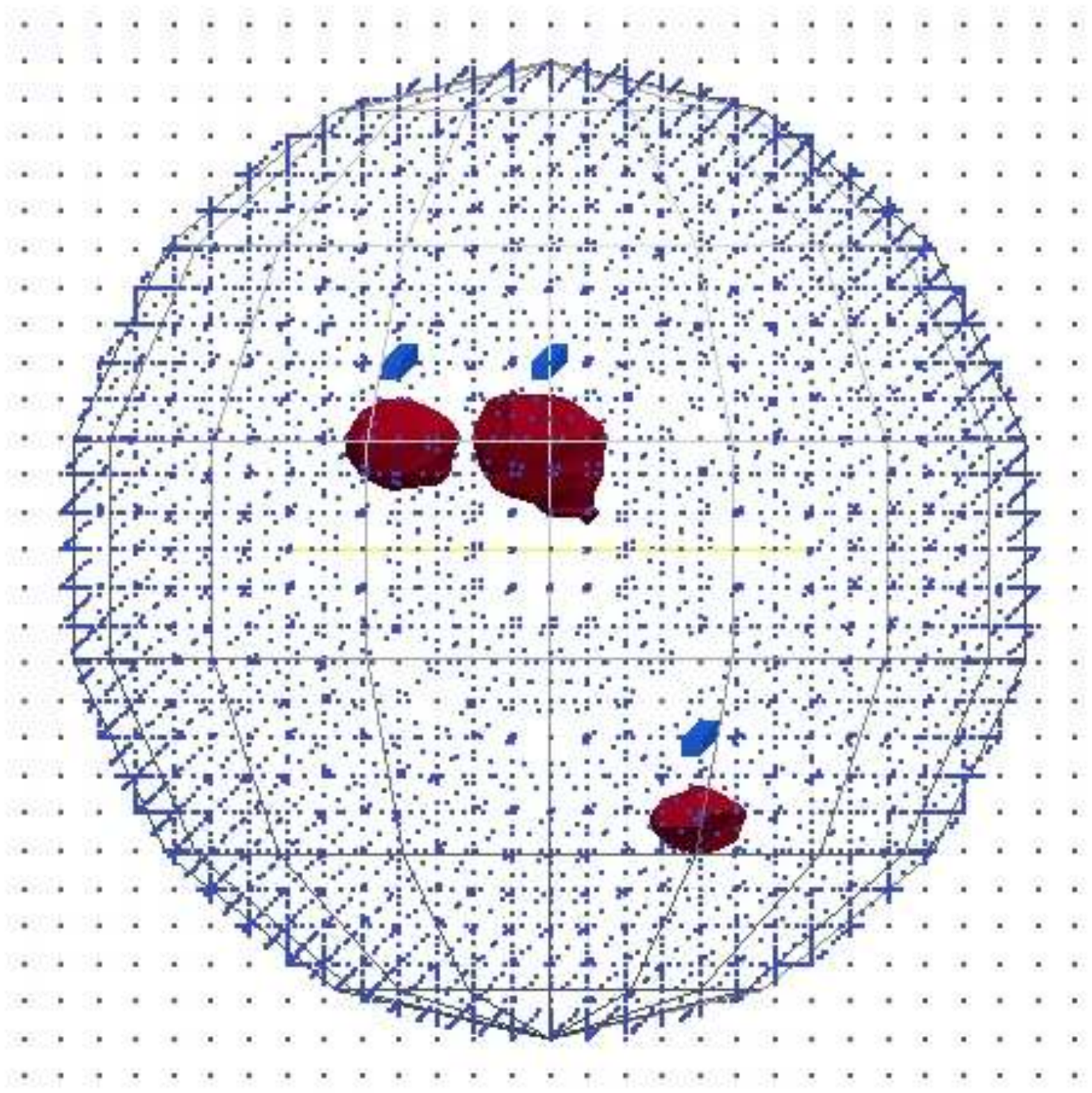}} &
{\includegraphics[scale=0.22, trim = 2.0cm 6.0cm 2.0cm 6.0cm, clip=true,]{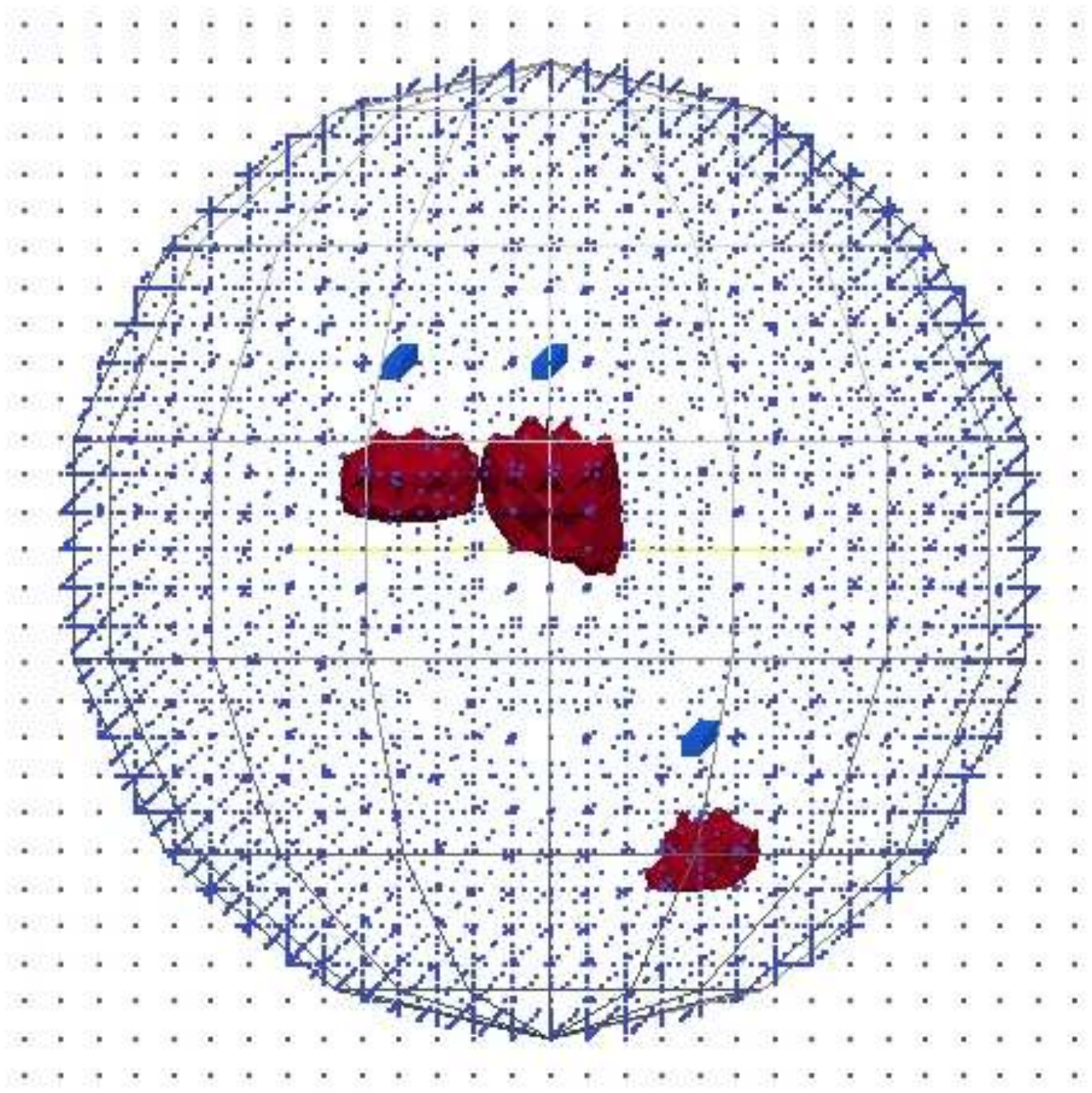}} \\
e) $x_2 x_3$ view & f) $x_2 x_3$ view \\
{\includegraphics[scale=0.22, trim = 2.0cm 6.0cm 2.0cm 6.0cm, clip=true,]{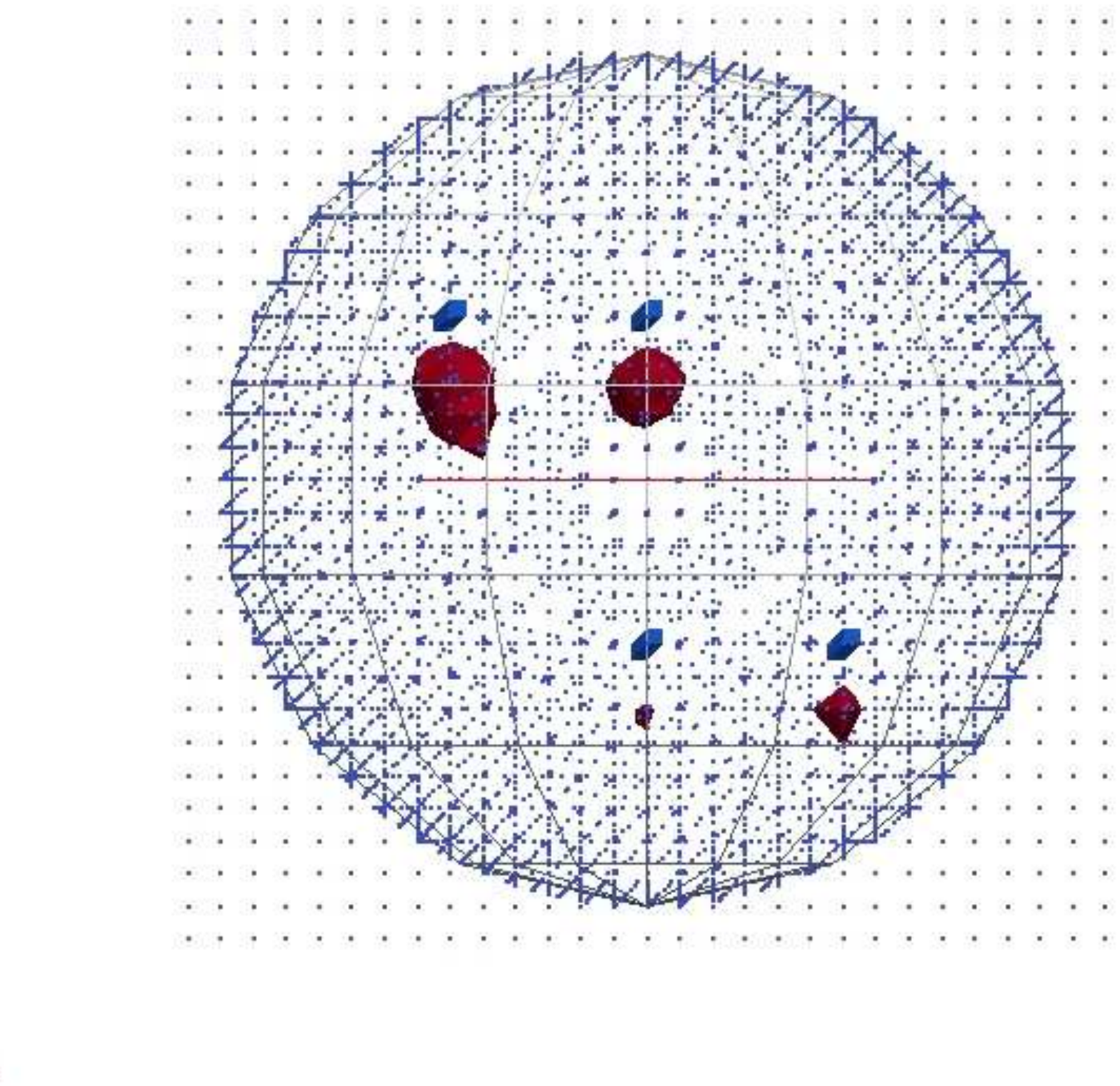}} &
{\includegraphics[scale=0.22, trim = 2.0cm 6.0cm 2.0cm 6.0cm, clip=true,]{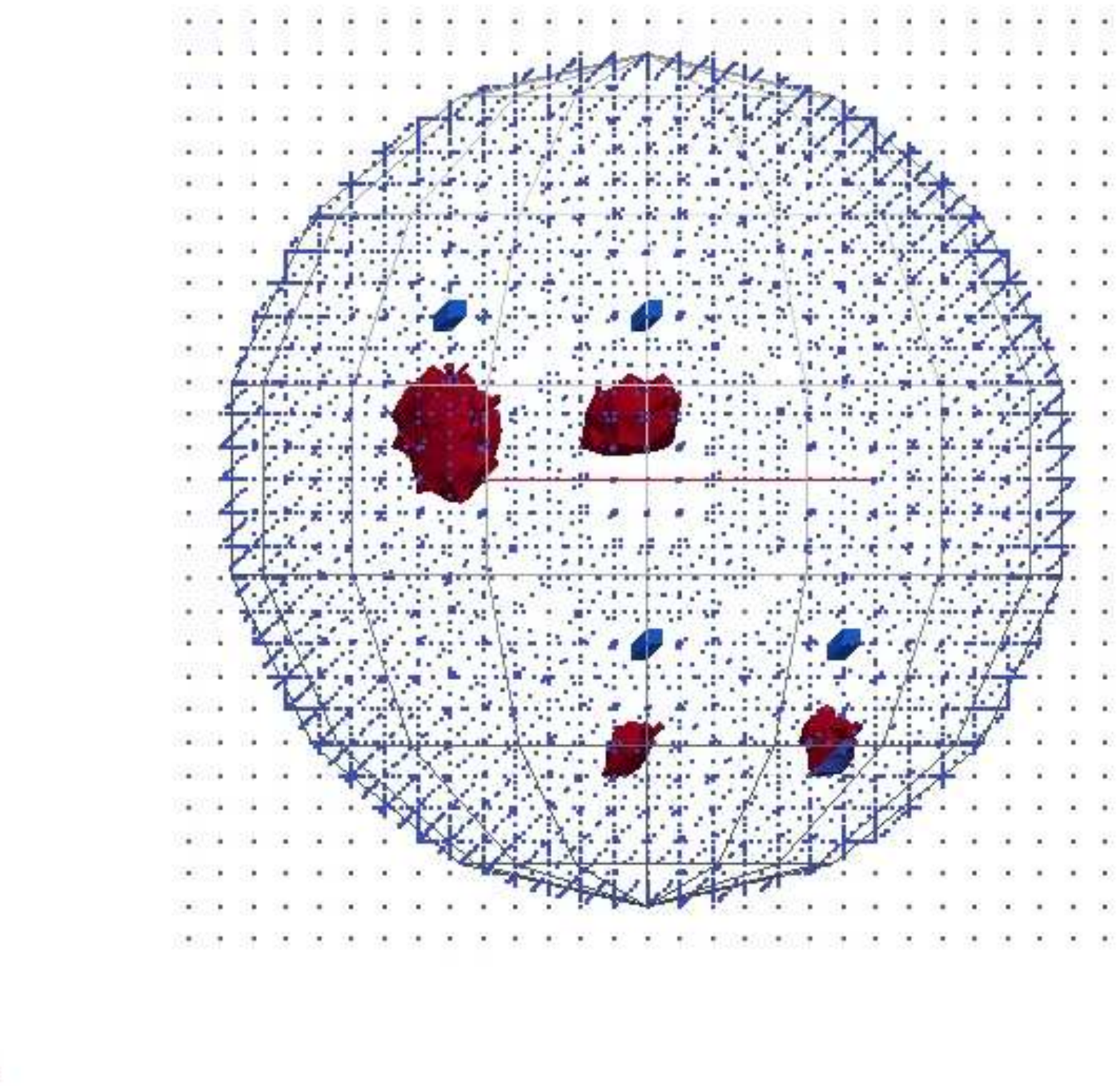}} \\
g) $x_1 x_3$ view & h) $x_1 x_3$ view 
\end{tabular}
\end{center}
\caption{ \protect\small \emph{Test 3. Reconstruction of four inclusions (in red color) obtained on the coarse mesh (left figures) and on the two times adaptively refined mesh (right figures). The noise level in the data is $\sigma=10\%$. For comparison we also present exact isosurfaces of the four small inclusions to be reconstructed (in light blue color).}}
\label{fig:test3noise10}
\end{figure}

\subsubsection{Test 3}
This test is similar to Test~3, only here the goal was to reconstruct four small inclusions located at different parts of $\Omega_\mathrm{FEM}$: two inclusions were located closer to the backscattering boundary, and another two inclusions were placed closer to the transmission boundary of $\Omega_\mathrm{FEM}$, see  Figure~\ref{fig:test3noise10} where they are presented. These inclusions model four small malign tumors of a size 2 mm.  We performed simulations with two additive noise levels in the data: $\sigma= 3 \%$ and $\sigma= 10\%$, see Tables 1--2 for the results.

The reconstruction on the initial coarse mesh with a noise level $\sigma= 10\%$ in the data is presented on the left figures of Figure~\ref{fig:test3noise10}. From this figure and Table~1 we observe that we get quite correct locations of all inclusions and achieve a maximal contrast of $\max_{\Omega_\mathrm{FEM}} \eps_{h_0} = 2.04 $ on the coarse mesh. However, Figure~\ref{fig:test2}-e), and g) show that the locations of all inclusions in the $x_3$-direction can still be improved. These figures also show that one of the four small inclusions is almost not present in the initial reconstruction.

The figures on the right of Figure~\ref{fig:test3noise10} present the reconstruction $\eps_\mathrm{rec}$ on the two times locally adaptively refined mesh.  In Figure~\ref{fig:test3noise10}-f), and h) we observe that the two lower inclusions are well reconstructed. However, since in this quite challenging test we have used only transmitted data resulted from a single measurement of a plane wave, we do not observe significant improvement of the reconstruction of the four inclusions in the $x_3$-direction.

\begin{figure}
\begin{center}
\begin{tabular}{cc}
{\includegraphics[scale=0.4, clip=true,]{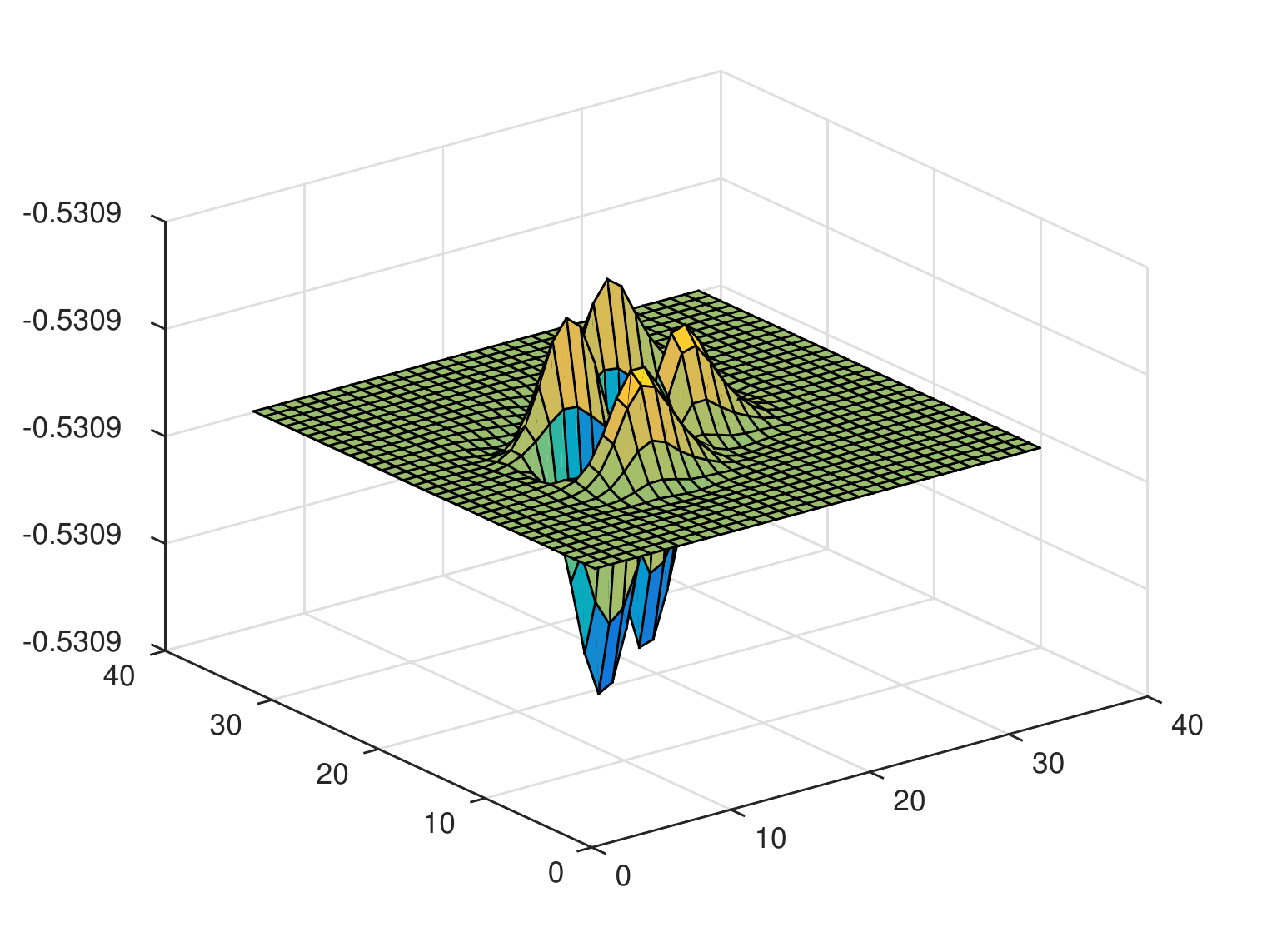}} &
{\includegraphics[scale=0.4, clip=true,]{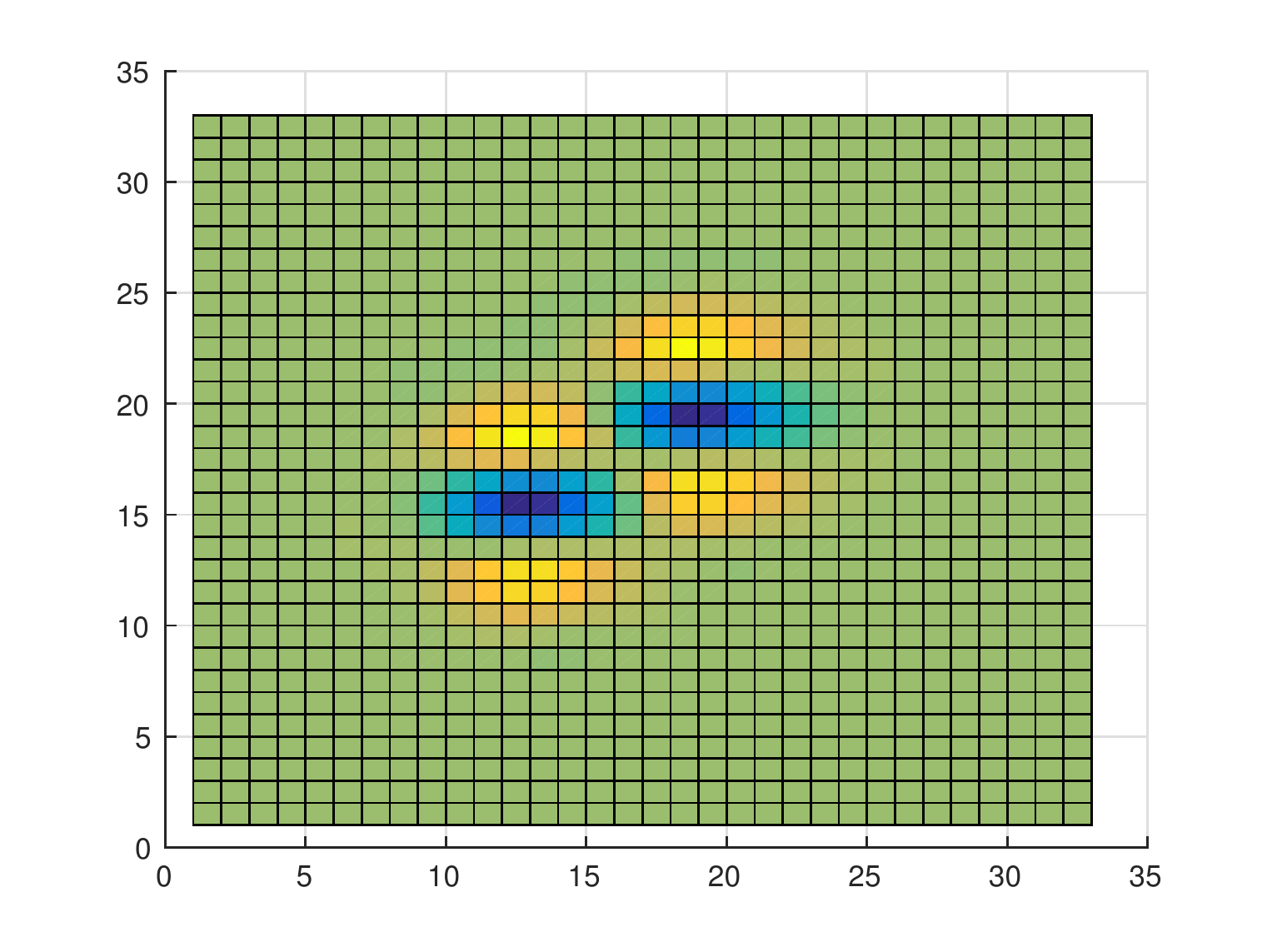}} \\
$t=0.6$  & $t=0.6$, $x_1 x_2 $ view \\
{\includegraphics[scale=0.4, clip=true,]{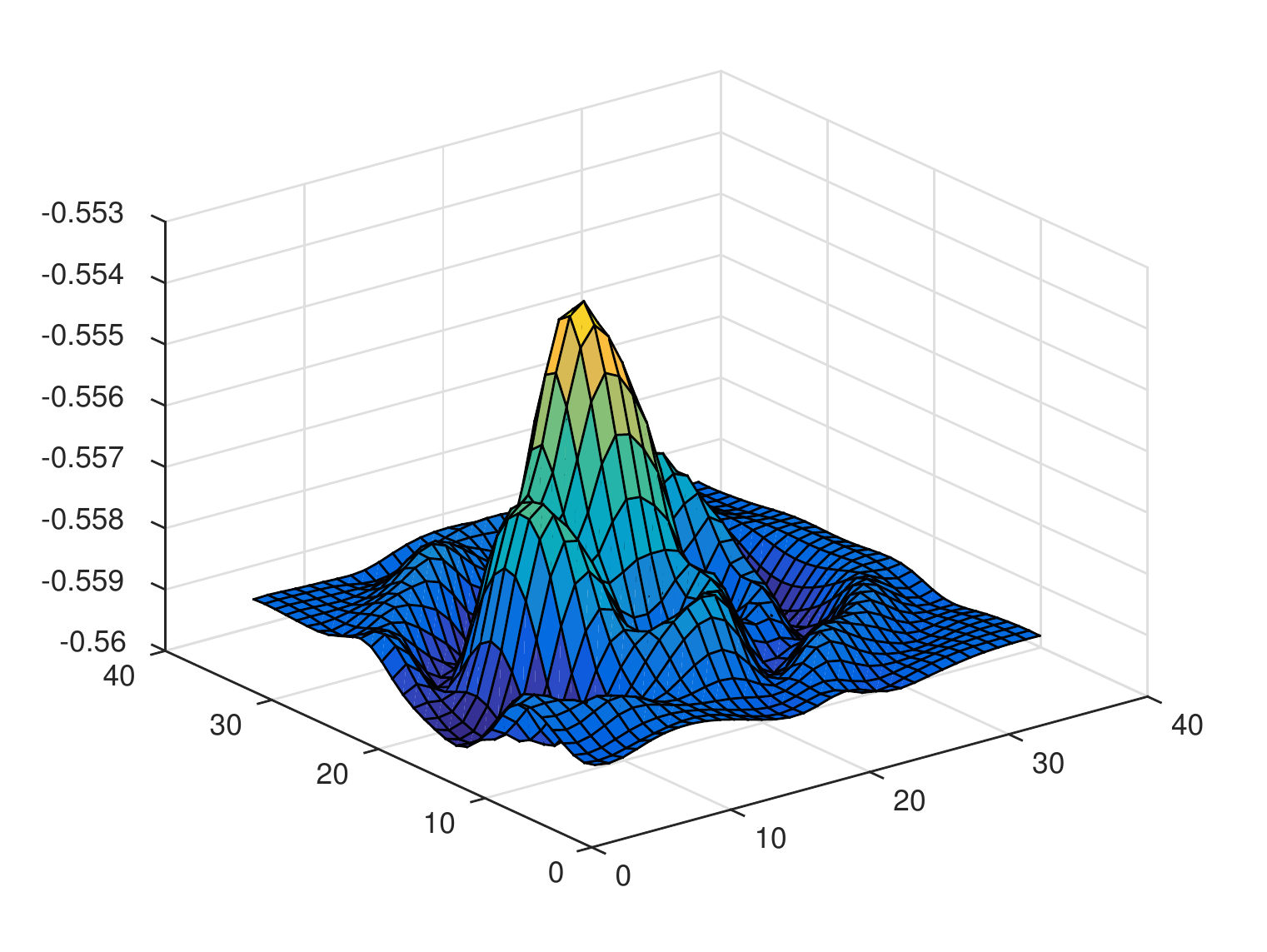}} &
{\includegraphics[scale=0.4, clip=true,]{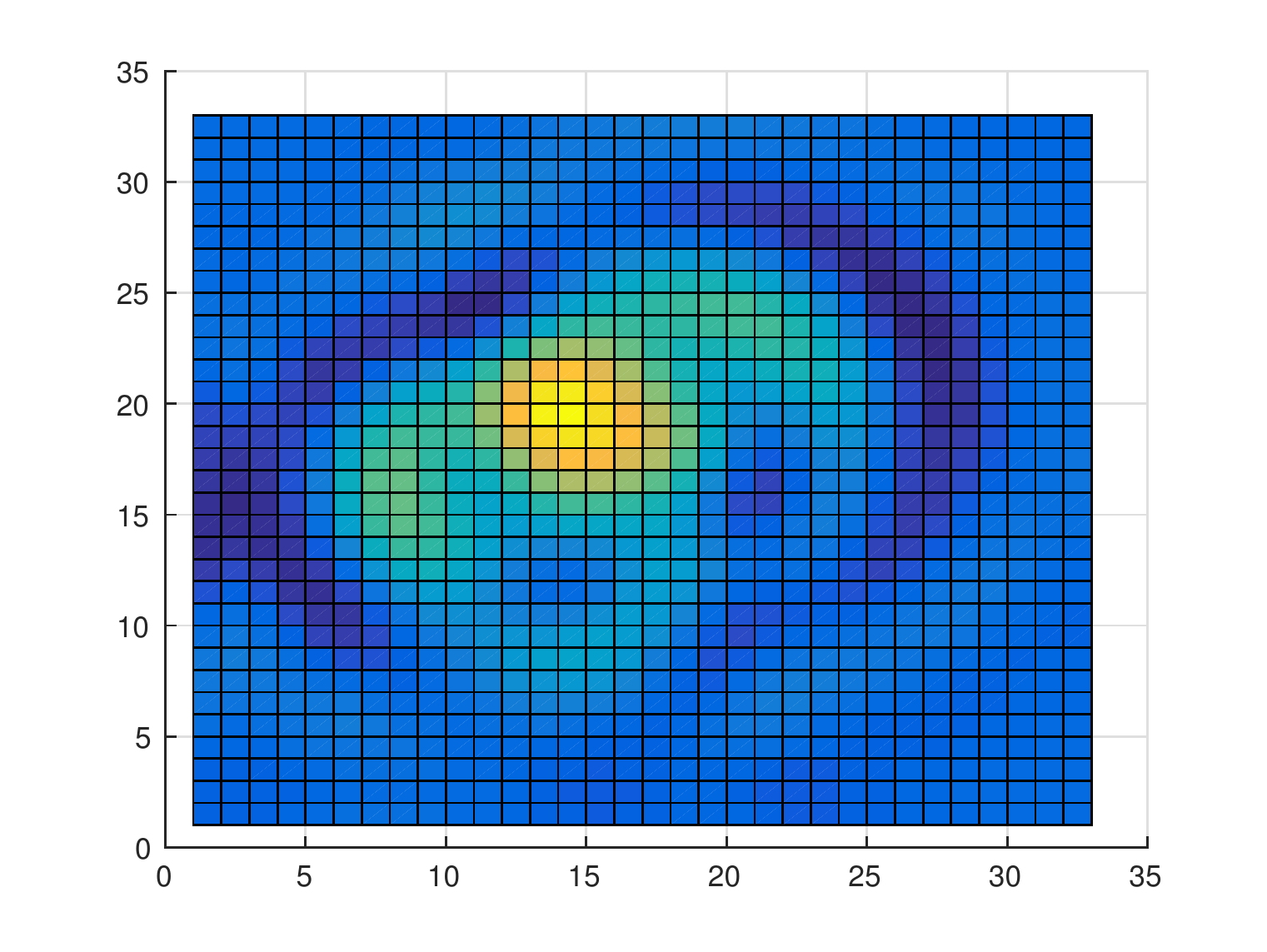}} \\
$t=1.2$  & $t=1.2$, $x_1 x_2$ view \\
{\includegraphics[scale=0.4, clip=true,]{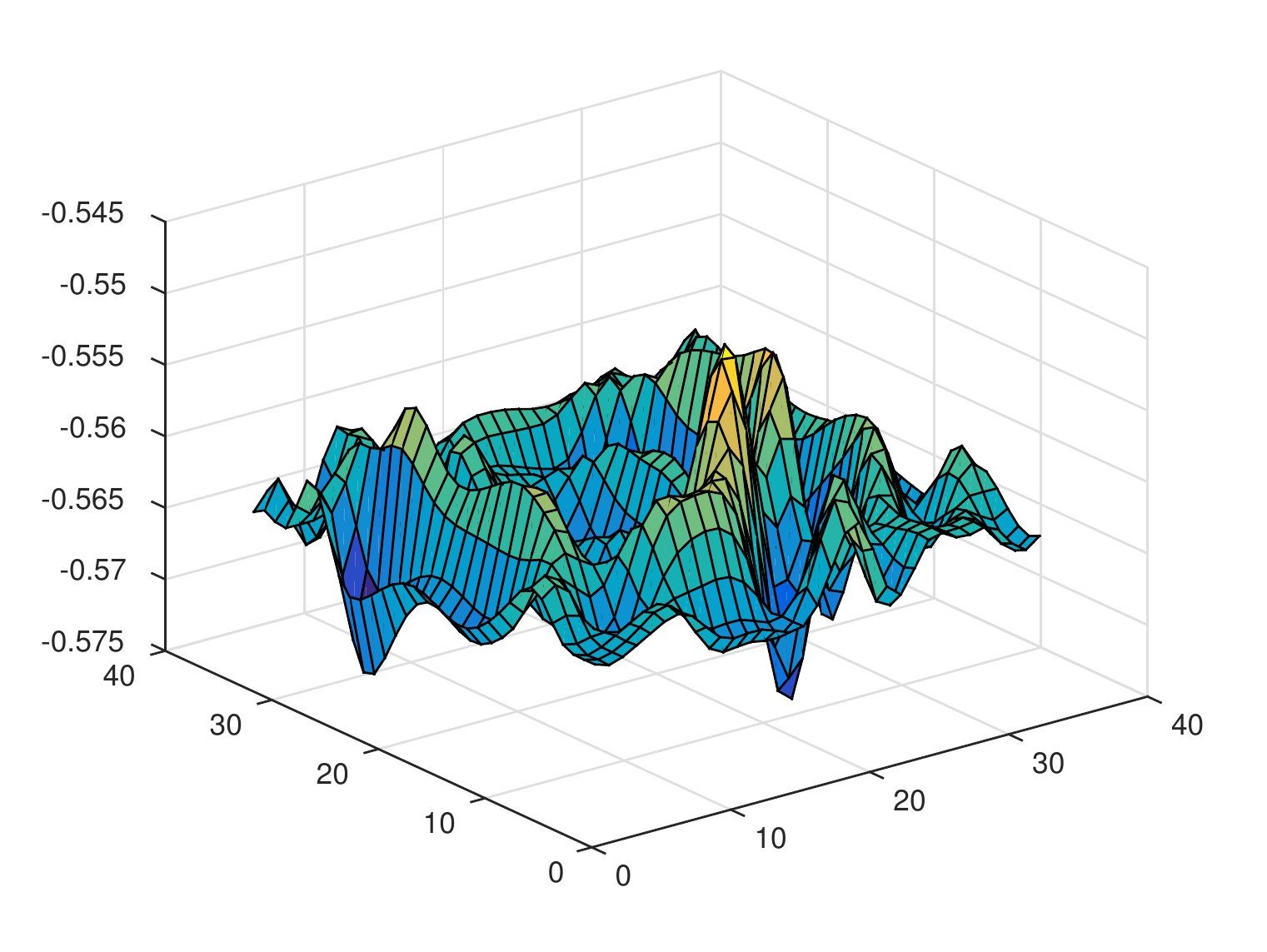}} &
{\includegraphics[scale=0.4, clip=true,]{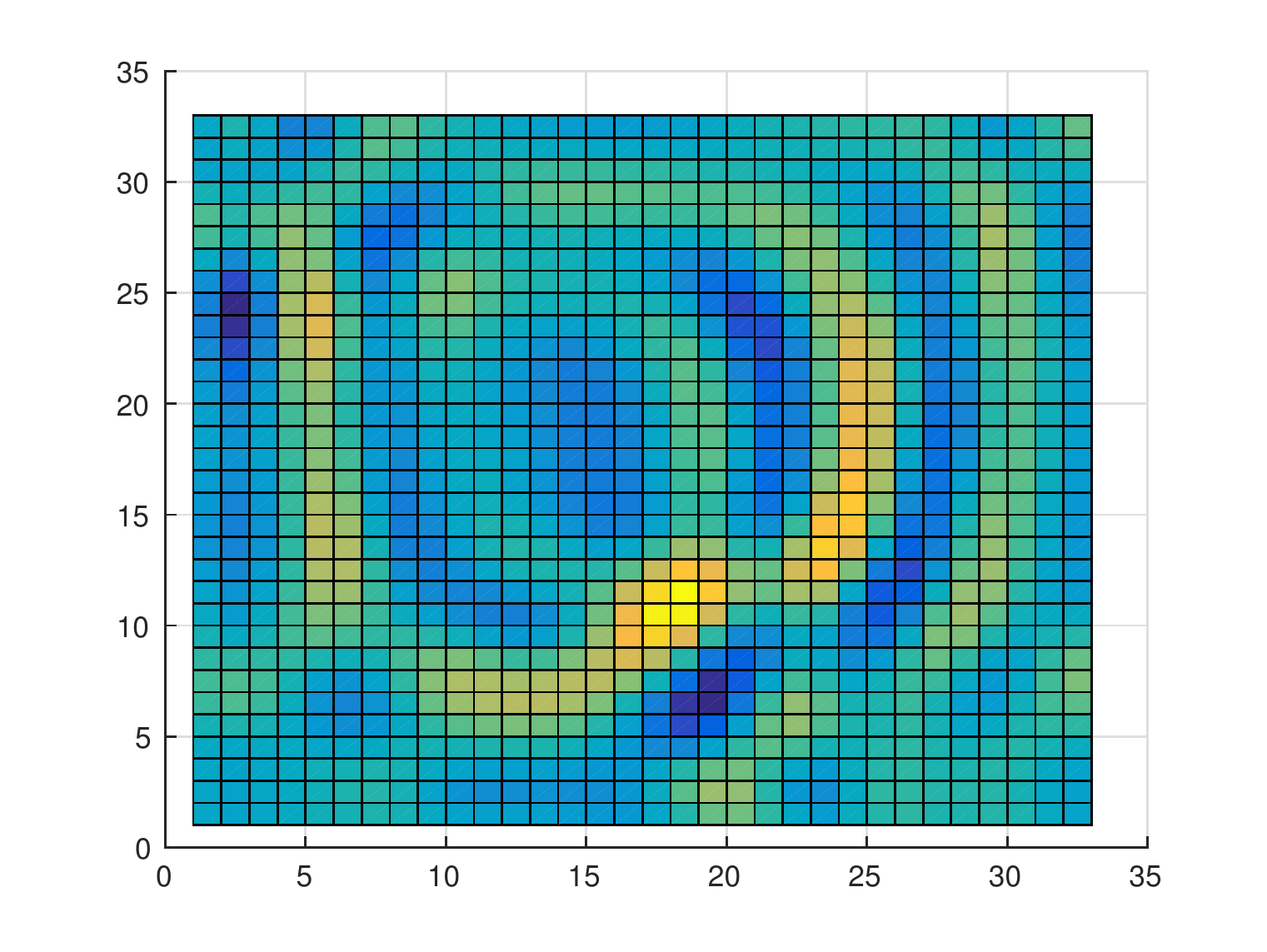}} \\
$t=1.8$  & $t=1.8$, $x_1 x_2$ view \\
\end{tabular}
\end{center}
\caption{\small\emph{Test 4. Backscattered data of the component $E_2$ at $\partial_1\widetilde{\Omega}$ at different times. The level of noise in the data is $\sigma=10\%$.}}
\label{fig:test4data}
\end{figure}

\begin{figure}
\begin{center}
\begin{tabular}{cc}
{\includegraphics[scale=0.4, clip=true,]{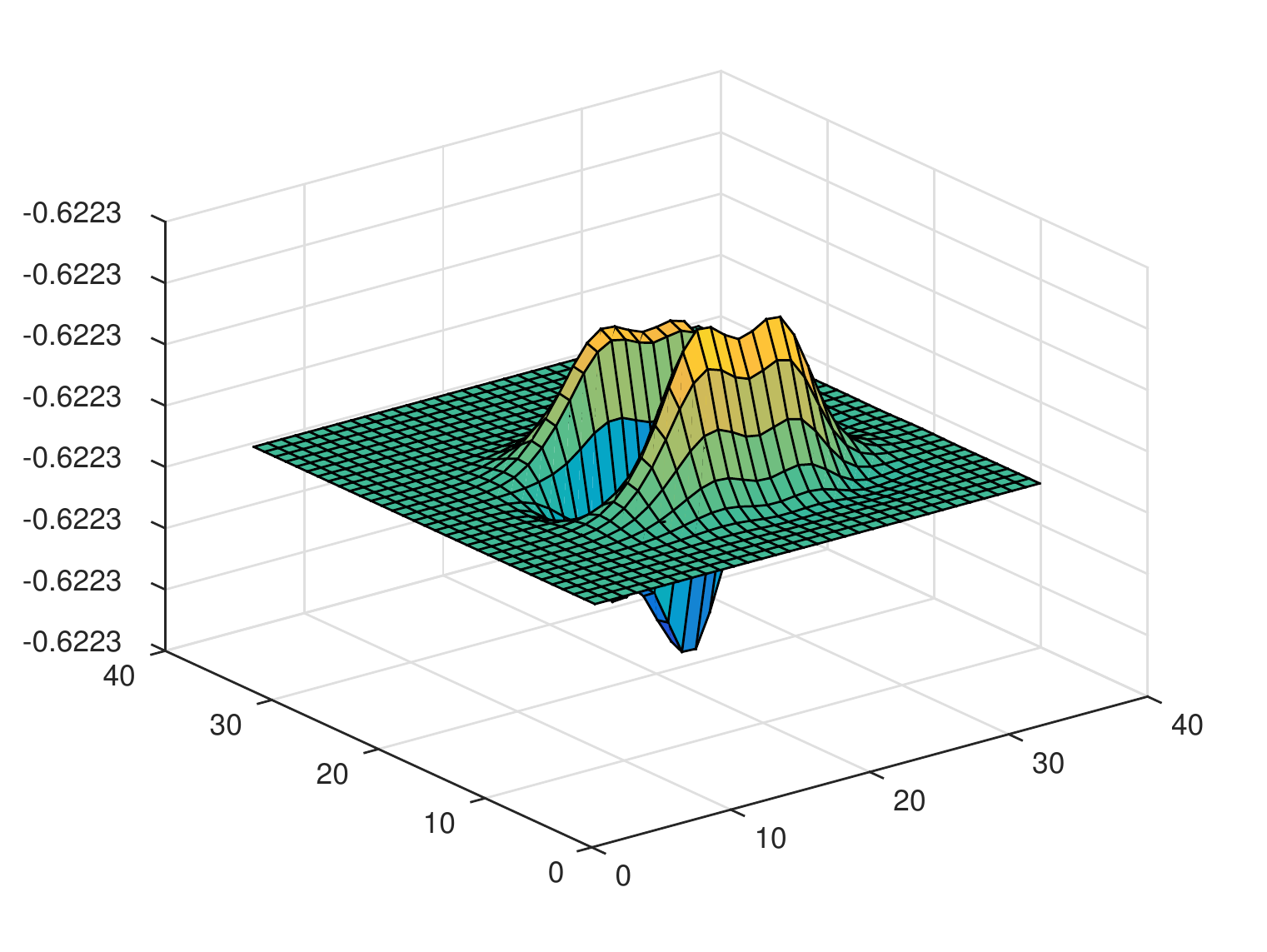}} &
{\includegraphics[scale=0.4, clip=true,]{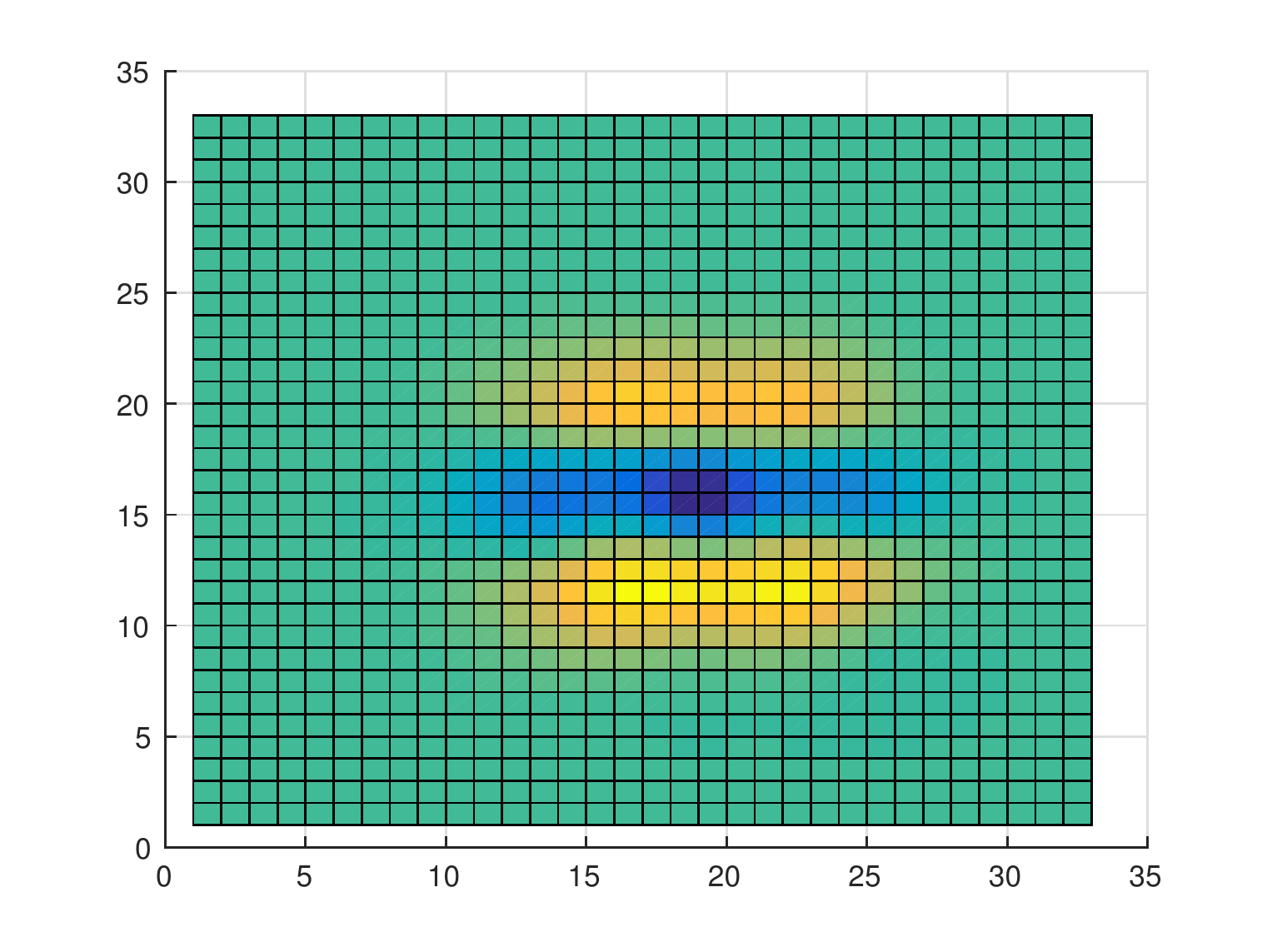}} \\
$t=0.6$  & $t=0.6$, $x_1 x_2 $ view \\
{\includegraphics[scale=0.4, clip=true,]{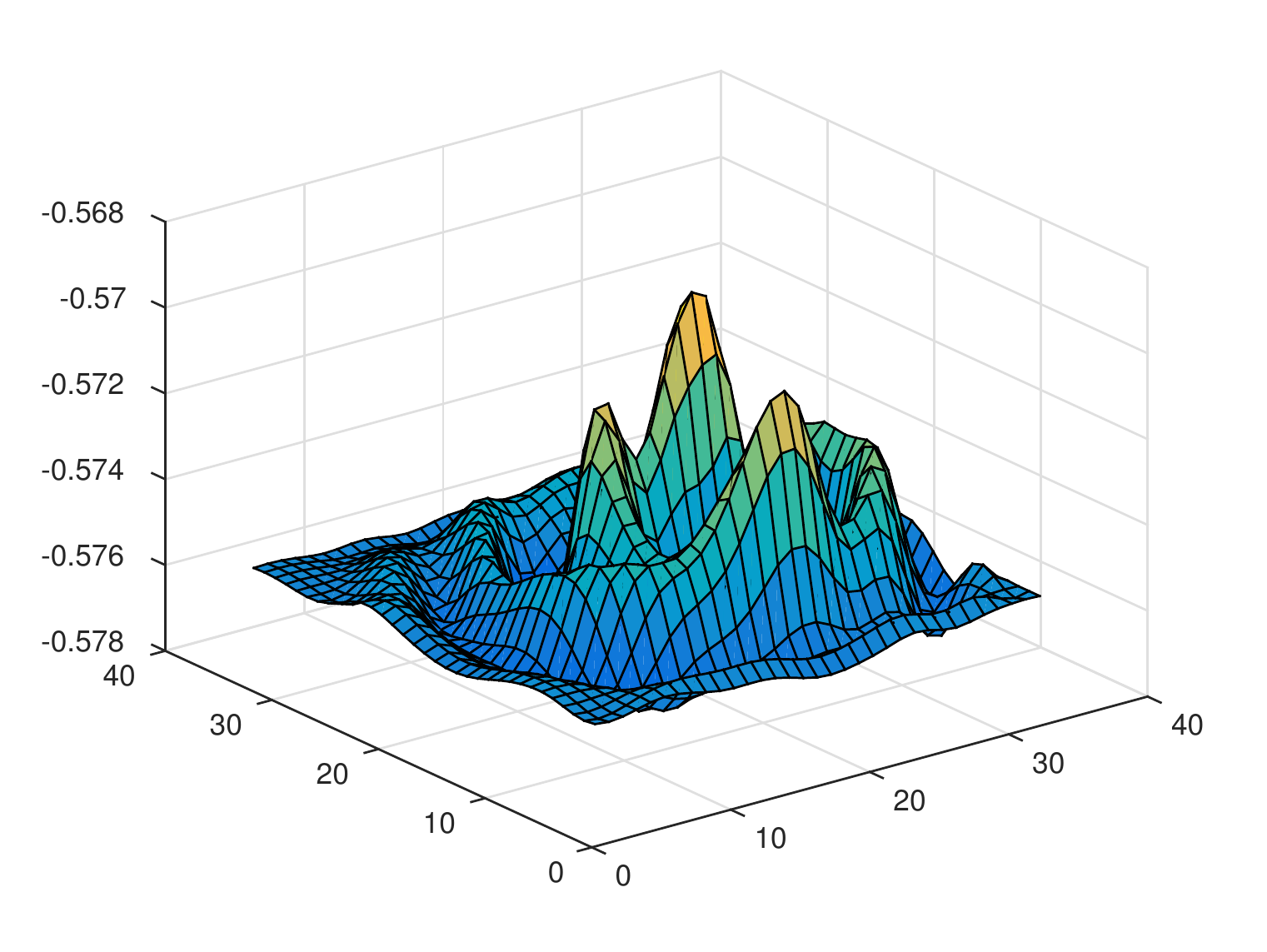}} &
{\includegraphics[scale=0.4, clip=true,]{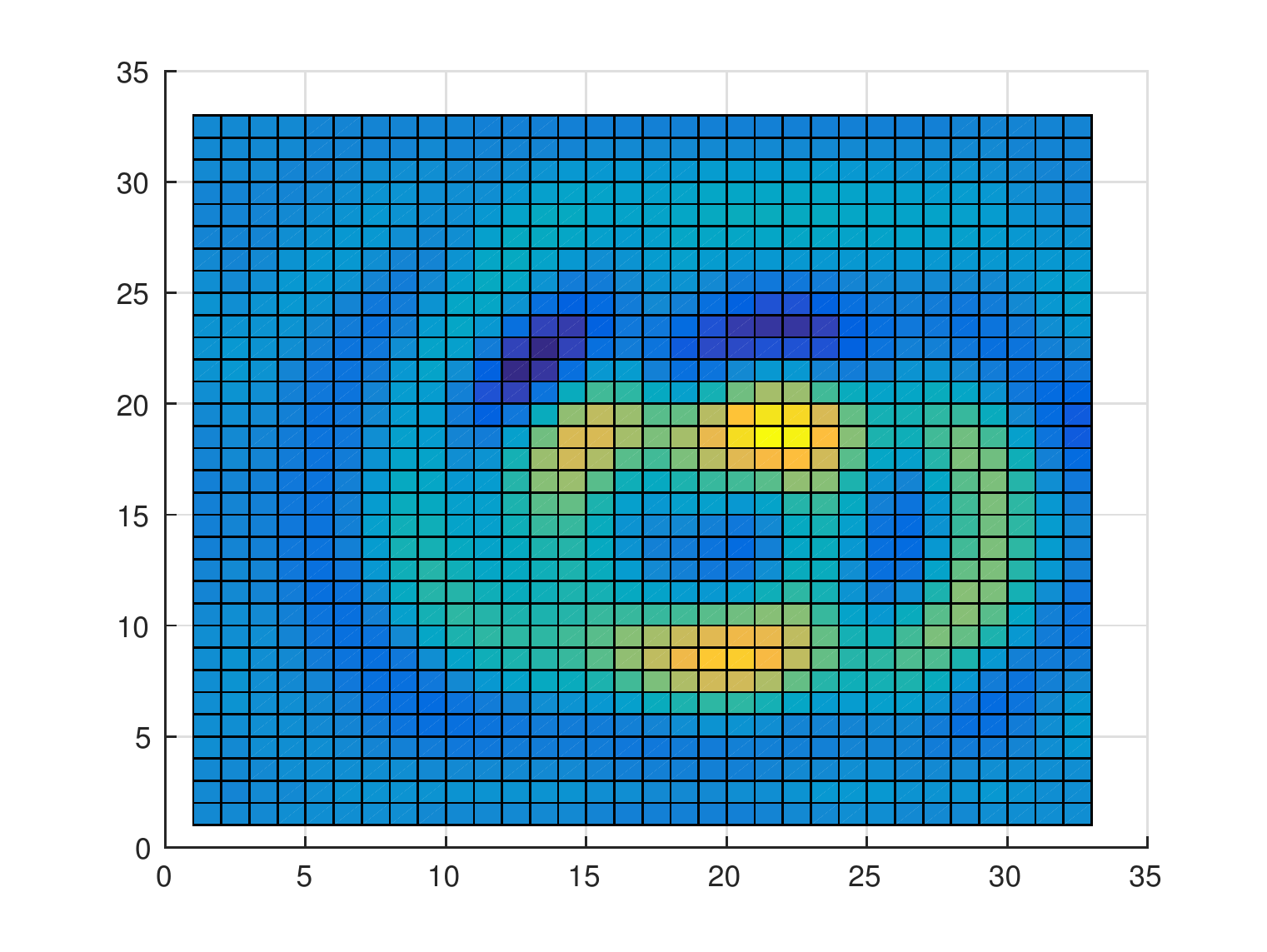}} \\
$t=1.2$  & $t=1.2$, $x_1 x_2$ view \\
{\includegraphics[scale=0.4, clip=true,]{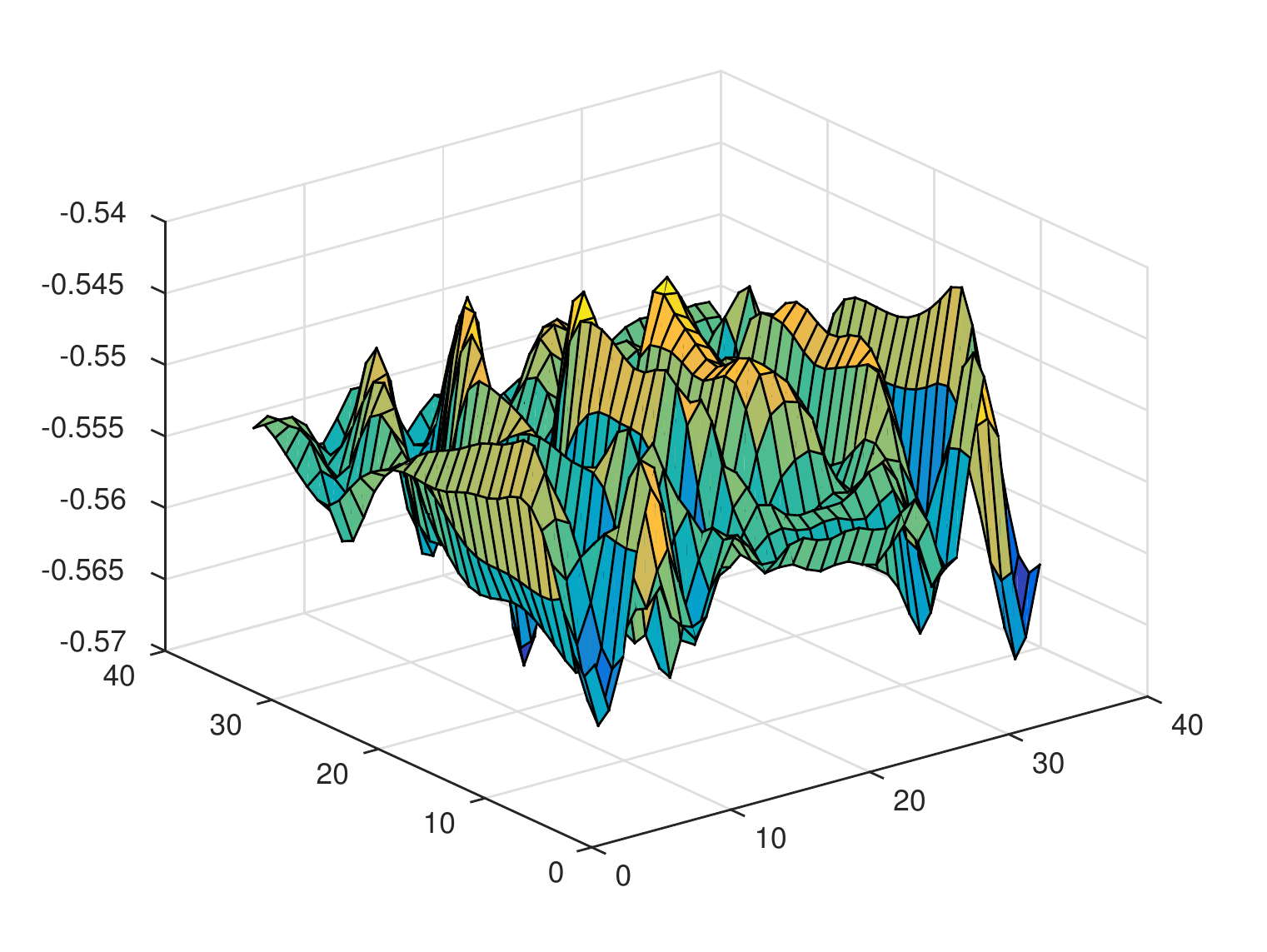}} &
{\includegraphics[scale=0.4, clip=true,]{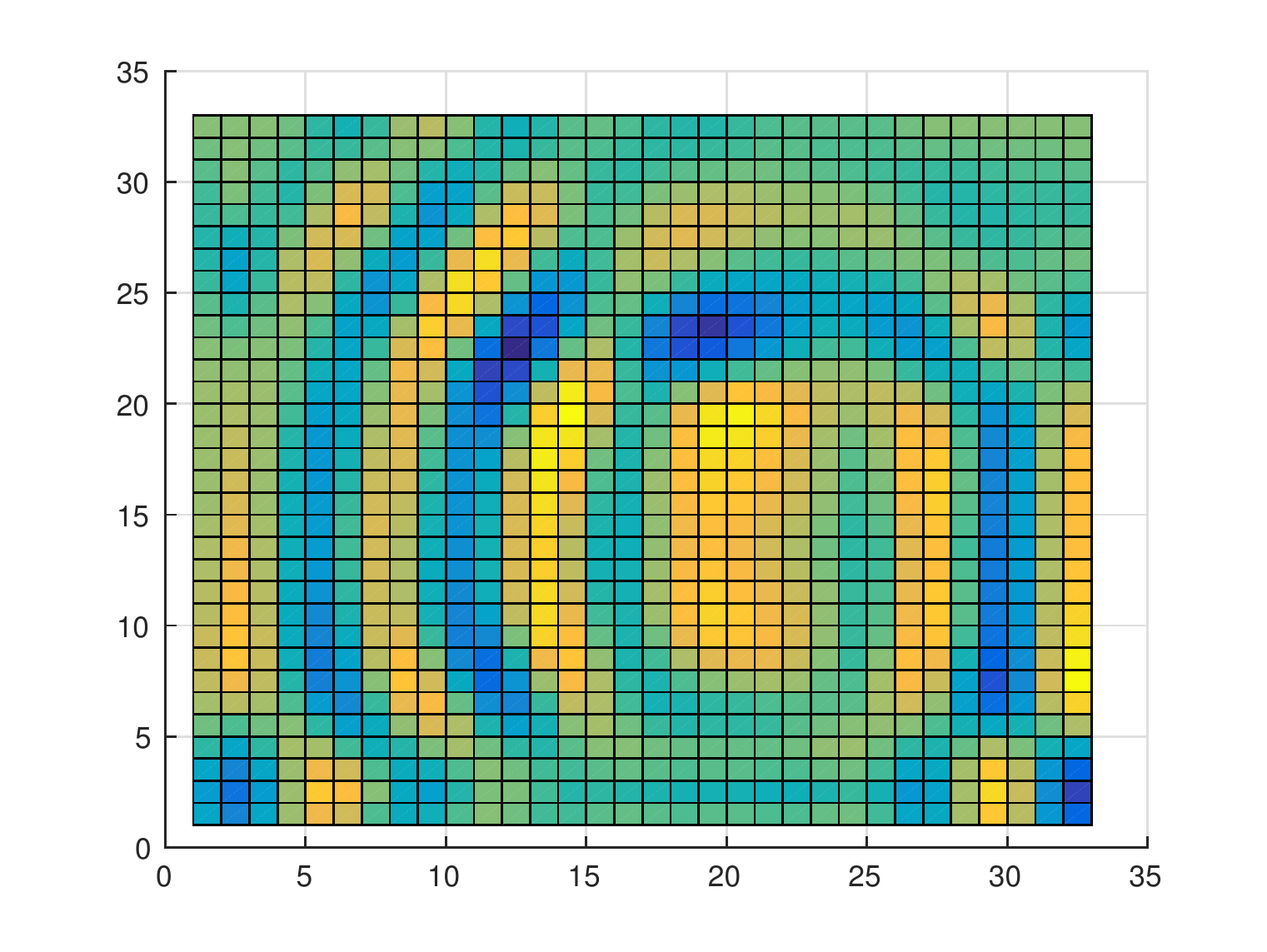}} \\
$t=1.8$  & $t=1.8$, $x_1 x_2$ view \\
\end{tabular}
\end{center}
\caption{\small\emph{Test 4. Backscattered data of the component $E_2$ at $\partial_2 \widetilde{\Omega}$ at different times.  The level of noise in the data is $\sigma=10\%$.}}
\label{fig:test4datab}
\end{figure}

\begin{figure}
\begin{center}
\begin{tabular}{cc}
$\x\in\Omega_\mathrm{FEM}:\eps_{h_0}(\x) = 2.03 $ & $\x\in\Omega_\mathrm{FEM}:\eps_\mathrm{rec}(\x) = 2.15 $ \\
{\includegraphics[scale=0.25, trim = 2.0cm 6.0cm 2.0cm 6.0cm, clip=true,]{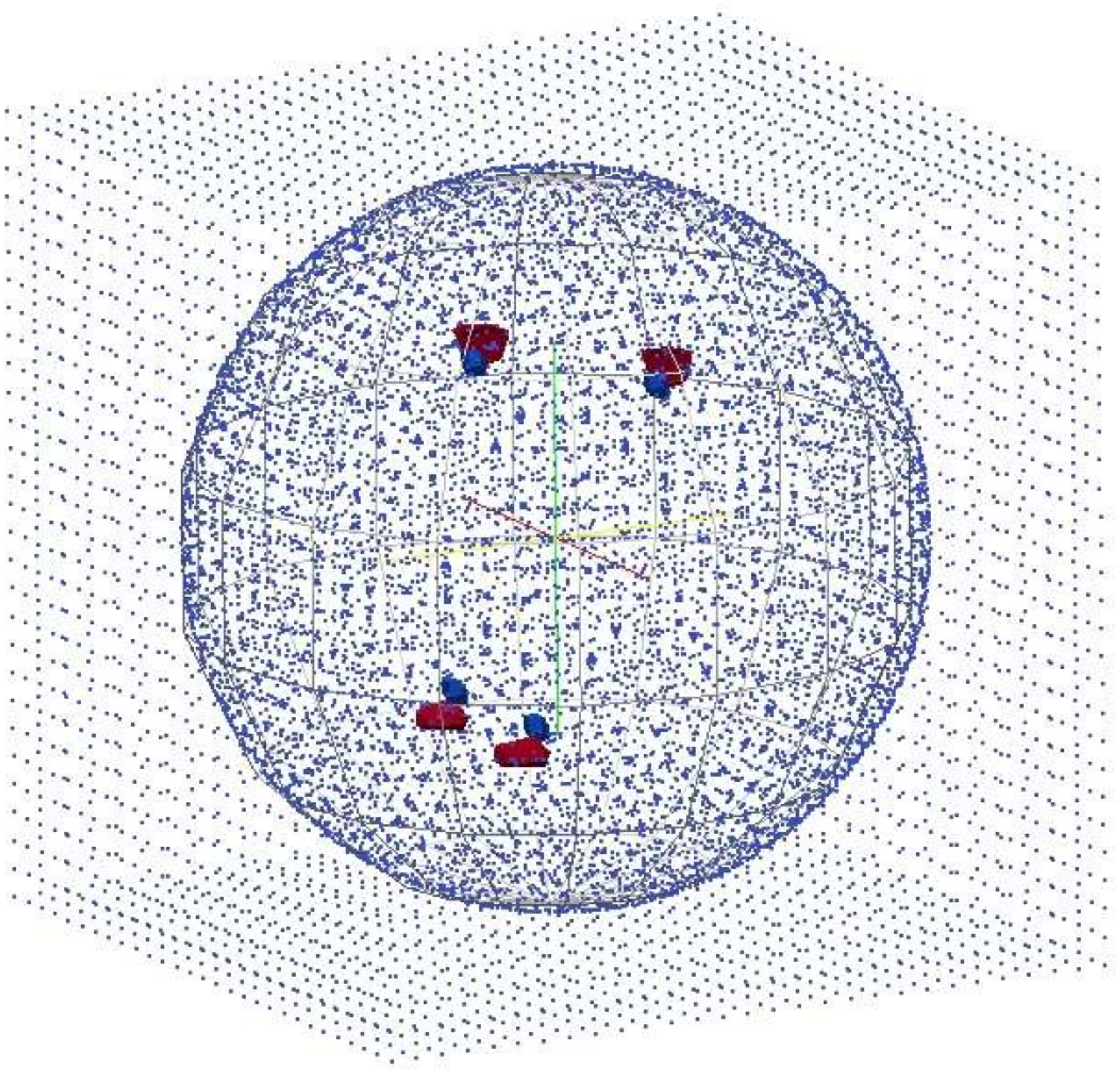}} &
{\includegraphics[scale=0.25, trim = 2.0cm 6.0cm 2.0cm 6.0cm, clip=true,]{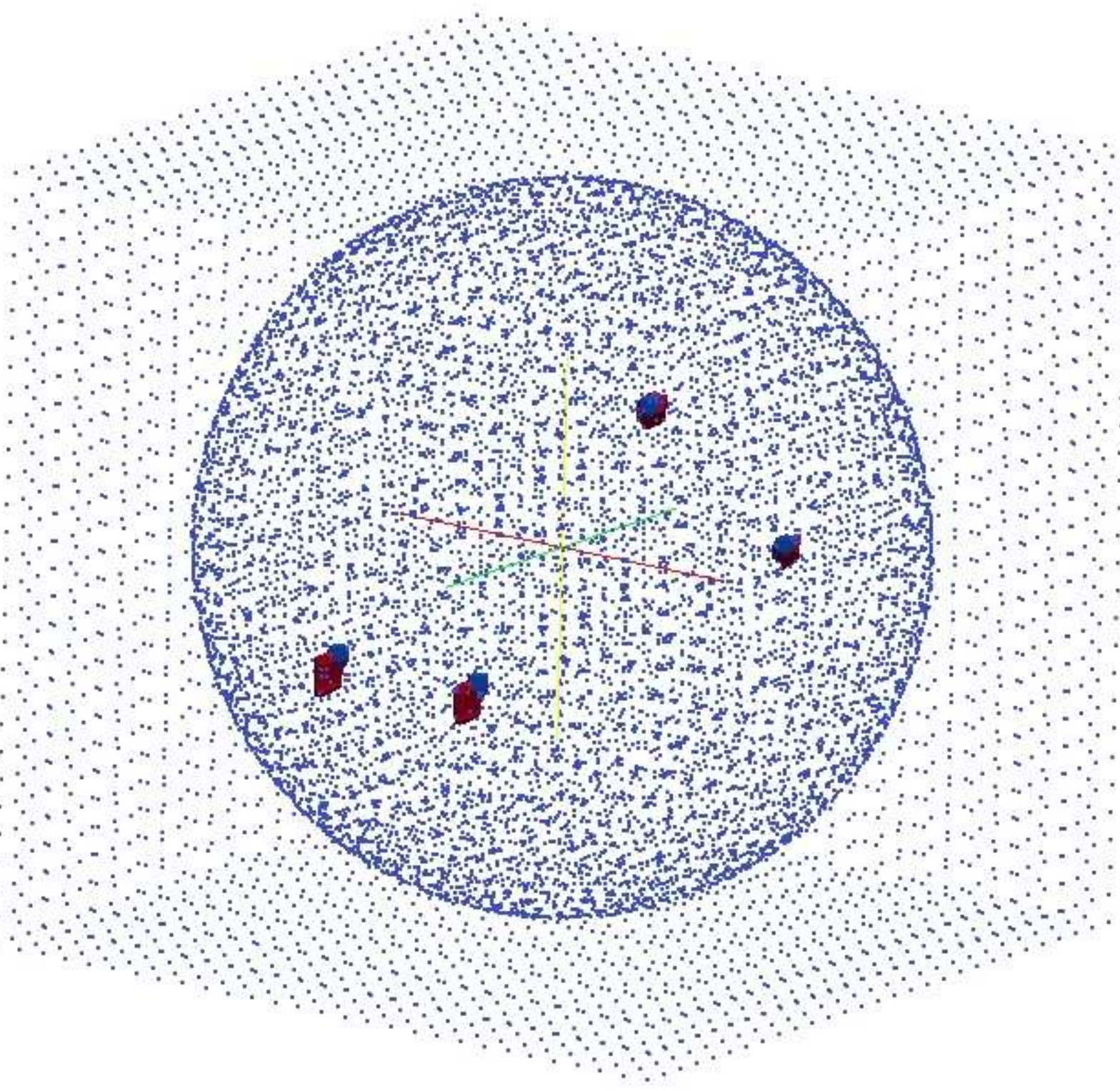}}  \\
a) prospect view &  b) prospect view \\
{\includegraphics[scale=0.25, trim = 2.0cm 6.0cm 2.0cm 6.0cm, clip=true,]{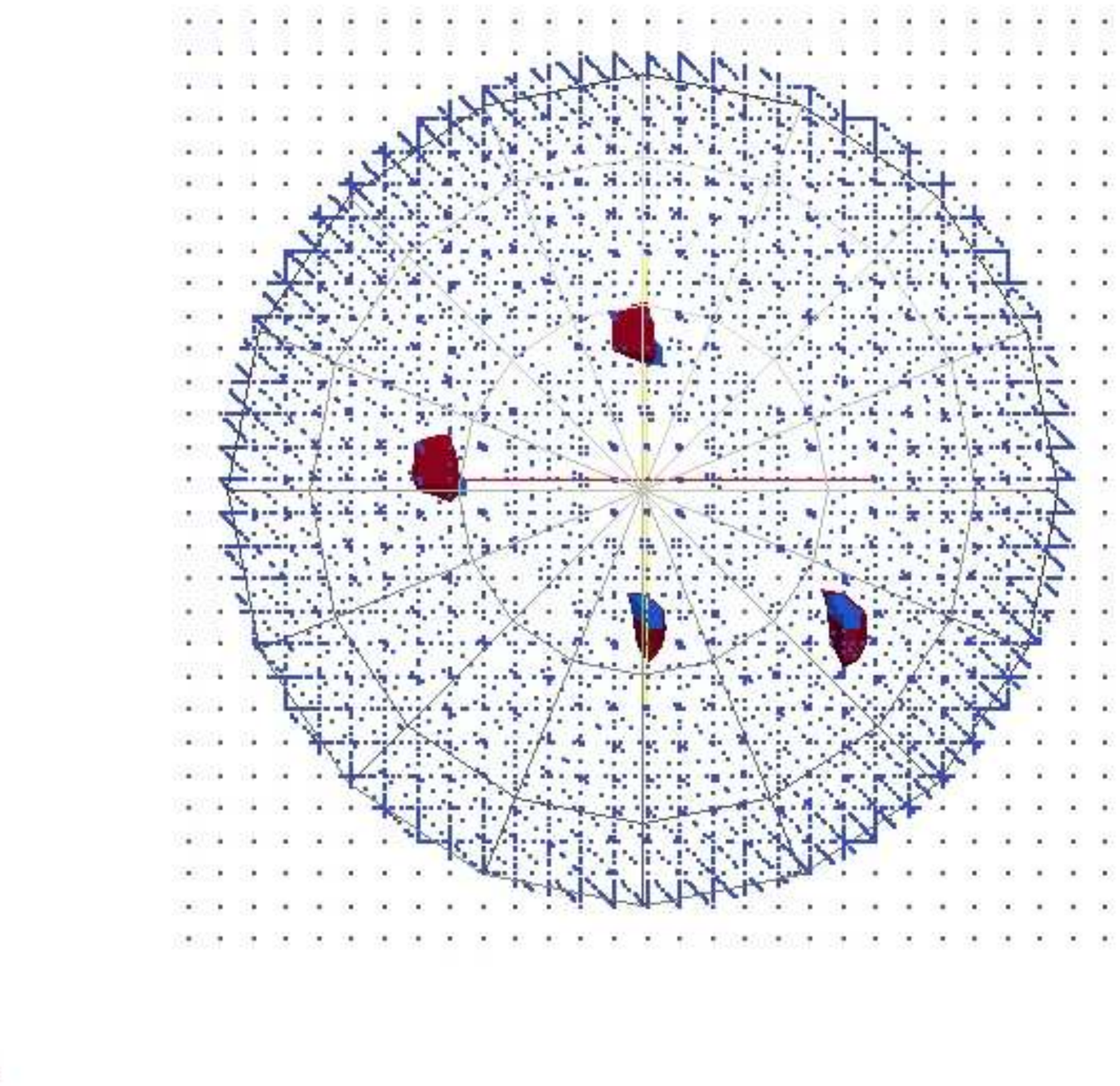}} &
{\includegraphics[scale=0.25, trim = 2.0cm 6.0cm 2.0cm 6.0cm, clip=true,]{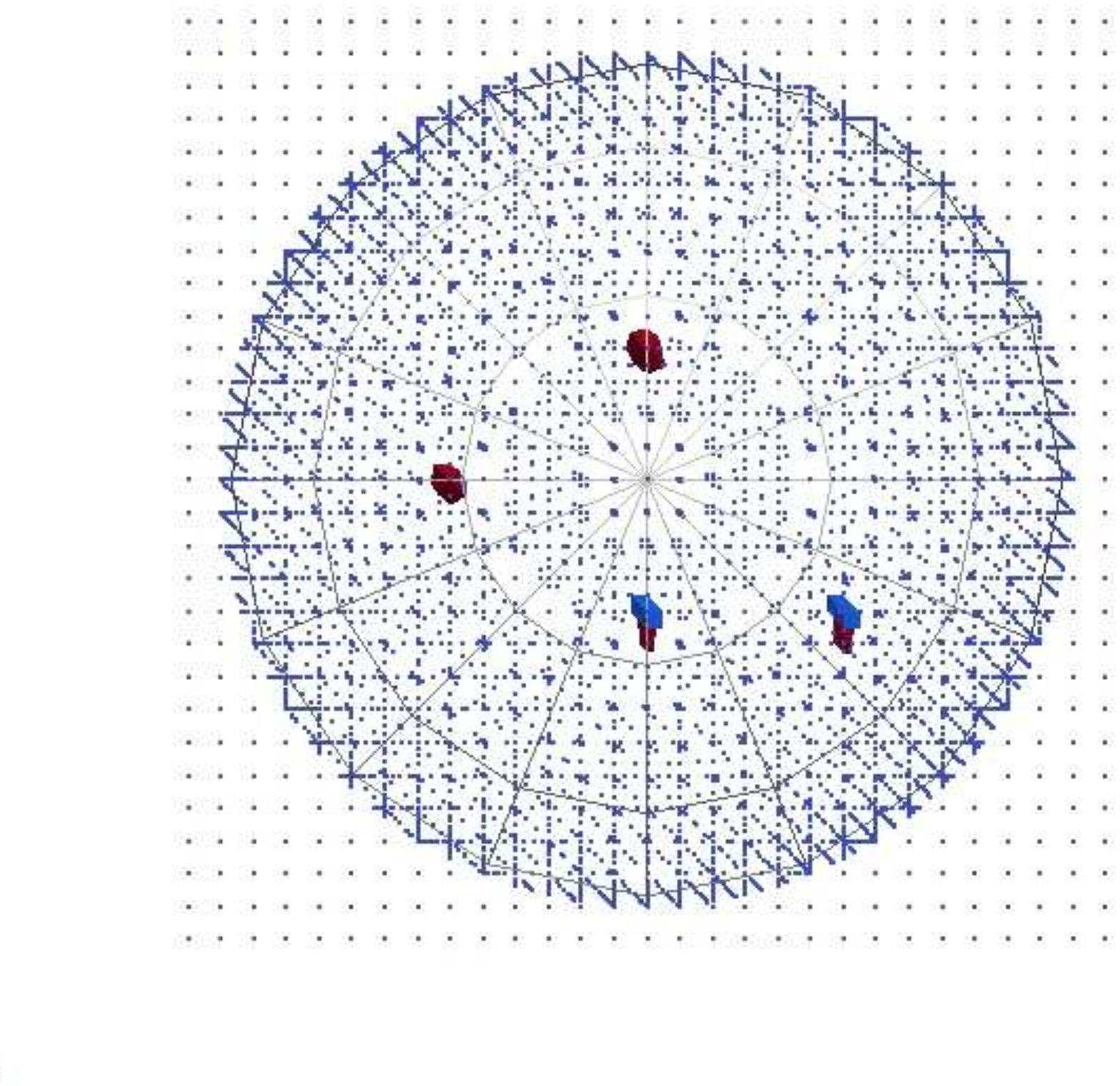}} \\
c)  $x_1 x_2$ view & d)  $x_1 x_2$ view \\ 
{\includegraphics[scale=0.25, trim = 2.0cm 6.0cm 2.0cm 6.0cm, clip=true,]{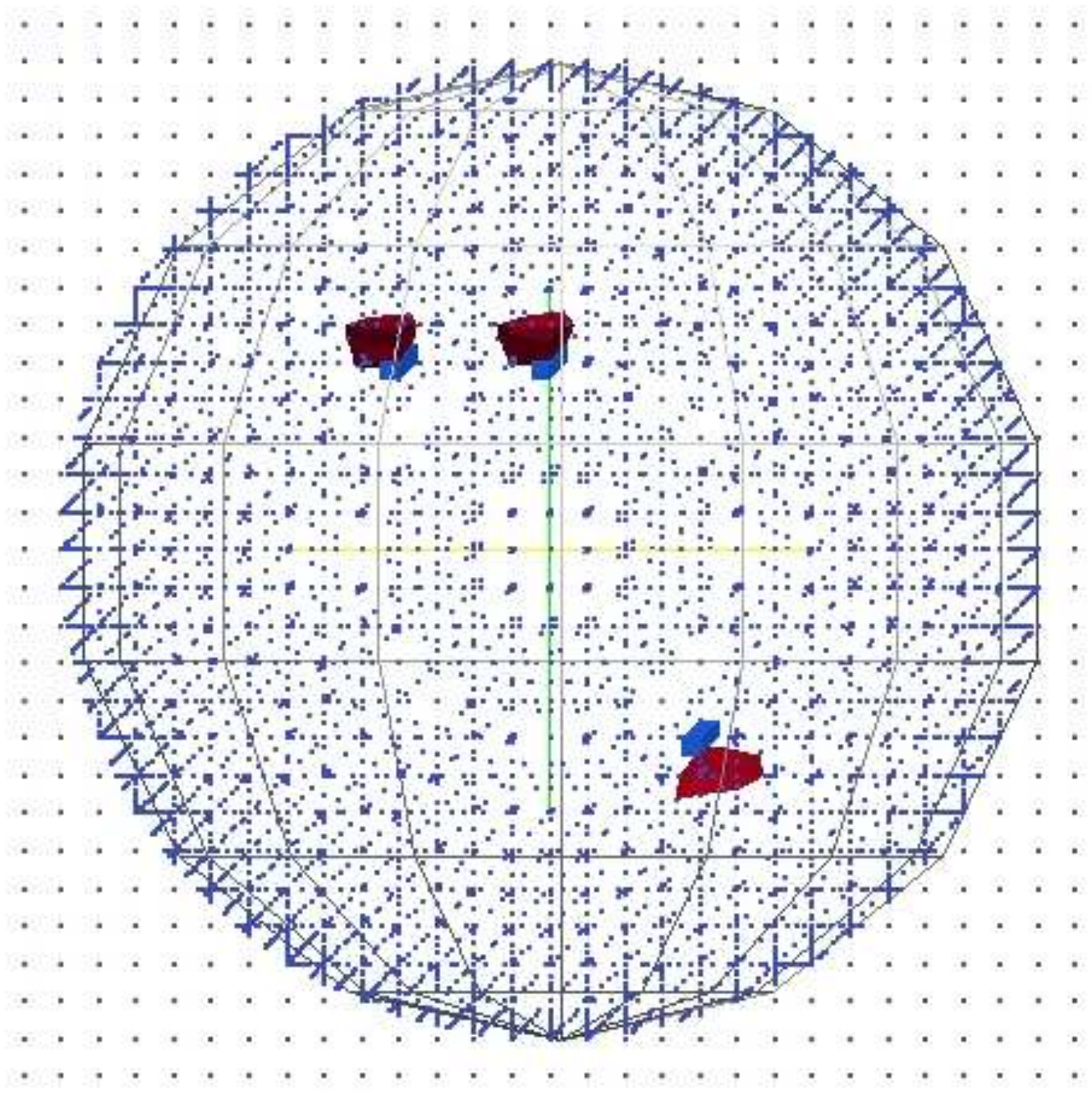}} &
{\includegraphics[scale=0.25, trim = 2.0cm 6.0cm 2.0cm 6.0cm, clip=true,]{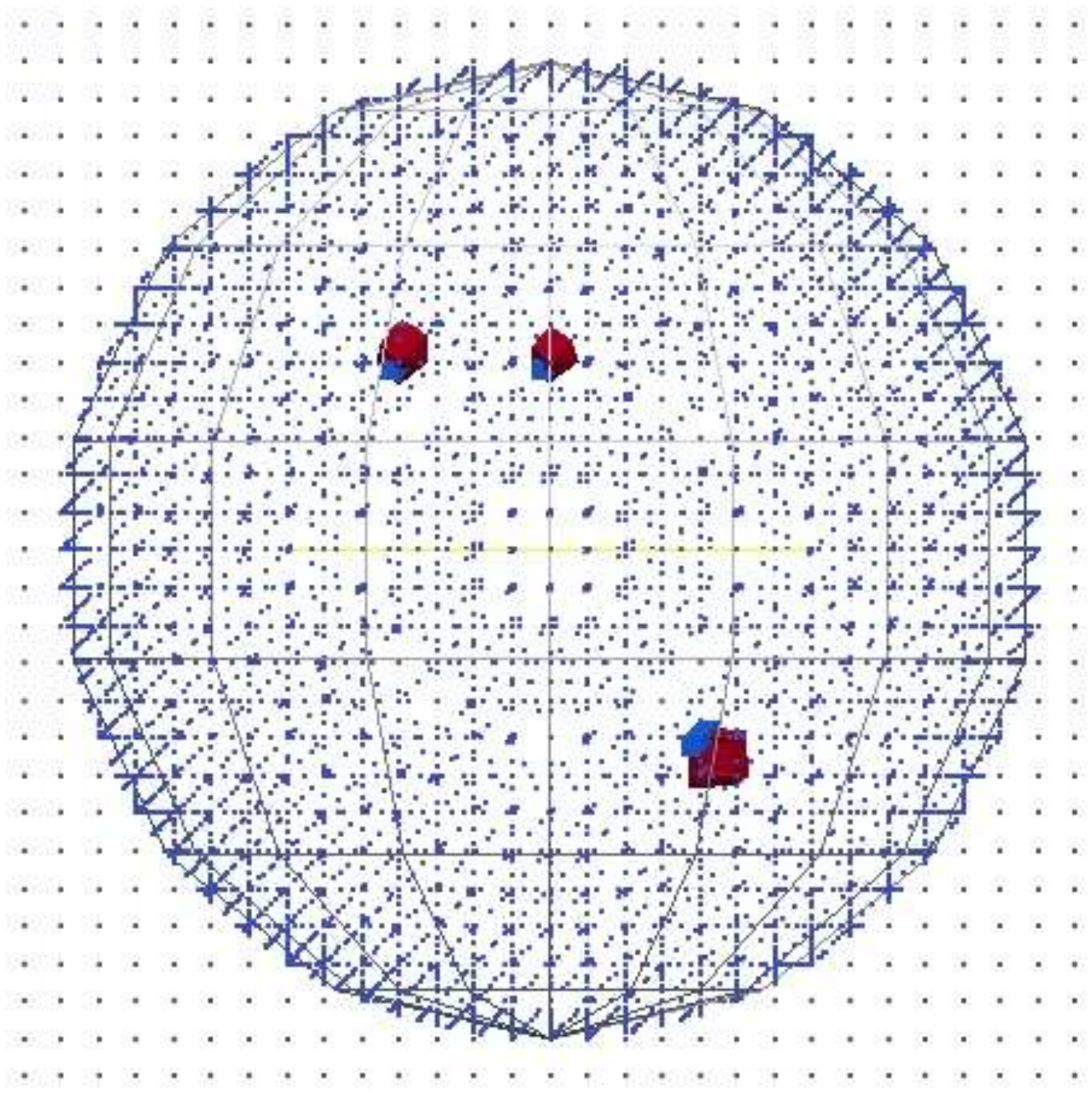}} \\
e) $x_2 x_3$ view & f)   $x_2 x_3$ view \\
{\includegraphics[scale=0.25, trim = 2.0cm 6.0cm 2.0cm 6.0cm, clip=true,]{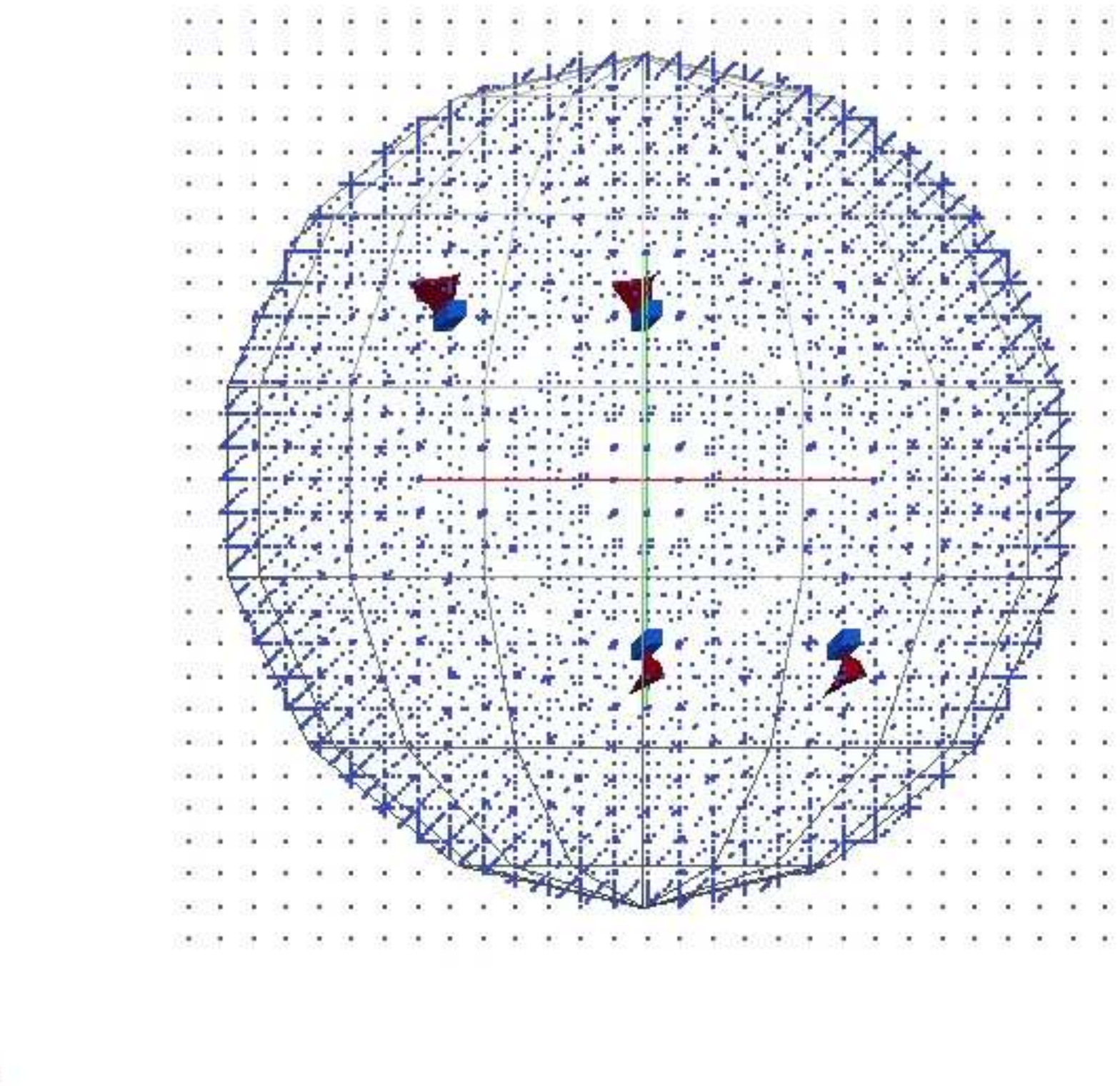}} &
{\includegraphics[scale=0.25, trim = 2.0cm 6.0cm 2.0cm 6.0cm, clip=true,]{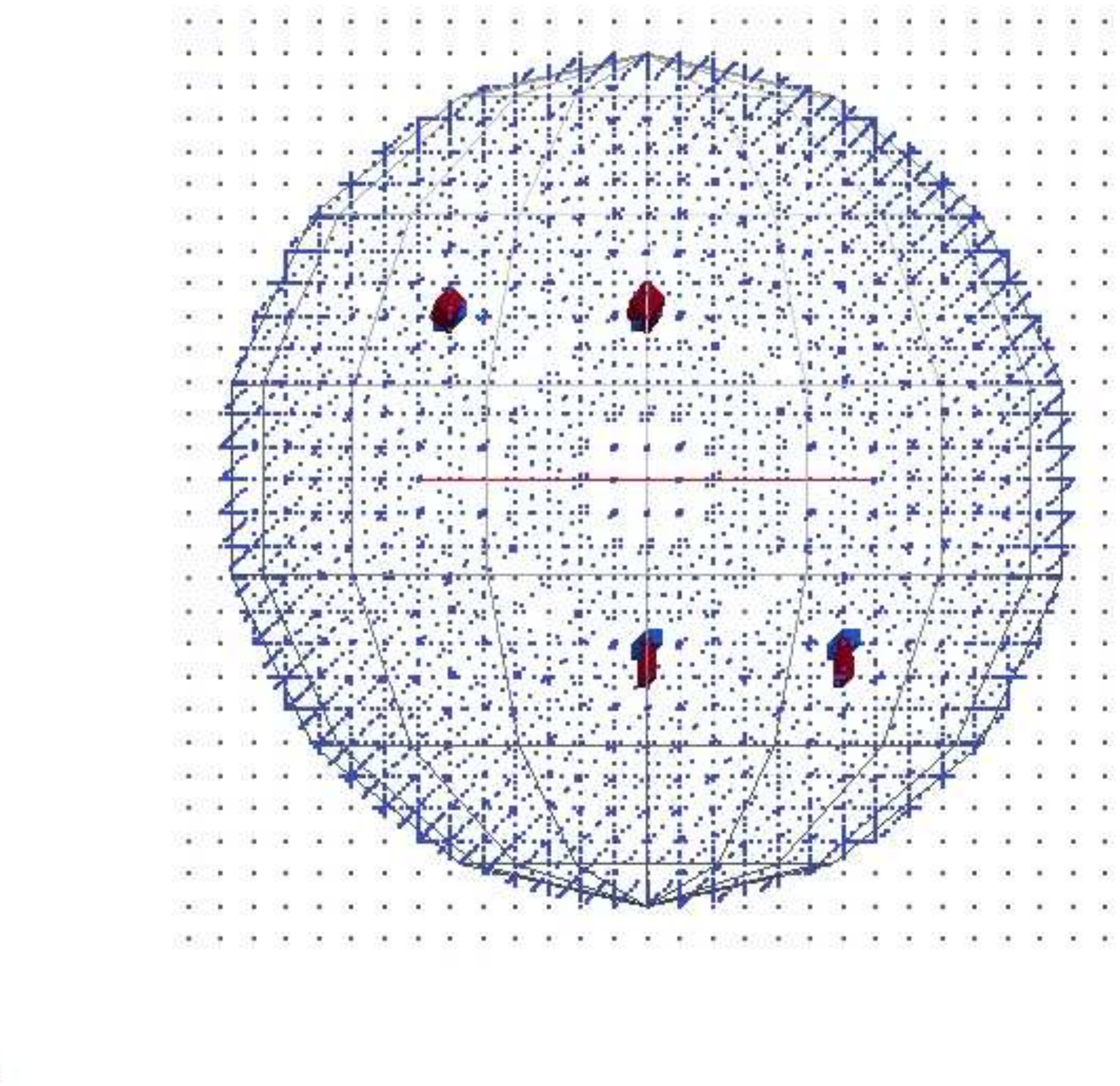}} \\
g) $x_1 x_3$ view & h) $x_1 x_3$ view
\end{tabular}
\end{center}
\caption{\small\emph{Test 4. Reconstruction obtained with two plane waves. We present reconstruction of the four inclusions (in red color) obtained on the coarse mesh (left figures) and on the two times adaptively refined mesh (right figures). The level of noise in the data is $\sigma=10\%$. For comparison we also present exact isosurfaces of the four small inclusions to be reconstructed (in light blue color).}}
\label{fig:test4noise10}
\end{figure}

\subsubsection{Test 4}
In this test we decided improve the results of Test~3 and we tried to reconstruct the four inclusions using measurements of two backscattered wave fields. First, we initialized a plane wave at the front boundary $\partial_1\widetilde{\Omega} $ in time $[0,\,T]$ and collected backscattered data here. Next, we initialized a plane wave at the back boundary $\partial_2\widetilde{\Omega}$ in time $[T,\,2T]$ and collected backscattered data for this wave field. We choose $T = 3$ and $\tau = 0.006$ as in all previous tests. Figure~\ref{fig:test4noise10} and Tables 1--2 show the results of the reconstruction. Now we see that all inclusions are of the same size and they are reconstructed in correct positions and with correct contrasts on the coarse mesh as well as on the refined mesh. From Tables 1--2 we see that adaptive algorithm converged already after first mesh refinement. The drawback of this test is that computations of one optimization iteration took twice as much time as the corresponding iterations of the previous tests, because of two measurements of the backscattered wave fields.

\section{Conclusion} \label{conclusion}
In this work we have derived two a posteriori error estimates, for the direct error in the approximated permittivity as well as in the Tikhonov functional, in the finite element approximation of the Lagrangian approach to our coefficient inverse problem. Both estimates are consisting of two parts which can be interpreted as representing the error incurred by the approximation of the solution to the direct and adjoint problems, and the error incurred by the approximation of the coefficient itself, respectively. These estimates are important in the adaptive algorithms we have studied. Moreover, they further justify the use of similar error indicators in our previous works \cite{btkm14, btkm14b}.

Numerically we have tested the adaptive algorithms with two different additive noise levels, $\sigma = 3 \%$, and $10\%$, in the data. Our numerical tests show that with mesh refinements, as was expected, the quality of the reconstruction is improved a lot. Compare, for example, the results of Figure~\ref{fig:test1noise10coarse} with those of Figure~\ref{fig:test1noise10ref5}. Using these figures and tables~1 and 2 we observe that, with mesh refinements, the artifacts obtained on a coarse mesh are removed and the reconstructed function $\eps_{h_k}$ has more correct location in $x_3$ direction.

We can conclude that we have supported the tests of our previous works \cite{b02, bh15, bj05, bc06, b13, btkm14, btkm14b, bkk10}, and have shown that the adaptive finite element method is powerful tool for the reconstruction of coefficients in Maxwell's equations from limited observations.

Our adaptive algorithms can also be applied for the case when edge elements are used for the numerical simulation of the solutions of forward and adjoint problems, see \cite{cz99, cwz14} for finite element analysis in this case. This, as well as development of different techniques for iterative choice of regularization the parameter in the Tikhonov functional, can be considered as a challenge for future research.

For the applications to medical imaging, an additional important challenge, bot from a computational and theoretical point of view, is the extension of the adaptive Lagrangian methods to the case of a complex coefficient. This corresponds to considering a conductive medium, which is more realistic for organic tissues.

\section*{Acknowledgment}
This research is supported by the Swedish Research Council (VR). The computations were performed on resources at Chalmers Centre for Computational Science and Engineering (C3SE) provided by the Swedish National Infrastructure for Computing (SNIC).

\bibliographystyle{unsrt}
\bibliography{../../mybibliography}

\begin{thebibliography}{10}

\bibitem{fhp06}
A.~Fhager, P.~Hashemzadeh, and M.~Persson.
\newblock Reconstruction quality and spectral content of an electromagnetic
  time-domain inversion algorithm.
\newblock {\em IEEE Trans. Biomed. Eng.}, 53:1594--1604, 2006.

\bibitem{mfrdp07}
P.M. Meaney, Q.~Fang, T.~Rubaek, E.~Demidenko, and K.D. Paulsen.
\newblock Log transformation benefits parameter estimation in microwave
  tomographic imaging.
\newblock {\em Med. Phys.}, 34:2014--2023, 2007.

\bibitem{mpr95}
P.M. Meaney, K.D. Paulsen, and T.P. Ryan.
\newblock Two-dimensional hybrid element image reconstruction for tm
  illumination.
\newblock {\em IEEE Trans. Antennas Propag.}, 43:239--247, 1995.

\bibitem{gmkdp12}
T.M. Grzegorczyk, P.M. Meaney, P.A. Kaufman, R.M. diFlorio Alexander, and K.D.
  Paulsen.
\newblock Fast 3-d tomographic microwave imaging for breast cancer detection.
\newblock {\em IEEE Trans Med Imaging.}, 31:1584--1592, 2012.

\bibitem{b11}
L.~Beilina.
\newblock Adaptive finite element method for a coefficient inverse problem for
  the {M}axwell's system.
\newblock {\em Appl. Anal.}, 90:1461--1479, 2011.

\bibitem{bkk10}
L.~Beilina, M.V. Klibanov, and M.Yu. Kokurin.
\newblock Adaptivity with relaxation for ill-posed problems and global
  convergence for a coefficient inverse problem.
\newblock {\em J. Math. Sci.}, 167:279--325, 2010.

\bibitem{btkm14}
L.~Beilina, Nguyen T.T., M.V. Klibanov, and J.B. Malmberg.
\newblock Reconstruction of shapes and refractive indices from backscattering
  experimental data using the adaptivity.
\newblock {\em Inverse Problems}, 30:105007, 2014.

\bibitem{btkm14b}
L.~Beilina, Nguyen T.T., M.V. Klibanov, and J.B. Malmberg.
\newblock Globally convergent and adaptive finite element methods in imaging of
  buried objects from experimental backscattering radar measurements.
\newblock {\em J. Comput. Appl. Math.}, 2014.
\newblock Article in press: http://dx.doi.org/10.1016/j.cam.2014.11.055.

\bibitem{jzlj94}
W.T. Joines, Y.~Zhang, C.~Li, and R.L. Jirtle.
\newblock The measured electrical properties of normal and malignant human
  tissues from 50 to 900 mhz.
\newblock {\em Med. Phys.}, 21:547--550, 1994.

\bibitem{m14}
J.B. Malmberg.
\newblock {\em A posteriori error estimate in the Lagrangian setting for an
  inverse problem based on a new formulation of {M}axwell's system}, volume 120
  of {\em Springer Proceedings in Mathematics and Statistics}, pages 42--53.
\newblock Springer, 2015.

\bibitem{m03}
P.~Monk.
\newblock {\em Finite {E}lement {M}ethods for {M}axwell's {E}quations}.
\newblock Clarendon, Oxford, 2003.

\bibitem{pl91}
K.D. Paulsen and D.R. Lynch.
\newblock Elimination of vector parasites in finite element {M}axwell
  solutions.
\newblock {\em IEEE Trans. Microwave Theory Techniques}, 39:395--404, 1991.

\bibitem{l85}
O.A. Ladyzhenskaya.
\newblock {\em The boundary value problems of mathematical physics}, volume~49
  of {\em Applied Mathematical Sciences}.
\newblock Springer-Verlag, New York, 1985.
\newblock Translated from the Russian by Jack Lohwater [Arthur J. Lohwater].

\bibitem{e10}
L.C. Evans.
\newblock {\em Partial differential equations}, volume~19 of {\em Graduate
  Studies in Mathematics}.
\newblock American Mathematical Society, Providence, RI, second edition, 2010.

\bibitem{ehn96}
H.W. Engl, M.~Hanke, and A.~Neubauer.
\newblock {\em Regularization of {I}nverse {P}roblems}, volume 375 of {\em
  Mathematics and its Applications}.
\newblock Kluwer Academic Publishers Group, Dordrecht, 1996.

\bibitem{tgsy95}
A.N. Tikhonov, A.V. Goncharsky, V.V. Stepanov, and A.G. Yagola.
\newblock {\em Numerical Methods for the Solution of Ill-Posed Problems}.
\newblock Kluwer Academic Publishers, Dordrecht, 1995.

\bibitem{bk81}
A.L. Bukhge{\u\i}m and M.V. Klibanov.
\newblock Uniqueness in the large of a class of multidimensional inverse
  problems.
\newblock {\em Dokl. Akad. Nauk SSSR}, 260(2):269--272, 1981.

\bibitem{k86}
M.V. Klibanov.
\newblock Uniqueness of the solution of two inverse problems for a {M}axwell
  system.
\newblock {\em Zh. Vychisl. Mat. i Mat. Fiz.}, 26(7):1063--1071, 1119, 1986.

\bibitem{bcs12}
M.~Bellassoued, M.~Cristofol, and E.~Soccorsi.
\newblock Inverse boundary value problem for the dynamical heterogeneous
  {M}axwell's system.
\newblock {\em Inverse Problems}, 28:095009, 2012.

\bibitem{li05}
S.~Li.
\newblock An inverse problem for {M}axwell's equations in bi-isotropic media.
\newblock {\em {SIAM} J. Math. Anal.}, 37:1027--1043, 2005.

\bibitem{ly07}
S.~Li and M.~Yamamoto.
\newblock An inverse problem for {M}axwell's equations in anisotropic media.
\newblock {\em Chin. Ann. Math. Ser. B}, 28(1):35--54, 2007.

\bibitem{bks11}
A.B. Bakushinsky, M.Yu. Kokurin, and A.~Smirnova.
\newblock {\em Iterative Methods for Ill-Posed Problems: An Introduction}.
\newblock De Gruyter, Berlin, 2011.

\bibitem{h15}
S.~Hosseinzadegan.
\newblock Iteratively regularized adaptive finite element method for
  reconstruction of coefficients in {M}axwell's system.
\newblock Master's thesis in applied mathematics, Department of Mathematical
  Sciences, Chalmers University of Technology and Gothenburg University, 2015.

\bibitem{bkr00}
R.~Becker, H.~Kapp, and R.~Rannacher.
\newblock Adaptive finite element methods for optimal control of partial
  differential equations: Basic concept.
\newblock {\em SIAM J. Control Optim.}, 39:113--132, 2000.

\bibitem{kl10}
K.~Kraft and S.~Larsson.
\newblock The dual weighted residuals approach to optimal control of ordinary
  differential equations.
\newblock {\em BIT}, 50(3):587--607, 2010.

\bibitem{js95}
C.~Johnson and A.~Szepessy.
\newblock Adaptive finite element methods for conservation laws based on a
  posteriori error estimates.
\newblock {\em Comm. Pure Appl. Math.}, 48:199--234, 1995.

\bibitem{cfl67}
F.~Courant, K.~Friedrichs, and H.~Lewy.
\newblock On the partial differential equations of mathematical physics.
\newblock {\em IBM J. Res. Develop.}, 11:215--234, 1967.

\bibitem{p84}
O.~Pironneau.
\newblock {\em Optimal shape design for elliptic systems}.
\newblock Springer-Verlag, 1984.

\bibitem{waves}
{WavES}: the software package.

\bibitem{b13}
L.~Beilina.
\newblock Energy estimates and numerical verification of the stabilized domain
  decomposition finite element/finite difference approach for the {M}axwell's
  system in time domain.
\newblock {\em Cent. Eur. J. Math.}, 11:702--733, 2013.

\bibitem{bh15}
L.~Beilina and S.~Hosseinzadegan.
\newblock An adaptive finite element method in reconstruction of coefficients
  in {M}axwell's equations from limited observations.
\newblock To appear in Applications of Mathematics, Springer, 2016. Preprint is
  available in arXiv:1510.07525, 2015.

\bibitem{elhk09}
C.~Eyraud, A.~Litman, A.~H\'erique, and W.~Kofman.
\newblock Microwave imaging from experimental data within a {B}ayesian
  framework with realistic random noise.
\newblock {\em Inverse Problems}, 25:024005, 2009.

\bibitem{bssps04}
A.E. Bulyshev, Souvorov A.E., S.Y. Semenov, V.G. Posukh, and Y.E. Sizov.
\newblock Three-dimensional vector microwave tomography: theory and
  computational experiments.
\newblock {\em Inverse Problems}, 20:1239--1259, 2004.

\bibitem{cw90}
W.C. Chew and Y.M. Wang.
\newblock Reconstruction of two-dimensional permittivity distribution using the
  distorted born iterative method.
\newblock {\em IEEE Trans. Med. Imaging}, 9:218--225, 1990.

\bibitem{jph91}
N.~Joachimowicz, C.~Pichot, and J.P. Hugonin.
\newblock Inverse scattering: and iterative numerical method for
  electromagnetic imaging.
\newblock {\em IEEE Trans. Antennas Propag.}, 39:1991, 1991.

\bibitem{egl15}
C.~Eyraud, J.-M. Geffrin, and A.~Litman.
\newblock Complex permittivity determination from far-field scattering
  patterns.
\newblock {\em IEEE Antennas and Wireless Propagation Lett.}, 14:309--312,
  2015.

\bibitem{egl15b}
C.~Eyraud, J.-M. Geffrin, and A.~Litman.
\newblock 3-d imaging of a microwave absorber sample from microwave scattered
  field measurements.
\newblock {\em IEEE Microwave and Wireless Components Lett.}, 25:472--474,
  2015.

\bibitem{b02}
L.~Beilina.
\newblock Adaptive hybrid {FEM/FDM} methods for inverse scattering problems.
\newblock {\em Inverse problems and information technologies}, 1:73--116, 2002.

\bibitem{bj05}
L.~Beilina and C.~Johnson.
\newblock A posteriori error estimation in computational inverse scattering.
\newblock {\em Math. Models Methods Appl. Sci.}, 1:23--35, 2005.

\bibitem{bc06}
L.~Beilina and C.~Clason.
\newblock An adaptive hybrid fem/fdm method for an inverse scattering problem
  in scanning acoustic microscopy.
\newblock {\em SIAM J. Sci. Comp}, 28:382--402, 2006.

\bibitem{cz99}
P.~Ciarlet~Jr. and J.~Zou.
\newblock Fully discrete finite element approaches for time-dependent
  {M}axwell's equations.
\newblock {\em Numerische Mathematik}, 82:193--219, 1999.

\bibitem{cwz14}
P.~Ciarlet~Jr., H.~Wu, and J.~Zou.
\newblock Edge element methods for {M}axwell's equations with strong
  convergence for gauss' laws.
\newblock {\em SIAM J. Numer. Anal.}, 52:779--807, 2014.

\end{thebibliography}

\end{document}